\def \Nats {\mathds{N}}
\def \Ints {\mathds{Z}}
\DeclareMathOperator{\restrictedto} {\upharpoonright}
\DeclareMathOperator{\Domain}{Dom}
\DeclareMathOperator{\Powerset}{P}
\DeclareMathOperator{\cl}{cl}
\DeclareMathOperator{\sscl}{\Lambda}
\newcommand{\set}[1]{\{#1\}}
\newcommand{\setarg}[2]{{\{#1\ |\ #2\}}}
\newcommand{\setcol}[2]{\{#1 : #2\}}
\def \strong {\leqslant}
\newtheorem{lemma}{Lemma}[subsection]
\newtheorem{theorem}[lemma]{Theorem}
\newtheorem{cor}[lemma]{Corollary}
\newtheorem{prop}[lemma]{Proposition}
\theoremstyle{definition}
\newtheorem{definition}[lemma]{Definition}
\newtheorem{fact}[lemma]{Fact}
\newtheorem{observation}[lemma]{Observation}
\newtheorem*{notation*}{Notation}
\newtheorem{notation}[lemma]{Notation}
\newtheorem{remark}[lemma]{Remark}
\newtheorem*{digression*}{Digression}
\DeclareMathOperator{\dm}{d}
\DeclareMathOperator{\Fin}{Fin}
\DeclareMathOperator{\fin}{fin}
\DeclareMathOperator{\tp}{tp}
\DeclareMathOperator{\arity}{a}
\newcommand{\pg}[1]{\mathcal{#1}}
\newcommand{\subfin}{\subseteq_{\fin}}
\newcommand{\flatsum}[1]{\Delta_{#1}}
\newcommand{\hg}[1]{\mathcal{#1}}
\newcommand{\subflat}{\trianglelefteq}
\newcommand{\psubflat}{\vartriangleleft}
\newcommand{\prerank}{\rho}
\newcommand{\OrdMod}{\mathfrak{O}}
\newcommand{\LEX}{\mathcal{L}_{\text{EX}}}
\title{The generic flat pregeometry}
\author{Omer Mermelstein}
\date{\today}
\address{Department of Mathematics, University of Wisconsin--Madison \\ 480 Lincoln Dr, Madison, WI 53706}
\email{omermelstein@wisc.edu}
\subjclass[2010]{Primary 05B35, 03C30; Secondary 03C45, 03C13, 03C50}
\keywords{Hrushovski construction, Fra\"iss\'e limit, Pregeometry, Flatness, Strict gammoid, $\omega$-stable}
\begin{document}
\begin{abstract}
	We examine the first order structure of pregeometries of structures built via Hrushovski constructions. In particular, developing matroid theoretic technology, we show that the class of flat pregeometries is an amalgamation class such that the pregeometry of the unbounded arity Hrushovski construction is precisely its generic.
	
	We show that the generic is saturated, provide an axiomatization for its theory, show that the theory is $\omega$-stable, and has quantifier-elimination down to boolean combinations of $\exists\forall$-formulas. We show that the pregeometries of the bounded-arity Hrushovski constructions satisfy the same theory, and that they in fact form an elementary chain.
\end{abstract}
\maketitle

%\tableofcontents

\section{Introduction}

Via a study of flat pregeometries, we investigate the pregeometries associated to the hypergraph Hrushovski constructions --- the strongly minimal structures used to refute Zilber's conjecture. In particular, we show that these pregeometries are generic structures for classes of flat pregeometries, with the pregeometry associated to the unbounded arity Hrushovski construction being the generic for the class of all finite flat pregeometries. Using this characterization, we show that this pregeometry is saturated and $\omega$-stable of Morley rank $\omega^\omega$, and provide its theory and quantifier elimination down to boolean combinations of $\exists\forall$-formulas. Finally, we present the pregeometries of Hrushovski constructions of bounded arity as an elementary chain limiting to the unbounded arity pregeometry, showing that all are models of the same $\omega$-stable theory, differing in degree of saturation.

There are few known prototypical examples of $\omega$-stable theories, and even fewer which are not uncountably categorical. The canonical such examples are everywhere infinite forests, the generic structure for the amalgamation class of finite trees; Hrushovski's non-collapsed construction, which we define later in the introduction; and DCF$_0$, the theory of differentially closed fields of characteristic 0. These theories are all of Morley rank $\omega$, depicting the theory of the generic flat pregeometry as distinct. It is surprising to find any truly new $\omega$-stable theory and it is strong evidence that despite their reputation, the pregeometries of Hrushovski constructions are nice and natural objects. 

\medskip
A \emph{combinatorial pregeometry}, or matroid, is an abstract dependence relation on a set (see subsection \ref{subsection: pregeometry}). Classical examples are linear dependence in vector spaces, and algebraicity in field extensions. A pregeometry associated to a structure illustrates the degree of interaction between given elements --- e.g. linear independence indicates zero interaction, whereas a large set of a small linear dimension indicates many linear dependencies between the elements. In model theory, structures (types) that are sufficiently well behaved model theoretically have a naturally associated pregeometry called the \emph{forking} pregeometry. In the examples above, as well as many others, the forking pregeometry coincides with the classical dependence relation intrinsic to the structure.

A major program in model theory is to classify structures via their associated pregeometries. There are three types of pregeometries that arise ubiquitously in the model theoretic analysis of mathematical structures: disintegrated (set-like), locally modular (linear space-like), and field-like.

Zilber conjectured \cite{ZilberConjecture} that among the strongly minimal theories, those theories that are most model theoretically tame, these are the only types of pregeometries that arise. Hrushovski \cite{Hns} showed that the conjecture is false by producing a strongly minimal theory which was non-disintegrated yet interpreted no algebraic structures. Instead, his new strongly minimal theory is inherently combinatorial in nature. This was seen as the end for the hope of an orderly classification of all strongly minimal theories in terms of simple, understandable pregeometries. In this paper, we counter this position, giving an analysis of the pregeometry Hrushovski constructed, showing that it is in fact tame.

Where we can understand strongly minimal sets in terms of their pregeometries, many fruitful applications of model theory have been found. In many restricted cases -- see for example \cite{Rabinovich,StronglyMinimalReductsOfValuedFields, HrushovskiZilber-Zariskigeometries} -- the suggested trichotomy does in fact hold: Any strongly minimal subset of a tame enough structure must have one of these three sorts of pregeometries. This is fertile ground for interaction of model theory with other fields such as algebraic geometry, differentially closed fields, valuation theory, and many more -- see for example \cite{HrushovskiZilber-Zariskigeometries,Hrushovski-MordellLang,Zilber-ACurveAndItsAbstractJacobian}. In an attempt to salvage the trichotomy in a large class of cases, Hrushovski and Zilber \cite{HrushovskiZilber-Zariskigeometries} showed that the trichotomy holds for all Zariski structures. We hope that with our analysis of the pregeometries of structures arising from Hrushovski constructions, we can again pursue a general theory.

\vspace{8.4pt}
While the stated motivation is model theoretic, the paper reads as a study of \emph{flat} pregeometries (known as \emph{strict gammoids} or \emph{cotransversal matroids} to matroid theorists) with the intended purpose of constructing generic objects. Pregeometries and hypergraphs are the stars of the show, with model theory confined almost exclusively to subsection \ref{subsection: Model theory of G}.

To discuss the results of the paper, we must briefly survey Hrushovski's construction. For a finite hypergraph $\hg{A} = (M,R)$ ($R$ can be any set of finite subsets of $M$), we define the \emph{predimension} $\delta(\hg{A}) = |M| - |R|$ to be the difference between the number of vertices of $\hg{A}$ and the number of edges of $\hg{A}$. We write $\hg{A}\strong \hg{B}$ if there is no finite intermediate $\hg{A} \subseteq \hg{D}\subseteq \hg{B}$ such that $\delta(\hg{D})<\delta(\hg{A})$. In the context of a hypergraph $\hg{A} =(M,R)$, a finite subset $X\subseteq \hg{A}$ is deemed independent if $X\strong \hg{A}$ and there are no edges contained in $X$. In this way, we associate to each hypergraph a pregeometry $G_{\hg{A}}$, which the hypergraph $\hg{A}$ is said to \emph{represent}. These associated pregeometries were proven to have the property of \emph{flatness} by Hrushovski \cite{Hns}, which is key in showing Hrushovski's strongly minimal construction interprets no algebraic structures. The class $\mathcal{C}$ of all finite hypergraphs such that $\emptyset\strong \hg{A}$ is an amalgamation class under the notion of embedding $\strong$, and so has a unique countable generic structure $\mathcal{M}$. This is Hrushovski's (non-collapsed\footnote{Evans and Ferreira \cite{DavidMarcoTwo} showed that, under minor assumptions on the multiplicity function $\mu$, the pregeometry associated to Hrushovski's strongly minimal construction is identical to the one associated to the non-collapsed construction, which is easier to work with.}) construction for hypergraphs of unbounded arity. 

The pregeometry $G_{\mathcal{M}}$, denoted $\pg{G}$, is our main object of study in this paper. To allow detailed analysis of $\pg{G}$, we build it as the generic structure for $\mathfrak{C}$, the class of all finite flat pregeometries. We achieve this by following the Hrushovski construction outline: an amalgamation class with respect to a distinguished notion of embedding, paralleling the procedure for hypergraphs. However, with the amalgamation class of flat pregeometries lacking a predimension function or unique self-sufficient closures, the expected model theoretic properties cannot be automatically deduced by standard methods (e.g. Baldwin-Shi \cite{BaldwinShi}). We must first develop a deeper understanding of flat pregeometries.

\smallskip
In the first part of the paper, we introduce flatness and its accompanying notions $\sqsubseteq^*$, $\sqsubseteq$, $\prerank$ --- the purely geometric analogues of $\subseteq$, $\strong$, $\delta$, in the context of the amalgamation construction. Here we take a moment to clarify why standard methods fail us: Though the language we use for pregeometries is relational, not every set is embedded in a way that makes it into a ``substructure'' ($\sqsubseteq^*$-embedded), nor does the set generate a surrounding $\sqsubseteq^*$-embedded substructure in a unique way. As a result, the intersection of $\sqsubseteq^*$-embedded substructures is not necessarily $\sqsubseteq^*$-embedded --- pulling the rug from under submodularity, a critical property of $\delta$. In addition, unlike $\delta$ which takes integer values, $\prerank$ takes ordinal values $\leq \omega^{\omega+1}$, further complicating matters.

The contents of Section \ref{section: flatness} are the (strictly matroid theoretic) proofs that $\sqsubseteq^*$, $\sqsubseteq$, $\prerank$ are indeed adequate analogues, showing in Corollary \ref{corollary: sqsubseteq equivalence} that $H\sqsubseteq G$ is equivalent to $\prerank(H)$ being minimal among $H'\sqsubseteq^* G$ containing $H$, and to the existence of hypergraphs $\hg{A}\strong \hg{B}$ such that $G_{\hg{A}} = H$, $G_{\hg{B}} = G$. To the best of the author's knowledge, the notion of embedding $\sqsubseteq$ and the function $\prerank$ (which builds on the matroid theoretic $\alpha$-function) are new, yet intimately relevant to the study of strict gammoids. Combined with the Enumerative Construction (Definition \ref{definition: enumerative construction}) --- an explicit construction of a hypergraph representation of an arbitrary\footnote{Currently, the existence of a hypergraph representation is known only for \emph{finite} strict gammoids, via an application of Hall's Marriage Theorem, making the resulting hypergraph more opaque than using the Enumerative Construction. See \cite{EvansMatroid}.} strict gammoid --- these tools may be of independent interest to matroid theory. In subsection \ref{subsection: construction of G}, we use the characterization of $\sqsubseteq$ we in terms of $\strong$ to show that $(\mathfrak{C},\sqsubseteq)$ is an amalgamation class and that its generic structure is isomorphic to $\pg{G}$.

The construction echoes previous works by Evans \cite[Section 5]{EvansMatroid}, in which the author explores geometric characterizations of $\strong$ in the finite case, and Evans and Ferreira \cite[Section 6]{DavidMarcoOne}, in which the authors prove $\pg{G}$ has a weak form of genericity by applying a forgetful functor to the construction of $\mathcal{M}$. However, there, inability to depart from the hypergraph scaffolding leaves $\pg{G}$ impervious to further analysis. Using the technology developed in this paper to overhaul Evans and Ferreira's attempt yields what they were after \cite[Section 6, Problem]{DavidMarcoOne}, and opens $\pg{G}$ up to the model theoretic analysis they did not have the means to conduct.

\smallskip
In the second part of the paper, in subsection \ref{subsection: Model theory of G}, we use our detailed construction and characterization of $\pg{G}$ to analyze it model theoretically: we show it is saturated, we axiomatize its theory, and we show that its theory is $\omega$-stable and has quantifier elimination down to boolean combinations of a specified set of $\exists\forall$-formulas.

Finally, we develop a geometrical definition of arity in the flat setting. Having a thorough analysis of $\pg{G}$, we consider the pregeometries of the structures built by the bounded-arity Hrushovski constructions, as studied by Evans and Ferreira \cite{DavidMarcoOne}. Let $\mathcal{M}_n$ be Hrushovski's construction for hypergraphs of arity up to $n$. Let $\pg{G}_{n}$ be its pregeometry. Evans and Ferreira showed that $\hg{G}_n \not\cong \hg{G}_m$ whenever $n\neq m$. Moreover, they showed that even after localizing at finite sets, $\hg{G}_n$ and $\hg{G}_m$ remain non-isomorphic, seemingly demonstrating that there are $\omega$ many fundamentally different pregeometries arising from Hrushovski constructions. In contrast, we show that there are natural elementary embeddings: $\pg{G}_{3}\prec \pg{G}_{4}\prec \cdots \prec \pg{G}$. We conclude that not only is the theory of $\pg{G}$ $\omega$-stable, but it is the theory of all of the pregeometries of the canonical Hrushovski constructions, differing only by their level of saturation.

\section{Flatness}
\label{section: flatness}
\subsection{Pregeometry}
\label{subsection: pregeometry}

Write $A\subfin B$ to say that $A$ is a finite subset of $B$.

\begin{definition}
\label{definition: combinatorial pregeometry}
	A \emph{combinatorial pregeometry} $G$ is a set $X$ with a dimension function $\dm:\Fin(X)\to \Nats$ such that
	\begin{enumerate}
		\item
		$\dm(\emptyset) = 0$
		\item
		$\dm(A)\leq \dm(Ax)\leq \dm(A)+1$
		\item
		$\dm(A\cup B) + \dm(A\cap B) \leq \dm(A) + \dm(B)$
	\end{enumerate}
\end{definition}

Say that $A\subfin X$ is \emph{independent} if $\dm(A) = |A|$. Say that $Y\subseteq X$ is independent if every finite subset of $Y$ is independent. For $A\subfin X$ define $\cl(A) = \setarg{x\in X}{\dm(Ax) = \dm(A)}$. For $Y\subseteq X$ define $\cl(Y) = \bigcup_{Y_0\subfin Y} \cl(Y_0)$. For $Z\subseteq Y\subseteq X$, say that $Z$ is a basis for $Y$ if $Z$ is independent and $Y\subseteq\cl(Z)$. Say that $Y\subseteq X$ is closed if $\cl(Y) = Y$. Observe that an arbitrary intersection of closed sets is closed. We may extend $\dm$ to infinite subsets by taking $\dm(Y) = \sup\setcol{|Y_0|}{Y_0\subseteq Y\text{ is independent}}$, this definition coincides with $\dm$ on finite sets.

We interchangeably think of a pregeometry on a set $X$ as:
\begin{enumerate}
\item
A closure operator $\cl:P(X)\to P(X)$;
\item
A dimension function $\dm:P(X)\to \mathit{Card}$;
\item
A first order structure with relations $\setarg{I_n}{n\in \Nats}$ where $I_n\subseteq X^n$ is the set of independent $n$-tuples.
\item
A first order structure with relations $\setarg{D_n}{n\in \Nats}$ where $D_n\subseteq X^n$ is the set of dependent $n$-tuples.
\end{enumerate}

A subpregeometry $H\subseteq G$ is a substructure of a first order representation of $G$, and it is itself the pregeometry gotten by restricting the dimension function of $G$ to subsets of $H$. When no confusion arises, we may omit distinction between subsets and subpregeometries.

By convention, if several pregeometries $H,G,\dots$ are discussed simultaneously, we differentiate their dimension functions and closure operators with a subscript, i.e., $d_H, d_G,\cl_H, \cl_G$, etc. If all dimension functions in discussion are restrictions of some ambient dimension $\dm$, we omit the subscripts.

\subsection{Definition of flatness}

\begin{notation}
Let $G$ be a pregeometry and let $\Sigma = \set{E_1,\dots,E_k}$ be some ambient collection of closed subsets of $G$. For each non-empty set of indices $s\subseteq[k]$ we denote $E_s = \bigcap_{i\in s} E_i$, and for $s=\emptyset$ we denote $E_{\emptyset} = \bigcup_{i=1}^k E_i$. Denote also
\[
\flatsum{G}(\Sigma) = \sum_{\emptyset\neq s\subseteq [k]} (-1)^{|s|+1}\dm_G(E_s)
\]
\end{notation}

The alternating sum $\flatsum{G}(\Sigma)$ is the inclusion-exclusion principle, where the dimension function $\dm_G$ replaces cardinality. Like in inclusion-exclusion, the alternating sum should be thought of as reconstructing the dimension of $E_\emptyset$, the union of the sets $E_i$, based on the sum of the information found within each individual $E_i$. With this intuition in mind, it should make no difference whether we add to $\Sigma$ closed subsets of $E_i$, as we are adding no new information.

\begin{observation}
\label{observation: inclusion in flatsum}
In the context of a pregeometry $G$ and $\Sigma = \set{E_1,\dots, E_k}$, if there are some $i,j\leq k$ distinct with $E_i\subseteq E_j$, then $\flatsum{G}(\Sigma\setminus \set{E_i}) = \flatsum{G}(\Sigma)$.
\end{observation}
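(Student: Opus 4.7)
The plan is to show that the difference $\flatsum{G}(\Sigma) - \flatsum{G}(\Sigma \setminus \{E_i\})$ vanishes by pairing up its terms so that they cancel pairwise. Splitting the sum defining $\flatsum{G}(\Sigma)$ according to whether a subset $s \subseteq [k]$ contains the index $i$, the terms with $i \notin s$ are exactly the terms making up $\flatsum{G}(\Sigma\setminus\{E_i\})$. Writing each $s$ with $i \in s$ as $s' \cup \{i\}$ for $s' \subseteq [k]\setminus\{i\}$, the difference can be rewritten as
\[
\flatsum{G}(\Sigma) - \flatsum{G}(\Sigma \setminus \{E_i\}) = \sum_{s' \subseteq [k]\setminus\{i\}} (-1)^{|s'|}\, \dm_G(E_{s'\cup\{i\}}).
\]

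The key observation is that the hypothesis $E_i \subseteq E_j$ gives $E_i \cap E_j = E_i$, and hence
\[
E_{s'\cup\{i\}} = E_{s'\cup\{i,j\}}
\]
for every $s' \subseteq [k]\setminus\{i\}$, because adjoining $E_j$ to an intersection already containing $E_i$ does not change the intersection.

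I then partition the subsets $s' \subseteq [k]\setminus\{i\}$ into pairs based on whether $j \in s'$: each $s'$ with $j \notin s'$ is paired with $s' \cup \{j\}$. By the observation above, both members of such a pair contribute the same value $\dm_G(E_{s'\cup\{i\}})$, while their cardinalities differ by one, so the corresponding signs $(-1)^{|s'|}$ and $(-1)^{|s'|+1}$ are opposite. Hence each pair cancels and the total sum is zero, yielding the claimed equality.

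This is essentially a clean inclusion–exclusion cancellation, so I anticipate no genuine obstacle. The only point requiring care is the bookkeeping to verify that the pairing exhausts all $s' \subseteq [k]\setminus\{i\}$ exactly once, and that the edge case $s' = \emptyset$ (giving the term $(-1)^0 \dm_G(E_i)$, paired with $s' = \{j\}$ giving $(-1)^1 \dm_G(E_{\{i,j\}}) = -\dm_G(E_i)$) is handled correctly, which it is.
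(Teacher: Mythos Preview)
Your proof is correct and follows essentially the same approach as the paper: express the difference $\flatsum{G}(\Sigma) - \flatsum{G}(\Sigma\setminus\{E_i\})$ as a sum over subsets $s' \subseteq [k]\setminus\{i\}$, then use $E_{s'\cup\{i\}} = E_{s'\cup\{i,j\}}$ to pair each $s'$ with $s'\cup\{j\}$ and cancel. Your bookkeeping is in fact more careful than the paper's terse version, correctly including the $s'=\emptyset$ term and tracking the sign $(-1)^{|s'|}$.
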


\begin{proof}
Observe that
\[
\flatsum{G}(\Sigma) - \flatsum{G}(\Sigma\setminus \set{E_i})
= \sum_{\emptyset\neq s\subseteq [k]\setminus\set{i}}(-1)^{|s|+1}\dm(E_{s\cup\set{i}})
\]
Note that $E_{s\cup\set{i,j}} = E_{s\cup\set{i}}$ for every $s\subseteq[k]\setminus\set{i,j}$. Since the dimensions of these sets appear with opposite signs in the sum, they cancel each other out.
\end{proof}

Cardinality is the simplest of dimension functions, but for an arbitrary dimension function there is no reason why $\flatsum{G}(\Sigma)$ should evaluate to the precise dimension of $E_{\emptyset}$\footnote{In fact, inclusion-exclusion always being correct is equivalent to a disintegrated dimension function}. Flatness is the statement that whenever we use an alternating sum to ``guess'' the dimension of a union, we may be overestimating, but never underestimating.

\begin{definition}
\label{def: flatness}
Say that a pregeometry $G$ is \emph{flat} if whenever $\Sigma$ is a finite collection of finite dimensional closed sets in $G$, then $\flatsum{G}(\Sigma) \geq \dm(\bigcup \Sigma)$.
\end{definition}

In the next observation and its corollary we see how adding information to $\Sigma$ changes the estimate $\flatsum{G}(\Sigma)$, given that $G$ is flat.

\begin{observation}
\label{observation: add set to Sigma}
Let $G$ be a flat pregeometry and let $\Sigma = \set{E_1,\dots,E_k}$ be a collection of finite dimensional closed sets in $G$. Let $X$ be a closed finite-dimensional subset of $G$ and denote $\Sigma_X = \setarg{E_i\cap X}{E_i\in\Sigma}$. Observe $\bigcup \Sigma_X = X\cap \bigcup \Sigma$. Then
\begin{align*}
\flatsum{G}(\Sigma\cup\set{X}) - \flatsum{G}(\Sigma) &= \dm_G(X) - \sum_{\emptyset\neq s\subseteq [k]} (-1)^{|s|+1} \dm_G(E_s\cap X)
\\
& = \dm_G(X) - \flatsum{X}(\Sigma_X)
\\
&\leq \dm_G(X) - \dm_G(X\cap \bigcup \Sigma)
\end{align*}
\end{observation}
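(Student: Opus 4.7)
The plan is to verify the two equalities by a direct expansion of the alternating sum, and then to derive the inequality from flatness of $G$ applied to the collection $\Sigma_X$ (all of whose members are closed in $G$).

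\textbf{First equality.} I would index $X$ as $E_{k+1}$ and split the sum defining $\flatsum{G}(\Sigma\cup\set{X})$ according to whether $s\subseteq[k+1]$ contains $k+1$. The subsets $s\subseteq [k]$ with $k+1\notin s$ contribute exactly $\flatsum{G}(\Sigma)$. The singleton $s=\set{k+1}$ contributes $(-1)^{2}\dm_G(X) = \dm_G(X)$. A subset $s = t\cup\set{k+1}$ with $\emptyset\neq t\subseteq[k]$ contributes $(-1)^{|t|+2}\dm_G(E_t\cap X) = -(-1)^{|t|+1}\dm_G(E_t\cap X)$. Summing the three contributions and rearranging yields the first equality.

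\textbf{Second equality.} For each $\emptyset\neq s\subseteq[k]$, the intersection of the members of $\Sigma_X$ indexed by $s$ is
\[
\bigcap_{i\in s}(E_i\cap X) = \Bigl(\bigcap_{i\in s}E_i\Bigr)\cap X = E_s\cap X.
\]
Since $\dm_X$ is just the restriction of $\dm_G$ to subsets of $X$, we have $\dm_X(E_s\cap X) = \dm_G(E_s\cap X)$, so the sum in the first line is exactly $\flatsum{X}(\Sigma_X)$.

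\textbf{Inequality.} Each $E_i\cap X$ is closed in $G$ as the intersection of two closed subsets of $G$, so $\Sigma_X$ is a finite collection of finite-dimensional closed subsets of $G$. Applying flatness of $G$ to $\Sigma_X$ gives
\[
\flatsum{G}(\Sigma_X)\geq \dm_G\Bigl(\bigcup \Sigma_X\Bigr) = \dm_G(X\cap \bigcup\Sigma),
\]
using the identity $\bigcup \Sigma_X = X\cap \bigcup\Sigma$ noted in the statement. By the computation above, $\flatsum{G}(\Sigma_X) = \flatsum{X}(\Sigma_X)$, so
\[
\dm_G(X) - \flatsum{X}(\Sigma_X) \leq \dm_G(X) - \dm_G(X\cap \bigcup\Sigma),
\]
as required.

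The only mild subtlety is the last step: one must note that the members of $\Sigma_X$, while they may fail to be closed when viewed abstractly in $X$ versus in $G$, are at any rate closed in $G$ (which is what flatness of $G$ needs), and that the alternating sums computed in $G$ and in $X$ agree because $\dm_X$ is the restriction of $\dm_G$. Beyond this bookkeeping, the proof is a direct manipulation of the inclusion-exclusion sums and a single application of flatness.
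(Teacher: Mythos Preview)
Your proof is correct and is exactly the natural expansion the paper leaves implicit; the paper treats this as a self-evident observation and gives no proof beyond the displayed computation itself. One minor remark: your caveat that the $E_i\cap X$ ``may fail to be closed when viewed abstractly in $X$'' is unnecessary, since $X$ is closed in $G$ and hence any $G$-closed subset of $X$ is also $X$-closed; but this does not affect the argument.
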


\begin{cor}
\label{corollary: adding set to flatsum that is already contained in union}
If $G$ is flat, $\Sigma=\set{E_1,\dots,E_k}$ a collection of finite dimensional closed sets in $G$, and $X= \cl_G(Y)$ for some $Y\subseteq \bigcup \Sigma$, then $\flatsum{G}(\Sigma\cup\set{X})\leq \flatsum{G}(\Sigma)$.

Moreover, the inequality is strict if and only if $\dm_G(X) \neq \flatsum{X}(\Sigma_X)$, where $\Sigma_X = \setarg{E_i\cap X}{E_i\in\Sigma}$.
\end{cor}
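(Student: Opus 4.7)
The plan is to read off the conclusion almost directly from Observation \ref{observation: add set to Sigma} once we pin down that $X$ and $X\cap\bigcup\Sigma$ have the same dimension.

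First I would verify that $\dm_G(X)=\dm_G(X\cap\bigcup\Sigma)$. Since $Y\subseteq \bigcup\Sigma$ and also $Y\subseteq\cl_G(Y)=X$, we have $Y\subseteq X\cap\bigcup\Sigma$, so $X=\cl_G(Y)\subseteq \cl_G(X\cap\bigcup\Sigma)$. The reverse inclusion $\cl_G(X\cap\bigcup\Sigma)\subseteq X$ is immediate because $X$ is closed and contains $X\cap\bigcup\Sigma$. Hence $\cl_G(X\cap\bigcup\Sigma)=X$ and the two sets share the same dimension.

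Next, plugging this equality into the third line of Observation \ref{observation: add set to Sigma} gives
\[
\flatsum{G}(\Sigma\cup\set{X})-\flatsum{G}(\Sigma)\leq \dm_G(X)-\dm_G(X\cap \bigcup\Sigma)=0,
\]
which is the main assertion. For the ``moreover'' clause I would use instead the equality (not inequality) in Observation \ref{observation: add set to Sigma}:
\[
\flatsum{G}(\Sigma\cup\set{X})-\flatsum{G}(\Sigma)=\dm_G(X)-\flatsum{X}(\Sigma_X).
\]
So strict inequality is equivalent to $\dm_G(X)<\flatsum{X}(\Sigma_X)$. To upgrade $\neq$ to $<$, I would invoke flatness again, now inside $X$: the closed subpregeometry $X$ of $G$ is itself flat (closed sets in $X$ are closed in $G$ with the same dimension, so Definition \ref{def: flatness} transfers), and the elements of $\Sigma_X$ are finite-dimensional closed subsets of $X$. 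Hence $\flatsum{X}(\Sigma_X)\geq \dm_X(\bigcup\Sigma_X)=\dm_G(X\cap \bigcup\Sigma)=\dm_G(X)$, so we always have $\dm_G(X)\leq \flatsum{X}(\Sigma_X)$, and equality holds iff $\dm_G(X)=\flatsum{X}(\Sigma_X)$.

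The only non-mechanical point is the observation that closure of $Y$ forces $X$ and $X\cap\bigcup\Sigma$ to be closure-equivalent; everything else is bookkeeping with the identity supplied by the previous observation. I do not expect a real obstacle here — this corollary is essentially repackaging Observation \ref{observation: add set to Sigma} under the extra hypothesis $Y\subseteq \bigcup\Sigma$.
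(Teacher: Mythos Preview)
Your proof is correct and follows the paper's intended route: the corollary is stated immediately after Observation~\ref{observation: add set to Sigma} without a separate proof, precisely because it is the specialization of that observation to the case $\dm_G(X)=\dm_G(X\cap\bigcup\Sigma)$, which you verify cleanly. One minor remark: your separate argument that $X$ is flat to obtain $\dm_G(X)\leq\flatsum{X}(\Sigma_X)$ is not needed, since this inequality already follows from combining the equality line of the observation with the main inequality you just proved; but this is harmless redundancy, not an error.
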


\begin{notation}
Let $G$ be a pregeometry and let $\Sigma$ be a finite collection of finite dimensional closed sets in $G$. As above, we will often be interested in the dimensional information the closed sets of $\Sigma$ capture of some set other than $G$. We denote the relativization of $\Sigma$ with a subscript. If a subscript is already present, i.e.,  $\Sigma_G$ instead of $\Sigma$, replace it.
\\
For $X\subseteq G$ a subpregeometry of $G$, denote
\[
\Sigma_X = \setarg{E\cap X}{E\in\Sigma}.
\]
For $H\supset G$ a pregeometry containing $G$, denote
\[
\Sigma_H = \setarg{\cl_H(E)}{E\in\Sigma}.
\]
\end{notation}

\subsection{Distinguished embeddings}

In observation \ref{observation: add set to Sigma}, $\Sigma_X$ is the restriction of the elements of $\Sigma$ to $X$. For an arbitrary $X$, the value of $\flatsum{X}(\Sigma_X)$ may differ from $\flatsum{G}(\Sigma)$. One obvious reason is that intersecting $\bigcup\Sigma$ with a smaller set may result in a drop in dimension. Bar that, a subtler possibility is that the dimension of intersections between the elements of $\Sigma$ is not witnessed in full in $X$.

Given our intuition regarding inclusion-exclusion, if $\dm(\bigcup \Sigma_X) = \dm(\bigcup \Sigma)$, we should have $\flatsum{X}(\Sigma_X) \geq \flatsum{G}(\Sigma)$, as the restrictions to $X$ hold less information than the unrestricted sets in $G$. However, this non-witnessing of intersections may result in $\flatsum{X}(\Sigma_X)$ being strictly smaller than $\flatsum{G}(\Sigma)$. Indeed, this would imply that the pregeometry on $X$ is displaying non-flat behavior This motivates the next definition.

\begin{definition}
For pregeometries $H\subseteq G$ write $H\sqsubseteq^* G$ if whenever $X_1,X_2\subseteq H$ are closed in $H$ then
\[
\dm_H(X_1\cap X_2) = \dm_G(\cl_G(X_1)\cap \cl_G(X_2)).
\]
\end{definition}

\begin{observation}
The relation $\sqsubseteq^*$ is transitive.
\end{observation}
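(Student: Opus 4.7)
The plan is to chain the two instances of $\sqsubseteq^*$ together, with the only substantive step being the verification that iterating closures $\cl_G\circ\cl_K = \cl_G$ on subsets of $K$. Suppose $H\sqsubseteq^* K$ and $K\sqsubseteq^* G$, and let $X_1,X_2\subseteq H$ be closed in $H$. I want to conclude
\[
\dm_H(X_1\cap X_2)= \dm_G(\cl_G(X_1)\cap \cl_G(X_2)).
\]

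First, apply $H\sqsubseteq^* K$ directly to $X_1,X_2$ to get $\dm_H(X_1\cap X_2) = \dm_K(\cl_K(X_1)\cap \cl_K(X_2))$. Now set $Y_i = \cl_K(X_i)\subseteq K$; these are closed in $K$ by construction, so applying $K\sqsubseteq^* G$ to $Y_1,Y_2$ yields $\dm_K(Y_1\cap Y_2) = \dm_G(\cl_G(Y_1)\cap \cl_G(Y_2))$. It remains to identify $\cl_G(\cl_K(X_i))$ with $\cl_G(X_i)$: this follows from the general fact that for a subset $X$ of a subpregeometry $K\subseteq G$, the inclusions $X\subseteq \cl_K(X)\subseteq \cl_G(X)$ together with idempotence of $\cl_G$ force $\cl_G(\cl_K(X)) = \cl_G(X)$. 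Substituting into the chain completes the argument.

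The only conceivable pitfall is wanting $K$ itself to be closed or $\sqsubseteq^*$-embedded in some stronger sense; but the two-set definition of $\sqsubseteq^*$ places no hypothesis on $K$ inside $G$ beyond the defining one, and the closure identity above bypasses that concern. So the proof should be essentially two lines once the closure lemma is noted.
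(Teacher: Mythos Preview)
Your proof is correct. The paper states this as an observation without proof, and your argument is exactly the natural one: chain the two defining equalities via the closed sets $Y_i=\cl_K(X_i)$, using $\cl_G(\cl_K(X_i))=\cl_G(X_i)$ (which holds since $\cl_K(X_i)\subseteq\cl_G(X_i)$ for any subpregeometry $K\subseteq G$).
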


\begin{observation}
Assume $F\sqsubseteq^* G$ and $F\subseteq H\subseteq G$. Then $F\sqsubseteq^* H$.
\end{observation}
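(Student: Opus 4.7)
The plan is to show that the desired dimension equality for $\sqsubseteq^*$ witnessed in $H$ is sandwiched between two quantities that are already equal by the hypothesis $F \sqsubseteq^* G$. Let $X_1, X_2 \subseteq F$ be closed in $F$. I want to show $\dm_F(X_1\cap X_2) = \dm_H(\cl_H(X_1)\cap \cl_H(X_2))$.

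First, I would record the standard fact that closure in a subpregeometry is the ambient closure intersected with the subpregeometry: since $H$ is a subpregeometry of $G$, for $Y \subseteq H$ we have $\cl_H(Y) = \cl_G(Y) \cap H$. Likewise, the dimension functions $\dm_F, \dm_H, \dm_G$ all agree on common subsets of their domains (they are restrictions of a single ambient $\dm$). Applying this to $X_1, X_2$, the intersection of the $H$-closures becomes
\[
\cl_H(X_1)\cap \cl_H(X_2) = (\cl_G(X_1)\cap H)\cap (\cl_G(X_2)\cap H) = \cl_G(X_1)\cap\cl_G(X_2)\cap H.
\]

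Next, since $X_1, X_2 \subseteq F \subseteq H$, the intersection $X_1\cap X_2$ already sits inside $H$ and inside both $G$-closures, giving the chain of inclusions
\[
X_1\cap X_2 \;\subseteq\; \cl_H(X_1)\cap \cl_H(X_2) \;\subseteq\; \cl_G(X_1)\cap\cl_G(X_2).
\]
Monotonicity of $\dm$ then yields
\[
\dm(X_1\cap X_2)\;\leq\;\dm(\cl_H(X_1)\cap\cl_H(X_2))\;\leq\;\dm(\cl_G(X_1)\cap\cl_G(X_2)).
\]
The hypothesis $F \sqsubseteq^* G$ states precisely that the extreme terms are equal, so the middle term is forced to equal both, which is exactly the required identity $\dm_F(X_1\cap X_2)=\dm_H(\cl_H(X_1)\cap\cl_H(X_2))$.

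There is no real obstacle here; the only thing worth being careful about is the characterization $\cl_H(Y) = \cl_G(Y)\cap H$ for subpregeometries, which is implicit in the paper's definition of a subpregeometry as a substructure whose dimension function is the restriction of the ambient one. Once that is in hand, the argument is just a sandwich via monotonicity.
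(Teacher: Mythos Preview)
Your proof is correct. The paper states this as an observation without providing any proof, so your argument fills in the details; the sandwich via monotonicity using $\cl_H(X_i)=\cl_G(X_i)\cap H$ is exactly the natural verification the paper is implicitly leaving to the reader.
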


\begin{lemma}
\label{lemma: in sqsubseteq^* the intersection and closure commute}
If $H\sqsubseteq^* G$ and $E_1,\dots, E_k$ are closed subsets in $H$, then
\[
\cl_G\left(\bigcap_{i=1}^k E_i\right) = \bigcap_{i=1}^k \cl_G(E_i)
\]

\end{lemma}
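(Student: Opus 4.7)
The plan is to induct on $k$. The forward inclusion $\cl_G(\bigcap_i E_i)\subseteq\bigcap_i\cl_G(E_i)$ is immediate for every $k$: each $\cl_G(E_i)$ is closed in $G$ and contains $\bigcap_i E_i$, hence contains its $G$-closure. The case $k=1$ is trivial, so the real content lies in the base case $k=2$, after which induction is routine.

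For $k=2$, I would apply the definition of $\sqsubseteq^*$ with $X_1=E_1$ and $X_2=E_2$ to get $\dm_H(E_1\cap E_2)=\dm_G(\cl_G(E_1)\cap\cl_G(E_2))$. Since the dimension function on $H$ is the restriction of the one on $G$, the left-hand side equals $\dm_G(E_1\cap E_2)=\dm_G(\cl_G(E_1\cap E_2))$. So the two closed sets $\cl_G(E_1\cap E_2)\subseteq\cl_G(E_1)\cap\cl_G(E_2)$ have equal $G$-dimension.

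The main obstacle is that, for sets of infinite dimension, equal dimension plus containment does not force equality. To circumvent this, I would localize: given $x\in\cl_G(E_1)\cap\cl_G(E_2)$, pick finite $Y_i\subfin E_i$ with $x\in\cl_G(Y_i)$, and set $F_i=\cl_H(Y_i)\subseteq E_i$. Each $F_i$ is a finite-dimensional closed subset of $H$, and the dimension computation above applied now to $F_1, F_2$ reads $\dm_G(\cl_G(F_1\cap F_2))=\dm_G(\cl_G(F_1)\cap\cl_G(F_2))$, with both values finite. For finite-dimensional closed sets in $G$, containment plus equal dimension does force equality (any basis of the smaller set is a maximal independent subset of the larger, hence a basis of it). Since $x\in\cl_G(F_1)\cap\cl_G(F_2)=\cl_G(F_1\cap F_2)$ and $F_1\cap F_2\subseteq E_1\cap E_2$, this places $x$ in $\cl_G(E_1\cap E_2)$, settling the reverse inclusion.

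For the inductive step $k-1\to k$, let $E=\bigcap_{i=1}^{k-1}E_i$, which is closed in $H$ as an intersection of closed sets. The induction hypothesis gives $\cl_G(E)=\bigcap_{i=1}^{k-1}\cl_G(E_i)$, and applying the $k=2$ case to the pair $E, E_k$ yields $\cl_G(E\cap E_k)=\cl_G(E)\cap\cl_G(E_k)=\bigcap_{i=1}^{k}\cl_G(E_i)$, as desired.
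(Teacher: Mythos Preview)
Your argument is correct and follows the same inductive skeleton as the paper: reduce to the case $k=2$, invoke the defining equality of $\sqsubseteq^*$ to match the dimension of $\cl_G(E_1\cap E_2)$ with that of $\cl_G(E_1)\cap\cl_G(E_2)$, and conclude from containment. The paper's own proof stops there, declaring the equality ``evident'' from equal dimension plus containment.

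The one place your write-up genuinely goes beyond the paper is the localization step. You are right that for closed sets of infinite dimension, equal dimension plus containment need \emph{not} force equality (e.g.\ in a free matroid on an infinite set, any cofinite closed subset has the same dimension as the whole). The paper's terse conclusion is strictly only justified when the $E_i$ are finite-dimensional --- which is in fact the only context in which the lemma is later applied (cf.\ Corollary \ref{corollary: sqsubseteq^* plays nicely with sums}). Your reduction to finite-dimensional $F_i=\cl_H(Y_i)$ makes the lemma hold as stated, with no dimension hypothesis on the $E_i$, and costs essentially nothing. So: same approach, but your version closes a small gap the paper leaves implicit.
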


\begin{proof}
We prove by induction on $k$. If $k=1$, then there is nothing to show. Otherwise, by $H\sqsubseteq^* G$ and induction hypothesis,
\[
\dm_H\left(E_1\cap \bigcap_{i=2}^k E_i\right) = \dm_G\left(\cl_G(E_1)\cap \cl_G\left(\bigcap_{i=2}^k E_i\right)\right) = \dm_G\left(\bigcap_{i=1}^k \cl_G\left(E_i\right)\right).
\]
As $\displaystyle{\bigcap_{i=1}^k E_i \subseteq \bigcap_{i=1}^k \cl_G(E_i)}$, the desired equality is evident.
\end{proof}

\begin{cor}
\label{corollary: sqsubseteq^* plays nicely with sums}
Let $H\sqsubseteq^* G$ and let $\Sigma$ be a finite set of finite dimensional closed subsets of $H$. Then
\[
\flatsum{G}(\Sigma_G) = \flatsum{H}(\Sigma)
\]
\end{cor}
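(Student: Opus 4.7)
The plan is to show that each term in the defining sum of $\flatsum{G}(\Sigma_G)$ matches, index by index, the corresponding term in $\flatsum{H}(\Sigma)$, so that the equality follows by summing. Writing $\Sigma = \set{\otn{E}{k}}$, I need to compare, for every non-empty $s \subseteq [k]$, the values $\dm_H(E_s) = \dm_H(\bigcap_{i \in s} E_i)$ with $\dm_G(\bigcap_{i\in s} \cl_G(E_i))$.

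First I would handle the closure side: since each $E_i$ is closed in $H$ and $H \sqsubseteq^* G$, Lemma \ref{lemma: in sqsubseteq^* the intersection and closure commute} applies to the collection $\indexset{E}{s}$ and gives
\[
\bigcap_{i \in s} \cl_G(E_i) = \cl_G\left(\bigcap_{i \in s} E_i\right) = \cl_G(E_s).
\]
Thus the $s$-th term of $\flatsum{G}(\Sigma_G)$ is $(-1)^{|s|+1}\dm_G(\cl_G(E_s))$, which equals $(-1)^{|s|+1}\dm_G(E_s)$ since closure preserves dimension.

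Next I would reconcile with $H$. As $H$ is a subpregeometry of $G$, the dimension function on $H$ is the restriction of $\dm_G$, so $\dm_G(E_s) = \dm_H(E_s)$ (note $E_s \subseteq H$ is finite dimensional because it is contained in each $E_i$, which is finite dimensional by assumption on $\Sigma$). Putting the two equalities together yields
\[
(-1)^{|s|+1}\dm_G\!\left(\bigcap_{i\in s}\cl_G(E_i)\right) = (-1)^{|s|+1}\dm_H(E_s)
\]
for every non-empty $s\subseteq [k]$, and summing over such $s$ gives the claim.

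There is no real obstacle here; the entire content is packaged in the preceding lemma, which lets the closure pass through the intersection, together with the trivial remark that taking a subpregeometry does not change the dimension of a finite-dimensional set. The one thing I would take care to mention explicitly is that $E_s$ is indeed closed in $H$ (as an intersection of closed sets) and finite-dimensional, so that the lemma applies and the dimension comparison is legitimate.
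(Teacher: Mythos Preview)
Your proposal is correct and follows essentially the same approach as the paper: both invoke Lemma~\ref{lemma: in sqsubseteq^* the intersection and closure commute} to identify $\bigcap_{i\in s}\cl_G(E_i)$ with $\cl_G(E_s)$, then match the two sums term by term. You simply spell out the intermediate equalities $\dm_G(\cl_G(E_s))=\dm_G(E_s)=\dm_H(E_s)$ a bit more explicitly than the paper does.
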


\begin{proof}
Denoting $\Sigma = \set{E_1,\dots, E_k}$ and $\Sigma_G = \set{F_1,\dots, F_k}$, where $F_i = \cl_G(E_i)$, Lemma \ref{lemma: in sqsubseteq^* the intersection and closure commute} gives  $\dm_H(E_s) = \dm_G(F_s)$ for every non-empty $s\subseteq[k]$. Therefore, the sum $\flatsum{G}(\Sigma_G)$ is precisely the sum $\flatsum{H}(\Sigma)$.
\end{proof}

\begin{cor}
\label{cor: sqsubseteq^* in flat implies flatness}
If $H\sqsubseteq^* G$ and $G$ is flat, then also $H$ is flat.
\end{cor}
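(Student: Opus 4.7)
The plan is to reduce flatness of $H$ directly to flatness of $G$ via the preceding corollary, with no further combinatorial work required.

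Given a finite collection $\Sigma=\set{E_1,\dots,E_k}$ of finite-dimensional closed subsets of $H$, I want to show $\flatsum{H}(\Sigma) \geq \dm_H(\bigcup \Sigma)$. I will compare to the collection $\Sigma_G=\setarg{\cl_G(E_i)}{i\leq k}$ inside $G$. First, each $\cl_G(E_i)$ is finite-dimensional in $G$ because $\dm_G(\cl_G(E_i))=\dm_G(E_i)=\dm_H(E_i)<\infty$ (the last equality because $H$ is a subpregeometry, so $\dm_H$ is the restriction of $\dm_G$). Hence $\Sigma_G$ is a legal input for the flatness of $G$.

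Now I chain three facts. By Corollary \ref{corollary: sqsubseteq^* plays nicely with sums}, $\flatsum{H}(\Sigma) = \flatsum{G}(\Sigma_G)$. By flatness of $G$ applied to $\Sigma_G$, $\flatsum{G}(\Sigma_G)\geq \dm_G\!\left(\bigcup \Sigma_G\right)$. Finally, since $\bigcup \Sigma_G = \bigcup_{i}\cl_G(E_i) \supseteq \bigcup_{i} E_i = \bigcup \Sigma$, monotonicity of $\dm_G$ gives $\dm_G(\bigcup \Sigma_G)\geq \dm_G(\bigcup \Sigma) = \dm_H(\bigcup \Sigma)$, where the last equality is again because $H$ is a subpregeometry. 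Combining yields
\[
\flatsum{H}(\Sigma) \;=\; \flatsum{G}(\Sigma_G) \;\geq\; \dm_G\!\left(\textstyle\bigcup \Sigma_G\right) \;\geq\; \dm_H\!\left(\textstyle\bigcup \Sigma\right),
\]
which is exactly the flatness inequality for $H$.

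There is no real obstacle here: the hypothesis $H\sqsubseteq^* G$ is used solely through Corollary \ref{corollary: sqsubseteq^* plays nicely with sums} to translate the alternating sum on $H$ into the alternating sum on $G$. The only point worth noting is that one must pass to $\Sigma_G$ (taking closures in $G$) rather than using $\Sigma$ directly inside $G$, since the elements of $\Sigma$ need not be closed in $G$ — but this is precisely what the corollary is set up to handle, and taking the closure only enlarges the union, so the dimension comparison goes the right way.
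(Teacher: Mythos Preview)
Your proof is correct and follows essentially the same approach as the paper: use Corollary~\ref{corollary: sqsubseteq^* plays nicely with sums} to convert $\flatsum{H}(\Sigma)$ to $\flatsum{G}(\Sigma_G)$, apply flatness of $G$, and use monotonicity of dimension along the inclusion $\bigcup\Sigma \subseteq \bigcup\Sigma_G$. The paper's version is simply more terse, recording the chain $\dm(\bigcup\Sigma)\leq \dm(\bigcup\Sigma_G)\leq \flatsum{G}(\Sigma_G)=\flatsum{H}(\Sigma)$ in a single line.
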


\begin{proof}
Let $\Sigma$ be a finite collection of finite dimensional closed sets in $H$. By monotonicity of dimension, flatness of $G$, and $H\sqsubseteq^* G$,
\[
\dm(\bigcup\Sigma) \leq \dm(\bigcup\Sigma_G) \leq \flatsum{G}(\Sigma_G) = \flatsum{H}(\Sigma)
\]
\end{proof}

The next definition is a significant strengthening of $\sqsubseteq^*$ that will later allow amalgamation of flat pregeometries.

\begin{definition}
\label{def: geometric self-sufficiency}
For pregeometries $H\subseteq G$, write $H\sqsubseteq G$ if whenever $\Sigma$ is a finite collection of finite dimensional closed sets in $G$, then
\[
\flatsum{H}(\Sigma_H) \leq \flatsum{G}(\Sigma).
\]
Say that $H$ is \emph{strongly embedded} in $G$.
\end{definition}

\begin{observation}
\label{observation: sqsubseteq implies sqsubseteq^*}
$H\sqsubseteq G \implies H\sqsubseteq^* G$.
\end{observation}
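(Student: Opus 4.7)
The plan is to apply the $\sqsubseteq$ hypothesis to the two-element collection $\Sigma = \set{\cl_G(X_1),\cl_G(X_2)}$ in $G$, given arbitrary finite-dimensional closed $X_1,X_2 \subseteq H$. The goal is to reduce the $\sqsubseteq$-inequality $\flatsum{H}(\Sigma_H) \leq \flatsum{G}(\Sigma)$ to precisely the dimensional equation required by $\sqsubseteq^*$.

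The preliminary step I would carry out first is to observe that $\cl_G(X) \cap H = X$ whenever $X$ is closed in $H$. This follows because $\dm_H$ is, by definition, the restriction of $\dm_G$ to subsets of $H$: for $y\in H$ and $X\subseteq H$, $y\in\cl_H(X)$ iff $\dm_H(X\cup\set{y}) = \dm_H(X)$ iff $\dm_G(X\cup\set{y}) = \dm_G(X)$ iff $y\in\cl_G(X)$, so $\cl_H(X) = \cl_G(X)\cap H$, and when $X$ is closed in $H$ this equals $X$. Applied to $X_1$ and $X_2$, this identifies $\Sigma_H = \set{X_1,X_2}$. Next I would unpack both alternating sums:
\begin{align*}
\flatsum{G}(\Sigma) &= \dm_G(\cl_G(X_1)) + \dm_G(\cl_G(X_2)) - \dm_G(\cl_G(X_1)\cap\cl_G(X_2)), \\
\flatsum{H}(\Sigma_H) &= \dm_H(X_1) + \dm_H(X_2) - \dm_H(X_1\cap X_2).
\end{align*}
Using $\dm_G(\cl_G(X_i)) = \dm_G(X_i) = \dm_H(X_i)$, the first two terms on each side match, so $\flatsum{H}(\Sigma_H)\leq\flatsum{G}(\Sigma)$ is equivalent to $\dm_H(X_1\cap X_2) \geq \dm_G(\cl_G(X_1)\cap\cl_G(X_2))$. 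The reverse inequality is free from monotonicity: $X_1\cap X_2 \subseteq \cl_G(X_1)\cap\cl_G(X_2)$ and $\dm_H(X_1\cap X_2) = \dm_G(X_1\cap X_2)$.

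I do not anticipate any serious obstacle; the whole argument amounts to choosing the right two-element $\Sigma$ and reading off the definition. The only mild subtlety is that $\sqsubseteq$ is stated for \emph{finite-dimensional} closed sets while the definition of $\sqsubseteq^*$ does not impose that restriction. To handle potentially infinite-dimensional closed $X_1,X_2 \subseteq H$, one can exhaust each $X_i$ by closures of finite independent subsets, apply the finite-dimensional case to these approximations, and pass to suprema on both sides of the desired dimension equation.
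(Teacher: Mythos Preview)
Your proof is correct and follows essentially the same argument as the paper's: apply $H\sqsubseteq G$ to the two-element collection $\Sigma = \set{\cl_G(X_1),\cl_G(X_2)}$, cancel the matching terms $\dm(X_i) = \dm(\cl_G(X_i))$, and combine the resulting inequality with the trivial inclusion $X_1\cap X_2 \subseteq \cl_G(X_1)\cap\cl_G(X_2)$. Your remark about the finite-dimensional restriction in the hypothesis of $\sqsubseteq$ and its resolution via exhaustion is a careful point that the paper's own proof leaves implicit.
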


\begin{proof}
Let $X_1,X_2$ be closed subsets of $H$ and denote $Y_i = \cl_G(X_i)$. Note that $Y_i\cap H = X_i$. Then by $H\sqsubseteq G$
\[
\dm(Y_1) + \dm(Y_2) - \dm(Y_1\cap Y_2) \geq \dm(X_1) + \dm(X_2) - \dm(X_1\cap X_2).
\]
By $\dm(Y_i) = \dm(X_i)$ we get $\dm(X_1\cap X_2) \geq \dm(Y_1\cap Y_2)$. The equality is then immediate by $X_1\cap X_2\subseteq Y_1\cap Y_2$.
\end{proof}

\begin{cor}
If $H\sqsubseteq G$ and $G$ is flat, then also $H$ is flat. \qed
\end{cor}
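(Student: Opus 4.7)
The corollary is a one-line consequence of what has just been established, so my plan is simply to chain the two immediately preceding results. By Observation \ref{observation: sqsubseteq implies sqsubseteq^*}, the hypothesis $H\sqsubseteq G$ gives $H\sqsubseteq^* G$. Then Corollary \ref{cor: sqsubseteq^* in flat implies flatness} applied to the pair $H\sqsubseteq^* G$ with $G$ flat directly yields flatness of $H$.

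There is no real obstacle here; the content of the statement has already been fully absorbed into the proofs of the two results it combines. If one wanted to unfold the argument without invoking the intermediate notion, one would observe that for any finite collection $\Sigma$ of finite dimensional closed subsets of $H$, flatness of $G$ applied to $\Sigma_G$ gives $\dm_G(\bigcup\Sigma_G) \leq \flatsum{G}(\Sigma_G)$, and the definition of $H\sqsubseteq G$ gives $\flatsum{G}(\Sigma_G) \geq \flatsum{H}(\Sigma)$; combined with $\dm_H(\bigcup\Sigma) \leq \dm_G(\bigcup\Sigma_G)$ from monotonicity of dimension, this chains to $\dm_H(\bigcup\Sigma) \leq \flatsum{H}(\Sigma)$, as desired. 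But since both ingredients are already in the excerpt, the cleaner writeup is just the two-step invocation.
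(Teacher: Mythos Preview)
Your proposal is correct and matches the paper's approach exactly: the corollary is marked with \qed\ precisely because it is the immediate combination of Observation~\ref{observation: sqsubseteq implies sqsubseteq^*} and Corollary~\ref{cor: sqsubseteq^* in flat implies flatness}. Your unfolded direct argument is also fine; note that applying the definition of $\sqsubseteq$ to $\Sigma_G$ and using $(\Sigma_G)_H = \Sigma$ (since each $E\in\Sigma$ is closed in $H$) is what makes the inequality $\flatsum{H}(\Sigma)\leq\flatsum{G}(\Sigma_G)$ legitimate.
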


\begin{observation}
$\sqsubseteq$ is transitive.
\end{observation}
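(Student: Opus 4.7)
The plan is to unfold the definitions and chain the two given strong embeddings $F\sqsubseteq H$ and $H\sqsubseteq G$. Given a finite collection $\Sigma$ of finite-dimensional closed sets in $G$, I need to show $\flatsum{F}(\Sigma_F)\leq \flatsum{G}(\Sigma)$.

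First I would apply $H\sqsubseteq G$ directly to $\Sigma$ to obtain $\flatsum{H}(\Sigma_H)\leq \flatsum{G}(\Sigma)$, where $\Sigma_H=\set{E\cap H : E\in \Sigma}$. Before proceeding, I would pause to verify that $\Sigma_H$ is a legitimate input for the definition of $\sqsubseteq$ applied to $F\sqsubseteq H$: the sets $E\cap H$ are closed in $H$ (if $x\in\cl_H(E\cap H)$ then $x\in\cl_G(E)\cap H = E\cap H$), and they are finite-dimensional since $\dm_H(E\cap H)\leq \dm_G(E)<\infty$.

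Next, I would apply $F\sqsubseteq H$ to $\Sigma_H$, yielding $\flatsum{F}((\Sigma_H)_F)\leq \flatsum{H}(\Sigma_H)$. The final step is the trivial identification $(\Sigma_H)_F = \Sigma_F$, which follows from $F\subseteq H$: for each $E\in \Sigma$, $(E\cap H)\cap F = E\cap F$. Chaining the two inequalities then gives $\flatsum{F}(\Sigma_F)\leq \flatsum{G}(\Sigma)$, as required.

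I do not expect any genuine obstacle here; the proof is essentially a two-line composition of the defining inequalities. The only mild care needed is in the bookkeeping of the relativization notation $\Sigma_{(\cdot)}$, ensuring that the intermediate collection $\Sigma_H$ is of the right type (closed, finite-dimensional in $H$) to feed into the second application of the definition.
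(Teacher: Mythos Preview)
Your proof is correct and is precisely the intended argument. The paper records this as an observation without proof, and your two-step composition of the defining inequalities (together with the verification that $\Sigma_H$ consists of finite-dimensional closed sets in $H$ and that $(\Sigma_H)_F=\Sigma_F$) is exactly what the reader is expected to supply.
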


\begin{lemma}
\label{lemma: sqsubseteq and sqsubseteq^* interaction}
If $F\sqsubseteq G$ and $F\subseteq H\sqsubseteq^* G$, then $F\sqsubseteq H$.
\end{lemma}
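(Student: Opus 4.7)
The plan is to verify $F \sqsubseteq H$ by factoring the required flatsum inequality through $G$. Fix a finite collection $\Sigma$ of finite dimensional closed sets of $H$; I want to show $\flatsum{F}(\Sigma_F) \leq \flatsum{H}(\Sigma)$. The strategy is to use the ``$G$-inflation'' $\Sigma_G = \setarg{\cl_G(E)}{E\in\Sigma}$ as a bridge between the two sides.

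The first leg is immediate: since $H \sqsubseteq^* G$, Corollary \ref{corollary: sqsubseteq^* plays nicely with sums} gives $\flatsum{H}(\Sigma) = \flatsum{G}(\Sigma_G)$. The second leg applies the hypothesis $F \sqsubseteq G$ to the collection $\Sigma_G$ (a finite family of finite dimensional closed sets of $G$), producing $\flatsum{F}((\Sigma_G)_F) \leq \flatsum{G}(\Sigma_G)$.

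What remains is to identify $(\Sigma_G)_F$ with $\Sigma_F$, i.e.\ to show $\cl_G(E) \cap F = E \cap F$ for each $E \in \Sigma$. This in turn reduces to the general observation that for $E \subseteq H$ closed in $H$, one has $\cl_G(E) \cap H = E$. The point is that this identity has nothing to do with $\sqsubseteq^*$: it is an immediate consequence of $\dm_H$ being $\dm_G$ restricted to $H$, so that for any $x \in H$, $\dm_G(E \cup \set{x}) = \dm_H(E \cup \set{x})$ and $\dm_G(E) = \dm_H(E)$; hence $x \in \cl_G(E)$ if and only if $x \in \cl_H(E) = E$.

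Chaining the above yields $\flatsum{F}(\Sigma_F) = \flatsum{F}((\Sigma_G)_F) \leq \flatsum{G}(\Sigma_G) = \flatsum{H}(\Sigma)$, as required. No genuine obstacle appears; the only pitfall to avoid is conflating closures in $F$, $H$, and $G$, and the argument works precisely because the closure in $G$ of an $H$-closed set can only add elements outside $H$, so intersecting back with $F \subseteq H$ leaves nothing new.
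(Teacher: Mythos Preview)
Your proof is correct and follows essentially the same route as the paper's: both pass through $\Sigma_G$, invoke Corollary~\ref{corollary: sqsubseteq^* plays nicely with sums} for $\flatsum{H}(\Sigma)=\flatsum{G}(\Sigma_G)$, apply $F\sqsubseteq G$ for the inequality, and use $\cl_G(E)\cap F=E\cap F$ to identify $(\Sigma_G)_F$ with $\Sigma_F$. Your write-up is slightly more explicit about why $\cl_G(E)\cap H=E$, but the argument is the same.
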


\begin{proof}
Let $\Sigma$ be a finite set of finite dimensional closed subsets of $H$.
%Let $\Sigma_G = \setcol{\cl_G(E)}{E_H\in \Sigma_H}$. Let $\Sigma_F = \setcol{E_H\cap F}{E_H\in \Sigma_H}$.
Observe that for every $E\in \Sigma$, it holds that $\cl_G(E)\cap F = E\cap F$. Hence, relativizing $\Sigma_G$ to $F$ results in $\Sigma_F$.

By Corollary \ref{corollary: sqsubseteq^* plays nicely with sums}, $\flatsum{G}(\Sigma_G) = \flatsum{H}(\Sigma)$. By $F\sqsubseteq G$, $\flatsum{F}(\Sigma_F)\leq \flatsum{G}(\Sigma_G)$. Therefore, $\flatsum{F}(\Sigma_F) \leq \flatsum{H}(\Sigma)$. We conclude $F\sqsubseteq H$.
\end{proof}

%\ref{I'll leave an error here so I remember to come back and pick this up after I prove leqslant implies sqsubseteq}
%$H\sqsubseteq G$ if and only if $H\sqsubseteq \cl_G(H)$. By transitivity and closed implying $\sqsubseteq$, if $H\sqsubseteq \cl_G(H)$ then $H\sqsubseteq G$. If $H\not\sqsubseteq \cl_G(H)$, then the $\Sigma$ witnessing this is also a witness for $H\not\sqsubseteq G$.

\subsection{Hypergraphs}

A hypergraph $\hg{A}=(M,R)$ is a set of vertices $M$ and a set $R\subseteq [M]^{<\omega}$ of non-empty (hyper)edges. For $P\subseteq M$, we write $R[P]$ for $\setarg{e\in R}{e\subseteq P}$. We write $\hg{B}\subseteq\hg{A}$ if $\hg{B} = (P,R[P])$ for some $P\subseteq M$. We write $\hg{A}[P]$ for the hypergraph $(P,R[P])$ induced on $P$ by $\hg{A}$.

For a finite hypergraph $\hg{A}=(M,R)$ define its \emph{predimension}
\[
\delta(\hg{A}) = |M| - |R|.
\]
For a (possibly infinite) hypergraph $\hg{A}=(M,R)$ define its \emph{associated dimension function} $\dm_{\hg{A}}:\Fin(M)\to\Ints\cup\set{-\infty}$ by
\[
\dm_{\hg{A}}(X) = \inf\setarg{\delta(\hg{B})}{\hg{A}[X] \subseteq \hg{B}\subfin\hg{A}}.
\]
If $\dm_{\hg{A}}$ is non-negative, then it is the dimension function of a pregeometry $G_{\hg{A}}$ on $M$. Call this $G_{\hg{A}}$ the \emph{pregeometry associated to the hypergraph $\hg{A}$}. We say that a set $X$ is closed (independent) in $\hg{A}$ if it is closed (independent) in $G_{\hg{A}}$.
Say that $\hg{A}$ is a \emph{representation} of a pregeometry $G$ if $G_{\hg{A}}=G$. We say that $\hg{A}$ is a \emph{good} representation of $G_{\hg{A}}$ if whenever $e\in R$, then $\dm_{\hg{A}}(e) = |e|-1$.

For $\hg{A} = (M,R)$ and $P\subseteq M$, denote $\delta_{\hg{A}}(P) = \delta(\hg{A}[P])$. For $L\subseteq M$ such that $|L\setminus P| < \infty$ denote $\delta_{\hg{A}}(L/P) = |L\setminus P| - |R[L\cup P]\setminus R[P]|$.
\\
Write $P\strong \hg{A}$ and say that $P$ is \emph{self-sufficient} or \emph{strongly embedded} in $\hg{A}$, if for every finite $X\subseteq M$, $\delta_{\hg{A}}(X/P) \geq 0$. For $\hg{B} = (N,R[N])$ write $\hg{B}\strong \hg{A}$ for $N\strong \hg{A}$. The function $\dm_{\hg{A}}$ being non-negative is equivalent to $\emptyset\strong \hg{A}$. For $P$ finite, $P\strong\hg{A}$ is equivalent to $\dm_{\hg{A}}(P) = \delta(\hg{A}[P])$.

\begin{fact}
\label{fact: self-sufficiency properties}
Let $\hg{A}=(M,R)$. The following are well known and readily follow from the definitions:
\begin{enumerate}
\item
The function $\delta_{\hg{A}}$ is submodular, i.e., $\delta_{\hg{A}}(X/Y) \leq \delta_{\hg{A}}(X/X\cap Y)$ for all $X,Y\subseteq M$.
\item
$\hg{B}\strong \hg{A}$ if and only if $\hg{B}\subseteq \hg{A}$ and $\dm_{\hg{B}}$ is the restriction of $\dm_{\hg{A}}$ to subsets of $\hg{B}$.
\item
The relation $\strong$ is transitive.
\item
If $X$ is closed in $\hg{A}$, then $X\strong \hg{A}$.
\item
If $X,Y\strong \hg{A}$, then $X\cap Y\strong \hg{A}$.
\item
If $X\strong \hg{A}$ and $\delta_{\hg{A}}(Y_i/Y_i\cap X) \leq 0$ for every $i\leq n$, then $X\cup\bigcup_{i=1}^n Y_i\strong \hg{A}$ and the $Y_i$ are freely joined over $X$, i.e., $R[\bigcup_{i=1}^n Y_i\cup X] = R[X]\cup \bigcup_{i=1}^n R[Y_i]$.
\end{enumerate}
\end{fact}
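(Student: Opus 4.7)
The plan is to derive all six items by unraveling the definitions of $\delta_{\hg{A}}$ and $\strong$ in terms of vertex and edge counts, with submodularity (1) as the combinatorial core and (2) as the bridge between self-sufficiency and an intrinsic description of $\dm$. Items (3)--(6) then reduce to short applications of (1) and (2).

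For (1), both $\delta_{\hg{A}}(X/Y)$ and $\delta_{\hg{A}}(X/X\cap Y)$ have vertex count $|X\setminus Y|$, so the inequality reduces to $|R[X]\setminus R[X\cap Y]|\leq|R[X\cup Y]\setminus R[Y]|$; the natural inclusion of edge sets witnesses this, since any $e\in R[X]\setminus R[X\cap Y]$ must meet $X\setminus Y$ and therefore lies in $R[X\cup Y]\setminus R[Y]$. For (2), the forward direction uses (1): given finite $D\subseteq\hg{A}$ with $D\supseteq X$, write $\delta_{\hg{A}}(D)=\delta_{\hg{A}}(D\cap B)+\delta_{\hg{A}}(D/D\cap B)$ and apply submodularity together with $\hg{B}\strong\hg{A}$ to get $\delta_{\hg{A}}(D/D\cap B)\geq\delta_{\hg{A}}(D/B)\geq 0$, yielding $\dm_{\hg{A}}(X)\geq\dm_{\hg{B}}(X)$ after taking infima; the converse requires localizing a putative counterexample to a finite subset of $\hg{B}$ that captures all relevant edges.

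With (1) and (2) established, the remaining items fall out quickly. For (3), composition of dimension restrictions is immediate. For (4), every finite $X_0\subseteq X$ admits a finite witness $X_1\subseteq X$ realizing $\dm_{\hg{A}}(X_0)$, since any $y\in X_1\setminus X$ would contradict $y\notin\cl_{\hg{A}}(X_0)$; hence $\dm_{\hg{A}[X]}=\dm_{\hg{A}}\restrictedto X$ and (2) delivers $X\strong\hg{A}$. For (5), two applications of (1) give $\delta_{\hg{A}}(Z/X\cap Y)\geq 0$ for any finite $Z$. For (6), I would induct on $n$, at each step invoking the sandwich $0\leq\delta_{\hg{A}}(Y_i/Z_{i-1})\leq\delta_{\hg{A}}(Y_i/Y_i\cap Z_{i-1})\leq\delta_{\hg{A}}(Y_i/Y_i\cap X)\leq 0$, with $Z_{i-1}=X\cup Y_1\cup\cdots\cup Y_{i-1}$ assumed self-sufficient; this forces equalities in submodularity and yields both $Z_i\strong\hg{A}$ and the free-join identity $R[Z_i]=R[X]\cup\bigcup_{j\leq i}R[Y_j]$.

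The main obstacles I anticipate are (a) the converse direction of (2) when $\hg{B}$ is infinite, where $\strong$ concerns arbitrary finite sets in $\hg{A}$ while the hypothesis yields only intrinsic data inside $\hg{B}$, necessitating the finite localization step; and (b) in (6), the rightmost inequality of the sandwich, which uses the monotonicity $\delta_{\hg{A}}(A/B')\geq\delta_{\hg{A}}(A/B)$ for $B'\subseteq B$ (itself a submodularity consequence) together with $Y_i\cap X\subseteq Y_i\cap Z_{i-1}$ to transfer the hypothesis on $Y_i$ relative to $X$ into the inductive step.
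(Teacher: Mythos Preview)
The paper does not prove this Fact; it merely records it as well known. Your outline is the standard one and is correct for items (1)--(5), including the localization idea for the converse of (2).

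There is, however, a genuine error in your justification for (6). The ``monotonicity'' you invoke, $\delta_{\hg{A}}(A/B')\geq\delta_{\hg{A}}(A/B)$ for $B'\subseteq B$, is false in general and is \emph{not} a consequence of submodularity. Counterexample: take $M=\{a_1,a_2,c\}$, $R=\{\{a_1\},\{a_2\},\{a_1,a_2\}\}$, $A=M$, $B=\{a_1,a_2\}$, $B'=\emptyset$; then $\delta(A/B')=0<1=\delta(A/B)$. So the rightmost inequality in your sandwich, $\delta(Y_i/Y_i\cap Z_{i-1})\leq\delta(Y_i/Y_i\cap X)$, cannot be obtained from monotonicity alone.

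That inequality is nonetheless true in context, but its proof uses $X\strong\hg{A}$. Since $Y_i\cap X\subseteq Y_i\cap Z_{i-1}\subseteq Y_i$, the chain rule gives
\[
\delta(Y_i/Y_i\cap X)-\delta(Y_i/Y_i\cap Z_{i-1})=\delta(Y_i\cap Z_{i-1}/Y_i\cap X).
\]
Now $(Y_i\cap Z_{i-1})\cap X=Y_i\cap X$, so submodularity yields $\delta(Y_i\cap Z_{i-1}/Y_i\cap X)\geq\delta(Y_i\cap Z_{i-1}/X)$, and the latter is $\geq 0$ because $X\strong\hg{A}$. With this correction your sandwich closes and the rest of your inductive argument for (6) goes through unchanged.
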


From (5) of Fact \ref{fact: self-sufficiency properties}, we get that every subset $X$ has a \emph{self-sufficient closure} in $\hg{A}$ given by $\bigcap\setcol{Y\supseteq X}{Y\strong \hg{A}}$, which is a non-empty intersection by $\hg{A}\strong \hg{A}$. If $X$ is finite and $\dm_{\hg{A}}$ is bounded from below, then the self-sufficient closure of $X$ is also finite. Denote the self-sufficient closure of the set $X$ in $\hg{A}$ by $\sscl_{\hg{A}}(X)$.

\bigskip
Hrushovski showed that the pregeometry associated to a hypergraph is flat. As we need a slightly stronger statement (the additional part of the proposition below), and as the original proof contains an imprecision\footnote{The last line of the original proof\cite{Hns} implicitly assumes $P_{\emptyset} = \mathbb{P}$ (in the original notation, ${\bigcup_i G_i = G_{\emptyset}}$), which need not be true.} despite being morally correct, we bring the proof in full.
\begin{prop}[{\hspace{-0.001pt}\cite[Lemma 15]{Hns}}]
\label{prop: flatness}
Let $\hg{A}=(M,R)$ be a hypergraph with $\emptyset\strong \hg{A}$ and associated pregeometry $G$. Let $\Sigma =\set{E_1,\dots,E_k}$ be a set of finite dimensional closed sets in $G$. Then
\[
\flatsum{G}(\Sigma) \geq \dm_G(E_{\emptyset}).
\]
Additionally, equality holds if and only if $E_{\emptyset} \strong \hg{A}$ and $R[\bigcup_{i=1}^k E_i] = \bigcup_{i=1}^k R[E_i]$.
\end{prop}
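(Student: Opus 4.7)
The approach is to reduce the statement to the case of finite $E_i$, where an inclusion–exclusion argument on vertices and edges does the job. Suppose first that each $E_i$ is finite. Each $E_i$ is closed and intersections of closed sets are closed, so every $E_s = \bigcap_{i\in s}E_i$ is closed and hence self-sufficient in $\hg{A}$ by Fact~\ref{fact: self-sufficiency properties}(4); thus $\dm_G(E_s) = \delta(\hg{A}[E_s]) = |E_s|-|R[E_s]|$. Substituting into $\flatsum{G}(\Sigma)$ splits it into a vertex piece and an edge piece. The vertex piece is ordinary inclusion–exclusion: $\sum_{s\neq\emptyset}(-1)^{|s|+1}|E_s| = |E_\emptyset|$. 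For the edge piece, setting $T(e):=\setarg{i}{e\subseteq E_i}$ and swapping the order of summation, the inner sum $\sum_{\emptyset\neq s\subseteq T(e)}(-1)^{|s|+1}$ equals $1$ when $T(e)\neq\emptyset$ and $0$ otherwise, hence $\sum_{s\neq\emptyset}(-1)^{|s|+1}|R[E_s]| = |\bigcup_i R[E_i]|$. Combined,
\[
\flatsum{G}(\Sigma) = |E_\emptyset| - \bigl|\bigcup\nolimits_i R[E_i]\bigr| \geq |E_\emptyset| - |R[E_\emptyset]| = \delta(\hg{A}[E_\emptyset]) \geq \dm_G(E_\emptyset),
\]
with the first inequality tight iff $R[E_\emptyset] = \bigcup_i R[E_i]$ and the second tight iff $E_\emptyset\strong\hg{A}$.

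For arbitrary closed $E_i$ of finite dimension, I reduce to the finite case by restricting to a finite self-sufficient subhypergraph. For each $s\subseteq[k]$ (including $\emptyset$) choose a finite basis $B_s$ of $E_s$; let $X=\bigcup_{s\subseteq[k]}B_s$ and $\hat X = \sscl_{\hg{A}}(X)$, both finite since $\emptyset\strong\hg{A}$. Set $\hg{B} := \hg{A}[\hat X]$; by Fact~\ref{fact: self-sufficiency properties}(2), $G_{\hg{B}}$ is the restriction of $G$ to $\hat X$. Put $F_i := E_i\cap\hat X$; each $F_i$ is finite, and closed in $G_{\hg{B}}$ because $\cl_{G_{\hg{B}}}(F_i) = \cl_G(F_i)\cap\hat X \subseteq E_i\cap\hat X = F_i$. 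Intersections commute with restriction to $\hat X$, so $\bigcap_{i\in s}F_i = E_s\cap\hat X \supseteq B_s$ and $\bigcup_i F_i = E_\emptyset\cap\hat X \supseteq B_\emptyset$, from which $\dm_{G_{\hg{B}}}(\bigcap_{i\in s}F_i) = \dm_G(E_s)$ for every $s$. Hence $\flatsum{G}(\Sigma) = \flatsum{G_{\hg{B}}}(\{F_1,\ldots,F_k\})$, and the finite case applied to $\hg{B}$ delivers the inequality.

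For the equality, the reverse direction is direct: $E_\emptyset\strong\hg{A}$ gives $F_\emptyset = E_\emptyset\cap\hat X\strong\hg{A}$ by Fact~\ref{fact: self-sufficiency properties}(5), so $F_\emptyset\strong\hg{B}$; and $R[E_\emptyset]=\bigcup_i R[E_i]$ gives that any edge in $F_\emptyset\subseteq E_\emptyset$ lies in some $E_i$ and hence in $F_i$ (its vertices being in $\hat X$), so $R_{\hg{B}}[F_\emptyset] = \bigcup_i R_{\hg{B}}[F_i]$. For the forward direction I argue by contrapositive, enlarging $X$ so that any failure in $\hg{A}$ becomes visible in $\hg{B}$. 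If $e\in R[E_\emptyset]\setminus\bigcup_i R[E_i]$, add the vertices of $e$ to $X$; then $e\in R_{\hg{B}}[F_\emptyset]\setminus\bigcup_i R_{\hg{B}}[F_i]$. If $E_\emptyset\not\strong\hg{A}$, pick a finite $X_0\subseteq M$ and a finite $\mathcal{E}\subseteq R[X_0\cup E_\emptyset]\setminus R[E_\emptyset]$ with $|\mathcal{E}|>|X_0\setminus E_\emptyset|$, and add $X_0\cup\bigcup\mathcal{E}$ to $X$; a short check shows $\mathcal{E}$ continues to witness $F_\emptyset\not\strong\hg{B}$. In either case the finite case forces strict inequality in $\hg{B}$, hence $\flatsum{G}(\Sigma) > \dm_G(E_\emptyset)$.

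I expect the forward direction of the equality to be the main obstacle: this is exactly where the footnote flags the imprecision in Hrushovski's original proof. A failure of $E_\emptyset\strong\hg{A}$ must be made visible inside a restricted finite hypergraph, and for that one must pull every vertex of every witnessing bad edge into $\hat X$; otherwise these edges silently vanish on restriction and strictness is lost.
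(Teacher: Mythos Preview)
Your proof is correct and follows the same core strategy as the paper: pass to a finite self-sufficient piece containing bases for all the $E_s$, then apply inclusion--exclusion to both vertices and edges. The paper does this in one pass (defining $P_s = \sscl_{\hg{A}}(\mathbb{F})\cap E_s$ and computing directly), while you separate out the finite case first and then restrict; the identities reached are identical.

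The only substantive difference is in the forward direction of the equality clause. The paper does not enlarge its finite set: it invokes Fact~\ref{fact: self-sufficiency properties}(6) to show the $E_i$ are freely joined over $P_\emptyset$, which yields $R[E_\emptyset]\setminus\bigcup_i R[E_i] = R[\mathbb{P}]\setminus\bigcup_i R[P_i]$ and $E_\emptyset\strong\hg{A}\iff\mathbb{P}\strong\hg{A}$ in one stroke, so a single choice of $P_\emptyset$ serves for both the inequality and the equality analysis. Your alternative --- absorb an offending edge $e$ or a finite witness $X_0\cup\bigcup\mathcal{E}$ into $X$ before taking the self-sufficient closure --- is more elementary (no free-join argument needed) and is exactly what the footnote's imprecision calls for: making the witness survive restriction. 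Both routes are valid; the paper's is slicker, yours is more hands-on.
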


\begin{proof}
Recall that $E_{\emptyset} = \bigcup_{i=1}^k E_i$ need not be closed. For each $s\subseteq [k]$, let $F_s\subseteq E_s$ be a finite set such that $\cl_G(F_s) \supseteq E_s$ and let $\mathbb{F} = \bigcup_{s\subseteq[k]}F_s$. Let $P_{\emptyset}=\sscl_{\hg{A}}(\mathbb{F})$, for each $\emptyset\neq s\subseteq [k]$ let $P_s = P_{\emptyset} \cap E_s$, and let $\mathbb{P} = \bigcup_{i=1}^k P_{\set{i}} = P_{\emptyset}\cap E_{\emptyset}$. Then for every $s\subseteq [k]$
	
\begin{enumerate}
\item
$P_s\strong \hg{A}$, as an intersection of self-sufficient sets;
\item
$F_s\subseteq P_s\subseteq E_s$, hence $\cl_G(P_s) \supseteq E_s$, implying $\delta_\hg{A}(P_s) = \dm_G(E_s)$ by (1).
\end{enumerate}
We compute, using inclusion-exclusion between the third and fourth lines,
\begin{align*}
\dm_G(E_{\emptyset}) - \flatsum{G}(\Sigma) &= \sum_{s\subseteq [k]} (-1)^{|s|}\dm_G(E_s)
\\
&= \sum_{s\subseteq [k]} (-1)^{|s|}\delta_{\hg{A}}(P_s)
\\
&= \sum_{s\subseteq [k]} (-1)^{|s|}|P_s| - \sum_{s\subseteq [k]} (-1)^{|s|}|R[P_s]|
\\
&= (|P_\emptyset| - |\bigcup_{i=1}^k P_i|) - (|R[P_\emptyset]| - |\bigcup_{i=1}^k R[P_{i}]|)
\\
&= |P_\emptyset\setminus \mathbb{P}| - |R[P_\emptyset]\setminus R[\mathbb{P}]| - |R[\mathbb{P}]\setminus\bigcup_{i=1}^k R[P_{i}]|
\\
&= \delta(P_\emptyset/\mathbb{P}) - |R[\mathbb{P}]\setminus\bigcup_{i=1}^k R[P_{i}]|
\end{align*}
Noting that $\dm_{\hg{A}}(\mathbb{P}) = \dm_{\hg{A}}(E_{\emptyset}) = \delta(P_\emptyset)$, the first summand is non-positive, proving the main statement.

We prove the additional part by examining each of the two summands, beginning with the second.

We wish to apply (6) of Fact \ref{fact: self-sufficiency properties} to see that the sets $E_i$ are freely joined over $P_{\emptyset}\strong \hg{A}$. Although each $E_i$ may be infinite, it may be presented as the union of a properly increasing chain of finite sets ${P_i\strong Y_i^1 \strong Y_i^2\strong\dots}$. Since fact \ref{fact: self-sufficiency properties} implies that the sets $Y_i^j$ are freely joined over $P_{\emptyset}$ for every $j$, this is true also for the sets $E_i$ in their entirety. Thus, $R[E_{\emptyset}\cup P_{\emptyset}]=R[P_{\emptyset}]\cup\bigcup_{i=1}^k R[E_i]$. Intersecting both sides with $R[E_{\emptyset}]$, we get $R[E_{\emptyset}]=R[\mathbb{P}]\cup\bigcup_{i=1}^k R[E_i]$. Subtracting $\bigcup_{i=1}^k R[E_i]$ from both resulting sides gives $R[E_{\emptyset}]\setminus \bigcup_{i=1}^k R[E_i]=R[\mathbb{P}]\setminus\bigcup_{i=1}^k R[P_i]$. Thus, the second summand equals zero if and only if $R[E_{\emptyset}] = \bigcup_{i=1}^k R[E_i]$.

The first summand equals zero if and only if $\delta_{\hg{A}}(\mathbb{P}) = \delta_{\hg{A}}(P_{\emptyset}) = \dm_{\hg{A}}(\mathbb{P})$, i.e., if $\mathbb{P}\strong\hg{A}$. We claim that this is equivalent to $E_{\emptyset}\strong\hg{A}$. If $E_\emptyset\strong\hg{A}$, then $\mathbb{P}\strong\hg{A}$ as an intersection of self-sufficient sets. If $\mathbb{P}\strong \hg{A}$, then by using (6) of Fact \ref{fact: self-sufficiency properties} again, this time with $\mathbb{P}$ and the sets $E_i$, we get that $E_\emptyset = \mathbb{P}\cup\bigcup_{i=1}^k E_i\strong \hg{A}$.

As $\dm_G(E_{\emptyset}) = \flatsum{G}(\Sigma)$ if and only if both summands are zero, we are done.
\end{proof}

It is known that every \emph{finite} flat pregeometry has a good representation \cite{EvansMatroid}. The proof uses Hall's Marriage Theorem, which is not applicable to infinite pregeometries, and does not allow the control we will later need. Instead, we give a different proof by inductive construction, that applies to any arbitrary flat pregeometry. We will thus receive the characterization

\begin{theorem}
\label{theorem: flat pregeometry comes from graph}
A pregeometry $G$ has a good representation if and only if $G$ is flat.
\end{theorem}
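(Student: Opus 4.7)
The plan is to split the proof by direction. The reverse implication, that a good representation implies flatness, should be nearly immediate from Proposition \ref{prop: flatness}: if $G = G_{\hg{A}}$ for some good representation $\hg{A}$, then $G$ being a pregeometry forces $\dm_{\hg{A}} \geq 0$, i.e.~$\emptyset \strong \hg{A}$, and the proposition directly yields flatness of $G$.

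For the forward implication, I would build a good representation by transfinite induction along a well-ordering $G = \{g_\alpha : \alpha < \kappa\}$. Writing $V_\alpha = \{g_\beta : \beta < \alpha\}$, I would construct hypergraphs $\hg{A}_\alpha$ with vertex set $V_\alpha$ by taking $\hg{A}_0 = (\emptyset,\emptyset)$, taking unions at limit stages, and at each successor $\alpha+1$ adding the vertex $g_\alpha$ together with at most one edge containing it. If $g_\alpha \notin \cl_G(V_\alpha)$, no edge is added. Otherwise, flatness is used to identify a minimum closed set $C_\alpha \subseteq G$ with $g_\alpha \in C_\alpha$ and $C_\alpha = \cl_G(C_\alpha \cap V_\alpha)$; I would pick any $G$-basis $F_\alpha$ of $C_\alpha \cap V_\alpha$ and add the single edge $e_\alpha = F_\alpha \cup \{g_\alpha\}$. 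Each successor step adds at most one new vertex and at most one new edge with that vertex, so $\hg{A}_\alpha \strong \hg{A}_{\alpha+1}$ by a direct $\delta$-computation; by transitivity of $\strong$, $\hg{A}_\alpha \strong \hg{A} := \bigcup_\alpha \hg{A}_\alpha$ for all $\alpha$.

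The inductive invariant I would carry is that $\hg{A}_\alpha$ is a good representation of $V_\alpha$ viewed as a subpregeometry of $G$. Given this, Fact \ref{fact: self-sufficiency properties}(2) combined with $\hg{A}_\alpha \strong \hg{A}$ yields $\dm_{\hg{A}} \restriction V_\alpha = \dm_{\hg{A}_\alpha} = \dm_G \restriction V_\alpha$ for every $\alpha$; since every finite subset of $G$ lies in some $V_\alpha$, this gives $\dm_{\hg{A}} = \dm_G$ on finite sets. The edge condition $\dm_{\hg{A}}(e_\alpha) = |e_\alpha| - 1$ is built in via the choice of $F_\alpha$ as an independent basis with $g_\alpha \in \cl_G(F_\alpha)$, so $\dm_G(e_\alpha) = |F_\alpha| = |e_\alpha| - 1$.

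The hard part will be the successor step when $g_\alpha$ is dependent over $V_\alpha$, where flatness needs to be used twice. First, the minimum carrier $C_\alpha$ must be shown to exist: given two closed subsets $C_1, C_2 \subseteq G$ satisfying $g_\alpha \in C_i$ and $C_i = \cl_G(C_i \cap V_\alpha)$, their intersection should inherit both properties, equivalently $\cl_G(C_1) \cap \cl_G(C_2) = \cl_G((C_1 \cap V_\alpha) \cap (C_2 \cap V_\alpha))$. This modular-like equality is the analogue of Lemma \ref{lemma: in sqsubseteq^* the intersection and closure commute} for arbitrary subpregeometries $V_\alpha$ in a flat $G$, and I would prove it by applying flatness to a suitable $\Sigma$ built from $C_1$, $C_2$, and $\cl_G(\{g_\alpha\})$. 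Second, one must verify that adding $e_\alpha$ preserves the invariant: for a finite $X \subseteq V_{\alpha+1}$ containing $g_\alpha$, I would split the sub-hypergraphs $\hg{B} \supseteq \hg{A}_{\alpha+1}[X]$ in the definition of $\dm_{\hg{A}_{\alpha+1}}(X)$ according to whether $F_\alpha \subseteq \hg{B}$, and use the minimality of $C_\alpha$ together with Proposition \ref{prop: flatness} to show the resulting infimum equals $\dm_G(X)$ on the nose.
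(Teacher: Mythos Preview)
Your handling of the easy direction is fine. The forward direction, however, has a genuine gap: the ``minimum carrier'' $C_\alpha$ need not exist for an arbitrary well-ordering of the points, and when it fails, adding a single edge cannot possibly maintain your invariant.

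Concretely, let $G$ be the flat pregeometry on $\{a,b,c,d,e\}$ represented by the hypergraph with edges $\{a,b,c\}$ and $\{c,d,e\}$ (so $\dm(G)=3$). Enumerate the points as $a,b,d,e,c$. After four steps, $V_4=\{a,b,d,e\}$ is a $4$-circuit of dimension $3$ in $G$, and your construction correctly gives $\hg{A}_4$ the single edge $\{a,b,d,e\}$. Now add $c$. Both $C_1=\cl_G(\{a,b\})=\{a,b,c\}$ and $C_2=\cl_G(\{d,e\})=\{c,d,e\}$ satisfy your carrier conditions, but $C_1\cap C_2=\{c\}$ does \emph{not}: $\{c\}\cap V_4=\emptyset$, and $c\notin\cl_G(\emptyset)$. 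So no minimum exists. Worse, whichever of $C_1,C_2$ you pick, the resulting $\hg{A}_5$ fails to represent $G$: adding only $\{a,b,c\}$ leaves $\dm_{\hg{A}_5}(\{c,d,e\})=3\neq 2=\dm_G(\{c,d,e\})$, and symmetrically for the other choice. In fact no single added edge can work here --- one would need to add two $3$-edges \emph{and} delete the $4$-edge already present. Your appeal to an ``analogue of Lemma~\ref{lemma: in sqsubseteq^* the intersection and closure commute}'' is exactly where this breaks: that lemma needs $V_\alpha\sqsubseteq^* G$, which your invariant does not provide and which fails for $V_4$ above.

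The paper's construction avoids this by enumerating not points but pairs $(A,F)$ with $F$ closed and $|A|=\dm(F)+1$, ordered so that all lower-dimensional closed sets are completely handled before any higher-dimensional one is touched. The vertex set stays fixed throughout; only edges are added. The work is then concentrated in Lemma~\ref{lemma: add edge}, whose hypotheses (especially the dimension stratification and condition~(4) on previously placed edges) are exactly what the enumeration guarantees, and whose proof is where flatness is actually used.
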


We only need to show right to left. We execute the construction by laying down one edge at a time. The following is the key lemma allowing us to see the construction through.

\begin{lemma}
\label{lemma: add edge}
Let $G$ be a flat pregeometry on the set $M$. Let $\hg{A} = (M,R)$ be a hypergraph such that $\emptyset\strong \hg{A}$. Assume for some fixed $n$ that
\begin{enumerate}
\item
$R\subseteq [M]^{\leq n+1}$
\item
$\dm_{\hg{A}}(e) = |e|-1$, whenever $e\in R$.
\item
For any $X\subseteq M$, $\cl_{\hg{A}}(X)\subseteq \cl_{G}(X)$. Moreover, if $\dm_G(X)<n$, equality holds.
\end{enumerate}
Assume $S\in [M]^{n}$ is independent in $G$ satisfying 
\begin{enumerate}
\item[$(4)$] If $r\in R\cap [M]^{n+1}$ with $r\nsubseteq \cl_G(S)$, then $\cl_G(r) = \cl_{\hg{A}}(r)$.
\end{enumerate}
such that there exists $t\in \cl_G(S)\setminus\cl_{\hg{A}}(S)$.
Let $\hg{B}=(M,R\cup\set{St})$ be the hypergraph obtained by adding the edge $St:= S\cup \set{t}$ to $\hg{A}$. Then assertions (1)-(3) above hold with respect to $\hg{B}$.
\end{lemma}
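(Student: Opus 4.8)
The plan is to verify assertions (1)--(3) for $\hg{B} = (M, R \cup \{St\})$ in order, leaning on the fact that only one edge, $St$, of size $n+1$ has been added, and that $t \in \cl_G(S)$ with $S$ independent of size $n$.

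\textbf{Assertion (1).} This is immediate: $St$ has size $|S| + 1 = n+1 \leq n+1$, and by hypothesis (1) for $\hg{A}$ all edges of $R$ have size $\leq n+1$, so $R \cup \{St\} \subseteq [M]^{\leq n+1}$. One should also check $\emptyset \strong \hg{B}$, i.e. that $\dm_{\hg{B}}$ is non-negative; I would deduce this together with assertion (2), since adding an edge can only decrease predimensions, and (2) will pin down exactly how much $\dm$ drops.

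\textbf{Assertion (2).} I must show $\dm_{\hg{B}}(e) = |e| - 1$ for every $e \in R \cup \{St\}$. The key preliminary claim is that adding the edge $St$ does not change the dimension function, i.e. $\dm_{\hg{B}}(X) = \dm_G(X)$ for all finite $X$ — or at least for all $X$ relevant here. The inequality $\dm_{\hg{B}} \leq \dm_{\hg{A}} = $ (something $\geq \dm_G$ by hypothesis (3)) is one direction; the substantive direction is that $\dm_{\hg{B}}(X) \geq \dm_G(X)$, i.e. adding $St$ cannot push any dimension below its $G$-value. This is where flatness of $G$ enters. The natural approach: given a finite $\hg{D}$ with $\hg{B}[X] \subseteq \hg{D} \subfin \hg{B}$ witnessing a small $\delta_{\hg{B}}(\hg{D})$, compare with the same vertex set as a subhypergraph of $\hg{A}$; the only possible discrepancy is whether the edge $St$ is present, which matters only when $S \cup \{t\} \subseteq \hg{D}$. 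Since $t \in \cl_G(S)$ and $\dm_G(S) = n$, I expect to run a flatness/submodularity argument — using that $\cl_G(S)$ and the vertex sets of the old edges interact correctly via (3) and (4) — to show the presence of $St$ is "already accounted for" dimension-wise. Then $\dm_{\hg{B}}(St) \leq \delta_{\hg{B}}(St) = |S| + 1 - 1 = |S| = n = \dm_G(St)$ (using $\dm_G(St) = \dm_G(S) = n$ since $t \in \cl_G(S)$), and the reverse inequality $\dm_{\hg{B}}(St) \geq n$ follows from $\dm_{\hg{B}} = \dm_G$; for old edges $e \in R$, $\dm_{\hg{B}}(e) = \dm_G(e) = \dm_{\hg{A}}(e) = |e| - 1$ by the same dimension-invariance plus hypothesis (2).

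\textbf{Assertion (3).} I need $\cl_{\hg{B}}(X) \subseteq \cl_G(X)$ for all $X$, with equality when $\dm_G(X) < n$. The inclusion $\cl_{\hg{B}}(X) \subseteq \cl_{\hg{A}}(X) \subseteq \cl_G(X)$ would follow once dimension-invariance ($\dm_{\hg{B}} = \dm_G$ on the relevant sets, or at least $\dm_{\hg{B}} \leq \dm_{\hg{A}}$ refined appropriately) is in hand, since $y \in \cl_{\hg{B}}(X)$ means $\dm_{\hg{B}}(Xy) = \dm_{\hg{B}}(X)$. For the equality when $\dm_G(X) < n$: by hypothesis (3) for $\hg{A}$ we have $\cl_{\hg{A}}(X) = \cl_G(X)$, and I must argue that adding $St$ does not enlarge the closure of a low-dimensional set — here the point is that witnessing $y \in \cl_G(X) \setminus \cl_{\hg{B}}(X)$ would require the new edge $St$ to participate in a self-sufficient set over $Xy$ of small predimension, but $|S| = n > \dm_G(X) \geq \dm_G(Xy) - 1$, so $St$ is "too big" to be absorbed; more carefully I would use submodularity of $\delta_{\hg{B}}$ relative to $X$ and the independence of $S$ in $G$. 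I would also check the special role of $t$: the hypothesis guarantees $t \notin \cl_{\hg{A}}(S)$ but $t \in \cl_G(S) \cap \cl_{\hg{B}}(S)$ — indeed $St \in R \cup \{St\}$ forces $\dm_{\hg{B}}(St) = |St| - 1 = \dm_{\hg{B}}(S)$, so $t \in \cl_{\hg{B}}(S)$, consistent with $\cl_{\hg{B}}(S) \subseteq \cl_G(S)$; this is the one place the closure genuinely grows, but only at dimension exactly $n$, not below.

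\textbf{Main obstacle.} The crux is the dimension-invariance claim $\dm_{\hg{B}} = \dm_G$ (equivalently, that adding this one edge $St$ does not collapse any dimension below its flat/$G$ value). This is exactly the step that replaces the "predimension bookkeeping" that is automatic in the hypergraph setting but subtle here because $\cl_{\hg{A}}$ may be strictly smaller than $\cl_G$ on high-dimensional sets. I expect the proof to hinge on applying flatness of $G$ (via $\flatsum{G}$ and Proposition \ref{prop: flatness}, or directly via submodularity of $\delta_{\hg{A}}$ combined with hypotheses (3) and (4)) to a carefully chosen collection $\Sigma$ built from $\cl_G(S)$, the closure $\cl_G(X)$, and the closures of old edges meeting the relevant configuration — ensuring that the "new" dependency introduced by $St$ was, from the flat pregeometry's point of view, already present. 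Hypothesis (4) is presumably needed precisely to handle the interaction of $St$ with other maximal-arity edges $r$ that escape $\cl_G(S)$.
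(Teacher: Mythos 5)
Your high-level diagnosis is right: the whole lemma reduces to showing $\cl_{\hg{B}}(X)\subseteq\cl_G(X)$ for all finite $X$ (equivalently, adding $St$ never drops a dimension below its $G$-value), and this is where flatness must enter. But there are two problems. First, the inclusion $\cl_{\hg{B}}(X)\subseteq\cl_{\hg{A}}(X)$ that you invoke is backwards: adding the edge $St$ can only lower predimensions, hence lower dimensions, hence \emph{enlarge} closures, so $\cl_{\hg{A}}(X)\subseteq\cl_{\hg{B}}(X)$ — indeed you observe yourself that $t\in\cl_{\hg{B}}(S)\setminus\cl_{\hg{A}}(S)$, which contradicts your claimed chain. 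This matters because the correct inclusion is what makes the "equality when $\dm_G(X)<n$" clause automatic: once $\cl_{\hg{B}}(X)\subseteq\cl_G(X)$ is proved, one has $\cl_G(X)=\cl_{\hg{A}}(X)\subseteq\cl_{\hg{B}}(X)\subseteq\cl_G(X)$ for low-dimensional $X$, with no separate "$St$ is too big to be absorbed" argument needed. Relatedly, your claim "$\dm_{\hg{B}}=\dm_G$" is too strong and false; what holds (and what assertion (3) encodes, which is in fact strictly stronger than the mere dimension inequality) is $\cl_{\hg{B}}(X)\subseteq\cl_G(X)$.

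Second and more importantly, the actual content of the lemma — the flatness computation — is entirely deferred: you correctly name the ingredients (Proposition \ref{prop: flatness}, the $\flatsum{}$ operator, a collection $\Sigma$ built from closed-set data, the role of hypothesis (4) in taming the other arity-$(n{+}1)$ edges), but "I expect to run a flatness argument" is not a proof, and this is precisely the nontrivial step. The paper's execution is delicate: one takes a minimal $X$ with $y\in\cl_{\hg{B}}(X)\setminus\cl_G(X)$, sets $Y_0=\sscl_{\hg{B}}(Xy)$ and observes $St\subseteq Y_0$, builds $\Sigma=\setarg{\cl_G(r)}{r\in R[Y_0]\cup\set{St}}$, carefully enlarges $Y_0$ to a set $Y\strong\hg{A}$ in which all intersections $E_s$ have their full $G$-dimension witnessed (so that $\dm_G(E_s)=\dm_G(E_s\cap Y)$), then compares $\flatsum{G}(\Sigma)$ with $\flatsum{\hg{Y}}(\Sigma_Y)$ term by term using hypotheses (3) and (4), losing exactly one unit on the term $\cl_G(St)$ because $t\in\cl_G(S)\setminus\cl_{\hg{A}}(S)$, and finally applies the additional equality clause of Proposition \ref{prop: flatness} to $\hg{Y}=\hg{A}[Y]$ to conclude $\dm_G(Y)\leq\dm_{\hg{B}}(Y)=|X|$, contradicting $y\notin\cl_G(X)$. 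Without this enlargement-of-$Y_0$ step and the precise accounting of where the "$-1$" comes from, the estimate does not close.
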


\begin{proof}
We begin with assertion $(3)$. It will suffice to show that for any finite $X\subseteq M$, $\cl_{\hg{B}}(X) \subseteq \cl_G(X)$. Assume to the contrary there is some $X\subfin M$ such that there exists $y\in\cl_{\hg{B}}(X)\setminus \cl_G(X)$. Choose $X$ to be of minimal size, hence independent in $\hg{B}$, so also independent in $\hg{A}$.

Let $Y_0=\sscl_{\hg{B}}(Xy)$. It must be that $St\subseteq Y_0$, since otherwise $\delta_{\hg{A}}(Y_0) = \delta_{\hg{B}}(Y_0)$ and $y\in\cl_{\hg{A}}(X)\subseteq \cl_G(X)$. In particular, $St\subseteq Y_0$ implies $Y_0\strong \hg{A}$. Define
\[
\Sigma = \setarg{\cl_G(r)}{r\in R[Y_0]\cup\set{St}}
\]
and, on account of Observation \ref{observation: inclusion in flatsum}, thin $\Sigma$ out by removing any $Z\in\Sigma$ that is not maximal under inclusion in $\Sigma$. Observe that $\cl_G(St)$ remains in $\Sigma$, because it is of dimension $n$, which is maximal in $\Sigma$ by assertion $(1)$. Enumerate $\Sigma = \set{E_1,\dots,E_{k+1}}$ with $E_{k+1} = \cl_G(St)$. For each $1\leq i\leq k$ fix some $r_i\in R[Y_0]$ such that $E_i = \cl_G(r_i)$, and observe that by assertion $(4)$, also $E_i=\cl_{\hg{A}}(r_i)$.

We would like to have for each non-empty $s\subseteq [k+1]$ the equality
\[
\cl_G(E_s\cap Y_0) = E_s
\]
To achieve this, we will replace $Y_0$ with a bigger set $Y$ such that $\delta_{\hg{B}}(Y) = \dm_{\hg{B}}(Xy)$, $R[Y] \subseteq \bigcup_{i=1}^{k+1} R[E_i]$, and the equality above holds for every $s\subseteq[k+1]$.

The equality already holds whenever $|s| = 1$, and increasing $Y_0$ will not change that. For each $s\subseteq [k+1]$ with $|s|\geq 2$ fix some $m_s\in s$ such that $m_s\neq k+1$, and choose arbitrarily $B_s\subfin E_s$ such that $E_s\subseteq \cl_G(B_s)$.
%Since $E_s\subset E_{m_s}$, we have $\dm_G(E_s) < \dm_G(E_{m_s}) \leq n$. So by assertion $(3)$, for each such $s$ we have $B_s\subseteq\cl_G(E_{m_s}) = \cl_{\hg{A}}(E_{m_s})$.
As $B_s\subseteq \cl_G(r_{m_s}) = \cl_{\hg{A}}(r_{m_s})$, let $D_s\subseteq E_{m_s}$ be such that $B_s\subseteq D_s$, $E_{m_s}\cap Y_0\subseteq D_s$, and $\delta_{\hg{A}}{(D_s/D_s\cap Y_0)} \leq 0$. Denote
\[
Y=Y_0\cup\bigcup\setarg{D_s}{s\subseteq [k+1], |s|\geq 2},
\]
and note we have $\delta_{\hg{A}}(Y/Y_0) \leq 0$. Moreover, $Y_0\strong \hg{A}$ implies that equality holds, $Y\strong \hg{A}$, and for each $r\in R(Y)\setminus R(Y_0)$ there is some $s$ with $r\in R[D_s]$ (using (6) of \ref{fact: self-sufficiency properties}). In particular, for each such an $r$ there is some $i\leq k$ such that $r\subseteq E_i\in \Sigma$.

Denote $F_i = E_i\cap Y$ and let $\Sigma_{Y} = \set{F_1,\dots,F_{k+1}}$. 
We observe that every element of $Y$ appears in some $r\in R[Y]\cup\set{St}$. For elements of $Xy$, this is by assumption on $y$ and minimality of $X$. For $a\in Y\setminus Xy$, since $\delta_{\hg{A}}(Y) = \dm_G(Xy)\leq \dm_{\hg{A}}(Y)$ it must be that $\delta_{\hg{A}}(Y\setminus \set{a}) \geq \delta_{\hg{A}}(Y)$ so $a\in r$ for some $r\in R[Y]\cup\set{St}$. Thus, $F_\emptyset = Y$.

By flatness of $G$,
\[
\dm_G(Y) = \dm_G(E_\emptyset) \leq \flatsum{G}(\Sigma).
\]
By $\dm_G(E_s) = \dm_G(F_s)$ for every $\emptyset\neq s \subseteq[k+1]$,
\[
\flatsum{G}(\Sigma) = \sum_{\emptyset\neq s\subseteq [k+1]} (-1)^{|s|+1}\dm_G(F_s).
\]

Denote $\hg{Y} = \hg{A}[Y]$. Since $Y\strong \hg{A}$, we have $\dm_{\hg{Y}} = \dm_{\hg{A}}\restrictedto \Powerset(Y)$, as well as $\cl_{\hg{Y}}(Z) = \cl_{\hg{A}}(Z)\cap Y$ for any $Z\subseteq Y$. For each $i\leq k$, recall that $E_i = \cl_{\hg{A}}(r_i)$. For each $s\subseteq [k+1]$ such that $|s|\geq 2$, because $\dm_G(E_s)<n$, by assertion $(3)$ the set $E_s$ is closed in $\hg{A}$ and $\dm_{\hg{A}}(E_s) = \dm_G(E_s)$. Therefore, in either case, $F_s$ is closed in $\hg{Y}$ with $\dm_{\hg{Y}}(F_s) = \dm_{\hg{A}}(F_s) = \dm_G(F_s)$. Then
\begin{align*}
\flatsum{G}(\Sigma) &= \left(\sum_{\substack{\emptyset\neq s\subseteq [k+1]\\s\neq \set{k+1}}} (-1)^{|s|+1}\dm_{\hg{Y}}(F_s)\right) + \dm_G(F_{k+1})
\\
&\leq \left(\sum_{\substack{\emptyset\neq s\subseteq [k+1]\\s\neq \set{k+1}}} (-1)^{|s|+1}\dm_{\hg{Y}}(F_s)\right) + (\dm_{\hg{A}}(F_{k+1})-1)
\\
&= \left(\sum_{\emptyset\neq s\subseteq [k+1]} (-1)^{|s|+1}\dm_{\hg{Y}}(F_s)\right) - 1
\\
&= \flatsum{\hg{Y}}(\Sigma_Y) - 1
\end{align*}

For any edge $r\in R$ there is some $i\leq k+1$ such that $r\in F_i$, and $F_\emptyset\strong \hg{Y}$ because $F_\emptyset = Y$. These two facts, by the additional part of Proposition \ref{prop: flatness}, imply that $\flatsum{\hg{Y}}(\Sigma_Y) = \dm_{\hg{Y}}(F_\emptyset)$. Hence,
\begin{align*}
\flatsum{G}(\Sigma) &\leq \dm_{\hg{Y}}(F_\emptyset) - 1
\\
&= \dm_{\hg{A}}(Y) - 1
\\
&= (\dm_{\hg{B}}(Y) + 1) -1 = \dm_{\hg{B}}(Y)
\end{align*}
We conclude that $\dm_G(Y)\leq \dm_{\hg{B}}(Y) = |X|$, so $y\in \cl_G(X)$ in contradiction to our assumption.

This proves assertion $(3)$ holds in $\hg{B}$. Assertion $(1)$ is immediate by construction. Lastly, assertion $(3)$ gives us that $\dm_{\hg{B}}(e) = \dm_G(e) = \dm_{\hg{A}}(e) = |e|-1$ for every $e\in R$. As $\dm_{\hg{B}}(St) = n$ by construction, assertion $(2)$ holds in $\hg{B}$ as well.
\end{proof}

Now, to prove Theorem \ref{theorem: flat pregeometry comes from graph} all that we need is an enumeration of all finite tuples in $G$ that is favorable to applications of Lemma \ref{lemma: add edge}.

\begin{definition}
Let $G$ be a flat pregeometry on a set $M$, let $X$ be the set of all pairs $(A, F)$ where $F$ is a closed set in $G$ and $A\in[F]^{\dm_G(F)+1}$. Let $(I,<)$ be a well ordering Consider a bijection $f: I\to X$, and denote by $f_1$, $f_2$ its projections, i.e., for each $i\in I$, $f(i) = (f_1(i), f_2(i))$.
For every $n\in\Nats$, denote $I^f_n:=\setarg{i\in I}{f_1(i)\in [M]^n}$ and
for every $F$, a closed set in $G$, denote $I^f_F:=\setarg{i\in I}{f_2(i)=F}$.

Say that $f$ is a \emph{valid enumeration} for $G$ if the sets of the form $I^f_n, I^f_F$ are intervals in $I$ such that $I^f_n < I^f_m$ whenever $n<m$.
\end{definition}

\begin{definition}[The Enumerative Construction]
\label{definition: enumerative construction}
Let $G$ be a flat pregeometry on a set $M$ and let $f$ be a valid enumeration for $G$. We define an inductive construction of a hypergraph, one edge at a time. Let $\hg{A}_0 = (M,\emptyset)$. For each $i\in I$, we construct $\hg{A}_i = (M, R_i)$ such that, denoting $n= |f_1(i)| - 1$,
\begin{enumerate}[i.]
	\item
	If $i<j$, then $R_i\subseteq R_{j}$
	\item
	Assertions $(1)-(3)$ of lemma \ref{lemma: add edge} hold with respect to $n$, $\hg{A}_i$ and $G$.
	\item
	$\dm_{\hg{A}_{i+1}}(f_1(i)) = \dm_G(f_1(i))$.
\end{enumerate}
At stage $i$, check whether $\dm_{\hg{A}_i}(f_1(i)) = \dm_G(f_1(i))$. If equality holds, we define $R_{i+1} = R_{i}$. Otherwise, by assertion $(3)$ it must be that $f_1(i)$ is independent in $\hg{A}_i$. Let $S=f_1(i)\setminus\set{t}$ for some $a\in f_1(i)$,  then $t\in \cl_G(S)\setminus \cl_{\hg{A}_i}(S)$. Additionally, for every $r\in R_i$, letting $F=\cl_G(r)$, since all of $I^f_F$ has already been enumerated, $\cl_{\hg{A}_i}(r) = \cl_G(r)$, i.e., (4) of Lemma \ref{lemma: add edge} holds with respect to $S$. Therefore we define $R_{i+1} = R_i\cup\set{f_1(i)}$. For $i\in I$ a limit in $(I, <)$, define $R_{i} = \bigcup \setcol{R_{j}}{j< i}$.

It is easy to verify that the conditions above hold, with the second condition given by Lemma \ref{lemma: add edge}. Letting $R = \bigcup_{i\in I} R_i$, we define $\hg{A}_f = (M, R)$, \emph{the $f$-construction of $G$}.
\end{definition}

The next proposition concludes the proof of Theorem \ref{theorem: flat pregeometry comes from graph}:

\begin{prop}
For $G$ a flat pregeometry on a set $M$ and $f$ a valid enumeration for $G$, the hypergraph $\hg{A}_f$ is a good representation of $G$.
\end{prop}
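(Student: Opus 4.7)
The plan is to verify the two conditions defining good representation: $G_{\hg{A}_f} = G$ (equivalently $\dm_{\hg{A}_f} = \dm_G$ on all finite sets) and $\dm_{\hg{A}_f}(e) = |e|-1$ for every edge $e \in R$. Both will follow by sandwiching $\dm_{\hg{A}_f}(X)$ above and below by $\dm_G(X)$, using assertion $(3)$ of Lemma \ref{lemma: add edge} (preserved at every stage by condition (ii) of Definition \ref{definition: enumerative construction}) for the lower bound, and the exhaustiveness of the enumeration $f$ combined with condition (iii) for the upper bound.

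For the lower bound, given $X \subfin M$ and any finite $P \supseteq X$, only finitely many edges of $\hg{A}_f$ lie on $P$, so there is some stage $i$ at which $R_i[P] = R[P]$; thus $\delta(\hg{A}_f[P]) = \delta_{\hg{A}_i}(P) \geq \dm_{\hg{A}_i}(P)$. Assertion $(3)$ at stage $i$ gives $\cl_{\hg{A}_i}(Y) \subseteq \cl_G(Y)$ for every $Y$, which forces $\dm_{\hg{A}_i}(Y) \geq \dm_G(Y)$ since any $G$-basis of $Y$ must remain independent in $\hg{A}_i$. Specialising to $Y = P$ and taking the infimum over $P$ yields $\dm_{\hg{A}_f}(X) \geq \dm_G(X)$; incidentally, with $X = \emptyset$ this also gives $\emptyset \strong \hg{A}_f$, so that $G_{\hg{A}_f}$ is indeed a pregeometry.

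For the upper bound, I would fix a $G$-basis $B \subseteq X$ and show $X \subseteq \cl_{\hg{A}_f}(B)$. For each $x \in X \setminus B$ the pair $(B \cup \set{x}, \cl_G(B \cup \set{x}))$ lies in the range of $f$, say equals $f(i_x)$, and condition (iii) gives $\dm_{\hg{A}_{i_x+1}}(B \cup \set{x}) = \dm_G(B \cup \set{x}) = |B|$. Monotonicity of $\dm$ under edge addition then bounds $\dm_{\hg{A}_f}(B \cup \set{x}) \leq |B|$, and the lower bound applied to $B$ sandwiches both $\dm_{\hg{A}_f}(B)$ and $\dm_{\hg{A}_f}(B \cup \set{x})$ at $|B|$, forcing $x \in \cl_{\hg{A}_f}(B)$. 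Hence $\dm_{\hg{A}_f}(X) \leq |B| = \dm_G(X)$.

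Finally, each edge $e \in R$ enters at a stage $i_e$ where, by construction, $f_1(i_e)$ is independent in $\hg{A}_{i_e}$, so Lemma \ref{lemma: add edge} applies and delivers $\dm_{\hg{A}_{i_e+1}}(e) = |e|-1$; the same sandwich argument then gives $\dm_{\hg{A}_f}(e) = |e|-1$. The only subtle point I anticipate is the monotonicity statement $\dm_{\hg{A}_f}(Y) \leq \dm_{\hg{A}_j}(Y)$, which holds because on any finite $P$ the full edge set $R[P]$ is achieved by some finite stage; but this is routine and I do not foresee any real obstacle.
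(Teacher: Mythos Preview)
Your proposal is correct and follows essentially the same approach as the paper: the lower bound from assertion~(3) of Lemma~\ref{lemma: add edge} and the upper bound from condition~(iii) of Definition~\ref{definition: enumerative construction} together with monotonicity under edge addition. In fact you are more careful than the paper's three-line proof, which literally only treats $X$ in the range of $f_1$ and leaves both the reduction to that case and the verification of ``good'' implicit; your argument via a $G$-basis $B$ and the pairs $(B\cup\{x\},\cl_G(B))$ is exactly the right way to fill that in.
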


\begin{proof}
Let $X\subfin M$. Since $f$ is surjective, let $i\in I$ be such that $f_1(i) = X$. Then for every $j>i$, Lemma \ref{lemma: add edge} guarantees $\dm_{\hg{A}_j}(X) = \dm_G(X)$, so in particular $\dm_{\hg{A}_f}(X) = \dm_G(X)$. Conclude that $G_{\hg{A}_f} = G$.
\end{proof}

\begin{observation}
\label{observation: edge sets in enumerative construction}
\begin{enumerate}[i.]
\item
Every $\hg{B} = (M, S)$, a good representation of $G$, can be attained as an enumerative construction --- choose $f$ such that for every closed set $F$, the set $\setcol{(A,F)}{A\in S\cap [F]^{\dm_G(F)+1}}$ is an initial segment of $I^f_F$.
\item
Let $\hg{A}_1 = (M, R_1)$ and $\hg{A}_2 = (M,R_2)$ be two good representations of $G$ and let $F$ be a closed $n$-dimensional subset of $G$. Denoting $R_i^F = R_i\cap [F]^{n +1}$ for $i\in\set{1,2}$, the hypergraph $(M, (R_1\setminus R_1^F)\cup R_2^F)$ is also a good representation of $G$. See this by replacing the enumeration of $[F]^{n+1}$ used for the construction of $\hg{A}_1$ with that used for the construction of $\hg{A}_2$.
\end{enumerate}
\end{observation}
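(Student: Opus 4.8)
The plan is to extract both parts from the Enumerative Construction (Definition \ref{definition: enumerative construction}): for (i) by feeding it a tailor-made valid enumeration and then showing the construction reproduces $\hg{B}$ edge for edge, and for (ii) by surgically editing the enumeration that realizes $\hg{A}_1$.

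\emph{Part (i).} Let $\hg{B}=(M,S)$ be a good representation of $G$. Since $\hg{B}$ is good, every $e\in S$ has $\dm_G(e)=\dm_{\hg{B}}(e)=|e|-1$, so $\cl_G(e)$ is $(|e|-1)$-dimensional and $(e,\cl_G(e))$ is the unique pair of $X$ whose first coordinate is $e$. Fix a valid enumeration $f$ of $X$ in which, for each closed $F$, the pairs $\setcol{(e,F)}{e\in S,\ \cl_G(e)=F}$ form an initial segment of the interval $I^f_F$; this constrains only the internal order of each $I^f_F$, so such an $f$ exists. I claim $\hg{A}_f=\hg{B}$. By transfinite induction along $(I,<)$ I show $R_i=\setarg{e\in S}{(e,\cl_G(e))=f(j)\text{ for some }j<i}$; the base case and limit stages are immediate, and for a successor stage $i$ it suffices to prove that the edge $r:=f_1(i)$ is laid down at stage $i$ precisely when $r\in S$ (in which case automatically $f_2(i)=\cl_G(r)$, by uniqueness of the pair). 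If $r\in S$, then $r\notin R_i$ by the inductive hypothesis while $R_i\subseteq S$; were $r$ dependent in $\hg{A}_i$, then with $\hg{D}:=\sscl_{\hg{A}_i}(r)$ we would have $R_i[\hg{D}]\subseteq S[\hg{D}]$ and $r\in S[\hg{D}]\setminus R_i[\hg{D}]$, hence $\delta_{\hg{B}}(\hg{D})\leq\delta_{\hg{A}_i}(\hg{D})-1=\dm_{\hg{A}_i}(r)-1\leq|r|-2<\dm_G(r)$, contradicting $\dm_{\hg{B}}=\dm_G$; so $r$ is independent in $\hg{A}_i$ and, as $\dm_G(r)=|r|-1$, the edge $r$ gets added. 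Conversely, if $r$ is laid down then (as established in Definition \ref{definition: enumerative construction}) $\dm_G(r)=|r|-1$ and $F:=f_2(i)=\cl_G(r)$; if moreover $r\notin S$, then all $S$-pairs of $I^f_F$ precede stage $i$ (they form an initial segment), so by the inductive hypothesis every edge of $\hg{B}$ lying in $F$ already belongs to $R_i\subseteq S$ and thus $\hg{A}_i[F]=\hg{B}[F]$; since assertion (3) of Lemma \ref{lemma: add edge} forces $\cl_{\hg{A}_i}(r)\subseteq\cl_G(r)\subseteq F$ and closed sets are self-sufficient, $\dm_{\hg{A}_i}(r)$ is computed inside $F$ and equals $\dm_{\hg{B}[F]}(r)=\dm_{\hg{B}}(r)=\dm_G(r)$, so $r$ is \emph{not} added --- a contradiction. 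As every pair of $X$ is eventually enumerated, $R=\bigcup_i R_i=S$, i.e.\ $\hg{A}_f=\hg{B}$.

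\emph{Part (ii).} Using (i), write $\hg{A}_1=\hg{A}_f$ and $\hg{A}_2=\hg{A}_g$ for valid enumerations $f,g$ of the above shape (with $S=R_1$, resp.\ $R_2$). Because $F$ is closed and $n$-dimensional, $I^f_F$ and $I^g_F$ index the same set of pairs $\setcol{(A,F)}{A\in[F]^{n+1}}$. Let $f'$ agree with $f$ off $I^f_F$ and, on $I^f_F$, be the order-isomorphism from $(I^f_F,<)$ onto $(I^g_F,<)$ followed by $g$; then $I^{f'}_m=I^f_m$ and $I^{f'}_{F'}=I^f_{F'}$ for all $m$ and $F'$, so $f'$ is again a valid enumeration, and by the Proposition preceding this Observation $\hg{A}_{f'}$ is a good representation of $G$. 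It remains to identify its edge set as $(R_1\setminus R_1^F)\cup R_2^F$. Again by assertion (3), $\dm_{\hg{A}}(A)$ is computed within $F$ for every $A\subseteq F$ at every stage of the construction, which decouples the traversal of the interval for $[F]^{n+1}$ from the rest of the run: the $f$- and $f'$-runs coincide outside that interval (contributing $R_1\setminus R_1^F$), while inside it the argument of part (i), now applied to the induced pregeometry on $F$, shows the $f'$-run deposits exactly $R_2^F$.

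\emph{Main obstacle.} In (i) the load-bearing step is the ``$r\in S\Rightarrow r$ is laid down'' direction: one must rule out that $r$ has already become dependent in the partial hypergraph, and this is exactly where faithfulness of $\hg{B}$ enters --- any such dependency would live inside a set across which $\hg{B}$ carries the additional edge $r$, forcing $\dm_{\hg{B}}(r)$ strictly below $\dm_G(r)$. In (ii) the delicate point is confirming that the enumeration edit has precisely the advertised local effect; the decoupling via assertion (3) reduces this to verifying that $R_2^F$ ``completes'' the lower structure of $\hg{A}_1$ inside $F$ (not merely that of $\hg{A}_2$) to a good representation of the induced pregeometry on $F$ --- which I would establish by an induction on $\dim F$, migrating that lower structure from $\hg{A}_1$'s to $\hg{A}_2$'s by lower-dimensional instances of (ii) and back.
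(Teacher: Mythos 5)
Part (i) matches the paper's inline hint and is carried out carefully and correctly; in particular, the forward direction (that $r\in S$ cannot have already become dependent) via $\sscl_{\hg{A}_i}(r)$ and the inequality $\dm_{\hg{B}}(r)\leq\delta_{\hg{B}}(\sscl_{\hg{A}_i}(r))<\dm_G(r)$, and the converse direction (that a non-$S$ pair in $I^f_F$ is processed only after $\hg{A}_i[F]=\hg{B}[F]$, whence $\dm_{\hg{A}_i}(r)=\dm_G(r)$), are both sound. This is exactly the argument the observation's parenthetical hint is pointing at.

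For part (ii) you follow the paper's hint (swap the enumeration on $I^f_F$), and the decoupling via assertion (3) is correct. But you are right that there is a genuine remaining point, and your reduction of it — namely, that the deposits during $I^{f'}_F$ are $R_2^F$ precisely when $(F,(R_1\cap[F]^{\leq n})\cup R_2^F)$ is a good representation of $G|_F$, so that the restricted $f'$-enumeration really is of the form used in part (i) — is accurate. The proposed fix ("migrate the lower structure by lower-dimensional instances of (ii) and back, by induction on $\dim F$") is the right kind of idea, but as written it is only a sketch: the induction scheme (full swap of $R[F]$ first, then swap back each $R^{F'}$ for $F'\psubflat F$ one at a time) is not set up, and the base of the induction is not isolated. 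What this really boils down to is the fact that $(F,R_1\cap[F]^{\leq n})$ and $(F,R_2\cap[F]^{\leq n})$ induce the same pregeometry on $F$; the paper does eventually record exactly this (the unnumbered Observation at the start of the "Geometric arity" subsection, proved there cleanly using $\arity(\hg{B}_i)\leq n$), but that machinery is not yet available at the point of Observation~\ref{observation: edge sets in enumerative construction}. So: (i) is complete and faithful to the paper; for (ii) you identify a real subtlety the paper's one-line hint glosses over, but the resolution you propose is not carried through and would need to be either fleshed out as the swap-and-unswap induction you gesture at, or replaced by a direct verification that the two truncated hypergraphs on $F$ represent the same pregeometry.
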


Our next goal is to attach hypergraph characterizations to the geometric embeddings $\sqsubseteq^*$ and $\sqsubseteq$. We do this by carefully choosing the enumeration used in the construction.

\begin{definition}
\label{definition: hydra and centered at}
Let $G$ be a flat pregeometry on a set $M$ and let $f$ be a valid enumeration for $G$ with domain $I$.
\begin{enumerate}
\item
Say that $f$ is a \emph{hydra} if for each $F$ closed in $G$ there exists an independent set $Z^f_F\in [F]^{\dm_G(F)}$ such that the set $\setarg{i\in I}{f_1(i) = Z^f_F\cup\set{a},\ a\in F}$ is an initial segment of $I^f_F$.
\item
For a set $P\subseteq M$, say that $f$ is \emph{centered} at $P$ if for every $F$ closed in $G$, the sets $\setarg{i\in I^f_F}{f_1(i)\subseteq P}$, $\setarg{i\in I^f_F}{\dm_G(f_1(i)\cap P) = \dm_G(F\cap P)}$ are initial segments of $I^f_F$. If $f$ is a hydra, require also that $|Z^f_F\cap P| = \dm_G(F\cap P)$.
\item
For $P\subseteq M$, letting $J=\setarg{i\in I}{f_1(i)\subseteq P}$, call the function $f|_J$ the \emph{restriction of $f$ to $P$}.
\end{enumerate}
\end{definition}

\begin{observation}
Let $G$ be a flat pregeometry defined on a set $M$ and let $H\subseteq G$ be the restriction of $G$ to $P\subseteq M$. Assume $H$ is flat. Then whenever $f$ is a valid enumeration for $G$, the restriction of $f$ to $P$ is a valid enumeration for $H$.
Moreover, if $f$ is a hydra for $G$ centered at $P$, then the restriction of $f$ to $P$ is a hydra for $H$.
\end{observation}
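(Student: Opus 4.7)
The plan is a direct verification from the definitions, pivoting on the dimension identity $\dm_H(F')=\dm_G(\cl_G(F'))$ for every $F'$ closed in $H$ (immediate from $\dm_H=\dm_G\restrictedto\Powerset(P)$ and the fact that closure preserves dimension in $G$). For $i\in J$, I would set $A=f_1(i)$, $F=f_2(i)$, and $F':=F\cap P$. Since $A\subseteq P$ has size $\dm_G(F)+1$ and spans $F$ in $G$, one gets $F=\cl_G(A)$ and $F'=\cl_H(A)$, and the identity yields $\dm_H(F')=\dm_G(F)$, so $(A,F')\in X_H$. The assignment $(A,F)\mapsto(A,F\cap P)$ thus bijects the image of $f|_J$ onto $X_H$, with inverse $(A',F')\mapsto(A',\cl_G(F'))$. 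The interval conditions follow by inheritance: $I^{f|_J}_n=I^f_n\cap J$ and, for $F'$ closed in $H$, $I^{f|_J}_{F'}=I^f_{\cl_G(F')}\cap J$ are intersections of intervals in $I$ with $J$, hence intervals in $J$; the ordering $I^{f|_J}_n<I^{f|_J}_m$ for $n<m$ is inherited from $I$. This confirms that $f|_J$ is a valid enumeration for $H$.

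For the hydra part, fix $F'$ closed in $H$, set $F=\cl_G(F')$, and define $Z^{f|_J}_{F'}:=Z^f_F\cap P$. The centering hypothesis combined with the key identity gives
\[
|Z^f_F\cap P|=\dm_G(F\cap P)=\dm_H(F')=\dm_G(F)=|Z^f_F|,
\]
so in fact $Z^f_F\subseteq P$ and $Z^{f|_J}_{F'}=Z^f_F$ is the required independent subset of $F'$ of size $\dm_H(F')$. For the initial segment condition, observe that an index $i$ with $f_1(i)=Z^f_F\cup\{a\}$ lies in $J$ iff $a\in P$ (as $Z^f_F\subseteq P$), iff $a\in F'$. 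Consequently the set $\{i\in I^{f|_J}_{F'}:(f|_J)_1(i)=Z^{f|_J}_{F'}\cup\{a\},\,a\in F'\}$ coincides with $\{i\in I^f_F\cap J:f_1(i)=Z^f_F\cup\{a\},\,a\in F\}$, which is the intersection of an initial segment of $I^f_F$ (provided by the hydra condition on $f$) with $J$, and so is an initial segment of $I^{f|_J}_{F'}=I^f_F\cap J$.

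The only substantive step is the dimension identity $\dm_H(F')=\dm_G(\cl_G(F'))$; it is precisely this that forces $|A|=\dm_H(F')+1$ in the first part and $Z^f_F\subseteq P$ in the hydra part. All remaining verifications reduce to the book-keeping observation that the intersection of an interval in $I$ with $J$ is an interval in $J$.
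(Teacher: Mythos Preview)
The paper states this as an Observation without proof, so there is no paper argument to compare against. Your argument, however, contains a genuine gap. You assert that for $(A,F)=f(i)$ with $i\in J$, the set $A$ spans $F$ in $G$, i.e.\ $\cl_G(A)=F$. This does not follow from the definition: $X_G$ consists of pairs $(A,F)$ with $A\in[F]^{\dm_G(F)+1}$, which only says $A$ is a $(\dm_G(F)+1)$-element subset of $F$, not that it generates $F$. For instance, take $G$ on $\{a,b,c,d\}$ with $\cl_G(\emptyset)=\{a,b\}$ and $c,d$ independent; then $F=\{a,b,c\}$ is closed of dimension $1$ and $(\{a,b\},F)\in X_G$, yet $\cl_G(\{a,b\})=\{a,b\}\neq F$. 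Taking $P=\{a,b,d\}$, one checks $|f[J]|=7$ while $|X_H|=5$, so your map cannot be a bijection: the pairs $(\{a,b\},\{a,b,c\})$ and $(\{a,b,d\},\{a,b,c,d\})$ lie in $f[J]$ but correspond to no element of $X_H$, since in each case $\dm_H(F\cap P)<\dm_G(F)$. Your proposed inverse $(A',F')\mapsto(A',\cl_G(F'))$ is indeed an injection of $X_H$ into $f[J]$, but it is not onto.

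This actually indicates that the Observation, read against the bijection requirement in the definition of valid enumeration, is stated somewhat loosely. For the Enumerative Construction---the only place the statement is invoked, in Proposition~\ref{prop: sqsubseteq implies leqslant}---only $g_1$ is consulted at each stage, and indices $i$ with $\dm_G(f_1(i))<|f_1(i)|-1$ are harmlessly skipped, so the construction proceeds with $g=f|_J$ regardless. A literal proof would need either to shrink $J$ to those $i$ with $\dm_G(f_2(i)\cap P)=\dm_G(f_2(i))$, or to relax ``bijection'' to ``surjection''. Your hydra argument, by contrast, is sound on its own terms: there you only consider $F=\cl_G(F')$ for $F'$ closed in $H$, and such $F$ do satisfy $\dm_G(F\cap P)=\dm_G(F)$, so centering forces $Z^f_F\subseteq P$ exactly as you claim.
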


Using a hydra $f$ for the enumeration makes the resulting hypergraph easier to understand. For a closed set $F$, looking only at the edges $e\subseteq F$ with $|e| = \dm(F)+1$, we get a (partial) ``sun'' shape, with the edges connecting elements of $F$ to the basis $Z^f_F$. Those elements who do not have an edge going to them, are already in the closure of the ``sun'' due to existing edges of lower dimension.

\begin{lemma}
\label{lemma: edges contain hydra root}
Let $G$ be a flat pregeometry on $M$, let $f$ be a hydra for $G$ with domain $I$. Then for $F$ closed in $G$, if $e$ is an edge in $A_f$ such that $\cl_G(e) = F$, then $Z^f_{F}\subseteq e$.

In particular, if $f$ is centered at $P\subseteq M$, $\dm_G(e\cap P) = \dm_G(F\cap P)$.
\end{lemma}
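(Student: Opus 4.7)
The plan is to exploit the ordering property of a hydra: for any closed $F\subseteq G$ the \emph{hydra initial segment} $\setarg{i\in I^f_F}{f_1(i) = Z^f_F\cup\set{a},\ a\in F}$ is the first chunk of $I^f_F$. I would first show that immediately after this chunk is processed, one already has $F\subseteq \cl_{\hg{A}_j}(Z^f_F)$ in the then-current hypergraph $\hg{A}_j$; any later edge candidate $e\in [F]^{\dm_G(F)+1}$ is therefore forced to satisfy $\dm_{\hg{A}_j}(e) = \dm_G(e)$ already, so the construction does not add $e$ as an edge. Taking the contrapositive gives the lemma.

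For the first step, I would walk along the hydra initial segment for $F$ and check, at each stage $i$ with $f_1(i) = Z^f_F\cup\set{a}$, that $a\in\cl_{\hg{A}_{i+1}}(Z^f_F)$. Either the construction finds $\dm_{\hg{A}_i}(f_1(i)) = \dm_G(F)$ and skips, in which case $Z^f_F$ (still $\hg{A}_i$-independent) is a maximal $\hg{A}_i$-independent subset of $f_1(i)$ and $a$ is already in $\cl_{\hg{A}_i}(Z^f_F)$; or the edge $Z^f_F\cup\set{a}$ is added, after which assertion $(2)$ of Lemma \ref{lemma: add edge} again gives $\dm_{\hg{A}_{i+1}}(Z^f_F\cup\set{a}) = |Z^f_F|$ and $a\in \cl_{\hg{A}_{i+1}}(Z^f_F)$. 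Iterating over all $a\in F\setminus Z^f_F$ yields $F\subseteq \cl_{\hg{A}_j}(Z^f_F)$ once the initial segment for $F$ is fully processed.

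For the second step, take $e$ an edge of $\hg{A}_f$ with $\cl_G(e) = F$. Good representation forces $|e| = \dm_G(F)+1$, so $e$ is enumerated at some $j\in I^f_F$ with $f_1(j) = e$. If $j$ is in the hydra initial segment, the description of that segment already gives $Z^f_F\subseteq e$. Otherwise $j$ is strictly later, whence $e\subseteq F\subseteq \cl_{\hg{A}_j}(Z^f_F)$ and thus $\dm_{\hg{A}_j}(e)\leq |Z^f_F| = \dm_G(F)$. Assertion $(3)$ of Lemma \ref{lemma: add edge} gives $\cl_{\hg{A}_j}\subseteq \cl_G$, which upgrades on finite sets to the reverse inequality $\dm_{\hg{A}_j}(e)\geq \dm_G(e)$. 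Together these pin $\dm_{\hg{A}_j}(e) = \dm_G(e)$, so the construction skips $e$ at stage $j$, contradicting $e\in \hg{A}_f$.

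The ``in particular'' clause is then a short sandwich argument: once $Z^f_F\subseteq e\subseteq F$ is in hand, the chain $Z^f_F\cap P\subseteq e\cap P\subseteq F\cap P$, combined with independence of $Z^f_F\cap P$ and the centering hypothesis $|Z^f_F\cap P| = \dm_G(F\cap P)$, squeezes $\dm_G(e\cap P)$ to equal $\dm_G(F\cap P)$. The main, rather modest, obstacle is confirming that the construction really skips $e$ when $Z^f_F\not\subseteq e$; that hinges on the ordering properties of a hydra conspiring with assertion $(3)$ of Lemma \ref{lemma: add edge}, and nothing more elaborate is required.
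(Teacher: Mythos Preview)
Your proof is correct and follows essentially the same approach as the paper's. The paper's argument is marginally more direct: rather than first establishing $F\subseteq\cl_{\hg{A}_j}(Z^f_F)$ after the entire hydra initial segment, it simply notes for a candidate $f_1(i)$ with $Z^f_F\nsubseteq f_1(i)$ that each $a\in f_1(i)\setminus Z^f_F$ already lies in $\cl_{\hg{A}_i}(Z^f_F)$ because $Z^f_F\cup\{a\}$ was examined earlier, and concludes $\dm_{\hg{A}_i}(f_1(i))\leq n$ immediately --- but this is a cosmetic difference, not a different idea.
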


\begin{proof}
Let $F$ be an $n$-dimensional closed set in $G$ and let $i\in I^f_F$. It will be enough to show that, if $Z^f_F\nsubseteq f_1(i)$, then when $f_1(i)$ is examined during the construction of $\hg{A}_f$, already $\dm_{\hg{A}_i}(f_1(i)) \leq n$.

Let $a\in f_1(i)\setminus Z^f_F$. Since we have examined the set $Z^f_F\cup\set{a}$ in a previous stage, we have $a\in\cl_{\hg{A}_i}(Z^f_F)$. Thus, $f_1(i)\subseteq \cl_{\hg{A}_i}(Z^f_F)$ and $\dm_{\hg{A}_i}(f_1(i)) \leq \dm_{\hg{A}_i}(Z^f_F) = n$.
\end{proof}

This subsection culminates in the proof of Corollary \ref{corollary: sqsubseteq is equivalent to self-sufficient}, summarizing the analogy between $\sqsubseteq$ and $\strong$. Propositions \ref{prop: self-sufficiency implies sqsubseteq} and \ref{prop: sqsubseteq implies leqslant} are the two directions of the proof.

\begin{prop}
\label{prop: self-sufficiency implies sqsubseteq}
Let $\hg{A} = (M,R)$, $\hg{B}=(P,R[P])$ be such that $\hg{B}\strong \hg{A}$. Then $G_{\hg{B}} \sqsubseteq G_{\hg{A}}$. In particular, $G_{\hg{B}} \sqsubseteq^* G_{\hg{A}}$.
\end{prop}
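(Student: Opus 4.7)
The plan is to choose witnessing hypergraph data for both $\flatsum$ values so that each collapses to a clean edge-vs-vertex count inside $\hg{A}$; the desired inequality then reduces to a single counting argument that invokes $\hg{B} \strong \hg{A}$.

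Fix $\Sigma = \set{E_1,\dots,E_k}$ closed finite-dimensional in $G_\hg{A}$, and set $F_i := E_i \cap P$. Choose a finite $\mathbb{F} \subseteq E_\emptyset$ so that, for every $\emptyset \neq s \subseteq [k]$, $\mathbb{F} \cap E_s$ contains a basis of $E_s$ and $\mathbb{F} \cap P \cap E_s$ contains a basis of $F_s$. Let $P_\emptyset := \sscl_\hg{A}(\mathbb{F})$ and --- this is the key choice --- $Q_\emptyset := P_\emptyset \cap P$. Being an intersection of sets self-sufficient in $\hg{A}$, $Q_\emptyset \strong \hg{A}$, and since $Q_\emptyset \subseteq P$ also $Q_\emptyset \strong \hg{B}$. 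Put $P_s := P_\emptyset \cap E_s$, $Q_s := Q_\emptyset \cap E_s = P_s \cap P$, $\mathbb{P} := \bigcup_i P_i$, $\mathbb{Q} := \bigcup_i Q_i = \mathbb{P} \cap P$; from self-sufficiency together with $\cl_{G_\hg{A}}(P_s) \supseteq E_s$ and $\cl_{G_\hg{B}}(Q_s) \supseteq F_s$, one gets $\delta_\hg{A}(P_s) = \dm_{G_\hg{A}}(E_s)$ and $\delta_\hg{B}(Q_s) = \dm_{G_\hg{B}}(F_s)$.

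Plain inclusion-exclusion applied to $|\bigcup_i P_i|$ and $|\bigcup_i R_\hg{A}[P_i]|$ (using $R_\hg{A}[\bigcap_{i\in s} P_i] = \bigcap_{i\in s} R_\hg{A}[P_i]$), together with $R_\hg{B}[X] = R_\hg{A}[X]$ for $X \subseteq P$, collapses both flatsums to the clean expressions
\[
\flatsum{G_\hg{A}}(\Sigma) \;=\; |\mathbb{P}| - \Big|\bigcup_i R_\hg{A}[P_i]\Big|, \qquad \flatsum{G_\hg{B}}(\Sigma_{G_\hg{B}}) \;=\; |\mathbb{Q}| - \Big|\bigcup_i R_\hg{A}[Q_i]\Big|.
\]
Subtracting, and using $\mathbb{Q} \subseteq \mathbb{P}$ and $\bigcup_i R_\hg{A}[Q_i] \subseteq \bigcup_i R_\hg{A}[P_i]$,
\[
\flatsum{G_\hg{A}}(\Sigma) - \flatsum{G_\hg{B}}(\Sigma_{G_\hg{B}}) \;=\; |\mathbb{P} \setminus \mathbb{Q}| - \Big|\bigcup_i R_\hg{A}[P_i] \setminus \bigcup_i R_\hg{A}[Q_i]\Big|.
\]

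The last step is bounding the edge difference by $|\mathbb{P} \setminus P|$. Any edge $e$ contributing to the right-hand cardinality lies in some $P_i$ but in no $Q_j = P_j \cap P$; hence $e \not\subseteq P$ (else $e \subseteq P_i \cap P = Q_i$), so $e$ has a vertex in $P_i \setminus P \subseteq \mathbb{P} \setminus P$. Thus every such edge sits in $R_\hg{A}[(\mathbb{P} \setminus P) \cup P] \setminus R_\hg{A}[P]$, and $\hg{B} \strong \hg{A}$ applied to the finite set $\mathbb{P} \setminus P$ yields $|\mathbb{P} \setminus P| \geq |R_\hg{A}[(\mathbb{P} \setminus P) \cup P] \setminus R_\hg{A}[P]|$. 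Combined with $\mathbb{P} \setminus \mathbb{Q} = \mathbb{P} \setminus P$, this gives $\flatsum{G_\hg{A}}(\Sigma) \geq \flatsum{G_\hg{B}}(\Sigma_{G_\hg{B}})$, so $G_\hg{B} \sqsubseteq G_\hg{A}$; the ``in particular'' is Observation~\ref{observation: sqsubseteq implies sqsubseteq^*}. The main obstacle is identifying the right $Q_\emptyset$: the naive $\sscl_\hg{B}(\mathbb{F} \cap P)$ can be properly contained in $P_\emptyset \cap P$, which destroys the identity $\mathbb{Q} = \mathbb{P} \cap P$ and leaves ``bad'' edges living entirely inside $P$, where $\hg{B} \strong \hg{A}$ offers no leverage. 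Taking $Q_\emptyset = P_\emptyset \cap P$ forces every uncounted edge to escape $P$, exactly where self-sufficiency bites.
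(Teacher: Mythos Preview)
Your argument is correct and genuinely different from the paper's. The paper deletes from $\hg{A}$ every edge not contained in some $E_i$, obtaining $\hg{A}'$ (and $\hg{B}' = \hg{A}'[P]$) in which $\bigcup\Sigma$ is automatically self-sufficient and every edge is covered; the ``additional part'' of Proposition~\ref{prop: flatness} then forces $\flatsum{\hg{A}'}(\Sigma) = \dm_{\hg{A}'}(\bigcup\Sigma)$ and $\flatsum{\hg{B}'}(\Sigma_P) = \dm_{\hg{B}'}(\bigcup\Sigma_P)$, and the inequality drops out of monotonicity of dimension. You instead run the Proposition~\ref{prop: flatness} computation in parallel for $\hg{A}$ and $\hg{B}$, the crucial move being the coherent choice $Q_\emptyset = P_\emptyset\cap P$ (rather than a separate self-sufficient closure inside $\hg{B}$), which aligns the two inclusion--exclusion identities so that their difference is literally $\delta_{\hg{A}}(\mathbb{P}\setminus P\,/\,P)$ minus a nonnegative overcount, and $\hg{B}\strong\hg{A}$ finishes it. Your route is more elementary---it needs neither the edge-deletion trick nor the equality clause of Proposition~\ref{prop: flatness}---and it exposes the difference $\flatsum{G_\hg{A}}(\Sigma)-\flatsum{G_\hg{B}}(\Sigma_{G_\hg{B}})$ explicitly as a relative predimension; the paper's route is more modular and reuses machinery it has already built.
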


\begin{proof}
Let $\Sigma$ be a finite collection of finite dimensional closed subsets of $G_{\hg{A}}$.
Define $R' = \bigcup_{E\in \Sigma} R[E]$ and consider the hypergraphs $\hg{A}' = (M,R')$, $\hg{B}' = (P,R'[P])$ obtained by removing from $\hg{A}$ and $\hg{B}$ all edges not contained in some $E\in\Sigma$. Observe the following easy facts:
\begin{enumerate}
\item
Since $R'\subseteq R$, if $Y\strong \hg{A}$, then also $Y\strong \hg{A}'$.
\item
For any $E\in\Sigma$ and $Y\strong \hg{A}[E]$, because $R'[Y] = R[Y]$, we have that $\dm_{\hg{A}}(Y) = \dm_{\hg{A}'}(Y)$.
\item
Because $R' = R'[\bigcup\Sigma]$, clearly $\bigcup \Sigma \strong \hg{A}'$.
\end{enumerate}

From applying $(1)$ to $P$, we get $\hg{B}'\strong \hg{A}'$, hence $G_{\hg{B}'} \subseteq G_{\hg{A}'}$ by (2) of Fact \ref{fact: self-sufficiency properties}. Applying $(2)$ to $E_s:= \bigcap S$, for any non-empty $S\subseteq \Sigma$, we get $\dm_{\hg{A}}(E_s) = \dm_{\hg{A}'}(E_s)$ and $\dm_{\hg{B}}(E_s\cap P) = \dm_{\hg{A}}(E_s\cap P) = \dm_{\hg{A}'}(E_s\cap P) = {\dm_{\hg{B}'}(E_s\cap P)}$, where $(2)$ is again used for the middle equality. Therefore, we see that $\flatsum{\hg{A}}(\Sigma) = \flatsum{\hg{A}'}(\Sigma)$ and ${\flatsum{\hg{B}}(\Sigma_P) = \flatsum{\hg{B}'}(\Sigma_P)}$. By the additional part of Proposition \ref{prop: flatness}, we have ${\flatsum{\hg{A}'}(\Sigma) = \dm_{\hg{A}'}(\bigcup \Sigma)}$ and ${\flatsum{\hg{B}'}(\Sigma_P) = \dm_{\hg{B}'}(\bigcup\Sigma_P) = \dm_{\hg{A}'}(\bigcup \Sigma_P)}$, where the last equality is by $(3)$. By monotonicity, $\dm_{\hg{A}'}(\bigcup \Sigma_P) \leq \dm_{\hg{A}'}(\bigcup\Sigma)$, so $\flatsum{\hg{B}}(\Sigma_P) \leq \flatsum{\hg{A}}(\Sigma)$. We conclude that $G_{\hg{B}} \sqsubseteq G_{\hg{A}}$.
\end{proof}

%The following lemma is a component of the proof of Proposition \ref{prop: sqsubseteq implies leqslant} extracted for better readability.

\begin{lemma}
\label{lemma: flatsum covering edges}
Let $\hg{A} = (M,R)$. Let $X\subfin M$, $Y\supseteq X$ be such that $\delta_{\hg{A}}(Y) = \dm_{\hg{A}}(X)$, and let $\Gamma = \setarg{\cl_{\hg{A}}(r)}{r\in R[Y]}$. If $\Sigma\supseteq \Gamma$ is a finite collection of finite dimensional closed subsets of $\hg{A}$ with $X\subseteq \bigcup \Sigma$ and $\dm_{\hg{A}}(E\cap Y) = \dm_{\hg{A}}(E)$ for every $E\in \Sigma$, then $\flatsum{\hg{A}}(\Sigma) = \dm_{\hg{A}}(X)$.

In particular, if $Y=M$ and $\Sigma\supseteq \Gamma$, then $\flatsum{\hg{A}}(\Sigma) = \dm_{\hg{A}}(M)= \delta(\hg{A})$.
\end{lemma}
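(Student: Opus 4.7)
My plan rests on the observation that the numerical identity $\delta_{\hg{A}}(Y)=\dm_{\hg{A}}(X)$ secretly forces $Y\strong\hg{A}$, after which the whole computation transplants into the sub-hypergraph $\hg{Y}:=\hg{A}[Y]$, where the hypotheses on $\Sigma$ exhibit the equality case of Proposition~\ref{prop: flatness}. First I would note that the chain $\dm_{\hg{A}}(X)\leq\dm_{\hg{A}}(Y)\leq\delta_{\hg{A}}(Y)=\dm_{\hg{A}}(X)$ is forced to equalities throughout, so $\dm_{\hg{A}}(Y)=\delta_{\hg{A}}(Y)$ and hence $Y\strong\hg{A}$. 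By Proposition~\ref{prop: self-sufficiency implies sqsubseteq} and Observation~\ref{observation: sqsubseteq implies sqsubseteq^*}, this gives $G_{\hg{Y}}\sqsubseteq^* G_{\hg{A}}$.

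Setting $\Sigma_Y:=\setarg{E\cap Y}{E\in\Sigma}$, each $E\cap Y$ is closed in $\hg{Y}$ (by $Y\strong\hg{A}$), and the hypothesis $\dm_{\hg{A}}(E\cap Y)=\dm_{\hg{A}}(E)$ together with closedness of $E$ yields $E=\cl_{G_{\hg{A}}}(E\cap Y)$. Corollary~\ref{corollary: sqsubseteq^* plays nicely with sums} then gives $\flatsum{\hg{A}}(\Sigma)=\flatsum{\hg{Y}}(\Sigma_Y)$. Next I would verify $\bigcup\Sigma_Y=Y$: points of $X$ lie in $Y\cap\bigcup\Sigma=\bigcup\Sigma_Y$ directly; while for $v\in Y\setminus X$, if $v$ were in no edge of $R[Y]$ then $\delta_{\hg{A}}(Y\setminus\set{v})=\delta_{\hg{A}}(Y)-1<\dm_{\hg{A}}(X)$, contradicting the infimum definition of $\dm_{\hg{A}}$ applied to $X\subseteq Y\setminus\set{v}$. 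So $v\in r$ for some $r\in R[Y]$, whence $v\in\cl_{\hg{A}}(r)\in\Gamma\subseteq\Sigma$.

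Finally I would invoke the additional part of Proposition~\ref{prop: flatness} inside $\hg{Y}$ on the collection $\Sigma_Y$: since $\bigcup\Sigma_Y=Y$, the self-sufficient condition becomes the trivial $Y\strong\hg{Y}$, and every edge $r\in R[Y]$ sits inside $\cl_{\hg{A}}(r)\cap Y\in\Sigma_Y$, so the edge-covering condition holds. Hence $\flatsum{\hg{Y}}(\Sigma_Y)=\dm_{\hg{Y}}(Y)=\delta(\hg{Y})=\delta_{\hg{A}}(Y)=\dm_{\hg{A}}(X)$, and combined with the previous paragraph this gives the main claim. The ``in particular'' statement is then immediate on taking $X=M$, as $\hg{A}$ finite ensures $\dm_{\hg{A}}(M)=\delta(\hg{A})$ automatically. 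The principal hurdle is recognizing that the numeric assumption $\delta_{\hg{A}}(Y)=\dm_{\hg{A}}(X)$ already encodes $Y\strong\hg{A}$ — once this is seen, the reduction to the much cleaner $\hg{Y}$ turns the lemma into a direct application of the equality case of flatness.
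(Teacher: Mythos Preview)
Your proof is correct and follows essentially the same route as the paper's: establish $Y\strong\hg{A}$ from the numeric hypothesis, pass to $\hg{Y}=\hg{A}[Y]$ via $G_{\hg{Y}}\sqsubseteq^* G_{\hg{A}}$ and Corollary~\ref{corollary: sqsubseteq^* plays nicely with sums}, verify $\bigcup\Sigma_Y=Y$ by the edge-counting argument on $Y\setminus X$, and conclude with the equality case of Proposition~\ref{prop: flatness}. Your write-up is in fact slightly more explicit than the paper's in justifying $Y\strong\hg{A}$ and the identity $E=\cl_{\hg{A}}(E\cap Y)$, but the argument is the same.
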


\begin{proof}
Denote $\hg{B} = \hg{A}[Y]$. By Proposition \ref{prop: self-sufficiency implies sqsubseteq}, $G_{\hg{B}}\sqsubseteq^* G_{\hg{A}}$. Since by assumption $\Sigma = \setarg{\cl_{\hg{A}}(F)}{F\in \Sigma_Y}$, Corollary \ref{corollary: sqsubseteq^* plays nicely with sums} gives $\flatsum{\hg{A}}(\Sigma) = \flatsum{\hg{B}}(\Sigma_Y)$.

Every $y\in Y\setminus X$ is involved in some relation $r\in R[Y]$, or otherwise we would have $\delta(\hg{B}) > \dm_{\hg{A}}(X)$. Thus, $\bigcup \Sigma_Y = Y\strong \hg{B}$, and by definition of $\Gamma$, $R[Y]\subseteq \bigcup_{E\in \Sigma}R[E]$. Then Proposition \ref{prop: flatness} guarantees that $\flatsum{\hg{B}}(\Sigma_Y) = \dm_{\hg{B}}(Y) = \dm_{\hg{A}}(X)$.
\end{proof}

%\begin{remark}
%In $(1)$ of the next lemma, it is not strictly required that $H\sqsubseteq^* G$. It may be that flatness of $H$ suffices, this was not pursued further.
%\end{remark}

\begin{prop}
\label{prop: sqsubseteq implies leqslant}
Let $H\subseteq G$ be flat pregeometries on $P\subseteq M$, respectively. Let $f$ be a hydra for $G$ centered at $P$ with $\Domain(f)=I$. Let $g$ be the restriction of $f$ to $P$, and denote $J=\Domain (g)$. Let $\hg{A} = (M, R)$ be the $f$-construction of $G$ and let $\hg{B} =(P, S)$ be the $g$-construction of $H$. Then
\begin{enumerate}
\item 
If $H\sqsubseteq^* G$, then $R[P] \subseteq S$.
\item
$H\sqsubseteq G$ if and only if $R[P] = S$ if and only if $\hg{B} \strong \hg{A}$.
\end{enumerate}
\end{prop}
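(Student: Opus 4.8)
We first settle the parts of (2) that do not involve the constructions. As $\hg A$, $\hg B$ are good representations, $\dm_{\hg A}=\dm_G$ and $\dm_{\hg B}=\dm_H=\dm_G\restrictedto\Powerset(P)=\dm_{\hg A}\restrictedto\Powerset(P)$. So if $R[P]=S$ then $\hg B=\hg A[P]$, and (2) of Fact~\ref{fact: self-sufficiency properties} gives $\hg B\strong\hg A$; conversely the relation $\hg B\strong\hg A$ is only defined for $\hg B$ of the form $(P,R[P])$, hence entails $S=R[P]$. When $\hg B\strong\hg A$ holds, $G_{\hg B}$ has dimension function $\dm_{\hg A}\restrictedto\Powerset(P)=\dm_H$, so $G_{\hg B}=H$ and Proposition~\ref{prop: self-sufficiency implies sqsubseteq} gives $H=G_{\hg B}\sqsubseteq G_{\hg A}=G$. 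It therefore remains to prove
\begin{enumerate}[(a)]
\item if $H\sqsubseteq^* G$ then $R[P]\subseteq S$ --- this is (1); and
\item if $H\sqsubseteq G$ then $S\subseteq R[P]$;
\end{enumerate}
since then (a), Observation~\ref{observation: sqsubseteq implies sqsubseteq^*} and (b) yield $H\sqsubseteq G\Rightarrow R[P]=S$, closing the cycle $H\sqsubseteq G\Rightarrow R[P]=S\Leftrightarrow\hg B\strong\hg A\Rightarrow H\sqsubseteq G$.

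Both (a) and (b) are proved by induction along $(I,<)$, running the two Enumerative Constructions in lockstep, so I first record the alignment forced by the hypotheses. Call a pair $i\in I^f_F$ a \emph{root-pair} of $F$ if $f_1(i)=Z^f_F\cup\set a$ for some $a\in F$; these form an initial segment of $I^f_F$. If $J$ contains any pair over a closed set $F$ of $G$, then $\dm_G(F\cap P)=\dm_G(F)$: a root-pair of $F$ inside $P$ gives $Z^f_F\subseteq P$ outright, while a non-root-pair of $F$ inside $P$ forces (comparing the two initial segments $\setarg{i\in I^f_F}{f_1(i)\subseteq P}$ and the set of root-pairs of $F$) every root-pair of $F$ to lie in $P$, again giving $Z^f_F\subseteq P$; in either case $\dm_G(F\cap P)=|Z^f_F\cap P|=\dm_G(F)$. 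For such $F$ one checks that $Z^f_F$ plays the role of $Z^g_{F\cap P}$, that these $F$ are precisely the sets $\cl_G(F')$ for $F'$ closed in $H$, and that $g$ enumerates the pairs of $J$ in the order inherited from $(I,<)$. Finally, by Lemma~\ref{lemma: edges contain hydra root}, every edge added to $\hg A$ has the form $Z^f_F\cup\set a$ with $F=\cl_G$ of the edge and $\dm_G(F)=|Z^f_F\cup\set a|-1$, and likewise for $\hg B$.

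For (a), assume $H\sqsubseteq^* G$ and prove $R_i[P]\subseteq S_i$ by induction on $i$. The only step requiring argument is the stage at which some pair $i\in J$, say $f_1(i)=e=Z\cup\set a\subseteq P$ with $Z=Z^f_F$, $F=\cl_G(e)$, $n=\dm_G(F)$, is added to $\hg A$: then $a\notin\cl_{\hg A_i}(Z)$ and we must see that $e$ is also added to $\hg B$, i.e.\ $a\notin\cl_{\hg B_i}(Z)$. Suppose not; set $Y_0=\sscl_{\hg B_i}(Z\cup\set a)$, so $\delta_{\hg B_i}(Y_0)=\dm_{\hg B_i}(Z)=n$, and $\Sigma=\setarg{\cl_G(r)}{r\in S_i[Y_0]}$. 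Using the inductive hypothesis $R_i[P]\subseteq S_i$ and the recorded alignment to identify which members of $\Sigma$ already carry an edge of $\hg A_i$, transporting the alternating sum to $G$ via Corollary~\ref{corollary: sqsubseteq^* plays nicely with sums} (legitimate because $H\sqsubseteq^* G$), and then running a flatness estimate over $G$ in the spirit of the proof of Lemma~\ref{lemma: add edge} (modelled on Proposition~\ref{prop: flatness}), one obtains $\dm_{\hg A_i}(Z\cup\set a)\le n$, i.e.\ $a\in\cl_{\hg A_i}(Z)$, a contradiction. Passing to the limit, $R[P]\subseteq S$. The induction for (b) is entirely parallel but keeps the stronger invariant $R_i[P]=S_i$; the upgrade from $\sqsubseteq^*$ to $\sqsubseteq$ is precisely what lets one also conclude that $\hg A$ never witnesses a dependence among vertices of $P$ by means of a vertex outside $P$, so that every edge added to $\hg B$ is also added to $\hg A$.

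The crux --- and the step most likely to be delicate --- is the inductive computation just sketched. The partial hypergraphs $\hg A_i$, $\hg B_i$ are not good representations of $G$, $H$, so their closure operators obey only the one-sided inclusions of assertion (3) of Lemma~\ref{lemma: add edge}; and under $\sqsubseteq^*$ alone, $\hg B$ really does pick up ``phantom'' edges inside $P$ that $\hg A$ circumvents by using vertices outside $P$, so $S_i$ may strictly contain $R_i[P]$. Showing that these phantom edges cannot pull $a$ into $\cl_{\hg B_i}(Z)$ --- equivalently, bounding the discrepancy between the dimension functions of $\hg A_i$ and $\hg B_i$ on the relevant sets --- is exactly where flatness of $G$ and the embedding hypothesis ($\sqsubseteq^*$ for (a), $\sqsubseteq$ for (b)) have to be invoked, through an inclusion--exclusion computation passed between $\hg B_i$, $\hg A_i$, and $G$.
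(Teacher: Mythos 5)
Your high-level skeleton is correct: the cycle $R[P]=S\Leftrightarrow\hg{B}\strong\hg{A}\Rightarrow H\sqsubseteq G$ is disposed of properly, and the decomposition into the two substantive implications (a) $H\sqsubseteq^* G\Rightarrow R[P]\subseteq S$ and (b) $H\sqsubseteq G\Rightarrow S\subseteq R[P]$ is sound. But the heart of the proof is absent, and the setup has two concrete problems.

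First, the inductive step is where all the content lives, and you explicitly acknowledge this (``the crux,'' ``the step most likely to be delicate''), yet what you write is only ``running a flatness estimate over $G$ in the spirit of the proof of Lemma~\ref{lemma: add edge} \dots one obtains $\dm_{\hg{A}_i}(Z\cup\set{a})\le n$.'' That sentence restates what must be proved; it is not a proof. The paper's computation spans several paragraphs: a careful enlargement $Y\rightsquigarrow\bar Y$ to make intersections witness their dimensions, construction of the sets $K$, $C$, $D$ (with $\dm_{\hg{A}_j}(K)<\dm_{\hg{B}_j}(K)$ because $K$ is a basis containing the phantom edge $e$), and an application of Lemma~\ref{lemma: flatsum covering edges} to evaluate $\flatsum{\hg{B}_j}(\Sigma_H\cup\set D)$ exactly, which only then yields the strict inequality $\dm_{\hg{A}_j}(\bar Y)<\dm_{\hg{B}_j}(\bar Y)$ from flatness. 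None of these moves is ``in the spirit of'' Lemma~\ref{lemma: add edge} in a way a reader could reconstruct.

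Second, your inductive invariant for (a) is too weak. You carry $R_i[P]\subseteq S_i$, but the paper's claim is the stronger statement that $\dm_{\hg{A}_j}(Y)\leq\dm_{\hg{B}_j}(Y)$ for \emph{every} finite $Y\subseteq P$. This matters: at the point where the dimension comparison first fails, the paper uses the IH at the preceding stage (equality of dimensions for that $Y$) to pin down that the offending edge $e=f_1(j)$ lies in $S_{j+1}\setminus R_{j+1}$ and is contained in $Y$. With only the edge-set inclusion in hand you cannot identify the culprit or set up the $\sscl$-and-flatsum argument, because adding an edge is governed by a dimension test at the prior stage, not by which edges are already present.

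Third, the claim that (b) is ``entirely parallel'' to (a) with the invariant upgraded to $R_i[P]=S_i$ misrepresents what is actually going on. The paper does not prove (b) by induction at all. It takes the \emph{minimal} $j$ with $e\in S_{j+1}\setminus R_{j+1}[P]$ (so that $S_j=R_j[P]$ automatically), sets $X=\sscl_{\hg{A}_j}(e)$, strips both hypergraphs of the top-dimensional edges over $\cl_G(e)$ to get $\hg{A}'$, $\hg{B}'$, and then uses Lemma~\ref{lemma: flatsum covering edges} (with $\Sigma_G=\setarg{\cl_G(X_0)}{X_0\in[X]^{<n}}$) to produce an explicit $\flatsum{G}(\Sigma_G)<\flatsum{H}(\Sigma_H)$, which directly violates $H\sqsubseteq G$. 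This is a one-shot global argument whose input is $H\sqsubseteq G$ as a quantitative flatsum inequality, not an inductive invariant about edge sets; the structural resemblance to (a) is superficial. Without that argument, even granting (a), you have not shown $S\subseteq R[P]$.

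Your preliminary ``alignment'' observations (root-pairs, $Z^f_F\subseteq P$, the restriction $g$ inheriting the order) are correct and match what the paper uses implicitly, so that part is fine. But the two substantive flatness computations --- the inductive claim for (1) and the minimal-bad-edge argument for (2) --- are both missing, and they are the entire content of the proposition.
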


\begin{proof}
Assume first that $H\sqsubseteq^* G$. For each $j\in J\subseteq I$, denote by $\hg{A}_j = (M, R_j)$ and $\hg{B}_j = (P, S_j)$ the $j$-th stages of the construction of $\hg{A}$ and $\hg{B}$, respectively.

\medskip
\noindent\textbf{Claim.} For each stage $j\in J$ of the construction, whenever $Y\subseteq P$ is finite, then $\dm_{\hg{A}_j}(Y) \leq \dm_{\hg{B}_j}(Y)$.

\begin{proof}
We prove by induction. This is clear for $j:=\min J$. If $j\in J$ is a limit stage and $\dm_{\hg{A}_j}(Y) > \dm_{\hg{B}_j}(Y)$ for some $Y\subseteq P$, then there is already some successor stage $j'<j$ in which $R_{j'}[\sscl_{\hg{A}_j}(Y)] = R_{j}[\sscl_{\hg{A}_j}(Y)]$ and $S_{j'}[\sscl_{\hg{B}_j}(Y)] = S_{j}[\sscl_{\hg{B}_j}(Y)]$, implying $\dm_{\hg{A}_{j'}}(Y) > \dm_{\hg{B}_{j'}}(Y)$. Thus, we only need to take care of successor stages.

Assume the claim is true for stage $j\in J$, but not $j+1$. Let $Y\subseteq P$ be finite such that $\dm_{\hg{A}_{j+1}}(Y) > \dm_{\hg{B}_{j+1}}(Y)$. By replacing it with its self-sufficient closure in $\hg{B}_{j+1}$, we may assume that $Y\strong \hg{B}_{j+1}$, and so $Y\strong \hg{B}_{j}$. Let $e := f_1(j) = g_1(j)$, $n := |e|-1$, and $F=\cl_G(e)$. It must be that $e$ is an edge in $\hg{B}_{j+1}$, but not in $\hg{A}_{j+1}$, and $e\subseteq Y$.

Let $\Gamma_j$ be the collection of closed sets in $H$ whose tuples we finished enumerating prior to stage $j$ of the construction. Let $\Sigma_H = \setarg{E\in \Gamma_j}{\dm_H(E\cap Y) = \dm_H(E)}$. For each $E\in \Sigma_H$, denote $E_G =\cl_G(E)$. Then for each $E\in \Sigma_H$, $\cl_{\hg{B}_j}(E)=E$, $\cl_{\hg{A}_j}(E) = E_G$, and $\dm_{\hg{B}_j}(E) = \dm_{\hg{A}_j}(E_G) = \dm_G(E_G)$. By $H\sqsubseteq^* G$, we have $\dm_{H}(E\cap F) = \dm_{G}(E_G\cap F)$. Again by $H\sqsubseteq^* G$ and choice of $\Gamma_j$, using Corollary \ref{corollary: sqsubseteq^* plays nicely with sums}, we have $\flatsum{\hg{B}_j}(\Sigma_H) = \flatsum{H}(\Sigma_H) = \flatsum{G}(\Sigma_G) = \flatsum{\hg{A}_j}(\Sigma_G)$.

We'd like to have $\dm_{\hg{B}_j}(E\cap F\cap Y) = \dm_{\hg{A}_j}(E_G\cap F)$, for every $E\in \Sigma_H$. We achieve this by increasing $Y$ to a superset $\bar{Y}$. For each $E\in \Sigma_H$, let $K_E\subseteq E$ be finite such that $\cl_G(K_E) \supseteq E\cap F$. In particular, $\cl_G(K_E) = E_G\cap F$. Let $L_E \subseteq E$ be finite containing $K_E$ such that $\delta_{\hg{B}_j}(L_E/E\cap Y)\leq 0$. Let $\bar{Y}$ be $Y\cup\bigcup_{E\in\Sigma} L_E$. Since $Y\strong \hg{B}_j$ and $\delta_{\hg{B}_j}(\bar{Y}/Y)\leq 0$, we have $\bar{Y}\strong \hg{B}_j$ and every edge in $S_j[\bar{Y}]\setminus S_j[Y]$ is in $S_j[L_E\cup Y]$ for some $E\in\Sigma_H$. Observe that still $\dm_{\hg{A}_{j+1}}(\bar{Y}) \geq \dm_{\hg{A}_{j+1}}(Y) > \dm_{\hg{B}_{j+1}}(Y) = \dm_{\hg{B}_{j+1}}(\bar{Y})$.

Let $K$, containing $e$, be a basis for $\bigcup_{E\in \Sigma_H} K_E$ in $\hg{B}_j$. Observe that since $e$ is not an edge in $\hg{A}_{j+1}$, the set $K$ is not independent in $\hg{A}_j$, hence $\dm_{\hg{A}_j}(K) < \dm_{\hg{B}_j}(K)$. Let $C = \cl_{\hg{A}_j}(K)$, $D = \cl_{\hg{B}_j}(K)$, and note that $C = \cl_{\hg{A}_j}(D)$. For each $E\in \Sigma_H$, as $K_E\subseteq C, D$, we have $\dm_{\hg{B}_j}(D\cap E) = \dm_{\hg{A}_j}(C\cap E_G) = \dm_{G}(F\cap E_G)$. Since $e\subseteq D$ and $g$ is a hydra, $Z^f_F\subseteq e \subseteq D$, and so Lemma \ref{lemma: edges contain hydra root} gives us that any $e'\in S_j[F]$ with $|e'| = n+1$, is contained in $D$.
%By Proposition \ref{prop: flatness}, since $S_j[\bar{Y}] \subseteq S_j[D\cap \bar{Y}]\cup\bigcup_{E\in \Sigma_H} S_j[E\cap \bar{Y}]$ and $\bar{Y}\strong \hg{B}_j$, we have the equality $\flatsum{\hg{B}_j[\bar{Y}]}(\Sigma_{\bar{Y}}\cup \set{D\cap \bar{Y}}) = \dm_{\hg{B}_j[\bar{Y}]}(\bar{Y}) = \dm_{\hg{B}_j}(\bar{Y})$.
Now all conditions of Lemma \ref{lemma: flatsum covering edges} hold with respect to $\hg{B}_j$, $Y$, $\bar{Y}$, and $\Sigma\cup \set{D}$. Thus, $\flatsum{\hg{B}_j}(\Sigma_H\cup \set{D}) = \dm_{\hg{B}_j}(Y) = \dm_{\hg{B}_j}(\bar{Y})$.
%By choice of $K$, we have $\dm_{\hg{B}_j}(E\cap D) = \dm_{\hg{A}_j}(E\cap C) = \dm_{G}(E_G\cap F)$ for every $E\in\Sigma_H$.
Note that since $Y\strong \hg{B}_{j}$, the set $\Sigma_H$ is closed under intersections.
Thus,
\begin{align*}
\flatsum{\hg{A}_j}(\Sigma_G \cup \set{C}) - \flatsum{\hg{B}_j}(\Sigma_{H} \cup \set{D}) &= \dm_{\hg{A}_j}(C) - \dm_{\hg{B}_j}(D) < 0.
\end{align*}
%By choice of $\bar{Y}$, we have $\dm_{\hg{B}_j[\bar{Y}]}(E\cap F\cap \bar{Y}) = \dm_{\hg{A}_j}(E_G\cap F)$, for every $E\in \Sigma_H$. By $\bar{Y}\strong \hg{B}_j$, Proposition \ref{prop: self-sufficiency implies sqsubseteq} and Corollary \ref{corollary: sqsubseteq^* plays nicely with sums}, $\flatsum{\hg{B}_j[\bar{Y}]}(\Sigma_{\bar{Y}}) = \flatsum{\hg{B}_j}(\Sigma_H) = \flatsum{\hg{A}_j}(\Sigma_G)$. Thus,
%\begin{align*}
%\flatsum{\hg{A}_j}(\Sigma_G \cup \set{C}) - \flatsum{\hg{B}_j[\bar{Y}]}(\Sigma_{\bar{Y}} \cup \set{D}) &= \dm_{\hg{A}_j}(C) - \dm_{\hg{B}_j[\bar{Y}]}(D\cap \bar{Y})
%\\
%& = \dm_{\hg{A}_j}(C) - \dm_{\hg{B}_j}(D) < 0.
%\end{align*}
On the other hand, flatness of the pregeometry associated to $\hg{A}_j$ gives $\dm_{\hg{A}_j}(\bar{Y}) \leq \flatsum{\hg{A}_j}(\Sigma_G \cup \set{C})$. We conclude that $\dm_{\hg{A}_j}(\bar{Y}) < \dm_{\hg{B}_j}(\bar{Y})$. Since $\dm_{\hg{A}_{j+1}}(\bar{Y}) = \dm_{\hg{A}_j}(\bar{Y})$ and $\dm_{\hg{B}_{j+1}}(\bar{Y}) = \dm_{\hg{B}_j}(\bar{Y}) - 1$ this is in contradiction to $\dm_{\hg{A}_{j+1}}(\bar{Y}) > \dm_{\hg{B}_{j+1}}(\bar{Y})$.
\end{proof}

Part (1) of the main statement is now clear. For every $j\in J$, if $e:= f_1(j) = g_1(j)$ is an edge in $R[P]$, then $\dm_{\hg{B}_j}(e) \geq \dm_{\hg{A}_j}(e) = |e|$, so by construction $e\in S_{j+1}\subseteq S$.

We prove part (2). We assume $S\setminus R[P]\neq\emptyset$ and show $H\not\sqsubseteq G$. We may assume $H\sqsubseteq^* G$, for otherwise this is clear. Let $e\in S\setminus R[P]$ be such that $j:=\min f_1^{-1}(e)$ is minimal, i.e., $S_j=R_j[P]$. Denote $n:= |e|-1$, let $X=\sscl_{\hg{A}_j}(e)$, and observe that $X\cap P\strong \hg{B}_j$. If $X\subseteq P$, then $\delta_{\hg{B}_j}(X\cap P) = \delta_{\hg{A}_j}(X) = \dm_{\hg{A}_j}(X) = n$, which is not the case, because $e\in S_{j+1}$.

We strip away from $\hg{A}_j$ and $\hg{B}_j$ the edges of full dimension in $\cl_G(e)$. Denote $R' = \setarg{r\in R_j}{\cl_{\hg{A}_j}(r)\neq \cl_{G}(e)}$, $S' = R'[P]$ and let $\hg{A}' = (M, R')$, $\hg{B}' = (P,S')$. Note that by Lemma \ref{lemma: edges contain hydra root} and $f$ being centered at $P$, it must be that ${R_j[X]\setminus R' = S_j\setminus S'\subseteq [P]^{n+1}}$. Denoting $m=|S_j[X]\setminus S'|$, the number of edges we have removed, we have $\dm_{\hg{A}'}(X) = \delta_{\hg{A}'}(X) = n+m$ and $\dm_{\hg{B}'}(X\cap P) = \delta_{\hg{B}'}(X\cap P) = n+1+m$.

By construction, for every $k<n$, $G$ and $\hg{A}'$ have exactly the same $k$-dimensional subsets. The same goes for $H$ and $\hg{B}'$. Let $\Sigma_G = \setarg{\cl_{G}(X_0)}{X_0\in[X]^{<n}}$. Then $\flatsum{G}(\Sigma_G) = \flatsum{\hg{A}'}(\Sigma_G)$ as well as $\flatsum{H}(\Sigma_H) = \flatsum{\hg{B}'}(\Sigma_H)$. Since $X\strong \hg{A}'$ and $R'[X]\subseteq [X]^{\leq n}$, by Lemma \ref{lemma: flatsum covering edges} (for the special case $X=Y$, in the notation of the lemma), we have $\dm_{\hg{A}'}(X) = \flatsum{\hg{A}'}(\Sigma_G)$. Similarly, $\dm_{\hg{B}'}(X\cap P) = \flatsum{\hg{B}'}(\Sigma_H)$. Conclude that $\flatsum{G}(\Sigma_G) < \flatsum{H}(\Sigma_H)$ and $H\not\sqsubseteq G$.

If $R[P]=S$, then $\hg{B}\subseteq \hg{A}$. The restriction of $\dm_{\hg{A}}= \dm_{G}$ to subsets of $P$ is precisely $\dm_H = \dm_{\hg{B}}$. By (2) of Fact \ref{fact: self-sufficiency properties}, this means $\hg{B}\strong \hg{A}$.

Finally, $\hg{B}\strong \hg{A}$ implies $H\sqsubseteq G$ by Proposition \ref{prop: self-sufficiency implies sqsubseteq}.
\end{proof}

\begin{cor}
\label{corollary: sqsubseteq is equivalent to self-sufficient}
Let $G$ be a flat pregeometry and let $H\subseteq G$. Then $H\sqsubseteq G$ if and only if there exist hypergraphs $\hg{B}\strong \hg{A}$ such that $G_{\hg{B}} = H$ and $G_{\hg{A}} = G$. \qed
\end{cor}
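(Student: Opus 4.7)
\medskip
\noindent\textbf{Proof plan.}
The corollary is really a synthesis of the two preceding propositions, so the plan is to split along the biconditional and invoke them.

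For the direction $(\Leftarrow)$, this is immediate from Proposition \ref{prop: self-sufficiency implies sqsubseteq}: given hypergraphs $\hg{B}\strong \hg{A}$ with $G_{\hg{B}}=H$ and $G_{\hg{A}}=G$, that proposition hands us $G_{\hg{B}}\sqsubseteq G_{\hg{A}}$, i.e., $H\sqsubseteq G$.

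For the direction $(\Rightarrow)$, suppose $H\sqsubseteq G$ with $G$ defined on $M$ and $H$ on $P\subseteq M$. By Observation \ref{observation: sqsubseteq implies sqsubseteq^*} we have $H\sqsubseteq^* G$, and Corollary \ref{cor: sqsubseteq^* in flat implies flatness} ensures $H$ is itself flat, so both pregeometries are eligible for the Enumerative Construction. The plan is to choose a hydra $f$ for $G$ centered at $P$, let $g$ be the restriction of $f$ to $P$, set $\hg{A}$ to be the $f$-construction of $G$ and $\hg{B}$ to be the $g$-construction of $H$. These are then good representations of $G$ and $H$, and Proposition \ref{prop: sqsubseteq implies leqslant}(2) says precisely that $H\sqsubseteq G$ is equivalent to $\hg{B}\strong \hg{A}$, which delivers the required pair of hypergraphs.

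The only non-mechanical step is exhibiting a hydra $f$ for $G$ centered at the distinguished set $P$. This is where I expect the bookkeeping to be most delicate, though it is not deep: one must well-order the pairs $(A,F)$ so that (i) the blocks $I^f_n$ come in increasing order of $n$, (ii) within each closed set $F$ the tuples with $f_1(i)\subseteq P$ and the tuples with $\dm_G(f_1(i)\cap P)=\dm_G(F\cap P)$ each form an initial segment of $I^f_F$, and (iii) for each closed $F$ one selects an independent $Z^f_F\in[F]^{\dm_G(F)}$ with $|Z^f_F\cap P|=\dm_G(F\cap P)$ and enumerates the ``sun'' over $Z^f_F$ first. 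Choosing $Z^f_F$ by first taking a basis of $F\cap P$ in $H$ (which exists as $H$ is a subpregeometry on $P$) and extending it to a basis of $F$ in $G$ makes conditions (ii) and (iii) compatible; the ordering in (i)-(ii) is then obtained simply by ordering pairs lexicographically on (arity, containment in $P$, full intersection dimension with $P$, rest). With such an $f$ in hand, Proposition \ref{prop: sqsubseteq implies leqslant}(2) closes the argument.
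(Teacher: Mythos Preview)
Your proposal is correct and follows exactly the route the paper intends: the corollary is marked \qed\ precisely because the backward direction is Proposition~\ref{prop: self-sufficiency implies sqsubseteq} and the forward direction is Proposition~\ref{prop: sqsubseteq implies leqslant}(2), applied to a hydra for $G$ centered at $P$. Your extra care in spelling out how to build such a hydra (choose $Z^f_F$ by extending a basis of $F\cap P$ to a basis of $F$, then order lexicographically) is a detail the paper leaves entirely implicit.
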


%\begin{cor}
%\label{corollary: closed and independent are sqsubseteq}
%If $G$ is a flat pregeometry and $H\subseteq G$ is closed (or independent), then $H\sqsubseteq G$.
%\end{cor}
%
%\begin{proof}
%Every closed (or independent) subset of a hypergraph $\hg{A}$ is self-sufficient in $\hg{A}$.
%\end{proof}

\subsection{The $\alpha$-function}

The $\alpha$-function was defined by Mason \cite{Onaclassofmatroidsarisingfrompathsingraphs} in order to characterize the class of flat pregeometries  --- strict gammoids, in matroid theoretic terminology. However, Mason's definition of a strict gammoid was distinct from ours, going through linkages in directed graphs. From the point of view of our presentation, the $\alpha$ function is a measure of how much the dimension of a set deviates from the sum of ``dimensional data'' contained in its subsets of smaller dimension, under the assumption ``flat'' interaction between these lower dimensional subsets.

\begin{definition}
In the context of an ambient pregeometry $G$, for $X\subseteq G$, write $Y\subflat X$ to indicate that $Y$ is a closed set in $G$ such that $Y\subseteq X$. Write $Y\psubflat X$ to mean $Y\subflat X$ and $Y\neq X$.
\end{definition}

\begin{remark}
When there is ambiguity with respect to the ambient pregeometry with which $Y\subflat X$ is used, we dispel it like so: $Y\subflat X\subseteq H$.
\end{remark}

\begin{definition}
Let $G$ be a pregeometry. For every $X\subseteq G$ finite, define recursively
\[
\alpha_{G}(X) = |X|-\dm(X) - \sum_{Y\psubflat X}\alpha_G(Y)
\]
In this definition an empty sum is taken to equal zero.
\end{definition}

In the flat context, $\alpha(X)$ is the number of ``edges'' that must be put on $X$, on top of edges contained in its closed proper subsets, in order to achieve its dimension.

\begin{prop}
\label{prop: alpha counts edges}
Let $G$ be a flat pregeometry on $M$. Let $\hg{A}=(M,R)$ be a representation of $G$ and let $F$ be a finite closed subset of $G$. Then $\alpha_G(F) = |\setarg{r\in R}{\cl(r) = F}|$.
\end{prop}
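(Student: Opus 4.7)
The plan is to prove this by induction on the finite closed set $F$ with respect to the partial order $\subflat$. Since $F$ is finite, the collection $\setarg{Y}{Y\subflat F}$ is a finite set and the induction is well-founded. For each closed $Y\subflat F$, let $N_Y = |\setarg{r\in R}{\cl(r) = Y}|$; the goal is to show $\alpha_G(F) = N_F$.

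First, I would record the two ingredients that make the induction go through. The geometric ingredient: since $F$ is closed in $G = G_{\hg{A}}$, Fact \ref{fact: self-sufficiency properties}(4) gives $F\strong\hg{A}$, so by Fact \ref{fact: self-sufficiency properties}(2) we have $\dm_G(F) = \delta(\hg{A}[F]) = |F| - |R[F]|$, i.e.\ $|R[F]| = |F| - \dm(F)$. The combinatorial ingredient: every edge $r\in R[F]$ satisfies $r\subseteq F$ with $F$ closed, so $\cl(r)\subseteq F$, hence $\cl(r)\subflat F$. This partitions $R[F]$ by closure:
\[
|R[F]| = \sum_{Y\subflat F} N_Y.
\]

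Combining these, $\sum_{Y\subflat F} N_Y = |F| - \dm(F)$, which rearranges to
\[
N_F = |F| - \dm(F) - \sum_{Y\psubflat F} N_Y.
\]
By the induction hypothesis, $\alpha_G(Y) = N_Y$ for every $Y\psubflat F$, so the right-hand side equals $\alpha_G(F)$ by the recursive definition of $\alpha$. This completes the induction.

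I expect there to be no serious obstacle: the only thing to verify carefully is that the induction is legitimate, which comes down to $F$ being finite so that the collection of closed proper subsets in the ambient $G$ sitting inside $F$ is finite. The base case, where no $Y\psubflat F$ exists (e.g.\ $F = \cl(\emptyset)$), follows directly from the identity $|R[F]| = |F| - \dm(F)$, which in that case reads $N_F = |F| - \dm(F)$ and matches the empty-sum definition of $\alpha_G(F)$.
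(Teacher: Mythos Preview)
Your proof is correct and is essentially the paper's own argument. The paper inducts on $\dm(F)$ rather than on $\subflat$, but since $Y\psubflat F$ with $F$ closed forces $\dm(Y)<\dm(F)$, these inductions are equivalent; the two key ingredients---$F\strong\hg{A}$ giving $|R[F]|=|F|-\dm(F)$, and partitioning $R[F]$ by $r\mapsto\cl(r)$---are identical in both.
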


\begin{proof}
By induction on $\dm(F)$.

\underline{$\dm(F) = 0$}: $F=\cl(\emptyset)$ so $F\strong \hg{A}$ and $0 = \dm(F) = \delta(F) = |F|-|R[F]|$, hence $|R[F]| = |F|$. On the other hand, $\alpha_G(F) = |F| -\dm(F) - \sum_{X\psubflat F} \alpha(X) = |F|$.

\underline{$\dm(F) > 0$}: For any $r\in R[F]$ such that $\cl(r)\neq F$, the set $\cl(r)$ is a closed proper subset of $F$. Therefore, by induction hypothesis, the number of edges on $F$ whose closure is not $F$ is precisely $\sum_{X\psubflat F} \alpha_G(X)$. Also, since $F\strong \hg{A}$, we have $\dm(F) = \delta(F) = |F| - |R[F]|$, so $|R[F]| = |F|-\dm(F)$. We get that the number of edges in $R[F]$ whose closure is $F$ is
\[
|R[F]| - \sum_{X\psubflat F} \alpha(X) = |F| - \dm(F) - \sum_{X\psubflat F} \alpha_G(X) = \alpha_G(F)
\]
\end{proof}

The weakness of the $\alpha$-function is that it only sees finite closed sets. In his preprint, Evans \cite{EvansMatroid} explores the connection between flatness, hypergraphs and the $\alpha$-function for finite pregeometries. The following characterization of flatness can be found in section 4.

\begin{prop}
\label{prop: characterization of flatness via alpha}
A finite pregeometry $G$ is flat if and only if whenever $X$ is a union of closed sets, then $\alpha_G(X)\geq 0$.
\end{prop}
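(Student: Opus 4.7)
The plan is to prove both directions using the edge-counting interpretation of $\alpha$ on closed sets from Proposition \ref{prop: alpha counts edges}: for the forward direction combined with the existence of a good representation (Theorem \ref{theorem: flat pregeometry comes from graph}), and for the reverse direction combined with an inclusion-exclusion expansion of $\flatsum{G}(\Sigma)$ in terms of $\alpha$-values.

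For the forward direction, take a good representation $\hg{A}=(M,R)$ of $G$. If $X$ is closed, Proposition \ref{prop: alpha counts edges} gives $\alpha_G(X) = |\{r \in R : \cl_G(r) = X\}| \geq 0$ directly. Otherwise $X$ is a union of closed sets but not itself closed, so every closed $Y \subseteq X$ is proper, and Proposition \ref{prop: alpha counts edges} yields $\sum_{Y \psubflat X} \alpha_G(Y) = |\{r \in R : \cl_G(r) \subseteq X\}|$. Since $r \subseteq \cl_G(r) \subseteq X$ forces $r \in R[X]$, this count is at most $|R[X]|$; combining with $\dm(X) \leq \delta(\hg{A}[X]) = |X| - |R[X]|$ gives
\[
\alpha_G(X) = (|X| - \dm(X)) - \sum_{Y \psubflat X} \alpha_G(Y) \geq |R[X]| - |R[X]| = 0.
\]

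For the reverse direction, assume $\alpha_G(Y) \geq 0$ for every union of closed sets $Y$, and let $\Sigma = \{E_1, \ldots, E_k\}$ with $X = \bigcup \Sigma$. Each intersection $E_s$ is closed, so rearranging the definition of $\alpha$ yields $\dm(E_s) = |E_s| - \sum_{Y \subflat E_s} \alpha_G(Y)$. Substituting into $\flatsum{G}(\Sigma)$: ordinary inclusion-exclusion on the cardinality terms produces $|X|$; for the remaining double sum, the coefficient of each $\alpha_G(Y)$ with $Y$ closed is $\sum_{\emptyset \neq s \subseteq T_Y}(-1)^{|s|+1}$, where $T_Y = \{i : Y \subseteq E_i\}$, equal to $1$ when $T_Y \neq \emptyset$ and $0$ otherwise. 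Thus
\[
\flatsum{G}(\Sigma) = |X| - \sum_{\substack{Y \text{ closed} \\ Y \subseteq E_i \text{ some } i}} \alpha_G(Y).
\]
Combining with $|X| - \dm(X) = \alpha_G(X) + \sum_{Y \psubflat X} \alpha_G(Y)$, and disposing of the trivial case $X \in \Sigma$ by Observation \ref{observation: inclusion in flatsum}, one obtains
\[
\flatsum{G}(\Sigma) - \dm(X) = \alpha_G(X) + \sum_{\substack{Y \psubflat X \\ Y \not\subseteq E_i \text{ any } i}} \alpha_G(Y),
\]
which is non-negative by hypothesis, since $X$ is a union of closed sets and every $Y$ appearing in the sum is itself closed. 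Hence $\flatsum{G}(\Sigma) \geq \dm(X)$ and $G$ is flat.

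The main obstacle is the bookkeeping in the inclusion-exclusion expansion of the reverse direction, namely correctly identifying the coefficient of each $\alpha_G(Y)$ and handling uniformly whether or not $X$ itself is closed; the argument above sidesteps this by always keeping $\alpha_G(X)$ separated from the sum over proper closed subsets.
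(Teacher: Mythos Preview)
Your proof is correct. The reverse direction is essentially the same inclusion-exclusion computation the paper carries out in Lemma~\ref{lemma: true meaning of alternating sum} and Corollary~\ref{corollary: flatsum of all closed subsets}, only you do it in one pass without isolating the intermediate identity; your displayed formula $\flatsum{G}(\Sigma) - \dm(X) = \alpha_G(X) + \sum_{Y \psubflat X,\ Y\not\subseteq \text{any }E_i} \alpha_G(Y)$ is precisely what one gets by combining those two results and Observation~\ref{observation: inclusion in flatsum}. The forward direction, however, is genuinely different: the paper stays entirely within the $\alpha$-calculus, reading off $\alpha_G(X) = \flatsum{G}(\{E:E\psubflat X\}) - \dm(X) \geq 0$ directly from flatness via Corollary~\ref{corollary: flatsum of all closed subsets}, whereas you invoke Theorem~\ref{theorem: flat pregeometry comes from graph} to obtain a good representation and then count edges via Proposition~\ref{prop: alpha counts edges}. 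Your route is heavier in that it imports the Enumerative Construction, but it has the virtue of making the combinatorial meaning of $\alpha_G(X)\geq 0$ transparent (it literally becomes a comparison of edge counts). The paper's route is more self-contained and keeps Proposition~\ref{prop: characterization of flatness via alpha} logically independent of the representation theorem, which matters if one wants to use the $\alpha$-characterization as an alternative gateway to flatness.
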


To better understand the $\flatsum{}$ operation, and for the sake of completeness, we strengthen the key lemma \cite[Lemma 4.2]{EvansMatroid} and bring the proof of Proposition \ref{prop: characterization of flatness via alpha} in full.

In light of Proposition \ref{prop: alpha counts edges}, Lemma \ref{lemma: true meaning of alternating sum} is best understood in the setting of $G$ flat, and holding in mind some good representation of $G$. In that case, the alternating sum $\flatsum{G}(\Sigma)$ is truly an inclusion-exclusion on sets of edges. With that said, the lemma holds also when $G$ is not flat.

\begin{lemma}
\label{lemma: true meaning of alternating sum}
Let $G$ be a finite pregeometry. Let $\Sigma$ be a collection of closed subsets of $G$ such that if $Y\subflat X\in \Sigma$, then $Y\in \Sigma$. Then
\[
\flatsum{G}(\Sigma) - \dm(G) = \sum_{\substack{X\subflat G\\X\notin \Sigma}} \alpha_G(X) - |G\setminus \bigcup \Sigma|
\]
\end{lemma}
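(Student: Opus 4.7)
The plan is to first reformulate the identity into a cleaner symmetric shape. Unrolling the recursive definition of $\alpha_G$ gives the ``total'' identity $|G|-\dm(G) = \sum_{X \subflat G}\alpha_G(X)$. Substituting this into the claim and splitting the closed subsets of $G$ into those in $\Sigma$ and those outside $\Sigma$, both $\dm(G)$ and the ``missing'' term $|G\setminus \bigcup\Sigma|$ cancel neatly, reducing the claim to
\[
\flatsum{G}(\Sigma) + \sum_{X\in\Sigma} \alpha_G(X) = \left|\textstyle\bigcup\Sigma\right|.
\]

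I would prove this by induction on $|\Sigma|$, with the empty case trivial. For the inductive step, pick a maximal element $F\in\Sigma$ under $\subseteq$ and set $\Sigma^- = \Sigma\setminus\set{F}$, which is still downward closed by maximality of $F$. Splitting the alternating sum defining $\flatsum{G}(\Sigma)$ according to whether the indexing set contains $F$ yields
\[
\flatsum{G}(\Sigma) = \flatsum{G}(\Sigma^-) + \dm(F) - \flatsum{G}(\Sigma^-_F),\qquad \Sigma^-_F := \setarg{E\cap F}{E\in\Sigma^-},
\]
where $\Sigma^-_F$ is a downward closed family of closed subsets of the pregeometry $F$. Because $F$ is closed in $G$, the dimension function of $F$ is the restriction of that of $G$, so $\alpha_F = \alpha_G$ on closed subsets of $F$. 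Combining the splitting with $|\bigcup\Sigma| = |\bigcup\Sigma^-| + |F| - |\bigcup\Sigma^-_F|$ and applying the induction hypothesis both to $\Sigma^-$ (inside $G$) and to $\Sigma^-_F$ (inside $F$), the remaining bookkeeping reduces to the recursive identity $\sum_{Y\subflat F}\alpha_G(Y) = |F|-\dm(F)$, which is just the unrolled definition of $\alpha_G$ at $F$.

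The main obstacle is verifying that $\Sigma^-_F$ is \emph{exactly} the family of closed proper subsets of $F$, not merely a sub-collection. For any $Y$ closed in $G$ with $Y\subsetneq F$, downward closure of $\Sigma$ gives $Y\in\Sigma$, and maximality of $F$ forces no $E\in\Sigma^-$ to contain $F$; hence $Y\in\Sigma^-$ and $Y = Y\cap F\in\Sigma^-_F$. Conversely, for any $E\in\Sigma^-$ we have $E\cap F\neq F$, so each element of $\Sigma^-_F$ is indeed a proper subset of $F$. This exact match is what makes the induction hypothesis on $\Sigma^-_F$ yield precisely $|F|-\dm(F)-\alpha_G(F)$, so that together with $\sum_{X\in\Sigma^-}\alpha_G(X) + \alpha_G(F) = \sum_{X\in\Sigma}\alpha_G(X)$ the accounting closes up and the reformulated identity follows.
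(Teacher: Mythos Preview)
Your proof is correct and rests on the same inductive core as the paper's: peel off one extremal element of $\Sigma$, split $\flatsum{G}(\Sigma)$ via Observation~\ref{observation: add set to Sigma}, apply the induction hypothesis to both the smaller family and to the restricted family inside the removed set, and close up using the recursion defining $\alpha_G$. Two presentational differences are worth noting. First, your preliminary reformulation to $\flatsum{G}(\Sigma)+\sum_{X\in\Sigma}\alpha_G(X)=|\bigcup\Sigma|$ is cleaner than the form the paper carries through the induction; because the right-hand side is handled uniformly by $|\bigcup\Sigma|=|\bigcup\Sigma^-|+|F|-|\bigcup\Sigma^-_F|$, you avoid the paper's case split on $\dm(X)=1$ versus $\dm(X)>1$ (which arises there only because the term $|G\setminus\bigcup\Sigma|$ behaves differently in the two cases). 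Second, you remove a \emph{maximal} element while the paper adds a \emph{minimal} one; these are dual and immaterial. One small point you might make explicit: when you write $\flatsum{G}(\Sigma^-_F)$ for the split-off piece, you are implicitly identifying the indexed alternating sum over $[k]$ with the set-based one even though distinct $E_i$ may give the same $E_i\cap F$; this is exactly what Observation~\ref{observation: inclusion in flatsum} (and its use in Observation~\ref{observation: add set to Sigma}) guarantees, so it is fine, but it is the one step where a reader could hesitate.
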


\begin{proof}
We construct $\Sigma$ inductively and show that the equation holds with respect to each intermediate stage. Denote $F=\cl(\emptyset)$ and let $\Sigma_0 = \set{F}$. Observe $\flatsum{G}(\Sigma_0) = 0$ and recall $\alpha_G(F) = |F|$. Then
\[
\sum_{\substack{X\subflat G\\X\neq F}} \alpha_G(X) - |G\setminus F| = \sum_{X\subflat G} \alpha_G(X) - |G| = -\dm(G) = \flatsum{G}(\Sigma_0)-\dm(G)
\]
where the second equality is by definition of $\alpha_G(G)$.

Assume now that we have constructed $\Sigma_i\subseteq \Sigma$, downwards-closed with respect to $\subflat$, such that the statement holds for every $\Gamma\subseteq \Sigma_i$. Choose some $X\in \Sigma\setminus \Sigma_i$ such that if $X'\psubflat X$, then already $X'\in \Sigma_i$. Let $\Sigma_{i+1} = \Sigma_i\cup\set{X}$. 

Assume first that $\dm(X) = 1$. Then $X$ intersects $\bigcup \Sigma_i$, and every element of $\Sigma_i$, in $F$. In particular, $\dm(\bigcap S\cap X) = 0$ for every $\emptyset\neq S\subseteq \Sigma$. By definition, $\dm(X) = |X| - \alpha_G(X) - \alpha_G(F) = |X\setminus F| - \alpha_G(X)$. We compute
\begin{align*}
\flatsum{G}(\Sigma_{i+1}) -\dm(G) &= \flatsum{G}(\Sigma_{i}) + \dm(X) -\dm(G)
\\
&= \left(\sum_{\substack{Y\subflat G\\Y\notin \Sigma_i}} \alpha_G(Y) - \left|G\setminus \bigcup \Sigma_i\right|\right) - (|X\setminus F| -\alpha_G(X))
\\
&= \sum_{\substack{Y\subflat G\\Y\notin \Sigma_{i+1}}} \alpha_G(Y) - \left|G\setminus\bigcup \Sigma_{i+1}\right|
\end{align*}

Now Assume $\dm(X) > 1$. Let $\Sigma_X = \setarg{E\cap X}{E\in \Sigma_i} = \setarg{E}{E\psubflat X}$. Note that $\bigcup \Sigma_X = X\subseteq \bigcup \Sigma_{i}$. Denote $A = \bigcup \Sigma_i = \bigcup \Sigma_{i+1}$. By assumption,
\[
\flatsum{G}(\Sigma_i) -\dm(G) = \sum_{\substack{Y\subflat G\\Y\notin \Sigma_i}} \alpha_G(Y) - |G\setminus A| = \sum_{\substack{Y\subflat G\\Y\notin \Sigma_{i+1}}} \alpha_G(Y) + \alpha_G(X) - |G\setminus A|
\]
and
\begin{align*}
\tag{$*$}
\flatsum{G}(\Sigma_X) &= \left(\sum_{\substack{Y\subflat G\\Y\notin \Sigma_X}} \alpha_G(Y) + \dm(G)\right) - |G\setminus X|
\\
&= \left(|G| - \sum_{\substack{Y\subflat G\\Y\in \Sigma_X}} \alpha_G(Y)\right) - |G\setminus X|
\\
&= |X| - \sum_{Y\psubflat X} \alpha_G(Y) = \alpha_G(X) + \dm(X)
\end{align*}
where the first equality is by induction hypothesis, the second by definition of $\alpha_G(G)$, and the fourth by definition of $\alpha_G(X)$.
Thus,
\begin{align*}
\flatsum{G}(\Sigma_{i+1}) -\dm(G) &= \left(\flatsum{G}(\Sigma_i) + \dm(X) - \flatsum{G}(\Sigma_X)\right) - \dm(G)
\\
&= \left(\flatsum{G}(\Sigma_i) - \dm(G)\right) - \left(\flatsum{G}(\Sigma_X) - \dm(X)\right)
\\
&= \sum_{\substack{Y\subflat G\\Y\notin \Sigma_{i+1}}} \alpha_G(Y) - |G\setminus A|
\end{align*}
\end{proof}

Now that Lemma \ref{lemma: true meaning of alternating sum} is proved, we may apply the equality $(*)$ whenever $\Sigma=\setarg{E}{E\psubflat X}$, for some arbitrary $X$.

\begin{cor}[\hspace{-0.001pt}\cite{EvansMatroid}, Lemma 4.2]
\label{corollary: flatsum of all closed subsets}
Let $X$ be a union of closed sets in a finite pregeometry $G$ and let $\Sigma =\setarg{E}{E\psubflat X}$. Then $\alpha_G(X) = \flatsum{G}(\Sigma) - \dm(X)$.
\end{cor}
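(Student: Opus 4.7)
The corollary is essentially the identity marked $(*)$ in the proof of Lemma \ref{lemma: true meaning of alternating sum}, now freed from the inductive context of that proof. My plan is therefore to apply Lemma \ref{lemma: true meaning of alternating sum} directly with $\Sigma := \setarg{E}{E\psubflat X}$, and then to simplify using the recursive definition of $\alpha$.

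First one must check the hypotheses of the lemma. If $Y\subflat Z\in\Sigma$, then $Y$ is closed in $G$ with $Y\subseteq Z\subsetneq X$, so $Y\psubflat X$ and hence $Y\in\Sigma$; thus $\Sigma$ is downward closed under $\subflat$. Since $X$ is a union of closed sets, each $x\in X$ lies in some closed $Y\subseteq X$ that is proper in $X$ (either $X$ itself is not closed, in which case every closed subset of $X$ is automatically proper, or $X$ is closed and the hypothesis provides a covering by proper closed pieces), so $\bigcup\Sigma = X$.

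Lemma \ref{lemma: true meaning of alternating sum} therefore yields
\[
\flatsum{G}(\Sigma) - \dm(G) \;=\; \sum_{\substack{Y\subflat G\\Y\notin \Sigma}}\alpha_G(Y) \;-\; |G\setminus X|.
\]
Unfolding the definition $\alpha_G(G) = |G|-\dm(G)-\sum_{Y\psubflat G}\alpha_G(Y)$ gives the identity $\sum_{Y\subflat G}\alpha_G(Y) = |G|-\dm(G)$. Splitting the sum in the displayed equation as $\sum_{Y\subflat G}\alpha_G(Y) - \sum_{Y\in\Sigma}\alpha_G(Y)$ and substituting this identity collapses the equation to
\[
\flatsum{G}(\Sigma) \;=\; |X| - \sum_{Y\psubflat X}\alpha_G(Y),
\]
which by definition of $\alpha_G(X)$ equals $\alpha_G(X) + \dm(X)$, giving the claim after rearranging.

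I do not expect a genuine obstacle: the computation is literally the one carried out at $(*)$ in the proof of Lemma \ref{lemma: true meaning of alternating sum}, and the only mildly delicate step is unpacking ``$X$ is a union of closed sets'' to ensure $\bigcup\Sigma = X$; once this is granted, the rest is bookkeeping with the recursion for $\alpha$.
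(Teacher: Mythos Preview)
Your proof is correct and follows exactly the paper's approach: the paper's entire argument is the remark preceding the corollary, that ``we may apply the equality $(*)$ whenever $\Sigma=\setarg{E}{E\psubflat X}$'', and you have simply written out that computation in full. One small remark on your parenthetical: when $X$ is closed, the bare hypothesis ``$X$ is a union of closed sets'' does not literally force a covering by \emph{proper} closed pieces (the trivial decomposition $X=X$ satisfies it), so the intended reading of the hypothesis is precisely $\bigcup\Sigma = X$; you identified this as the one delicate point, and indeed it is a feature of the statement rather than a gap in your argument.
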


\begin{proof}[\textbf{Proof of Proposition \ref{prop: characterization of flatness via alpha}}]
Let $\Sigma$ be some collection of closed sets, denote $X= \bigcup \Sigma$. Assume $X\notin \Sigma$, for otherwise clearly $\flatsum{G}(\Sigma) = \dm(X)$. By Corollary \ref{corollary: adding set to flatsum that is already contained in union}, increasing $\Sigma$ to $\setarg{E}{E\psubflat X}$ does not restrict generality, as it only decreases $\flatsum{G}(\Sigma)$. Then $\alpha_G(X) \geq 0$ if and only if $\flatsum{G}(\Sigma) \geq \dm(\bigcup \Sigma)$.
\end{proof}

\subsection{Geometric prerank}

We now define a notion of \emph{prerank}, which will be to flat pregeometries what $\delta$ is to hypergraphs. Much in the same way that the \emph{predimension} $\delta$ approximates dimension (Morley rank, in Hrushovski's non-collapsed construction), our prerank $\prerank$ will be closely related to Morley rank\footnote{A full analysis of Morley rank is not included in this text. Morley rank is ``shifted'' with respect to $\prerank$, namely $\omega^{-4}\cdot\prerank$, but we find our definition of $\prerank$ more convenient to work with in the context of this paper. See Digression immediately after the proof of Lemma \ref{lemma: edge resolution} for an explanation.} and quantifier elimination in the soon-to-come generic construction.

\begin{definition}
Define $\OrdMod$ to be the free $\Ints$-module generated by $\setcol{\omega^i}{i< \omega+1}$ and endowed with the order where $\sum_{i<\omega+1} a_i\omega^i < \sum_{i<\omega+1} b_i\omega^i$ if and only if $a_j<b_j$, where $j =\max\setcol{i}{a_i\neq b_i}$. That is, the reverse-lexicographical order, i.e., $3\omega^\omega + 7\omega^5 < 4\omega^\omega + 6\omega^5 < 4\omega^\omega + \omega^8$.

When all coefficients of $\alpha, \beta\in \OrdMod$ are non-negative, addition in $\OrdMod$ is precisely the \emph{natural sum} (or \emph{Hessenberg sum}).
\end{definition}

\begin{definition}
For every finite pregeometry $G$ assign $\prerank(G)\in\OrdMod$ by
\[
\prerank(G) = \dm(G)\omega^\omega + \sum_{X\subflat G} \alpha_G(X)\omega^{\dm(X)}
\]
For $H\sqsubseteq^* G$ pregeometries with $H$ finite, write $H\strong_r G$ if $\prerank(H')\geq \prerank(H)$ for every finite intermediate $H\subseteq H'\sqsubseteq^* G$.
\end{definition}

Our goal now is to show that, in the flat context, $\sqsubseteq$ and $\strong_r$ are equivalent. The following two lemmas lead up to Corollary \ref{corollary: sqsubseteq implies strong_r}, the left-to-right implication, and Proposition \ref{proposition: strong_r implies sqsubseteq} is the right-to-left implication.

\begin{lemma}
\label{lemma: closed preserves alpha}
If $H\subseteq G$ is the induced pregeometry on a closed subset of $G$, then $\alpha_H$ is the restriction of $\alpha_G$ to finite subsets of $H$.
\end{lemma}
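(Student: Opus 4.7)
The plan is a straightforward induction on $|X|$, where the only substantive point is verifying that the recursion defining $\alpha$ sees exactly the same data whether computed in $H$ or in $G$.

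First I would record the two preliminary equalities that make the induction go through. Since $H$ is a closed subpregeometry of $G$, for any $Y \subseteq H$ we have $\cl_G(Y) \subseteq \cl_G(H) = H$, so $\cl_H(Y) = \cl_G(Y)$. Consequently, for $X \subseteq H$ the collection $\{Y : Y \psubflat X\}$ of proper closed subsets of $X$ is literally the same whether $\psubflat$ is interpreted in $H$ or in $G$. Second, by the convention that $\dm_H$ is the restriction of $\dm_G$, we have $\dm_H(X) = \dm_G(X)$ for every finite $X \subseteq H$.

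Now induct on $|X|$. If $X$ has no proper closed subsets in either pregeometry, then both $\alpha_H(X)$ and $\alpha_G(X)$ reduce to $|X|-\dm(X)$ and agree. For the inductive step, assume the claim for every proper closed subset of $X$. Then
\[
\alpha_H(X) = |X| - \dm_H(X) - \sum_{Y \psubflat X} \alpha_H(Y) = |X| - \dm_G(X) - \sum_{Y \psubflat X} \alpha_G(Y) = \alpha_G(X),
\]
using the induction hypothesis on each $Y$ in the middle step and the observations above to identify the indexing sets and the dimension values.

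There is no real obstacle; the only thing one must be careful about is that the statement fails without the hypothesis that $H$ is closed in $G$ (otherwise a set $X \subseteq H$ might have a proper closed subset in $H$ that is not closed in $G$, or vice versa, so the recursions would be indexed differently). Flatness plays no role, nor does the $\sqsubseteq^*$ vs.\ $\sqsubseteq$ distinction; the proof is a pure unwinding of the definition once one checks that closedness is absolute inside a closed subpregeometry.
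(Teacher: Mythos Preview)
Your proof is correct and follows essentially the same approach as the paper's: observe that closedness in $H$ agrees with closedness in $G$ for subsets of $H$, then run the induction on the recursive definition of $\alpha$. The paper's version is terser (it does not spell out the base case or the induction parameter), but the argument is identical.
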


\begin{proof}
Observe that for any $F\subseteq H$, $F$ is closed in $H$ if and only if it is closed in $G$. We prove inductively. Let $X\subseteq H$ be such that for every $F\psubflat X$ we have $\alpha_H(F) = \alpha_G(F)$. Then
\[
\alpha_G(X) = |X|-\dm(X) - \sum_{Y\psubflat X}\alpha_G(Y) = |X|-\dm(X) - \sum_{Y\psubflat X}\alpha_H(Y) = \alpha_H(X)
\]
\end{proof}

\begin{lemma}
\label{lemma: sqsubseteq means alpha only increases}
Let $H\sqsubseteq G$ be finite pregeometries and let $X\subflat H$. Then $\alpha_H(X) \leq \alpha_G(\cl_G(X))$.
\end{lemma}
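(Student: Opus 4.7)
The plan is to reduce the statement to the comparison of two $\flatsum{}$'s via Corollary \ref{corollary: flatsum of all closed subsets}, and then extract the needed inequality from the definition of $H\sqsubseteq G$. Explicitly, I set
\[
\Sigma_H = \setcol{E}{E\psubflat X\text{ in }H}, \qquad \Sigma_G = \setcol{F}{F\psubflat \cl_G(X)\text{ in }G},
\]
and use Corollary \ref{corollary: flatsum of all closed subsets} on both $X\subflat H$ and $\cl_G(X)\subflat G$ to rewrite $\alpha_H(X) = \flatsum{H}(\Sigma_H) - \dm_H(X)$ and $\alpha_G(\cl_G(X)) = \flatsum{G}(\Sigma_G) - \dm_G(\cl_G(X))$. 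Since $H\sqsubseteq G$ implies $H\sqsubseteq^* G$ (Observation \ref{observation: sqsubseteq implies sqsubseteq^*}), applying $\sqsubseteq^*$ with $X_1=X_2=X$ gives $\dm_H(X) = \dm_G(\cl_G(X))$, so the desired inequality reduces to $\flatsum{H}(\Sigma_H) \leq \flatsum{G}(\Sigma_G)$.

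The main technical step is to identify $(\Sigma_G)_H$ with $\Sigma_H$ as sets of closed subsets of $H$; this is where the hypothesis $H\sqsubseteq^* G$ does its real work. I will first establish the auxiliary fact that for any $E$ closed in $H$, one has $\cl_G(E)\cap H = E$: the set $A:=\cl_G(E)\cap H$ contains $E$ and is closed in $H$ (since $\dm_H$ is the restriction of $\dm_G$), while $H\sqsubseteq^* G$ gives $\dm_H(A) = \dm_G(\cl_G(E)) = \dm_G(E) = \dm_H(E)$, forcing $A\subseteq \cl_H(E)=E$. Applied with $E=X$, this yields $\cl_G(X)\cap H = X$.

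Using these, each $E\in\Sigma_H$ is realized as $\cl_G(E)\cap H$, where $\cl_G(E)\in\Sigma_G$ because $\dm_G(\cl_G(E)) = \dm_H(E) < \dm_H(X) = \dm_G(\cl_G(X))$; conversely, any $F\cap H$ with $F\in\Sigma_G$ is closed in $H$, contained in $\cl_G(X)\cap H = X$, and strictly so (equality $F\cap H=X$ would give $X\subseteq F\subsetneq\cl_G(X)$, contradicting $\dm_G(X) = \dm_G(\cl_G(X)) > \dm_G(F)\geq \dm_G(X)$). Hence $(\Sigma_G)_H = \Sigma_H$ as collections of subsets (duplicates, should they occur, are harmless by Observation \ref{observation: inclusion in flatsum}).

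Finally, applying the definition of $H\sqsubseteq G$ to the collection $\Sigma_G$ gives
\[
\flatsum{H}(\Sigma_H) = \flatsum{H}((\Sigma_G)_H) \leq \flatsum{G}(\Sigma_G),
\]
and combining with the dimension equality finishes the proof. The only real obstacle is the set-equality $(\Sigma_G)_H = \Sigma_H$; once that is in hand the result falls out immediately from the defining inequality of $\sqsubseteq$. Notably, no flatness assumption on $G$ is invoked along the way — everything rests on $H\sqsubseteq^* G$ plus the clean $\flatsum{}$-reformulation of $\alpha$ supplied by Corollary \ref{corollary: flatsum of all closed subsets}.
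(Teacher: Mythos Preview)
Your proof is correct and follows essentially the same route as the paper: define $\Sigma$ to be the proper closed subsets of $\cl_G(X)$ in $G$, identify its restriction to $H$ with $\setcol{E}{E\psubflat X\text{ in }H}$, express both $\alpha$-values via Corollary~\ref{corollary: flatsum of all closed subsets}, and conclude using the defining inequality of $\sqsubseteq$. The paper routes the application of Corollary~\ref{corollary: flatsum of all closed subsets} through Lemma~\ref{lemma: closed preserves alpha} (passing to $Y$ and $X$ as ambient pregeometries) and is terser about the set equality, but the arguments are the same in substance; your invocations of $\sqsubseteq^*$ for $\dm_H(X)=\dm_G(\cl_G(X))$ and for $\cl_G(E)\cap H=E$ are in fact unnecessary (both follow directly from $\dm_H$ being the restriction of $\dm_G$), though harmless.
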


\begin{proof}
Denote $Y=\cl_G(X)$ and let $\Sigma = \setarg{E}{E\psubflat Y\subseteq G}$. Observe that whenever $E\in \Sigma$, then $\dm(E) < \dm(Y) = \dm(X)$, so $E\cap X\psubflat X\subseteq H$. Hence, $\Sigma_H = \setarg{F}{F\psubflat X\subseteq H}$. By $H\sqsubseteq G$, we have $\flatsum{H}(\Sigma_H) \leq \flatsum{G}(\Sigma)$. By Lemma \ref{lemma: closed preserves alpha} and Corollary \ref{corollary: flatsum of all closed subsets},
\begin{gather*}
\alpha_G(Y) = \alpha_Y(Y) = \flatsum{Y}(\Sigma) - \dm(Y) = \flatsum{G}(\Sigma) -\dm(Y)
\\
\alpha_H(X) = \alpha_X(X) = \flatsum{X}(\Sigma_H) -\dm(X) = \flatsum{H}(\Sigma_H) -\dm(X)
\end{gather*}
Thus, as $\dm(X) = \dm(Y)$, we get  $\alpha_H(X) \leq \alpha_G(Y)$.
\end{proof}

\begin{cor}
\label{corollary: sqsubseteq implies strong_r}
If $G$ is a flat pregeometry and $H\sqsubseteq G$ is finite, then $H\strong_r G$.
\end{cor}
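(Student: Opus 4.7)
The plan is to compare $\prerank(H)$ and $\prerank(H')$ coefficient-by-coefficient as elements of $\OrdMod$, leveraging Lemma \ref{lemma: sqsubseteq means alpha only increases} applied not to $H \sqsubseteq G$ directly, but to $H \sqsubseteq H'$. The first step is to obtain this relation: since $H \sqsubseteq G$, $H \subseteq H'$, and $H' \sqsubseteq^* G$, Lemma \ref{lemma: sqsubseteq and sqsubseteq^* interaction} yields $H \sqsubseteq H'$. Note also that $H'$ is flat by Corollary \ref{cor: sqsubseteq^* in flat implies flatness}, so $\alpha_{H'}(Y) \geq 0$ for every closed $Y \subflat H'$ by Proposition \ref{prop: characterization of flatness via alpha}, and similarly $\alpha_H(X) \geq 0$ for $X \subflat H$.

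Since $H \subseteq H'$, clearly $\dm(H) \leq \dm(H')$. If $\dm(H) < \dm(H')$, the coefficient of $\omega^\omega$ in $\prerank(H')$ strictly exceeds the coefficient in $\prerank(H)$, while all other terms in both expressions involve finite powers of $\omega$; by the reverse-lexicographical order on $\OrdMod$, this immediately gives $\prerank(H') > \prerank(H)$. So we may assume $\dm(H) = \dm(H')$ and must show coefficient-wise dominance for every finite power $\omega^d$.

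For this, consider the map $\phi: X \mapsto \cl_{H'}(X)$ from closed subsets of $H$ to closed subsets of $H'$. Using that $H$ is a subpregeometry of $H'$ (so $\dm_H = \dm_{H'}\restrictedto \Powerset(H)$), one checks that $\cl_{H'}(X) \cap H = X$ for $X \subflat H$, so $\phi$ is injective; and also that $\dm_{H'}(\phi(X)) = \dm_H(X)$, so $\phi$ preserves dimension. By Lemma \ref{lemma: sqsubseteq means alpha only increases} applied to $H \sqsubseteq H'$, $\alpha_H(X) \leq \alpha_{H'}(\phi(X))$ for every $X \subflat H$. Summing over all $X \subflat H$ with $\dm(X) = d$, and using injectivity of $\phi$ together with non-negativity of $\alpha_{H'}$ on the remaining closed subsets of $H'$,
\[
\sum_{\substack{X\subflat H\\ \dm(X)=d}} \alpha_H(X) \;\leq\; \sum_{\substack{X\subflat H\\ \dm(X)=d}} \alpha_{H'}(\phi(X)) \;\leq\; \sum_{\substack{Y\subflat H'\\ \dm(Y)=d}} \alpha_{H'}(Y).
\]

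Thus every $\omega^d$-coefficient of $\prerank(H')$ dominates that of $\prerank(H)$, including the $\omega^\omega$-coefficient (by the assumption $\dm(H) = \dm(H')$). Coefficient-wise dominance in $\OrdMod$ with reverse-lex order gives $\prerank(H') \geq \prerank(H)$, as required. The only mild subtlety is verifying the injectivity and dimension preservation of $\phi$, which reduces to the observation that $H$ inherits its dimension function by restriction from the ambient pregeometry; everything else is bookkeeping combined with Lemma \ref{lemma: sqsubseteq means alpha only increases}.
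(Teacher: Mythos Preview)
Your proof is correct and follows essentially the same approach as the paper: reduce to $H\sqsubseteq H'$ via Lemma \ref{lemma: sqsubseteq and sqsubseteq^* interaction}, use flatness of $H'$ to ensure $\alpha_{H'}\geq 0$ on closed sets, and then apply Lemma \ref{lemma: sqsubseteq means alpha only increases} together with the injectivity and dimension-preservation of $X\mapsto \cl_{H'}(X)$ to compare the two preranks term by term. The only cosmetic difference is that the paper writes the comparison as a single chain of inequalities in $\OrdMod$, which absorbs your case split on $\dm(H)<\dm(H')$ versus $\dm(H)=\dm(H')$; since coefficient-wise dominance already implies $\leq$ in the reverse-lexicographic order, that split is unnecessary.
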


\begin{proof}
Let $H\subseteq H'\sqsubseteq^* G$ be finite. Then $H'$ is flat and, by Lemma \ref{lemma: sqsubseteq and sqsubseteq^* interaction}, $H\sqsubseteq H'$. By Proposition \ref{prop: characterization of flatness via alpha} Flatness gives that whenever $Y\subflat H'$, then $\alpha_{H'}(Y)\geq 0$. Then, using Lemma \ref{lemma: sqsubseteq means alpha only increases},
\begin{align*}
\prerank(H) &= \dm(H)\omega^\omega + \sum_{X\subflat H}\alpha_H(X)\omega^{\dm(X)}
\\
&\leq \dm(H')\omega^\omega + \sum_{X\subflat H}\alpha_{H'}(\cl_{H'}(X))\omega^{\dm(X)}
\\
&\leq \dm(H')\omega^\omega + \sum_{Y\subflat H'}\alpha_{H'}(Y)\omega^{\dm(Y)} =\prerank(H')
\end{align*}
\end{proof}

\begin{prop}
\label{proposition: strong_r implies sqsubseteq}
If $G$ is a flat pregeometry and $H\sqsubseteq^* G$ is finite such that $H\strong_r G$, then $H\sqsubseteq G$.
\end{prop}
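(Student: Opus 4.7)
The plan is to argue the contrapositive: assuming $H\not\sqsubseteq G$, I exhibit a finite intermediate $H\subseteq H'\sqsubseteq^* G$ with $\prerank(H')<\prerank(H)$, contradicting $H\strong_r G$. Fix a hydra $f$ for $G$ centered at $P$ (the underlying set of $H$), let $\hg{A}=(M,R)$ be its $f$-construction, and let $\hg{B}=(P,S)$ be the $g$-construction of $H$, where $g:=f|_P$. Proposition~\ref{prop: sqsubseteq implies leqslant}(1), applied with $H\sqsubseteq^* G$, yields $R[P]\subseteq S$; Proposition~\ref{prop: sqsubseteq implies leqslant}(2), applied with $H\not\sqsubseteq G$, sharpens this to $R[P]\subsetneq S$. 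In particular, $P\not\strong\hg{A}$, so $P':=\sscl_{\hg{A}}(P)$ properly contains $P$.

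Set $H':=G_{\hg{A}[P']}$. Since $\cl_G(P)=\cl_{\hg{A}}(P)$ is self-sufficient in $\hg{A}$ by Fact~\ref{fact: self-sufficiency properties}(4), $P'\subseteq\cl_G(P)$ and hence $\dm(H')=\dm(H)$; Proposition~\ref{prop: self-sufficiency implies sqsubseteq} yields $H'\sqsubseteq G$, hence $H'\sqsubseteq^* G$, and the observation that $\sqsubseteq^*$ descends to intermediates delivers $H\sqsubseteq^* H'$. Thus $H'$ is a legitimate finite intermediate for the $\strong_r$-test. Both $\hg{B}$ and $\hg{A}[P']$ are good representations of $H$ and $H'$ (the latter because $P'\strong\hg{A}$), so Proposition~\ref{prop: alpha counts edges} converts preranks into edge sums; using $\dm(H')=\dm(H)$ together with $R[P]\subseteq S\cap R[P']$,
\[
\prerank(H)-\prerank(H')=\sum_{r\in S\setminus R[P]}\omega^{|r|-1}-\sum_{r\in R[P']\setminus R[P]}\omega^{|r|-1}.
\]

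The main obstacle is to verify that this difference is strictly positive in $\OrdMod$, i.e., that at the largest arity $n+1$ where the two multisets differ the $S$-side wins. The governing intuition: each $e\in S\setminus R[P]$ (of arity $n+1$) remains dependent inside $\hg{A}[P']$ because $P'\strong\hg{A}$ preserves $\dm_{\hg{A}}$, so a self-sufficient witness $Y\supseteq e$ of $\delta_{\hg{A}}(Y)=n$ can be chosen inside $P'$, and then $R[Y]\setminus R[P]\subseteq R[P']\setminus R[P]$ consists of edges of arity at most $n+1$ by the arity-ordered structure of $f$. Lemma~\ref{lemma: edges contain hydra root} combined with the hydra being centered at $P$ pins down which arity-$(n+1)$ edges can actually land in $R[P']\setminus R[P]$---they must emanate from the hydra root $Z^f_{\cl_G(e)}\subseteq P$ and reach into the newly added vertices of $P'\setminus P$---restricting their count. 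A careful arity-by-arity accounting from the top downward then establishes that the top-arity coefficient of $\prerank(H)-\prerank(H')$ is strictly positive, yielding the required contradiction and completing the proof.
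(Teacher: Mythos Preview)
Your overall strategy---compare $\prerank(H)$ to $\prerank(H')$ for a suitable finite $H\subseteq H'\sqsubseteq^* G$ via edge counts---is the paper's strategy, and your reduction to the displayed difference of $\omega$-sums is correct. (The step ``in particular, $P\not\strong\hg{A}$'' deserves one line: if $P\strong\hg{A}$ then $\hg{A}[P]$ is a good representation of $H$, so $|R[P]|=|P|-\dm(H)=|S|$, contradicting $R[P]\subsetneq S$.)

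The genuine gap is exactly where you flag it, and the intuition you offer does not survive scrutiny. The witness $Y\supseteq e$ with $\delta_{\hg{A}}(Y)=n$ lives in the completed hypergraph $\hg{A}$, not in the stage-$j$ hypergraph $\hg{A}_j$, so the arity-ordered enumeration of $f$ gives no bound on the arities appearing in $R[Y]$. The hydra-root remark is likewise inconclusive: for $r\in R[P']\setminus R[P]$ with $F=\cl_G(r)$, centering yields only $|Z^f_F\cap P|=\dm_G(F\cap P)$, which places no useful constraint when $\dm_G(F\cap P)$ is large. Nothing you have written bounds either the arity or the count of edges in $R[P']\setminus R[P]$, so the sign of the difference is left open.

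The paper establishes the inequality by a different route. It first passes to a \emph{geometrically minimal} $G$ (no proper intermediate $H\subseteq G_0\sqsubseteq G$); your choice $H'=G_{\hg{A}[\sscl_{\hg{A}}(P)]}$ is only hypergraph-minimal and need not satisfy this. Minimality is then used repeatedly, via the equality case of Proposition~\ref{prop: flatness}, to force $\bigcup\Sigma_G=M$ and to prove the decisive Claim: with $n=\max\{|e|:e\in S\setminus R[P]\}$, every edge of $R\setminus R[P]$ has arity \emph{strictly less than} $n$. The Claim is proved by stripping the top-arity edges over a fixed closed set $F$, rebuilding the collection $\Sigma_{G''}$, comparing $\flatsum{G'}$ and $\flatsum{G''}$, and invoking minimality once more to derive a contradiction on edge counts. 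This argument is the substance of the proposition; the arity bound is not a bookkeeping consequence of the hydra structure, and without it (or a replacement of comparable strength) your proof is incomplete.
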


\begin{proof}
It will be enough to show that $H\sqsubseteq G_0$ for some $H\subseteq G_0\sqsubseteq G$, so we may assume there is no such $G_0$ distinct from $G$. In particular, $G$ is finite and $\dm(H) = \dm(G)$.

Let $P,M,R,S, f, g, \hg{A},\hg{B}$ be as in the statement of Proposition \ref{prop: sqsubseteq implies leqslant}, and recall $R[P]\subseteq S$. Let $R' = R\setminus R[P]$, $S' = S\setminus R[P]$, $\hg{A}' = (M, R')$, $\hg{B}' = (P, S')$, $G' = G_{\hg{A}'}$, $H' = G_{\hg{B}'}$. Note that $H'\subseteq G'$.

Let $\Sigma = \setarg{\cl_H(r)}{r\in S'}\cup\setarg{\cl_H(a)}{a\in P}$ and observe that every set that is closed in $H$ is also closed in $H'$. Note that $\delta_{\hg{A}}(X/X\cap P) = \delta_{\hg{A}'}(X/X\cap P)$ for every $X\subseteq M$, hence $\Sigma_G = \Sigma_{G'}$. For every $X$ closed in $G$, it holds that 
\begin{align*}
\dm_{G'}(X) - \dm_{H'}(X\cap P) &= \delta_{\hg{A}'}(X) - \delta_{\hg{B}'}(X\cap P)
\\
&= \delta_{\hg{A}}(X) - \delta_{\hg{B}}(X\cap P) = \dm_{G}(X) - \dm_H(X\cap P)
\end{align*}
so $\flatsum{G'}(\Sigma_G) - \flatsum{H'}(\Sigma) = \flatsum{G}(\Sigma_G) - \flatsum{H}(\Sigma)$. By $H\sqsubseteq^* G$, we have $\flatsum{H}(\Sigma) = \flatsum{G}(\Sigma_G)$, hence $\flatsum{H'}(\Sigma) = \flatsum{G'}(\Sigma_G)$.

By applying the additional part of Proposition \ref{prop: flatness} in $\hg{B}'$, we get $\flatsum{H'}(\Sigma) = \dm_{H'}(P) = \dm_{G'}(P)$. Denote $E = \bigcup \Sigma_G$. By flatness, $\dm_{G'}(E) \leq \flatsum{G'}(\Sigma_G) = \dm_{G'}(P)$. Since $P\subseteq E$, in fact $\dm_{G'}(E) = \dm_{G'}(P)$, so $\dm_{G'}(\bigcup\Sigma_G) = \flatsum{G'}(\Sigma_G)$. Using the additional part of Proposition \ref{prop: flatness}, this time in the other direction, we get $E\strong \hg{A}'$ and $R'[E] = \bigcup_{F\in \Sigma_G}R'[F]$. By construction, since $P\subseteq E$, we have $E \strong \hg{A}$. By the minimality assumption on $G$, this means $E= M$.

Assuming $S'\neq \emptyset$, denote $n:=\max\setcol{|e|}{e\in S'}$.

\medskip
\noindent\textbf{Claim.} $R'\subseteq [M]^{< n}$ 

\begin{proof}[Proof of claim]
Assume to the contrary that there exists $e\in R'\cap [M]^{\geq n}$. We have already seen $R' = \bigcup_{F\in \Sigma_G}R'[F]$, so there is some $F\in \Sigma_G$ such that $e\in R'[F]$. Furthermore, $|e| =n$ and $F=\cl_G(e)$.

Let $R'' = R'\setminus \setcol{e\in R'[F]}{|e|=n}$ and denote $\hg{A}'' = (M, R'')$, $G'' = G_{\hg{A}''}$. Observe that every set closed in $\hg{A}'$ is also closed in $\hg{A}''$. Also observe that for every non-empty $S\subseteq \Sigma_G$ distinct from $\set{F}$, since $\hg{A}'[\bigcap S] = \hg{A}''[\bigcap S]$, we have $\dm_{G'}(\bigcap S) = \dm_{G''}(\bigcap S)$, $\dm_{G'}(P\cap\bigcap S) = \dm_{G''}(P\cap\bigcap S)$. Therefore, $\Sigma_{G'}\setminus\set{F} \subseteq \Sigma_{G''}$.

By construction, because $f$ is centered at $P$, the edges in $R'\setminus R''$ were introduced into $R$ only after every tuple in $F\cap P$ had the correct dimension, so $\dm_{G''}(F\cap P) = \dm_{G'}(F\cap P)=\dm_{G'}(F)$. Letting $F_0 = \cl_{G''}(F\cap P)$, we see $\Sigma_{G''} = \left(\Sigma_{G'}\setminus\set{F}\right)\cup\set{F_0}$.

We claim that for every non-empty $S\subseteq\Sigma_{G'}\setminus \set{F}$, $F\cap \bigcap S = F_0\cap \bigcap S$. Denote $Q:=\bigcap S$. As $H\sqsubseteq^* G$, Lemma \ref{lemma: in sqsubseteq^* the intersection and closure commute} gives that $\cl_G(F\cap Q\cap P) = F\cap Q$. So the same is true in $G'$, i.e., $\cl_{G'}(F\cap Q\cap P) = F\cap Q$. Since $\dm_{G'}(F\cap Q)<n-1$, we have $\cl_{G'}(F\cap Q\cap P) = \cl_{G''}(F\cap Q\cap P)$. Noting $F\cap Q\cap P = F_0\cap Q\cap P$ and that $F_0\cap Q$ is closed in $G''$, we get $F\cap Q\subseteq F_0\cap Q$. The inclusion in the other direction is clear, so we achieve the desired equality. Consequently, since $\hg{A}'[F\cap Q] = \hg{A''}[F\cap Q]$, we have $\dm_{G'}(F\cap Q) = \dm_{G''}(F_0\cap Q)$.

The summands in $\flatsum{G'}(\Sigma_{G'})$ and $\flatsum{G''}(\Sigma_{G''})$ are identical, hence, using flatness,
\[
\dm_{G''}(\bigcup \Sigma_{G''}) \leq \flatsum{G''}(\Sigma_{G''}) = \flatsum{G'}(\Sigma_{G'})=\dm_{G'}(P) \leq \dm_{G''}(P).
\]
Denoting $E_0 = \bigcup \Sigma_{G''}$, since $P\subseteq E_0$, we have that all inequalities above are in fact equalities. Applying the additional part of Proposition \ref{prop: flatness} to the equality $\dm_{G''}(E_0) = \flatsum{G''}(\Sigma_{G''})$, we get $E_0\strong \hg{A}''$. So $\delta_{\hg{A}'}(E_0)\leq \delta_{\hg{A}''}(E_0)= \dm_{G''}(P)=\dm_{G'}(P)$, and again by $P\subseteq E_0$ we have $E_0\strong \hg{A}'$. By construction, because $E_0$ contains $P$, this implies $E_0\strong \hg{A}$.

The minimality assumption on $G$ forces now that $E_0 = M$. But now $\delta(\hg{A}') = \delta_{\hg{A}'}(E_0) = \dm_{G'}(P) = \dm_{G''}(E_0) = \delta(\hg{A}'')$, so $|R''|=|R'|$ in contradiction to $e\in R'\cap [M]^{\geq n}$ existing.
\end{proof}
By Proposition \ref{prop: alpha counts edges}, 
\begin{align*}
&\prerank(H) = \dm_H(P)\omega^\omega + \sum_{r\in S} \omega^{|r|-1}, & \ \prerank(G) = \dm_{G}(M)\omega^\omega + \sum_{r\in R} \omega^{|r|-1} 
\end{align*}
Recall $\dm_G(M) = \dm_H(P)$, then
\begin{align*}
\prerank(H) -\prerank(G) &= \sum_{r\in S} \omega^{|r|-1} - \sum_{r\in R} \omega^{|r|-1}
\\
&= \sum_{r\in S'} \omega^{|r|-1} - \sum_{r\in R'} \omega^{|r|-1}
\end{align*}
By choice of $n$, we know that $\sum_{r\in S'} \omega^{|r|-1}\geq \omega^{n-1}$. By the claim we know $\sum_{r\in R'} \omega^{|r|-1} < \omega^{n-1}$. Therefore, $\prerank(H)>\prerank(G)$, in contradiction to $H\strong_r G$.

Thus, we must have $S'=\emptyset$ and by Proposition \ref{prop: sqsubseteq implies leqslant} we conclude that ${H\sqsubseteq G}$.
\end{proof}

The next Corollary summarizes the first section:

\begin{cor}
\label{corollary: sqsubseteq equivalence}
For a pregeometry $G$ and a finite $H\subseteq G$, the following are equivalent:
\begin{enumerate}
\item
$H\sqsubseteq G$
\item
$H\sqsubseteq^* G$ and $H\strong_r G$
\item
There exist good representations $\hg{A}$ and $\hg{B}$ of $G$ and $H$, respectively, such that $\hg{B}\strong \hg{A}$.
\end{enumerate}
\end{cor}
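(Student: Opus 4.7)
The plan is to simply assemble this corollary from the infrastructure already in place; the corollary's role is to repackage Propositions \ref{prop: self-sufficiency implies sqsubseteq}, \ref{prop: sqsubseteq implies leqslant}, and \ref{proposition: strong_r implies sqsubseteq} into a single tidy statement, so no new construction is required.

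First, for $(1) \Rightarrow (2)$, I would invoke Observation \ref{observation: sqsubseteq implies sqsubseteq^*} to obtain $H \sqsubseteq^* G$, and then Corollary \ref{corollary: sqsubseteq implies strong_r} to obtain $H \strong_r G$. For the reverse implication $(2) \Rightarrow (1)$, the content is exactly Proposition \ref{proposition: strong_r implies sqsubseteq}, provided the ambient pregeometry is flat; flatness is indeed the standing context (and it is inherited by $H$ from $G$ through Corollary \ref{cor: sqsubseteq^* in flat implies flatness} once $H \sqsubseteq^* G$ is known).

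Next, for $(1) \Leftrightarrow (3)$, the backbone is Corollary \ref{corollary: sqsubseteq is equivalent to self-sufficient}. The one point meriting a line of commentary is the adjective \emph{good}: Corollary \ref{corollary: sqsubseteq is equivalent to self-sufficient} only asserts the existence of \emph{some} hypergraph witnesses, whereas clause $(3)$ demands good representations. This is not an obstacle because the witnesses produced in the proof of Proposition \ref{prop: sqsubseteq implies leqslant} come from the Enumerative Construction (Definition \ref{definition: enumerative construction}) applied with a hydra for $G$ centered at the underlying set of $H$; by assertion (2) of Lemma \ref{lemma: add edge} (equivalently, by Theorem \ref{theorem: flat pregeometry comes from graph}) these are automatically good. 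Conversely, if any $\hg{B}\strong\hg{A}$ represent $H$ and $G$ — good or not — Proposition \ref{prop: self-sufficiency implies sqsubseteq} supplies $H \sqsubseteq G$ directly.

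There is no genuine obstacle here — all the real work was done in the cited propositions. The only thing to keep in mind while writing is the implicit flatness hypothesis: clause $(3)$ forces $G$ to be flat via Proposition \ref{prop: flatness}, the step $(1) \Rightarrow (2)$ uses flatness through Corollary \ref{corollary: sqsubseteq implies strong_r}, and $(2) \Rightarrow (1)$ uses it through Proposition \ref{proposition: strong_r implies sqsubseteq}; so one should at most remark that each direction lands inside the flat setting before quoting the corresponding reference.
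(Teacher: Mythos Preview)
Your proposal is correct and matches the paper's approach: the corollary is indeed a packaging of Observation~\ref{observation: sqsubseteq implies sqsubseteq^*}, Corollary~\ref{corollary: sqsubseteq implies strong_r}, Proposition~\ref{proposition: strong_r implies sqsubseteq}, and Corollary~\ref{corollary: sqsubseteq is equivalent to self-sufficient}, with the observation that the enumerative-construction witnesses of Proposition~\ref{prop: sqsubseteq implies leqslant} are automatically good. Your remark about the implicit flatness hypothesis is well taken and is exactly the sort of bookkeeping the paper leaves to the reader.
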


\section{Generic flat pregeometries}

\subsection{Construction of $\pg{G}$}
\label{subsection: construction of G}

In the context of a class of relational structures $\mathbb{D}$ and $\strong_{\mathbb{D}}$ --- a transitive, invariant-under-isomorphism notion of distinguished substructure between elements of $\mathbb{D}$ --- the following special case of Fra\"iss\'e's Theorem gives a method of constructing a generic structure for a subclass $\mathbb{C}$.

\begin{theorem}
\label{theorem: Fraisse's theorem - restricted}
Let $\mathbb{C}\subseteq \mathbb{D}$ be a countable (up to isomorphism) class of finite structures, closed under isomorphisms and taking $\strong_{\mathbb{D}}$-substructures. Assume
\begin{enumerate}
\item
$\emptyset\strong_{\mathbb{D}} A$, for every $A\in \mathbb{C}$
\item
For every $A,B_1,B_2\in \mathbb{C}$ with embeddings $f_i: A\to B_i$ such that ${f_i[A]\strong_{\mathbb{D}} B_i}$, there exists $D\in \mathbb{C}$ and $g_i:B_i\to D$ such that $g_i[B_i]\strong_{\mathbb{D}} D$ and $g_1f_1=g_2f_2$.
\end{enumerate}
Then there exists a unique (up to isomorphism) countable structure $M$ such that $\mathbb{C} = \setcol{A\strong_{\mathbb{D}} M}{|A| < \infty}$ and
\begin{itemize}
\item[$(*)$]
Whenever $A\strong_{\mathbb{D}} M$, $A\strong_{\mathbb{D}} B\in \mathbb{C}$, then there is an embedding $f:B\to M$ fixing $A$ such that $f[B]\strong_{\mathbb{D}} M$.
\end{itemize}
Call $M$ the generic structure for $\mathbb{C}$.
\end{theorem}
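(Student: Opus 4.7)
The approach is the standard Fra\"iss\'e construction, adapted to use $\strong_{\mathbb{D}}$-embeddings in place of ordinary substructure inclusion. For existence I would build $M$ as the union of a chain $A_0\strong_{\mathbb{D}} A_1\strong_{\mathbb{D}} \cdots$ in $\mathbb{C}$, with $A_0=\emptyset$ (which lies in $\mathbb{C}$ by hypothesis (1) and closure of $\mathbb{C}$ under $\strong_{\mathbb{D}}$-substructures). Since $\mathbb{C}$ is countable up to isomorphism, one can bookkeep, in a zigzag fashion compatible with the fact that $A_n$ keeps growing, all triples $(A,B,f)$ with $A\strong_{\mathbb{D}} B\in \mathbb{C}$ and $f:A\to A_n$ an embedding satisfying $f[A]\strong_{\mathbb{D}} A_n$. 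At stage $n$ I address the next such triple by applying the amalgamation hypothesis (2) to $f$ and the inclusion $A\hookrightarrow B$, obtaining $A_{n+1}\in \mathbb{C}$ with $A_n\strong_{\mathbb{D}} A_{n+1}$ into which $B$ embeds over $A$.

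Next I would verify that this $M$ has the desired properties. Property $(*)$ is immediate from the bookkeeping, since every candidate extension problem over some $A_n$ is eventually addressed. For the claim that $\mathbb{C}$ is precisely the class of finite $\strong_{\mathbb{D}}$-substructures of $M$, the chain property of $\strong_{\mathbb{D}}$ (that it respects unions of $\strong_{\mathbb{D}}$-chains) yields each $A_n\strong_{\mathbb{D}} M$, so by transitivity of $\strong_{\mathbb{D}}$ and closure of $\mathbb{C}$ under $\strong_{\mathbb{D}}$-substructures, everything in $\mathbb{C}$ appears as a finite $\strong_{\mathbb{D}}$-substructure of $M$ via one of the $A_n$. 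Conversely, any finite $X\strong_{\mathbb{D}} M$ sits in some $A_n$, and transitivity combined with $A_n\strong_{\mathbb{D}} M$ forces $X\strong_{\mathbb{D}} A_n$, placing $X$ in $\mathbb{C}$.

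Uniqueness I would obtain by a back-and-forth argument. Given two candidates $M, M'$, enumerate their elements and construct an isomorphism $M\cong M'$ as the union of an ascending chain of finite partial $\strong_{\mathbb{D}}$-isomorphisms $f_n:A_n\to A_n'$ with $A_n\strong_{\mathbb{D}} M$ and $A_n'\strong_{\mathbb{D}} M'$. At an even stage, to absorb the next element of $M$, first enclose $A_n$ together with that element in some finite $B\strong_{\mathbb{D}} M$ in $\mathbb{C}$ (using that every finite subset of $M$ sits inside a finite $\strong_{\mathbb{D}}$-substructure), then apply $(*)$ in $M'$ to realize $B$ over $f_n[A_n]$ as a $\strong_{\mathbb{D}}$-substructure of $M'$, giving $f_{n+1}$. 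Odd stages reverse the roles, and the direct limit of the $f_n$ is the desired isomorphism.

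The delicate point I expect is the chain property of $\strong_{\mathbb{D}}$ and the fact that every finite subset of $M$ is contained in some finite $\strong_{\mathbb{D}}$-substructure of $M$ --- facts that are trivial in the classical Fra\"iss\'e setting but demand genuine work when $\strong_{\mathbb{D}}$ is a distinguished embedding. For the intended application to $(\mathfrak{C},\sqsubseteq)$ this is exactly what the preceding analysis of self-sufficient closures for $\sqsubseteq$ has established (via Corollary \ref{corollary: sqsubseteq equivalence} and the hypergraph correspondence); the remainder of the argument is routine Fra\"iss\'e bookkeeping.
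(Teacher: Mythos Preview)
The paper states this theorem without proof, treating it as a standard special case of Fra\"iss\'e's Theorem; there is nothing to compare against. Your outline is the correct standard argument, and your identification of the two ``delicate points'' (that $\strong_{\mathbb{D}}$ is preserved under unions of chains, and that every finite subset of a candidate $M$ lies inside some finite $\strong_{\mathbb{D}}$-substructure) is apt --- these are genuine extra hypotheses on $\strong_{\mathbb{D}}$ not explicitly listed in the theorem but tacitly assumed, and they do hold in the paper's applications via the hypergraph self-sufficient closure. One small correction: for the back-and-forth in the uniqueness proof, you invoke the second delicate point for an \emph{arbitrary} candidate $M'$, not just the one you built as a union of a chain; this does not follow from the stated characterization $\mathbb{C}=\{A\strong_{\mathbb{D}} M':|A|<\infty\}$ and $(*)$ alone, so it really is an additional standing assumption on $(\mathbb{D},\strong_{\mathbb{D}})$ rather than something you can derive here.
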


\begin{remark}
\label{remark: isomorphism extends to automorphism in generic}
If $M$ is generic for $\mathbb{C}$, a standard back and forth argument shows that any isomorphism between finite $\strong_{\mathbb{D}}$-embedded substructures extends to an automorphism of $M$.
\end{remark}

The procedure with which Hrushovski's non-collapsed construction is attained is an application of the theorem to a class of hypergraphs. From the properties in Fact \ref{fact: self-sufficiency properties}, it is not hard to show that the conditions of Theorem \ref{theorem: Fraisse's theorem - restricted} hold.

\begin{definition}
\label{definition: Hrushovski's construction}
For every $n\in \Nats$, define $\mathcal{C}_n$ to be the class of finite hypergraphs $\hg{A}=(M,R)$ such that $\emptyset\strong \hg{A}$ and $R\subseteq [M]^{\leq n}$. For $n=\omega$, define $\mathcal{C}_{\omega} = \bigcup_{n\in\Nats} \mathcal{C}_n$. Denote by $\mathcal{M}_n$ the generic structure for $\mathcal{C}_n$.
\end{definition}

When this causes no confusion, we omit the subscript and write by convention $\mathcal{C}$, $\mathcal{M}$ for $\mathcal{C}_{\omega}$, $\mathcal{M}_{\omega}$, respectively.

Model theoretically, the structures $\mathcal{M}_n$ are saturated, $\omega$-stable, and almost model complete (have quantifier elimination up to boolean combinations of existential formulas). We will similarly construct generic flat pregeometries $\pg{G}_n$, sharing similar traits, and demonstrate that in fact $\pg{G}_n = G_{\mathcal{M}_n}$. Since the procedure goes through regardless of arity, we do the work with unbounded arity. We geometrically define and address bounded arities in a later subsection.

\begin{definition}
Define $\mathfrak{C}$ to be the class of all finite flat pregeometries.
\end{definition}

By Proposition \ref{theorem: flat pregeometry comes from graph}, in fact $\mathfrak{C} = \setarg{G_{\hg{A}}}{\hg{A}\in \mathcal{C}}$. By Corollary \ref{corollary: sqsubseteq equivalence}, it is clear that $G$ is flat if and only if $\emptyset\sqsubseteq G$. In order to apply Theorem \ref{theorem: Fraisse's theorem - restricted}, we only need to show amalgamation. We will go through hypergraphs to do this.

\begin{definition}
\label{definition: hypergraph amalg}
Let $\hg{B}_1 = (M_1, R_1)$, $\hg{B}_2 = (M_2, R_2)$ be hypergraphs such that $\emptyset\strong \hg{B}_1, \hg{B}_2$. Denote $M_0 = M_1\cap M_2$ and assume $M_0\strong \hg{B}_1$. Define the hypergraph
\[
\hg{B}_1\amalg\hg{B}_2 = (M_1\cup M_2,\ R_2\cup (R_1\setminus R_1[M_0]))
\]
\end{definition}

While this is not necessarily an amalgam of hypergraphs, if both $\hg{B}_1$ and $\hg{B}_2$ induce the same pregeometry on their intersection, $G_{\hg{B}_1\amalg\hg{B}_2}$ will be an amalgam of pregeometries. Before stating the definition of the amalgam for pregeometries in Definition \ref{definition: geometric amalgam}, we first show it is well-defined in Corollary \ref{corollary: geometric amalgam is well-defined}, and capitalize on that to get a short useful result in Corollary \ref{corollary: replace self-sufficient subgraph}. The following Lemma demonstrates that the amalgam is a ``free'' amalgam.

\begin{lemma}
\label{lemma: dimension in amalg}
In the notation of Definition \ref{definition: hypergraph amalg} above, assume that $G_{\hg{B}_1[M_0]}\subseteq G_{\hg{B}_2}$. That is, $G_{\hg{B}_1}$ and $G_{\hg{B}_2}$ restrict to the same pregeometry on $M_0$. Then for any $X\subseteq M_1\cup M_2$ closed in $\hg{D}:=\hg{B}_1\amalg\hg{B}_2$ of finite dimension,
\[
\dm_{\hg{D}}(X) = \dm_{\hg{B}_2}(X\cap M_2) + \dm_{\hg{B}_1}(X\cap M_1) - \dm_{\hg{B}_1}(X\cap M_0)
\]
\end{lemma}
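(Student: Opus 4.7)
The plan is to exploit an edge-counting identity for $\delta_{\hg{D}}$ that encodes the ``free amalgam'' structure, and then establish matching upper and lower bounds on $\dm_{\hg{D}}(X)$. First I would write down, for any finite $Y \subseteq M_1 \cup M_2$ with $Y_i := Y \cap M_i$, the identity
\[
\delta_{\hg{D}}(Y) = \delta_{\hg{B}_1}(Y_1) + \delta_{\hg{B}_2}(Y_2) - \delta_{\hg{B}_1}(Y_0)
\]
by counting vertices via $|Y| = |Y_1|+|Y_2|-|Y_0|$ and edges via the decomposition of the edges of $\hg{D}$ as $R_2 \sqcup (R_1 \setminus R_1[M_0])$. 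The same count gives $\hg{D}[M_2] = \hg{B}_2$ (so $M_2 \strong \hg{D}$); from $M_0 \strong \hg{B}_1$ and equality of pregeometries on $M_0$ I would deduce $M_0 \strong \hg{B}_2$ and then $M_1 \strong \hg{D}$. Finally, I would reduce to finite $X$ by choosing $X' \subseteq X$ finite witnessing simultaneously $\dm_{\hg{D}}(X)$ and each $\dm_{\hg{B}_i}(X \cap M_i)$.

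For the upper bound, I would take $Y_0 := \sscl_{\hg{B}_1}(X_0) \subseteq M_0$, $Y_1 := \sscl_{\hg{B}_1}(X_1 \cup Y_0) \subseteq M_1$, $Y_2 := \sscl_{\hg{B}_2}(X_2 \cup Y_0) \subseteq M_2$, and $Y := Y_1 \cup Y_2$. The containments $Y_0 \subseteq \cl_{\hg{B}_i}(X_i)$ will force $\delta_{\hg{B}_1}(Y_0) = \dm_{\hg{B}_1}(X_0)$, $\delta_{\hg{B}_1}(Y_1) = \dm_{\hg{B}_1}(X_1)$, and $\delta_{\hg{B}_2}(Y_2) = \dm_{\hg{B}_2}(X_2)$. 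Plugging into the identity, and using submodularity of the $\delta_{\hg{B}_i}$ to handle that $Y \cap M_0$ may strictly exceed $Y_0$, will give $\delta_{\hg{D}}(Y)$ at most the desired right-hand side.

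For the lower bound, I would fix a basis $B_0 \subseteq X_0$ of $X_0$ in the common pregeometry $H_0$ on $M_0$, then extend to bases $B_1 = B_0 \sqcup B_1'$ of $X_1$ in $\hg{B}_1$ and $B_2 = B_0 \sqcup B_2'$ of $X_2$ in $\hg{B}_2$. Since $X_0 \subseteq \cl_{\hg{B}_i}(B_0)$, the elements of $B_1'$ and $B_2'$ must lie outside $M_0$, so $B_1 \cup B_2$ is a disjoint union of cardinality $|B_1|+|B_2|-|B_0|$ (matching the right-hand side), and it carries no $\hg{D}$-edges by the independence of $B_i$ in $\hg{B}_i$. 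I would then show $B_1 \cup B_2$ is independent in $\hg{D}$ by proving $\delta_{\hg{D}}(Z) \geq |B_1 \cup B_2|$ for every finite $Z \supseteq B_1 \cup B_2$, using the identity along with $B_i \strong \hg{B}_i$; this yields $\dm_{\hg{D}}(X) \geq |B_1 \cup B_2|$.

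The hard part will be the lower bound: the subtracted $\delta_{\hg{B}_1}(Z_0)$ in the identity can be larger than $\dm_{H_0}(Z_0)$, so I will need to enlarge $Z$ so as to make $Z_0$ self-sufficient in both $\hg{B}_1$ and $\hg{B}_2$ simultaneously---using the common pregeometric structure on $M_0$---whereupon the subtracted term collapses to $\dm_{H_0}(Z_0)$ and cancels against the $B_0$-contributions already present in $\delta_{\hg{B}_1}(Z_1)$ and $\delta_{\hg{B}_2}(Z_2)$.
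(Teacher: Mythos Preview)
Your edge-counting identity
\[
\delta_{\hg{D}}(Y) = \delta_{\hg{B}_1}(Y_1) + \delta_{\hg{B}_2}(Y_2) - \delta_{\hg{B}_1}(Y_0)
\]
is exactly the right starting point, and the paper uses it too. But you overlook the key shortcut and make one false claim.

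\textbf{The false claim.} You assert that from $M_0 \strong \hg{B}_1$ and the agreement of pregeometries on $M_0$ one can deduce $M_0 \strong \hg{B}_2$ (and hence $M_1 \strong \hg{D}$). This is wrong: the hypothesis only says $\dm_{\hg{B}_2}|_{M_0} = \dm_{\hg{B}_1}|_{M_0}$, which does not force $\dm_{\hg{B}_2}|_{M_0} = \dm_{\hg{B}_2[M_0]}$. For a counterexample take $M_0 = M_1 = \{a,b\}$, $R_1 = \{\{a,b\}\}$, $M_2 = \{a,b,c\}$, $R_2 = \{\{a,c\},\{b,c\}\}$: both pregeometries make $\{a,b\}$ one-dimensional, yet $\delta_{\hg{B}_2}(\{a,b\}) = 2 > 1$, so $M_0 \not\strong \hg{B}_2$ and $M_1 \not\strong \hg{D}$. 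The amalgam is genuinely asymmetric; this is why Lemma~\ref{lemma: amalg properties} separates (i) from (v). Your argument does not seem to actually \emph{use} $M_1 \strong \hg{D}$ afterwards, so this is more a warning than a fatal gap.

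\textbf{The missed shortcut.} The hypothesis says $X$ is \emph{closed} in $\hg{D}$. Closed sets are self-sufficient, so immediately $\dm_{\hg{D}}(X) = \delta_{\hg{D}}(X)$, and your identity applied to $Y = X$ itself gives
\[
\dm_{\hg{D}}(X) = \delta_{\hg{B}_1}(X_1) + \delta_{\hg{B}_2}(X_2) - \delta_{\hg{B}_1}(X_0).
\]
It then only remains to observe that $X_2$ is closed in $\hg{B}_2$ (via $M_2 \strong \hg{D}$), that $X_1$ is closed in $\hg{B}_1$, and that $X_0$ is closed in $\hg{B}_1[M_0]$ (via $M_0 \strong \hg{B}_1$), so each $\delta$ on the right becomes the corresponding $\dm$. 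This is the paper's route: one direct computation in place of your two-sided estimate. Your upper and lower bounds, with the attendant submodularity juggling (handling $Y \cap M_0 \supsetneq Y_0$, enlarging $Z$ so that $Z_0$ becomes self-sufficient in both $\hg{B}_i$ simultaneously), can be pushed through, but they laboriously recover something that closedness of $X$ hands you for free.
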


\begin{proof}
Assume for a moment that $\hg{D}$ is finite. Since $X$ is closed in $\hg{D}$, $\dm_{\hg{D}}(X) = \delta_{\hg{D}}(X)$. By construction, $X\cap M_2$ is clearly closed in $\hg{B}_2$, so 
\[
\dm_{\hg{B}_2}(X\cap M_2) = \delta_{\hg{B}_2}(X\cap M_2) = \delta_{\hg{D}}(X\cap M_2).
\]
Also, $X\cap M_1$ is closed in $\hg{B}_1$ and since $M_0\strong \hg{B}_1$, the set $X\cap M_0$ is closed in $\hg{B}_1[M_0]$. So
\begin{align*}
\delta_{\hg{D}}(X\cap M_1/X\cap M_0) &= \delta_{\hg{B}_1}(X\cap M_1/X\cap M_0)
\\
&= \delta_{\hg{B}_1}(X\cap M_1) - \delta_{\hg{B}_1}(X\cap M_0)
\\
&= \dm_{\hg{B}_1}(X\cap M_1) - \dm_{\hg{B}_1}(X\cap M_0)
\end{align*}
By construction, $\delta_{\hg{D}}(X/X\cap M_2) = \delta_{\hg{D}}(X\cap M_1/X\cap M_0)$. Since, by definition, $\delta_{\hg{D}}(X) = \delta_{\hg{D}}(X\cap M_2) + \delta_{\hg{D}}(X/X\cap M_2)$, we are done.

Now, if $\hg{D}$ is infinite, reduce to a self-sufficient subgraph of $\hg{D}$ containing bases for $X$, $X\cap M_1$, $X\cap M_2$, and $X\cap M_0$ for the argument to go through.
\end{proof}

\begin{cor}
\label{corollary: geometric amalgam is well-defined}
In the notation of Definition \ref{definition: hypergraph amalg}, assuming $G_{\hg{B}_1[M_0]}\subseteq G_{\hg{B}_2}$, the pregeometry $G:= G_{\hg{B}_1 \amalg \hg{B}_2}$ does not depend on the structure of $\hg{B}_1$ and $\hg{B}_2$, but only on $G_{\hg{B}_1}$ and $G_{\hg{B}_2}$.
\end{cor}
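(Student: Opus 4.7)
The approach is to express $\dm_G(X)$ for any finite $X\subseteq M_1\cup M_2$ using only the pregeometries $G_{\hg{B}_1}$ and $G_{\hg{B}_2}$. Define $Y$ to be the smallest subset of $M_1\cup M_2$ containing $X$ for which $Y\cap M_i$ is closed in $G_{\hg{B}_i}$ for $i=1,2$. Since intersections of sets with this property retain it, $Y$ exists and is unique; and by submodularity $Y$ has $\hg{B}_i$-dimension bounded above by $\dm_{G_{\hg{B}_1}}(X\cap M_1)+\dm_{G_{\hg{B}_2}}(X\cap M_2)$, so is finite-dimensional. Crucially, $Y$ is specified purely by the two pregeometries, and is therefore the same for any pair of hypergraph representatives.

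The goal is to prove $Y=\cl_G(X)$, at which point Lemma \ref{lemma: dimension in amalg} applied to the closed, finite-dimensional set $Y$ yields
\[
\dm_G(X)=\dm_G(Y)=\dm_{G_{\hg{B}_1}}(Y\cap M_1)+\dm_{G_{\hg{B}_2}}(Y\cap M_2)-\dm_{G_{\hg{B}_1}}(Y\cap M_0),
\]
the last term unambiguous because $Y\cap M_0\subseteq M_0$ where the two pregeometries agree by $G_{\hg{B}_1[M_0]}\subseteq G_{\hg{B}_2}$. Every quantity on the right depends only on $G_{\hg{B}_1}$ and $G_{\hg{B}_2}$, so replacing $\hg{B}_1,\hg{B}_2$ with any other pair of representatives preserves $\dm_G(X)$.

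For $Y\subseteq\cl_G(X)$, build $Y$ by iterating the map $Z\mapsto\cl_{G_{\hg{B}_1}}(Z\cap M_1)\cup\cl_{G_{\hg{B}_2}}(Z\cap M_2)$ from $X$; at each stage an added element witnesses a $\hg{B}_i$-dependence that persists in $\hg{D}$, because the amalgam retains all edges of $\hg{B}_i$ off $M_0$ and the $M_0$-pregeometry is preserved. For $\cl_G(X)\subseteq Y$, it suffices that $Y$ be closed in $\hg{D}$: given $z\in(M_1\cup M_2)\setminus Y$ with WLOG $z\in M_1$, so $z\notin\cl_{G_{\hg{B}_1}}(Y\cap M_1)$, set $W:=\cl_G(Y)$ and $V:=\cl_G(Yz)$. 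The proof of Lemma \ref{lemma: dimension in amalg} yields that $W\cap M_i$ and $V\cap M_i$ are both $\hg{B}_i$-closed. Then $V\cap M_1$ is a $\hg{B}_1$-closed set containing $(W\cap M_1)\cup\{z\}$ with $z\notin W\cap M_1$ (else $z\in\cl_G(Y)$ would already reduce to a case against the minimality of $Y$), so $\dm_{G_{\hg{B}_1}}(V\cap M_1)>\dm_{G_{\hg{B}_1}}(W\cap M_1)$; comparing the Lemma formula at $V$ and $W$, one verifies that the $M_2$- and $M_0$-corrections cannot absorb this $+1$ gain, so $\dm_G(V)>\dm_G(W)$, i.e., $z\notin\cl_G(Y)$.

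The main obstacle is the closedness of $Y$ in $\hg{D}$: the dimension count comparing $V$ and $W$ must ensure the $\hg{B}_1$-gain from $z$ is not cancelled by a matching $\hg{B}_1$-gain in $V\cap M_0$, which ultimately rests on $M_0\strong\hg{B}_1$ and on the fact that $z\notin M_0$ is treated symmetrically in both amalgams, since all edges witnessing dependencies of $z$ come from $\hg{B}_1$ and are preserved verbatim in $\hg{D}$ and $\hg{D}'$.
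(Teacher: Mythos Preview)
Your approach differs from the paper's and, while the overall plan is sound, the step you flag as ``the main obstacle'' is a genuine gap that your sketch does not close.

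The paper never tries to describe $\cl_G(X)$ explicitly. It instead takes a second pair of representatives $\hg{B}_1',\hg{B}_2'$ with the same associated pregeometries, forms $G'$, and compares closed sets directly: for $X$ closed in $G'$, Lemma~\ref{lemma: dimension in amalg} (applied in $G'$) rewrites $\dm_{G'}(X)$ in purely pregeometric terms, and the same expression is recognized as $\delta_{\hg{D}}(X)\ge\dm_G(X)$. By the symmetric inequality applied to $Y:=\cl_G(X)$ one gets $\dm_{G'}(Y)\le\dm_G(Y)=\dm_G(X)\le\dm_{G'}(X)\le\dm_{G'}(Y)$, hence $X=Y$. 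So $G$ and $G'$ have identical closed sets and coincide. No closure iteration, no case analysis on $z$.

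In your argument two steps are not established. First, the inclusion $Y\subseteq\cl_G(X)$ at each stage of the iteration requires that $\cl_{G_{\hg{B}_1}}(A)\subseteq\cl_G(A)$ for $A\subseteq M_1$, i.e.\ that $G_{\hg{B}_1}\subseteq G$ as pregeometries; this is exactly part~(ii) of Lemma~\ref{lemma: amalg properties}, which the paper proves \emph{after} the corollary, and your one-line justification is not yet a proof. Second, and more seriously, the closedness of $Y$ in $G$: the parenthetical ``else $z\in\cl_G(Y)$ would already reduce to a case against the minimality of $Y$'' does not work---minimality of $Y$ among sets whose $M_i$-traces are $\hg{B}_i$-closed says nothing about $G$-closedness of $Y$---and the assertion that ``the $M_2$- and $M_0$-corrections cannot absorb this $+1$ gain'' is precisely what must be proved. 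Comparing the Lemma~\ref{lemma: dimension in amalg} formula at $W=\cl_G(Y)$ with the $\delta_{\hg D}$-computation at $Y$ gives
\[
[\dm_{\hg{B}_1}(W\cap M_1)-\dm_{\hg{B}_1}(Y\cap M_1)]+[\dm_{\hg{B}_2}(W\cap M_2)-\dm_{\hg{B}_2}(Y\cap M_2)]\le \dm_{\hg{B}_1}(W\cap M_0)-\dm_{\hg{B}_1}(Y\cap M_0),
\]
and one must argue the right-hand side cannot exceed either bracket on the left; submodularity does not give this in the direction you need without further work using $M_0\strong\hg{B}_1$ and the agreement of pregeometries on $M_0$.

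Your route can be completed, but it essentially front-loads pieces of Lemma~\ref{lemma: amalg properties}. The paper's symmetric comparison sidesteps the whole issue.
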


\begin{proof}
Denote $G:=G_\hg{D}$. Let $\hg{B}_1'$, $\hg{B}_2'$ be such that $M_0\strong \hg{B}_1'$ and $G_{\hg{B}_i'} = G_{\hg{B}_i}$ for $i=1,2$. Denote $G' = G_{\hg{B}_1'\amalg \hg{B}_2'}$. Let $X\subseteq M_1\cup M_2$ be closed in $G'$. Then as before $X\cap M_i$ is closed in $\hg{B}_i'$ for $i=1,2$, $X\cap M_0$ is closed in $\hg{B}_1'[M_0]$, and so
\begin{align*}
\dm_{G'}(X) &= \dm_{\hg{B}_2'}(X\cap M_2) + \dm_{\hg{B}_1'}(X\cap M_1) - \dm_{\hg{B}_1'}(X\cap M_0)
\\
&= \dm_{\hg{B}_2}(X\cap M_2) + \dm_{\hg{B}_1}(X\cap M_1) - \dm_{\hg{B}_1}(X\cap M_0)
\\
&= \delta_{\hg{B}_2}(X\cap M_2) + \delta_{\hg{B}_1}(X\cap M_1/X\cap M_0)
\\
&= \delta_{\hg{D}}(X) \geq \dm_{\hg{D}}(X) = \dm_{G}(X)
\end{align*}
By symmetry of the argument, taking $Y=\cl_{G}(X)$, we have $\dm_{G'}(Y)\leq \dm_{G}(Y) = \dm_{G}(X)$. By definition, $\dm_{G'}(X)\leq \dm_{G'}(Y)$ so we get that $\dm_{G'}(Y) = \dm_{G'}(X)$, i.e., $Y=X$. Since $G$ and $G'$ have the exact same closed sets, $G = G'$.
\end{proof} 

\begin{cor}
\label{corollary: replace self-sufficient subgraph}
If $H\sqsubseteq G$ are flat and $\hg{B}$ is a representation of $H$, then there exists $\hg{A}$ a representation of $G$ such that $\hg{B}\strong \hg{A}$.
\end{cor}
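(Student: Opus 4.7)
The plan is to build $\hg{A}$ by amalgamating $\hg{B}$ into a known good representation of $G$, using the hypergraph amalgam machinery from Definition \ref{definition: hypergraph amalg}. By Corollary \ref{corollary: sqsubseteq equivalence}, I fix good representations $\hg{A}_0$ of $G$ and $\hg{B}_0$ of $H$ with $\hg{B}_0 \strong \hg{A}_0$. Writing $P$ for the underlying set of $H$, the definition of $\strong$ gives $\hg{A}_0[P] = \hg{B}_0$ and in particular $P \strong \hg{A}_0$; moreover $G_{\hg{A}_0[P]} = G_{\hg{B}_0} = H = G_\hg{B}$. I then set $\hg{A} := \hg{A}_0 \amalg \hg{B}$, with vertex sets intersecting in $P$; all hypotheses for Definition \ref{definition: hypergraph amalg} and Corollary \ref{corollary: geometric amalgam is well-defined} are in place.

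To see $G_\hg{A} = G$, I invoke Corollary \ref{corollary: geometric amalgam is well-defined}: the pregeometry $G_{\hg{A}_0 \amalg \hg{B}}$ depends only on $G_{\hg{A}_0} = G$ and $G_\hg{B} = H$, so I may freely replace $\hg{B}$ by $\hg{B}_0$ without altering the underlying pregeometry. A direct edge-set computation shows $\hg{A}_0 \amalg \hg{B}_0 = \hg{A}_0$, since $R_{\hg{B}_0} = R_{\hg{A}_0}[P]$ and therefore $R_{\hg{B}_0} \cup (R_{\hg{A}_0} \setminus R_{\hg{A}_0}[P]) = R_{\hg{A}_0}$. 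Hence $G_\hg{A} = G_{\hg{A}_0} = G$.

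For $\hg{B} \strong \hg{A}$, I appeal to part (2) of Fact \ref{fact: self-sufficiency properties}. By construction, every edge of $\hg{A}$ contained in $P$ comes from $\hg{B}$, so $\hg{A}[P] = \hg{B}$; and since $H \subseteq G$ as subpregeometries, $\dm_\hg{B} = \dm_H$ is the restriction of $\dm_G = \dm_\hg{A}$ to subsets of $P$. The two conditions of Fact \ref{fact: self-sufficiency properties}(2) are thus met, yielding $\hg{B} \strong \hg{A}$. I anticipate no real obstacle: all substantive content lives in Corollary \ref{corollary: sqsubseteq equivalence} (to begin with a good representation $\hg{A}_0$) and in Corollary \ref{corollary: geometric amalgam is well-defined} (to swap representations of $H$ inside the amalgam); the remainder is straightforward bookkeeping.
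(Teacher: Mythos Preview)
Your proof is correct and follows essentially the same approach as the paper's: pick representations $\hg{B}_0 \strong \hg{A}_0$ of $H \sqsubseteq G$, form $\hg{A} = \hg{A}_0 \amalg \hg{B}$, observe $\hg{A}_0 \amalg \hg{B}_0 = \hg{A}_0$, and apply Corollary~\ref{corollary: geometric amalgam is well-defined} to conclude $G_{\hg{A}} = G$. The only cosmetic difference is that the paper asserts $\hg{B} \strong \hg{A}$ directly from the amalgam definition (since $\delta_{\hg{A}}(X/P) = \delta_{\hg{A}_0}(X/P) \geq 0$ for all finite $X$), whereas you deduce it from Fact~\ref{fact: self-sufficiency properties}(2) after first establishing $G_{\hg{A}} = G$; both routes are valid.
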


\begin{proof}
Choose some representations $\hg{B}'\strong \hg{A}'$ of $H\sqsubseteq G$. Note $\hg{A'} = \hg{A}'\amalg\hg{B}'$. Let $\hg{A} = \hg{A}'\amalg \hg{B}$, clearly $\hg{B}\strong \hg{A}$. Because $G_{\hg{B}}=G_{\hg{B}'}$, we also have $G_{\hg{A}'\amalg \hg{B}} = G_{\hg{A}'\amalg \hg{B}'}$, so $G_{\hg{A}} = G_{\hg{A}'} = G$.
\end{proof}

We can now rigorously define the geometric amalgam and show that it indeed (strongly) extends the component pregeometries.

\begin{definition}
\label{definition: geometric amalgam}
For flat pregeometries $H, G_1, G_2$ such that $H\sqsubseteq G_1$, $H\subseteq G_2$ and $H=G_1\cap G_2$. We define $G_1\amalg_{H} G_2$, the \emph{amalgam of $G_1$ and $G_2$ over $H$}, to be the pregeometry associated to $\hg{B}_1\amalg \hg{B}_2$, where $\hg{B}_i$ is a good representation of $G_i$ and $\hg{B}_1[H]\strong \hg{B}_1$. 
\end{definition}

\begin{lemma}
\label{lemma: amalg properties}
In the notation of Definition \ref{definition: geometric amalgam}, letting $G := G_1\amalg_H G_2$,
\begin{enumerate}[i.]
\item
$G_2\sqsubseteq G$
\item
$G_1 \subseteq G$
\item
Whenever $X$ is a closed set in $G_i$, then $\cl_G(X) = X\cup cl_{G_{3-i}}(X\cap H)$
\item
If $H\sqsubseteq^* G_2$, then $G_1 \sqsubseteq^* G$
\item
If $H\sqsubseteq G_2$, then $G_1 \sqsubseteq G$
\end{enumerate}

%\begin{enumerate}[i.]
%\item
%$G_{\hg{B}_1} \subseteq G$, and if $G_{\hg{B}_1[M_0]}\sqsubseteq^* G_2$, then $G_{\hg{B}_1} \sqsubseteq^* G$
%\item
%$G_{\hg{B}_2}\sqsubseteq G$
%\item
%If $M_0\strong \hg{B}_2$, then $G_{\hg{B}_1} \sqsubseteq G$
%\end{enumerate}
\end{lemma}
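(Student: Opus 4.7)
The plan is to prove the five parts in the stated order, using the underlying hypergraph $\hg{D} := \hg{B}_1\amalg\hg{B}_2$ (with edge set $R_\hg{D} = R_2\cup(R_1\setminus R_1[M_0])$) and Lemma~\ref{lemma: dimension in amalg} as the central tools. For (i), I show $\hg{B}_2\strong\hg{D}$ by direct edge-counting: any edge of $\hg{D}$ with a vertex outside $M_2$ must lie in $R_1\setminus R_1[M_0]$, so $\delta_\hg{D}(Z/M_2) = \delta_{\hg{B}_1}((Z\cap M_1)/((Z\cap M_1)\cap M_0))\geq 0$ by $M_0\strong\hg{B}_1$. Proposition~\ref{prop: self-sufficiency implies sqsubseteq} then yields $G_2\sqsubseteq G$; in particular $G_2$ is a subpregeometry of $G$, so $\cl_{G_2}(A) = \cl_G(A)\cap M_2$ for any $A\subseteq M_2$.

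For (ii), for finite $Y\subseteq M_1$, I prove $\dm_G(Y) = \dm_{G_1}(Y)$ by two inequalities. The lower bound $\dm_G(Y)\geq\dm_{G_1}(Y)$ follows from applying Lemma~\ref{lemma: dimension in amalg} to $X := \cl_G(Y)$: since $X\cap M_0\subseteq X\cap M_2$ and $\dm_{G_2}$ restricts to $\dm_H$ on $M_0$, the formula collapses to $\dm_G(X)\geq\dm_{G_1}(X\cap M_1)\geq\dm_{G_1}(Y)$. The upper bound is obtained by exhibiting a finite sub-hypergraph of $\hg{D}$ realizing $\dm_{G_1}(Y)$: take $Y':=\sscl_{\hg{B}_1}(Y)$ and adjoin a finite $Q\subseteq M_2$ witnessing $\delta_{\hg{B}_2}(Q) = \dm_H(Y'\cap M_0)$, then computing directly in $\hg{D}[Y'\cup Q]$ gives $\delta_\hg{D}(Y'\cup Q) = \delta_{\hg{B}_1}(Y') + \delta_{\hg{B}_2}(Q) - \delta_{\hg{B}_1}(Y'\cap M_0) = \dm_{G_1}(Y)$, since edges of $R_1[Y'\cap M_0]$ absent from $\hg{D}$ are exactly compensated by the edges of $R_2$ on $Q$ via the common value $\dm_H$.

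For (iii), with $X$ closed in $G_1$, the inclusion $\supseteq$ follows from (i) as $\cl_{G_2}(A)\subseteq\cl_G(A)$ for $A\subseteq M_2$. For $\subseteq$, take $z\in\cl_G(X)$: if $z\in M_1$, (ii) gives $z\in\cl_{G_1}(X) = X$; if $z\in M_2\setminus M_0$, fix finite $X_0\subseteq X$ with $z\in\cl_G(X_0)$, let $X' := \cl_G(X_0\cup\{z\})$, and apply Lemma~\ref{lemma: dimension in amalg} to $X'$. Since $\dm_G(X') = \dm_G(X_0) = \dm_{G_1}(X_0)$ by (ii), the lower-bound chain from (ii) applied to $X'$ becomes a chain of equalities, forcing both $\dm_{G_2}(X'\cap M_2) = \dm_H(X'\cap M_0)$ (so $X'\cap M_2\subseteq\cl_{G_2}(X'\cap M_0)$) and $X'\cap M_1\subseteq\cl_{G_1}(X_0)\subseteq X$ (so $X'\cap M_0\subseteq X\cap H$); together they yield $z\in\cl_{G_2}(X\cap H)$.

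For (iv), assuming $H\sqsubseteq^* G_2$, (iii) lets me write $\cl_G(X_i) = X_i\cup\cl_{G_2}(X_i\cap H)$ for $X_1,X_2\subseteq M_1$ closed in $G_1$; Lemma~\ref{lemma: in sqsubseteq^* the intersection and closure commute} applied to $H\sqsubseteq^* G_2$ gives $\cl_{G_2}(X_1\cap H)\cap\cl_{G_2}(X_2\cap H) = \cl_{G_2}(X_1\cap X_2\cap H)$, so (iii) applied to $X_1\cap X_2$ collapses the intersection to $\cl_G(X_1\cap X_2)$, and (ii) then delivers $\dm_G(\cl_G(X_1)\cap\cl_G(X_2)) = \dm_{G_1}(X_1\cap X_2)$. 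For (v), under $H\sqsubseteq G_2$, applying Lemma~\ref{lemma: dimension in amalg} to every $E_s$ of a finite collection $\Sigma$ of finite-dimensional closed sets in $G$ expands $\flatsum{G}(\Sigma) = \flatsum{G_2}(\Sigma_{G_2}) + \flatsum{G_1}(\Sigma_{G_1}) - \flatsum{H}(\Sigma_H)$; noting $(\Sigma_{G_2})_H = \Sigma_H$, the inequality $\flatsum{H}(\Sigma_H)\leq\flatsum{G_2}(\Sigma_{G_2})$ from $H\sqsubseteq G_2$ gives $\flatsum{G_1}(\Sigma_{G_1})\leq\flatsum{G}(\Sigma)$, establishing $G_1\sqsubseteq G$. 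The principal technical hurdle is the upper bound in (ii): the natural candidate $Q := \sscl_{\hg{B}_2}(Y'\cap M_0)$ may satisfy $Q\cap M_0\supsetneq Y'\cap M_0$, which drags new $M_0$-vertices into the sub-hypergraph and pulls in unwanted $R_1$-edges; resolving this requires either choosing $Q$ carefully so that $Q\cap M_0 = Y'\cap M_0$ or iteratively enlarging $Y'$ until the pair $(Y',Q)$ stabilizes, so that the edge-count collapses to $\dm_{G_1}(Y)$.
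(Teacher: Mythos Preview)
Your proofs of (i)--(iv) follow the paper's line closely. The technical hurdle you identify in (ii) is real for your choice of $Q$, but the paper sidesteps it entirely by working with closures rather than self-sufficient closures: set $Z_1 := \cl_{\hg{B}_1}(Y)$ instead of $\sscl_{\hg{B}_1}(Y)$. Since $H$ is a subpregeometry of $G_1$ (via $M_0\strong\hg{B}_1$), the set $Z_1\cap M_0$ is automatically closed in $H$; and since $H$ is a subpregeometry of $G_2$, taking $Z_2 := \cl_{\hg{B}_2}(Z_1\cap M_0)$ yields $Z_2\cap M_0 = \cl_H(Z_1\cap M_0) = Z_1\cap M_0$ on the nose. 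Your edge-count then goes through verbatim with $(Z_1,Z_2)$ in place of $(Y',Q)$, and no iteration is needed. Your proposed fix of iteratively enlarging $Y'$ is harder to make precise: the new $M_0$-points dragged in by $Q$ may be $G_1$-independent over $Y'$, so $\delta_{\hg{B}_1}$ of the enlarged set can grow, and it is not obvious the process terminates at the value $\dm_{G_1}(Y)$.

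Your argument for (v) is genuinely different from the paper's and is correct. The paper invokes Corollary~\ref{corollary: replace self-sufficient subgraph} to replace $\hg{B}_2$ by a representation in which $\hg{B}_1[M_0]\strong\hg{B}_2$ and $\hg{B}_1[M_0]=\hg{B}_2[M_0]$; then $\hg{B}_1\strong\hg{D}$ holds by direct inspection of the edge set, and Proposition~\ref{prop: self-sufficiency implies sqsubseteq} finishes. You instead stay on the geometric side: splitting each $\dm_G(E_s)$ via Lemma~\ref{lemma: dimension in amalg} gives the identity $\flatsum{G}(\Sigma) = \flatsum{G_2}(\Sigma_{G_2}) + \flatsum{G_1}(\Sigma_{G_1}) - \flatsum{H}(\Sigma_H)$, and the hypothesis $H\sqsubseteq G_2$ is exactly the inequality $\flatsum{H}(\Sigma_H)\leq\flatsum{G_2}(\Sigma_{G_2})$ needed to conclude. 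Your route is more self-contained (it does not rely on the replacement corollary) and makes transparent that (v) uses the full strength of $H\sqsubseteq G_2$, whereas the paper's hypergraph detour obscures this. The paper's approach, on the other hand, has the virtue of yielding $\hg{B}_1\strong\hg{D}$ at the hypergraph level, which is occasionally useful elsewhere.
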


\begin{proof}
Let $\hg{B}_1 = (M_1, R_1)$, $\hg{B}_2=(M_2,R_2)$ be good representation of $G_1$, $G_2$ such that $M_0:=M_1\cap M_2\strong \hg{B}_1$, and let $\hg{D} = \hg{B}_1\amalg \hg{B}_2$.

\smallskip
i. As $\hg{B}_2\strong D$, it is clear that $G_2\sqsubseteq G$.

\smallskip
ii. Let $X\subseteq M_1$. Letting $Y=\cl_{G}(X)$, from Lemma \ref{lemma: dimension in amalg} we have that $\dm_{G}(Y) \geq \dm_{\hg{B}_1}(Y\cap M_1) \geq \dm_{\hg{B}_1}(X)$. So $\dm_G(X) \geq \dm_{\hg{B}_1}(X)$. Now let $Z_1 = \cl_{\hg{B}_1}(X)$, $Z_2 = \cl_{\hg{B}_2}(Z_1\cap M_0)$, and $Z = Z_1\cup Z_2$. Observe that $Z_2\cap M_0 = Z_1\cap M_0$. Then
\begin{align*}
\delta_{D}(Z) &= \delta_{\hg{B}_2}(Z_2) + \delta_{\hg{B}_1}(Z_1/Z_1\cap M_0)
\\
&= \dm_{\hg{B}_2}(Z_1\cap M_0) + \dm_{\hg{B}_1}(Z_1) - \dm_{\hg{B}_1}(Z_1\cap M_0)
= \dm_{\hg{B}_1}(Z_1).
\end{align*}
So $\dm_G(X) \leq \delta_{\hg{D}}(Z) = \dm_{\hg{B}_1}(Z_1) = \dm_{\hg{B}_1}(X)$. Conclude $\dm_G(X) = \dm_{\hg{B}_1}(X)$.

\smallskip
iii. Fix $i\in\set{1,2}$. Let $X$ be closed in $G_i$ and let $Y=\cl_G(X)$. By Lemma \ref{lemma: dimension in amalg},
\[
\dm(Y) = \dm(Y\cap M_i) + \dm(Y\cap M_{3-i}) - \dm(Y\cap M_0).
\]
Since $\dm(Y)= \dm(X) = \dm(Y\cap M_i)$, we get that $\dm(Y\cap M_{3-i}) = \dm(Y\cap M_0)$, i.e., $Y\cap M_{3-i} = \cl_{G_{3-i}}(Y\cap M_0)$. As $Y\cap M_0 = X\cap M_0$, we have
\[
Y = (Y\cap M_i) \cup (Y\cap M_{3-i}) = X\cup \cl_{G_{3-i}}(X\cap M_0).\]

\smallskip
iv. Assume $H\sqsubseteq^* G_2$. Let $X_1,X_2$ be closed in $G_1$, and denote $Y_i=\cl_G(X_i)$. Then by the previous item,
\begin{align*}
Y_1\cap Y_2 &= (M_1\cap Y_1\cap Y_2)\cup (M_2\cap Y_1\cap Y_2)
\\
&= (X_1\cap X_2)\cup (\cl_{G_2}(X_1\cap M_0)\cap \cl_{G_2}(X_2\cap M_0))
\\
&= (X_1\cap X_2) \cup (\cl_{G_2}(X_1\cap X_2\cap M_0)) = \cl_G(X_1\cap X_2)
\end{align*}
where going from the second line to the third is by $H\sqsubseteq^* G_2$. Then $\dm(X_1\cap X_2) = \dm(Y_1\cap Y_2)$ and $G_1\sqsubseteq^* G$.

\smallskip
v. Assume $H\sqsubseteq G_2$. By Corollary \ref{corollary: replace self-sufficient subgraph} we may assume that $M_0\strong \hg{B}_2$ and $\hg{B}_1[M_0] = \hg{B}_2[M_0]$. So by construction $\hg{B}_1\strong \hg{D}$ and $G_1 \sqsubseteq G$.
\end{proof}

\begin{cor}
\label{cor: amalgam geometric summary}
Let $H\sqsubseteq G_1,G_2$ be flat pregeometries with $H=G_1\cap G_2$. Then there exists a flat pregeometry $G$ such that $G_1,G_2\sqsubseteq G$. Moreover, $G$ is defined on the union of the sets on which $G_1$, $G_2$ are defined. \qed
\end{cor}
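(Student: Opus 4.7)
The proof is a repackaging of Lemma \ref{lemma: amalg properties}, so the plan is essentially a two-line invocation plus a small loose end about flatness.

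Set $G := G_1 \amalg_H G_2$ as in Definition \ref{definition: geometric amalgam}. The definition applies because, by Corollary \ref{corollary: sqsubseteq equivalence} applied to $H \sqsubseteq G_1$, we may select good representations $\hg{B}_1$, $\hg{B}_2$ of $G_1$, $G_2$ with $\hg{B}_1[H] \strong \hg{B}_1$; Corollary \ref{corollary: geometric amalgam is well-defined} guarantees the resulting $G$ does not depend on the choices. The vertex set of $\hg{D} := \hg{B}_1 \amalg \hg{B}_2$ is $M_1 \cup M_2$, which yields the ``moreover'' clause.

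For the strong embeddings, Lemma \ref{lemma: amalg properties}(i) gives $G_2 \sqsubseteq G$ without hypothesis, and Lemma \ref{lemma: amalg properties}(v), applied with $H \sqsubseteq G_2$ (the hypothesis of the corollary), gives $G_1 \sqsubseteq G$.

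The one item not handed to us verbatim by Lemma \ref{lemma: amalg properties} is that $G$ itself is flat. By Proposition \ref{prop: flatness}, this reduces to checking $\emptyset \strong \hg{D}$. Since $G_2$ is flat, $\emptyset \strong \hg{B}_2$, so by transitivity of $\strong$ it is enough to verify $\hg{B}_2 \strong \hg{D}$. By the definition of $\amalg$, every edge of $\hg{D}$ not already in $\hg{B}_2$ belongs to $R_1 \setminus R_1[M_0]$: it has all its vertices in $M_1$, with at least one vertex in $M_1 \setminus M_0$. A short edge count then gives
\[
\delta_{\hg{D}}(X/M_2) \geq \delta_{\hg{B}_1}(X \cap M_1 / M_0) \geq 0
\]
for every finite $X \subseteq M_1 \cup M_2$, the last inequality holding because $M_0 \strong \hg{B}_1$ by construction.

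No real obstacle stands in the way: everything has been proved in Lemma \ref{lemma: amalg properties} and the discussion surrounding it. The only mildly delicate step is the bookkeeping that confirms $\hg{B}_2 \strong \hg{D}$, and even that is immediate from the definition of $\amalg$.
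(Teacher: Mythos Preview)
Your proof is correct and matches the paper's intent: the corollary is marked with \qed\ precisely because it falls out of Lemma \ref{lemma: amalg properties} (items (i) and (v)) once one takes $G = G_1\amalg_H G_2$. Your extra paragraph on flatness is a legitimate elaboration --- the paper already records $\hg{B}_2\strong \hg{D}$ in the proof of Lemma \ref{lemma: amalg properties}(i), so $\emptyset\strong\hg{B}_2\strong\hg{D}$ gives flatness immediately; your edge count is in fact an equality, and the more natural citation for possibly infinite $H$ is Corollary \ref{corollary: sqsubseteq is equivalent to self-sufficient} rather than Corollary \ref{corollary: sqsubseteq equivalence}.
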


We have proven that $\mathfrak{C}$ is an amalgamation class. We denote by $\pg{G}$ the countable generic structure guaranteed by Theorem \ref{theorem: Fraisse's theorem - restricted}. We dub $\pg{G}$ the \emph{generic flat pregeometry of unbounded arity}.

Now that we have constructed $\pg{G}$ independently, we show that it is in fact the pregeometry of Hrushovski's non-collapsed construction for hypergraphs of unbounded arity.

\begin{prop}
\label{proposition: G is isomorphic to G_M}
$G_{\mathcal{M}} \cong \pg{G}$.
\end{prop}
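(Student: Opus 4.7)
The plan is to verify that $G_{\mathcal{M}}$ satisfies the defining properties of the generic structure for $(\mathfrak{C}, \sqsubseteq)$ --- namely, that its $\sqsubseteq$-age is exactly $\mathfrak{C}$ and that it enjoys the extension property $(*)$ of Theorem~\ref{theorem: Fraisse's theorem - restricted} --- and then conclude by the uniqueness clause of that theorem. The crux throughout is translating between the hypergraph world (where the genericity of $\mathcal{M}$ is in hand) and the pregeometry world (where the extension needs to be carried out), using the equivalence from Corollary~\ref{corollary: sqsubseteq is equivalent to self-sufficient}.

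First I would handle the age. Since $\mathcal{M}$ is countable and $\emptyset \strong \mathcal{M}$, the pregeometry $G_{\mathcal{M}}$ is countable and flat (Proposition~\ref{prop: flatness}). Any finite $\sqsubseteq$-embedded subpregeometry is then flat as well by Observation~\ref{observation: sqsubseteq implies sqsubseteq^*} and Corollary~\ref{cor: sqsubseteq^* in flat implies flatness}, so it lies in $\mathfrak{C}$. Conversely, given $H \in \mathfrak{C}$, Theorem~\ref{theorem: flat pregeometry comes from graph} supplies a good representation $\hg{B} \in \mathcal{C}$; the genericity of $\mathcal{M}$ embeds $\hg{B}$ as a $\strong$-subhypergraph of $\mathcal{M}$, and Proposition~\ref{prop: self-sufficiency implies sqsubseteq} then gives $H \cong G_{\hg{B}} \sqsubseteq G_{\mathcal{M}}$.

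Next I would verify $(*)$. Fix a finite $H \sqsubseteq G_{\mathcal{M}}$ with underlying set $P$, and any $H \sqsubseteq H' \in \mathfrak{C}$. Set $\hg{B} := \mathcal{M}[\sscl_{\mathcal{M}}(P)]$, a finite self-sufficient subhypergraph of $\mathcal{M}$; then $G_{\hg{B}} \sqsubseteq G_{\mathcal{M}}$ by Proposition~\ref{prop: self-sufficiency implies sqsubseteq}, hence $G_{\hg{B}} \sqsubseteq^* G_{\mathcal{M}}$, and Lemma~\ref{lemma: sqsubseteq and sqsubseteq^* interaction} applied to $H \sqsubseteq G_{\mathcal{M}}$ and $H \subseteq G_{\hg{B}}$ yields $H \sqsubseteq G_{\hg{B}}$. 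Now form the amalgam $G := G_{\hg{B}} \amalg_{H} H'$; by Lemma~\ref{lemma: amalg properties}, both $G_{\hg{B}}$ and $H'$ are $\sqsubseteq$-embedded in $G$, and $G$ is finite and flat. Corollary~\ref{corollary: replace self-sufficient subgraph} produces a representation $\hg{A}$ of $G$ extending the given $\hg{B}$ with $\hg{B} \strong \hg{A} \in \mathcal{C}$. Genericity of $\mathcal{M}$ in $(\mathcal{C}, \strong)$ then embeds $\hg{A}$ into $\mathcal{M}$ over $\hg{B}$ with image $\strong \mathcal{M}$; the induced pregeometry embedding sends $H'$ into $G_{\mathcal{M}}$ fixing $H$, and the image of $G$ is $\sqsubseteq$-embedded in $G_{\mathcal{M}}$ (Proposition~\ref{prop: self-sufficiency implies sqsubseteq}). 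Transitivity of $\sqsubseteq$ yields $H' \sqsubseteq G_{\mathcal{M}}$, as required.

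The main obstacle, essentially already disposed of by the earlier sections, is bridging $\sqsubseteq$ and $\strong$: producing a hypergraph extension inside $\mathcal{M}$ that realizes a prescribed pregeometric extension. The amalgam machinery of Definition~\ref{definition: geometric amalgam} together with Corollary~\ref{corollary: replace self-sufficient subgraph} is exactly what makes this translation possible, so the remainder is a routine verification of the characterization of the generic.
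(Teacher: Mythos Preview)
Your proposal is correct and follows essentially the same approach as the paper: verify the age of $G_{\mathcal{M}}$ is $\mathfrak{C}$, then establish $(*)$ by passing to the self-sufficient closure $\sscl_{\mathcal{M}}(P)$, applying Lemma~\ref{lemma: sqsubseteq and sqsubseteq^* interaction}, forming the geometric amalgam, and using Corollary~\ref{corollary: replace self-sufficient subgraph} together with the genericity of $\mathcal{M}$ to realize the extension. The only differences are notational.
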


\begin{proof}
Since $\mathcal{M}$ is a hypergraph, $G_{\mathcal{M}}$ is flat, and so whenever $H\sqsubseteq G_{\mathcal{M}}$, also $H$ is flat. Thus, $\setcol{H\sqsubseteq G_{\mathcal{M}}}{|H|<\infty}\subseteq \mathfrak{C}$. Conversely, if $H\in \mathfrak{C}$, then for $\hg{A}$ a good representation of $H$, $\hg{A}\in \mathcal{C}$. Without loss of generality, we may assume $\hg{A}\strong \mathcal{M}$ and so $G_{\hg{A}}\sqsubseteq G_{\mathcal{M}}$.

We show that $(*)$ of Theorem \ref{theorem: Fraisse's theorem - restricted} holds with respect to $G_{\mathcal{M}}$, $\mathfrak{C}$ and $\sqsubseteq$. Assume $F\sqsubseteq G_{\mathcal{M}}$ and $F\sqsubseteq H\in \mathfrak{C}$. Let $B=\sscl_{\mathcal{M}}(X_F)$ where $X_F$ is the underlying set of $F$. Denote $K = G_{\mathcal{M}[B]}$. Then $K\sqsubseteq G_{\mathcal{M}}$ and by Lemma \ref{lemma: sqsubseteq and sqsubseteq^* interaction}, also $F\sqsubseteq K$. By renaming elements of $H$, we may assume $H\cap K = F$. Let $L= K\amalg_{F} H$. Then $K\sqsubseteq L$ and so by Corollary \ref{corollary: replace self-sufficient subgraph} choose some $\hg{D}$ a good representation of $L$ such that $\hg{M}[B]\strong \hg{D}$. By genericity of $\mathcal{M}$, we may strongly embed $\hg{D}$ into $\mathcal{M}$ over $\hg{M}[B]$, so without loss of generality $\hg{D}\strong \mathcal{M}$. Now $L\sqsubseteq G_{\mathcal{M}}$ and $F\sqsubseteq H \sqsubseteq L$. Hence, we have strongly embedded $H$ into $G_{\mathcal{M}}$ over $F$.
\end{proof}

\subsection{Model theory of $\pg{G}$}
\label{subsection: Model theory of G}

We now examine pregeometries as first order objects. Fix the language $\mathcal{L} = \setcol{I_n}{n\in \omega}$ where $I_n$ is an $n$-ary relation symbol. We consider a pregeometry $G$ as an $\mathcal{L}$-structure by interpreting $I_n$ as the set of independent $n$-tuples in $G$.

\begin{observation}
\label{observation: first order pregeometry}
Let $G$ be a pregeometry. Then
\begin{enumerate}
\item
If $X$ is definable in $G$ and $n\in \Nats$, then ``$\dm(X) \geq n$'' is an $\mathcal{L}$-formula. Hence ``$\dm(X) = n$'' is an $\mathcal{L}$-formula.
\item
For points $a_1,\dots, a_n$, the set $\cl(a_1\dots a_n)$ is definable.
\item
Using the first two items, for fixed $m,d\in \Nats$, we can quantify over $m$ closed sets of dimension at most $d$, and speak of the dimension of their intersections and unions.
\item
For each $n$, there is an $\mathcal{L}$-formula $\varphi_n(x_1,\dots, x_n)$ stating $\set{x_1,\dots, x_n}\sqsubseteq^* G$.
\end{enumerate}
The class of pregeometries/matroids is an elementary class of $\mathcal{L}$-structures. From the definition of flatness (Definition \ref{def: flatness}), we see that the class of \emph{flat} pregeometries is also an elementary class, given by an infinite scheme of axioms.
\end{observation}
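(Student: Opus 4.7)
The plan is to handle items (1)--(4) in sequence, each building on the previous, then deduce elementarity of the axiom classes. All constructions use only the relations $I_n$ together with boolean operations and bounded quantification, so no macros beyond what is already in the paper are needed.

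First, for item (1), given $X$ defined by $\psi(y, \bar{a})$, the condition $\dm(X) \geq n$ is expressed directly by the existential formula $\exists y_1 \cdots y_n \bigl( \bigwedge_{i} \psi(y_i, \bar{a}) \wedge I_n(y_1, \dots, y_n) \bigr)$; then $\dm(X) = n$ is a boolean combination of such formulas. In particular, applied to a finite tuple, $\dm(a_1 \ldots a_n) \geq k$ becomes the disjunction over $k$-element subtuples of $I_k(a_{i_1}, \dots, a_{i_k})$. For (2), I would observe that $y \in \cl(a_1, \dots, a_n)$ is equivalent to $\dm(\bar{a}y) = \dm(\bar{a})$, so the formula $\bigvee_{k=0}^{n}(\dm(\bar{a}) = k \wedge \dm(\bar{a}y) = k)$ does the job using (1). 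Membership in $\cl(\bar{a})$ is thus uniformly definable as a formula $\chi(y, \bar{a})$.

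For item (3), a closed set $E$ of dimension at most $d$ is coded by any tuple $\bar{b} \in G^d$ with $E = \cl(\bar{b})$; so quantification over $m$ such closed sets is realized by quantification over $m$ tuples $\bar{b}^1, \dots, \bar{b}^m$ of length $d$. Membership in the corresponding $E_j = \cl(\bar{b}^j)$ is given by the formula $\chi(y, \bar{b}^j)$ from (2), so membership in any intersection $\bigcap_{j\in s} E_j$ or union $\bigcup_{j\in s} E_j$ is a boolean combination of such formulas, and applying (1) to the resulting definable sets gives access to their dimensions. Item (4) then reduces to a finite condition: the subpregeometry $H = \{x_1, \dots, x_n\}$ has only finitely many subsets, so $H \sqsubseteq^* G$ becomes the conjunction, ranging over pairs $S_1, S_2 \subseteq [n]$, of the implication ``if both $\{x_i : i \in S_j\}$ are closed in $H$, then $\dm_H(S_1 \cap S_2) = \dm_G(\cl_G(S_1) \cap \cl_G(S_2))$''; closedness in $H$ is expressed by $\bigwedge_{i \notin S}(\dm(\{x_j : j \in S\} x_i) > \dm(\{x_j : j \in S\}))$, and the dimension equality uses (1)--(2).

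Finally, the pregeometry axioms of Definition \ref{definition: combinatorial pregeometry} translate directly into a finite scheme in $\mathcal{L}$ using (1): submodularity becomes $\forall \bar{a}\bar{b}$, a boolean combination of formulas $\dm(\bar{a}\bar{b}) = i$, $\dm(\bar{a}) = j$, etc., for $i, j$ in bounded ranges, and similarly for the increment and normalization axioms. For flatness, Definition \ref{def: flatness} quantifies over a finite collection $\Sigma$ of finite-dimensional closed sets; using (3) we get, for each pair $(k, d) \in \Nats^2$, an axiom expressing $\Delta_G(\Sigma) \geq \dm(\bigcup \Sigma)$ for every $\Sigma$ consisting of $k$ closed sets of dimension at most $d$. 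The infinite conjunction over $(k, d)$ yields the required axiom scheme for flatness. The only mild subtlety to keep straight is that the closed sets featured in flatness are truly finite-dimensional rather than finite, so one must parametrize by dimension $d$ rather than by cardinality; this is exactly what (3) provides.
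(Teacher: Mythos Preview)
Your argument is correct and complete. The paper itself offers no proof of this observation --- it is stated and then the text moves on --- so there is nothing to compare your approach against; you have simply supplied the routine verification the paper omits.

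One small slip: you write that the pregeometry axioms ``translate directly into a \emph{finite} scheme in $\mathcal{L}$''. Since the axioms of Definition~\ref{definition: combinatorial pregeometry} must hold for finite sets $A,B$ of every size, and the language has a separate relation $I_n$ for each $n$, the translation is an \emph{infinite} scheme (one clause per arity), just as for flatness. This does not affect the conclusion that the class is elementary.
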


We set out to axiomatize the theory of $\pg{G}$. The axiomatization is similar to Hrushovski's first order axiomatization of his construction \cite{Hns}. The genericity is expressed by a scheme of axioms that, paraphrased to invoke the definition of continuity, state that to achieve an embedding of an extension $H$ over the base $F$ that is at most $\epsilon$ away from being strong, the base $F$ needs to be at most $\delta$ away from being strong in the ambient structure, where $\delta$ depends only on $\epsilon$ and $|H|$. This is (T3) of Definition \ref{definition: theory of generic pregeometry}.

\begin{definition}
Write $X\sqsubseteq^n G$ to mean that $X\sqsubseteq^* G$ and whenever $Y\subseteq G$ contains $X$ such that $Y\sqsubseteq^* G$ and $|Y\setminus X|\leq n$, then $X\sqsubseteq Y$. 
\end{definition}

\begin{observation}
In an ambient structure $G$, ``$X\sqsubseteq^n G$'' is a first order formula in $|X|$ many variables, for $X$ finite of a fixed size.

Observe further that $X\sqsubseteq^n G$ holds for arbitrarily large $n$ if and only if $X\sqsubseteq G$. From left to right, see this equivalence by choosing some finite $Y\sqsubseteq G$ containing $X$ and using that $X\sqsubseteq^{n} G$, where $n\geq |Y\setminus X|$, to get $X\sqsubseteq Y\sqsubseteq G$. From right to left, the implication is immediate by Lemma \ref{lemma: sqsubseteq and sqsubseteq^* interaction}.
\end{observation}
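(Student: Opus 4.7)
For the first-order definability of $X\sqsubseteq^n G$ in $|X|=k$ variables, I unpack the definition as the conjunction of $X\sqsubseteq^* G$ --- first-order by Observation~\ref{observation: first order pregeometry}(4) --- with a $\forall y_1\cdots\forall y_n$ statement whose body reads: if $Y:=\{x_1,\dots,x_k,y_1,\dots,y_n\}\sqsubseteq^* G$ then $X\sqsubseteq Y$. The hypothesis is again first-order by (4). For the conclusion, I exploit that $|Y|\le k+n$ is bounded, so the closed subsets of $Y$ form a family of at most $2^{k+n}$ subsets of the bounded set $\{x_1,\dots,x_k,y_1,\dots,y_n\}$. Then $X\sqsubseteq Y$ is expressed as a finite conjunction indexed by collections $\Sigma$ of such subsets, each conjunct asserting either that some listed set fails to be closed in $Y$ or the bounded-integer inequality $\flatsum{X}(\Sigma_X)\le \flatsum{Y}(\Sigma)$. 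Both closure and dimensions of finite intersections and unions are first-order by items (1)-(3) of Observation~\ref{observation: first order pregeometry}, so the assembled formula is first-order in $x_1,\dots,x_k$.

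For the equivalence, the right-to-left direction is clean: given $X\sqsubseteq G$ and any $Y\supseteq X$ with $Y\sqsubseteq^* G$ and $|Y\setminus X|\le n$, Lemma~\ref{lemma: sqsubseteq and sqsubseteq^* interaction} (instantiated with $F:=X$, $H:=Y$) immediately yields $X\sqsubseteq Y$, so $X\sqsubseteq^n G$ holds for every $n$. For left-to-right I follow the hint: pick a finite $Y\supseteq X$ with $Y\sqsubseteq G$, which implies $Y\sqsubseteq^* G$ by Observation~\ref{observation: sqsubseteq implies sqsubseteq^*}; setting $m:=|Y\setminus X|$, the hypothesis $X\sqsubseteq^m G$ yields $X\sqsubseteq Y$, and transitivity of $\sqsubseteq$ combined with $Y\sqsubseteq G$ delivers $X\sqsubseteq G$.

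The one delicate step, and the main obstacle in the left-to-right direction, is the existence of a finite $Y\supseteq X$ with $Y\sqsubseteq G$; this is not automatic in an abstract pregeometry. In the intended ambient structures the existence is inherited from the hypergraph picture via Corollary~\ref{corollary: sqsubseteq is equivalent to self-sufficient}: $\sqsubseteq$-embeddings correspond to self-sufficient hypergraph embeddings, and the self-sufficient closure of a finite set in a hypergraph $\hg{A}$ with $\emptyset\strong\hg{A}$ is itself finite, supplying the required $Y$.
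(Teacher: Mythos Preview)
Your argument is correct and follows the same route as the paper: Lemma~\ref{lemma: sqsubseteq and sqsubseteq^* interaction} for one direction, and for the other the choice of a finite $Y\sqsubseteq G$ containing $X$ together with transitivity of $\sqsubseteq$. Your treatment is in fact more careful than the paper's on two counts: you spell out why ``$X\sqsubseteq Y$'' is first-order when $|Y|$ is bounded, and you flag that the existence of a finite $Y\supseteq X$ with $Y\sqsubseteq G$ is not automatic in an arbitrary pregeometry but follows in the flat setting from the hypergraph representation and finiteness of self-sufficient closures --- a point the paper leaves implicit.
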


\begin{definition}
\label{definition: theory of generic pregeometry}
For a fixed $\tau:\Nats\to \Nats$ with $\tau(n)\geq n$, let $T_\tau$ be the $\mathcal{L}$-theory stating (in an ambient structure $G$):
\begin{enumerate}[(T1)]
\item
$G$ is a flat pregeometry.
\item
$G$ is infinite-dimensional.
\item
Suppose $F\sqsubseteq^* G$, $F\sqsubseteq H\in\mathfrak{C}$. Then for every natural $n$, if $F\sqsubseteq^{\tau(|H|+n)} G$, then there exists an embedding $f:H\to G$ fixing $F$ such that $f[H]\sqsubseteq^n G$.
\end{enumerate}
\end{definition}

The definition of $T_{\tau}$ a priori depends on the choice of $\tau$. We will see that if $T_{\tau}$ is at all satisfiable, then it implies a complete theory independent of the choice of $\tau$, namely the theory of $\pg{G}$. We prove a series of lemmas, geometric analogues of hypergraph trivialities, to show that a good $\tau$ exists. 

We observe that the operation of amalgamating from the left with a fixed pregeometry preserves the $\sqsubseteq^*$ and $\sqsubseteq$ relations.

\begin{observation} 
\label{observation: intermediate amalgam}
Let $F\sqsubseteq G_1$, $F\subseteq H\sqsubseteq^* G_2$ be pregeometries such that $G_1, G_2$ are flat and $F= G_1\cap G_2$. Then $G_1\amalg_F H \sqsubseteq^* G_1\amalg_F G_2$. Furthermore, if $H\sqsubseteq G_2$, then $G_1\amalg_F H \sqsubseteq G_1\amalg_F G_2$.
\end{observation}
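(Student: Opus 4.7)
Writing $K = G_1 \amalg_F H$ and $L = G_1 \amalg_F G_2$, the plan is to reduce to an associativity identity: $L = K \amalg_H G_2$ as pregeometries on the common underlying set $G_1 \cup G_2 = K \cup G_2$ (these agree because $H \subseteq G_2$). Once this identification is established, the observation follows immediately: Lemma \ref{lemma: amalg properties}(i) applied to $K = G_1 \amalg_F H$ gives $H \sqsubseteq K$, so the outer amalgam $K \amalg_H G_2$ is well-defined with $K$ as its strongly embedded component; Lemma \ref{lemma: amalg properties}(iv) applied to $K \amalg_H G_2$ under the hypothesis $H \sqsubseteq^* G_2$ yields $K \sqsubseteq^* K \amalg_H G_2 = L$, and Lemma \ref{lemma: amalg properties}(v) upgrades this to $K \sqsubseteq L$ under the stronger hypothesis $H \sqsubseteq G_2$.

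For the associativity, I plan to compare dimension functions on closed sets via Lemma \ref{lemma: dimension in amalg}. Directly in $L$, for $X$ closed in $L$,
\[
\dm_L(X) = \dm_{G_1}(X \cap G_1) + \dm_{G_2}(X \cap G_2) - \dm_F(X \cap F).
\]
Applying the same lemma once in the outer amalgam $K \amalg_H G_2$ and then a second time to unwind $\dm_K(X \cap K)$ gives
\[
\dm_{K \amalg_H G_2}(X) = \dm_{G_1}(X \cap G_1) + \dm_H(X \cap H) - \dm_F(X \cap F) + \dm_{G_2}(X \cap G_2) - \dm_H(X \cap H),
\]
and the two $\dm_H(X \cap H)$ terms cancel, matching $\dm_L(X)$. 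The second application is legitimate because $K$ sits as a subpregeometry of the outer amalgam, so closures commute with restriction and $X \cap K$ is closed in $K$ whenever $X$ is closed in $K \amalg_H G_2$.

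To secure coherent good hypergraph representations for these applications of Lemma \ref{lemma: dimension in amalg}, I would take a hydra for $G_2$ centered simultaneously at $F$ and at $H$, which produces by restriction good representations $\hg{B}_F$, $\hg{B}_H$, $\hg{B}_2$ of $F$, $H$, $G_2$ that agree on their common supports (with $\hg{B}_F = \hg{B}_H[F] = \hg{B}_2[F]$, even though $\hg{B}_2[H]$ may be a strict subhypergraph of $\hg{B}_H$ when $H \sqsubseteq^* G_2$ but $H \not\sqsubseteq G_2$), and then extend $\hg{B}_F$ to a good representation $\hg{B}_1$ of $G_1$ with $\hg{B}_F \strong \hg{B}_1$ via Corollary \ref{corollary: replace self-sufficient subgraph} using $F \sqsubseteq G_1$. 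The nested hypergraph amalgams then realize both $L$ and $K \amalg_H G_2$, and Corollary \ref{corollary: geometric amalgam is well-defined} guarantees that these realizations depend only on the component pregeometries, not on any incidental edge choices. The main obstacle is precisely this associativity bookkeeping; once it is in hand, the observation is an immediate consequence of Lemma \ref{lemma: amalg properties}.
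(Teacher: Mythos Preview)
Your high-level strategy --- reduce to the associativity identity $G_1\amalg_F G_2 = (G_1\amalg_F H)\amalg_H G_2$ and then read off both conclusions from Lemma~\ref{lemma: amalg properties}(iv),(v) --- is exactly the paper's approach, and your application of Lemma~\ref{lemma: amalg properties} is correct.

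Where you diverge is in the associativity step itself, and here you work far too hard. The paper simply picks any representations $\hg{B}_1\strong \hg{A}_1$ of $F\sqsubseteq G_1$ and any representations $\hg{D}$, $\hg{A}_2$ of $H$, $G_2$, and observes that \emph{by the literal definition} of the hypergraph amalgam (Definition~\ref{definition: hypergraph amalg}), $\hg{A}_1\amalg\hg{A}_2 = (\hg{A}_1\amalg\hg{D})\amalg\hg{A}_2$ as hypergraphs on the same vertex set. This is a one-line edge-set calculation; Corollary~\ref{corollary: geometric amalgam is well-defined} then transports the identity to pregeometries. No hydras, no simultaneous centering, no dimension formulas are needed. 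Your proposed route through Lemma~\ref{lemma: dimension in amalg} also has a circularity hazard: you apply the lemma to $X$ assuming it is closed in both $L$ and $K\amalg_H G_2$, but equality of closed sets is precisely what is at stake. This could be repaired by mimicking the inequality-plus-symmetry trick in the proof of Corollary~\ref{corollary: geometric amalgam is well-defined}, but once you have the hypergraph identity that detour is unnecessary.
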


\begin{proof}
Since $H\sqsubseteq^* G_2$ and $F\sqsubseteq G_1$, both $F$ and $H$ are flat. Let $\hg{B}_1\strong \hg{A}_1$ be hypergraphs representing $F\sqsubseteq G_1$ and let $\hg{D}$, $\hg{A}_2$ be hypergraphs representing $H$, $G_2$. Note that $\hg{D}\strong \hg{A}_1\amalg \hg{D}$ and observe that, by definition, $\hg{A}_1\amalg \hg{A}_2 = (\hg{A}_1\amalg \hg{D})\amalg \hg{A}_2$. Thus, $G_1\amalg_F G_2 = (G_1\amalg_F H)\amalg_H G_2$. Lemma \ref{lemma: amalg properties} finishes the proof.
\end{proof}

%\begin{lemma}
%\label{lemma: amalg-dimension goes through center}
% Let $G=G_1\amalg_F G_2$ be an amalgam of flat pregeometries and let $X\subseteq G_i$ be closed in $G_i$. Then $\cl_G(X) = X\cup \cl_{G_{2-i}}(X\cap F)$.
%\end{lemma}
%
%\begin{proof}
%Denote $Y = \cl_G(X)$. By Lemma \ref{lemma: dimension in amalg},
%\[
%\dm(Y) = \dm(Y\cap G_i) + \dm(Y\cap G_{2-i}) - \dm(Y\cap F).
%\]
%Since $\dm(Y)= \dm(X) = \dm(Y\cap G_i)$, we get that $\dm(Y\cap G_{2-i}) = \dm(Y\cap F)$, i.e., $Y\cap G_{2-i} = \cl_{G_{2-i}}(Y\cap F)$. As $Y\cap F = X\cap F$, we see $Y = (Y\cap G_i) \cup (Y\cap G_{2-i}) = X\cup \cl_{G_{2-i}}(X\cap F)$.
%\end{proof}

Corollary \ref{corollary: strong in amalgam means strong in G_2} and its preceding lemma describe how $\sqsubseteq^*$/$\sqsubseteq$-embeddedness of a set $K$ in an amalgam reflects on its intersection with each component, given that $K$ contains the base of the amalgam.

\begin{lemma}
\label{lemma: one-sided sigma behaves the same on full amalgam}
Let $G=G_1\amalg_F G_2$ be an amalgam of flat pregeometries. Let $K\subseteq G$ contain $F$ and, for $i\in \set{1,2}$, denote $K_i = K\cap G_i$. Then whenever $\Sigma$ is a finite collection of finite dimensional closed sets in $G_2$, denoting $\Sigma_K =\setarg{\cl_G(E)\cap K}{E\in \Sigma}$, the equality $\flatsum{K}(\Sigma_K) = \flatsum{K_2}(\Sigma_{K_2})$ holds.

Furthermore, if $F\sqsubseteq^* G_2$, whenever $\Sigma$ is a finite collection of finite dimensional closed sets in $G_1$, then $\flatsum{K}(\Sigma_K) = \flatsum{K_1}(\Sigma_{K_1})$.
\end{lemma}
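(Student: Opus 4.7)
The plan is to prove both assertions via a single argument that identifies, term by term, the alternating sum in $K$ with the alternating sum in $K_j$ (where $j=2$ for the first part and $j=1$ for the second). Fix $\Sigma = \set{E_1,\dots,E_k}$, closed sets in $G_j$, and for nonempty $s\subseteq [k]$ write $E_s = \bigcap_{i\in s} E_i$, which is closed in $G_j$. The desired equality reduces to showing
\[
\dm_K\!\left(K\cap \bigcap_{i\in s}\cl_G(E_i)\right) = \dm_{K_j}(K_j\cap E_s)
\]
for every nonempty $s$, whence summing over $s$ with the appropriate signs gives the result.

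The first step is the intersection identity $\bigcap_{i\in s}\cl_G(E_i) = \cl_G(E_s)$. Using item (iii) of Lemma \ref{lemma: amalg properties}, for any $E$ closed in $G_j$ we have $\cl_G(E) = E\cup \cl_{G_{3-j}}(E\cap F)$. Splitting $G = G_1\cup G_2$ and using the elementary identity $\cl_G(X)\cap H = \cl_H(X)$ for $X\subseteq H\subseteq G$, together with the fact that $E_i\cap F$ is closed in $F$, one computes $\cl_G(E_i)\cap G_j = E_i$ and $\cl_G(E_i)\cap G_{3-j} = \cl_{G_{3-j}}(E_i\cap F)$. Intersecting over $i\in s$, the $G_j$-part contributes $E_s$ immediately, and for the $G_{3-j}$-part we must commute intersection with closure. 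This is precisely where Lemma \ref{lemma: in sqsubseteq^* the intersection and closure commute} is invoked, applied to the closed sets $E_i\cap F$ in $F$ with $F\sqsubseteq^* G_{3-j}$. For the first assertion this hypothesis is automatic from $F\sqsubseteq G_1$ (via Observation \ref{observation: sqsubseteq implies sqsubseteq^*}); for the second it is exactly the supplementary hypothesis $F\sqsubseteq^* G_2$. Reassembling the two pieces using item (iii) of Lemma \ref{lemma: amalg properties} once more yields $\cl_G(E_s)$.

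The second step is to intersect with $K$ and compute dimension. Writing
\[
K\cap \cl_G(E_s) = (K_j\cap E_s) \cup \bigl(K_{3-j}\cap \cl_{G_{3-j}}(E_s\cap F)\bigr),
\]
and using $F\subseteq K$ so that $E_s\cap F\subseteq E_s\cap K_j$, we get
\[
\cl_{G_{3-j}}(E_s\cap F)\subseteq \cl_G(E_s\cap F)\subseteq \cl_G(E_s\cap K_j).
\]
Hence the $K_{3-j}$-summand lies in the $G$-closure of the $K_j$-summand, so $\dm_G(K\cap \cl_G(E_s)) = \dm_G(K_j\cap E_s)$. Since dimensions in the subpregeometries $K$ and $K_j$ are just the restrictions of $\dm_G$, this is the sought-for termwise equality.

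The main subtlety, though not a serious obstacle, is keeping straight the roles of the two embeddedness hypotheses: the identity $\cl_G(E) = E\cup \cl_{G_{3-j}}(E\cap F)$ treats $G_1$ and $G_2$ symmetrically, but the commutation of intersection with closure breaks the symmetry, and that is why the second assertion requires the extra hypothesis $F\sqsubseteq^* G_2$ whereas the first obtains the analogous statement for $G_1$ for free.
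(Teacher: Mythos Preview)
Your proof is correct and follows essentially the same route as the paper: termwise identification of the alternating sums via the intersection identity $\bigcap_{i\in s}\cl_G(E_i)=\cl_G(E_s)$, then using item (iii) of Lemma \ref{lemma: amalg properties} to see that $K\cap\cl_G(E_s)$ lies in the closure of $K_j\cap E_s$. The only cosmetic difference is that the paper derives the intersection identity directly from $G_j\sqsubseteq^* G$ (item (i) or (iv) of Lemma \ref{lemma: amalg properties}) via Lemma \ref{lemma: in sqsubseteq^* the intersection and closure commute}, whereas you split into $G_j$ and $G_{3-j}$ and apply that lemma to $F\sqsubseteq^* G_{3-j}$; both are equally valid and closely related.
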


\begin{proof}
Enumerate $\Sigma = \set{E_1,\dots, E_k}$ and recall the notation $E_s = \bigcap_{i\in s} E_i$ for $\emptyset \neq s\subseteq[k]$. By $G_2\sqsubseteq^* G$, we have $\bigcap_{i\in s} \cl_G(E_i) = \cl_G(E_s)$. By the third item of Lemma \ref{lemma: amalg properties}, we have
\begin{align*}
\cl_G(E_s)\cap K  &= (\cl_{G_1}(E_s\cap F)\cup E_s)\cap (K_1\cup K_2)
\\
& = cl_{K_1}(E_s\cap F)\cup (E_s\cap K_2)
\\
&\subseteq \cl_K(E_s\cap K_2).
\end{align*}
Therefore, $\dm(\bigcap_{i\in s} \cl_G(E_i)\cap K) = \dm(E_s\cap K_2)$ for every $\emptyset\neq s\subseteq [k]$, proving $\flatsum{K}(\Sigma_K) = \flatsum{K_2}(\Sigma_{K_2})$.

For the additional part, by the fourth item of Lemma \ref{lemma: amalg properties}, $F\sqsubseteq^* G_2$ implies $G_1\sqsubseteq^* G$, which enables a symmetric argument.
\end{proof}

\begin{cor}
\label{corollary: strong in amalgam means strong in G_2}
Let $G:=G_1\amalg_F G_2$ be an amalgam of flat pregeometries and let $F\subseteq K\subseteq G$. Then
\begin{enumerate}
\item
If $K\sqsubseteq^* G$, then $K_2\sqsubseteq^* G_2$. Moreover, if $K\sqsubseteq G$ then $K_2\sqsubseteq G_2$.
\item
Assuming $F\sqsubseteq^* G_2$, if $K\sqsubseteq^* G$, then $K_1\sqsubseteq^* G_1$. Moreover, under the same assumption, if $K\sqsubseteq G$ then $K_1\sqsubseteq G_1$.
\end{enumerate}
\end{cor}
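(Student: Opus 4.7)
My plan is to chain together Lemma \ref{lemma: one-sided sigma behaves the same on full amalgam} with the $\flatsum$-characterization of $\sqsubseteq^*$ (Corollary \ref{corollary: sqsubseteq^* plays nicely with sums}) and the definition of $\sqsubseteq$, using that $G_2 \sqsubseteq G$ holds automatically in any amalgam by Lemma \ref{lemma: amalg properties} (i), so in particular $G_2 \sqsubseteq^* G$. The asymmetry between (1) and (2) enters only through the hypothesis $F \sqsubseteq^* G_2$, which upgrades $G_1 \subseteq G$ to $G_1 \sqsubseteq^* G$ via Lemma \ref{lemma: amalg properties} (iv) and is precisely what is needed to invoke the ``Furthermore'' part of Lemma \ref{lemma: one-sided sigma behaves the same on full amalgam} for collections of closed sets in $G_1$. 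Once that is in place, the argument for (2) mirrors the argument for (1) after swapping the roles of $G_1$ and $G_2$, so I focus on (1).

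For the $\sqsubseteq$-part, I let $\Sigma$ be a finite collection of finite-dimensional closed sets in $G_2$. By $G_2 \sqsubseteq^* G$ and Corollary \ref{corollary: sqsubseteq^* plays nicely with sums}, $\flatsum{G_2}(\Sigma) = \flatsum{G}(\Sigma_G)$. Applying $K \sqsubseteq G$ to the collection $\Sigma_G$ (closed in $G$), and noting that its relativization to $K$ is exactly the $\Sigma_K$ appearing in Lemma \ref{lemma: one-sided sigma behaves the same on full amalgam}, I obtain $\flatsum{K}(\Sigma_K) \leq \flatsum{G}(\Sigma_G)$. The lemma then gives $\flatsum{K_2}(\Sigma_{K_2}) = \flatsum{K}(\Sigma_K)$, and chaining yields $\flatsum{K_2}(\Sigma_{K_2}) \leq \flatsum{G_2}(\Sigma)$, i.e., $K_2 \sqsubseteq G_2$.

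The $\sqsubseteq^*$-part of (1) is the step I expect to require the most care, since a direct chain of inequalities only forces $\dm(X_1 \cap X_2) \leq \dm(Y_1 \cap Y_2)$ for $X_i$ closed in $K_2$ and $Y_i := \cl_{G_2}(X_i)$, whereas we need equality. My key device is to introduce $\tilde{Z}_i := \cl_G(Y_i) \cap K$ (closed in $K$) and observe that the pointwise equalities established in the proof of Lemma \ref{lemma: one-sided sigma behaves the same on full amalgam} give $\dm(\tilde{Z}_i) = \dm(Y_i \cap K_2) = \dm(X_i) = \dm(Y_i)$. Since $\tilde{Z}_i \subseteq \cl_G(Y_i)$ with matching dimensions, the two subsets of $G$ have equal $G$-closures: $\cl_G(\tilde{Z}_i) = \cl_G(Y_i)$. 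Combining this with $G_2 \sqsubseteq^* G$ and Lemma \ref{lemma: in sqsubseteq^* the intersection and closure commute}, I get $\cl_G(\tilde{Z}_1) \cap \cl_G(\tilde{Z}_2) = \cl_G(Y_1) \cap \cl_G(Y_2) = \cl_G(Y_1 \cap Y_2)$. The definition of $K \sqsubseteq^* G$ applied to $\tilde{Z}_1, \tilde{Z}_2$ then gives $\dm_K(\tilde{Z}_1 \cap \tilde{Z}_2) = \dm_G(\cl_G(Y_1 \cap Y_2)) = \dm(Y_1 \cap Y_2)$, and the $|s|=2$ pointwise equality from Lemma \ref{lemma: one-sided sigma behaves the same on full amalgam} identifies the left-hand side with $\dm(X_1 \cap X_2)$, yielding the desired equality.
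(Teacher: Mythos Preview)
Your argument is correct and follows the paper's approach. For the $\sqsubseteq$ part of (1) you reproduce the paper's chain exactly: $\flatsum{K_2}(\Sigma_{K_2}) = \flatsum{K}(\Sigma_K) \leq \flatsum{G}(\Sigma_G) = \flatsum{G_2}(\Sigma)$, with the outer equalities coming from Lemma~\ref{lemma: one-sided sigma behaves the same on full amalgam} and Corollary~\ref{corollary: sqsubseteq^* plays nicely with sums} (via $G_2\sqsubseteq^* G$), and the inequality from $K\sqsubseteq G$.

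For the $\sqsubseteq^*$ part the paper simply declares the result ``immediate'' from the same two ingredients; the intended argument is to run the same chain with equality in the middle, using Corollary~\ref{corollary: sqsubseteq^* plays nicely with sums} for $K\sqsubseteq^* G$ applied to $\Sigma_K$ (which requires checking $(\Sigma_K)_G = \Sigma_G$, i.e.\ that $\cl_G(\cl_G(Y_i)\cap K) = \cl_G(Y_i)$). You instead unpack this at the level of the definition, working with two closed sets $X_1,X_2$ and invoking the term-by-term dimension equalities hidden in the \emph{proof} of Lemma~\ref{lemma: one-sided sigma behaves the same on full amalgam}. This is the same content, just made explicit; your version has the virtue of making clear exactly where the pointwise identity $\dm(\bigcap_{i\in s}\cl_G(E_i)\cap K) = \dm(E_s\cap K_2)$ is used, which the paper's one-line proof leaves to the reader.
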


\begin{proof}
Observe that whenever $\Sigma$ is a finite collection of finite dimensional closed sets in $G_2$, as $G_2\sqsubseteq^* G$ the equality $\flatsum{G_2}(\Sigma) = \flatsum{G}(\Sigma_G)$ holds. Together with Lemma \ref{lemma: one-sided sigma behaves the same on full amalgam}, both parts of (1) are immediate. Item (2) is the same, by the additional part of Lemma \ref{lemma: one-sided sigma behaves the same on full amalgam}.
\end{proof}

The next two lemmas are an analogue to the fact that in an ambient hypergraph $\hg{A}$, if $\hg{D}\subseteq \hg{B}\strong \hg{A}$ and $\hg{D}\not\strong \hg{C}\subseteq \hg{A}$, then $D\not\strong \hg{B}\cap \hg{C}$.

\begin{lemma}
\label{lemma: bound on sqsubseteq^* superset}
There exists a fixed function $g:\Nats\to \Nats$ such that if $G$ is flat and $F\subseteq G$, then there is some $H\sqsubseteq^* G$ containing $F$ with $|H|\leq g(|F|)$.
\end{lemma}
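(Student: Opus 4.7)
The plan is to enlarge $F$ iteratively by adjoining bases for intersection-flats. Set $H_0 := F$, and given $H_i$, define
\[
H_{i+1} := H_i \cup \bigcup_{Y_1, Y_2 \subseteq H_i} B(Y_1, Y_2),
\]
where $B(Y_1, Y_2)$ is a chosen basis in $G$ of $\cl_G(Y_1) \cap \cl_G(Y_2)$. Every added element lies in $\cl_G(F)$, a flat of dimension $d \leq n := |F|$, so $|B(Y_1, Y_2)| \leq d$ and $|H_{i+1}| \leq |H_i| + d \cdot 4^{|H_i|}$.

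First I would unpack the $\sqsubseteq^*$ condition into a concrete witness requirement. For any $Y \subseteq H$ the inclusion $Y \subseteq \cl_G(Y) \cap H$ gives $\cl_G(\cl_G(Y) \cap H) = \cl_G(Y)$; hence every closed set $X = \cl_H(Y) = \cl_G(Y) \cap H$ in $H$ satisfies $\cl_G(X) = \cl_G(Y)$. Thus $H \sqsubseteq^* G$ is equivalent to the statement that for every $Y_1, Y_2 \subseteq H$, the set $H \cap \cl_G(Y_1) \cap \cl_G(Y_2)$ contains a basis in $G$ of $\cl_G(Y_1) \cap \cl_G(Y_2)$. The iteration produces a $\sqsubseteq^*$ set precisely at its stabilization.

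The main step is to show stabilization after boundedly many iterations. I would track $\mathcal{F}_i := \{\cl_G(Y) : Y \subseteq H_i\}$, the family of $G$-flats reachable from $H_i$, and argue inductively that $\mathcal{F}_i$ is contained in the $\cap,\vee$-sublattice of the flat lattice of $G$ generated by the atoms $\cl_G(\{f\})$ for $f \in F$. Once $\mathcal{F}_i$ closes under $\cap$, every intersection $\cl_G(Y_1) \cap \cl_G(Y_2)$ coincides with $\cl_G(Y_3)$ for some $Y_3 \subseteq H_i$, and $Y_3$ itself contains a basis — so the iteration terminates. Taking $g(n)$ to be the size bound that results from iterating $|H_i| \mapsto |H_i| + d \cdot 4^{|H_i|}$ until closure gives the lemma.

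The main obstacle is bounding this generated $\cap,\vee$-sublattice by a function of $n$. In general lattice theory such a sublattice can be infinite (even in modular lattices), so the argument must exploit flatness of $G$ specifically — most naturally through the $\alpha$-function characterization (Proposition \ref{prop: characterization of flatness via alpha}) or by fixing a good representation (Theorem \ref{theorem: flat pregeometry comes from graph}) and bounding the flats in $\cl_G(F)$ via the edges they are ``paid for'' by. The resulting bound on $|\mathcal{F}_i|$ then forces stabilization after at most $O(d)$ iterations, yielding an iterated-exponential function $g$.
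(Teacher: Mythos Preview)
Your approach and the paper's are entirely different, and your proposal has a genuine gap that you yourself flag but do not close.

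The paper's proof is a three-line compactness argument: assume no uniform bound exists for some size $n$, introduce constants $c_1,\dots,c_n$, and write down the $\mathcal{L}$-theory asserting ``the ambient structure is a flat pregeometry and no finite superset of $\{c_1,\dots,c_n\}$ is $\sqsubseteq^*$-embedded.'' This is finitely satisfiable by hypothesis, hence has a model $G$; but $G$, being flat, has a hypergraph representation $\hg{A}$, and the self-sufficient closure $\sscl_{\hg{A}}(\{c_1^G,\dots,c_n^G\})$ is a finite $\sqsubseteq$-embedded (hence $\sqsubseteq^*$-embedded) set containing the constants --- contradiction. The function $g$ is never made explicit.

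Your iteration is set up correctly and your reformulation of $\sqsubseteq^*$ is fine, but the entire content of the lemma lives in the step you label ``the main obstacle'': bounding the $\cap,\vee$-sublattice of the flat lattice of $\cl_G(F)$ generated by $n$ points, uniformly in $G$. You gesture at the $\alpha$-function or at a good representation, but neither yields this directly. Even once you fix a good representation $\hg{A}$, the size of $\sscl_{\hg{A}}(F)$ is $|F| + |R[\sscl_{\hg{A}}(F)]|$, and there is no a~priori combinatorial bound on the number of edges here in terms of $|F|$ alone --- that is exactly what compactness is being used to extract. Your sketch does not supply an alternative mechanism, so as written it is not a proof.

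If you want to salvage a constructive argument, you would need a genuinely new combinatorial bound on either the self-sufficient closure in a good representation or on your generated sublattice; the paper does not contain such a bound, and its use of compactness suggests the authors did not see one either.
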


\begin{proof}
Assume the contrary. Let $n$ be such that for every $k\in \Nats$ there exists $G_k$ flat with some $F\in [G_k]^n$ such that whenever $H\sqsubseteq^* G_k$ contains $F$, then $|H|> k$. Adding to $\mathcal{L}$ constant symbols $c_1,\dots, c_n$, consider the theory stating (see Observation \ref{observation: first order pregeometry}) that the ambient structure $G$ is a flat pregeometry and that for every natural $k$
\[
\forall x_1,\dots, x_k\ \set{c_1,\dots, c_n,x_1,\dots, x_k}\not\sqsubseteq^* G.
\]
Then this theory is finitely satisfiable and has a model $G$. Since $G$ is flat, there is a hypergraph $\hg{A}$ representing it. Let $X = \set{c_1^G,\dots, c_n^G}$. Then $\sscl_{\hg{A}}(X)\sqsubseteq G$, so in particular $\sscl_{\hg{A}}(X)\sqsubseteq^* G$. But $\sscl_{\hg{A}}(X)$ is finite, a contradiction.
\end{proof}

\begin{lemma}
\label{lemma: not sqsubseteq is witnessed inside any sqsubseteq superstructure}
There exists a fixed function $h$ such that if $G$ is a flat pregeometry and
\begin{enumerate}
\item
$X\subseteq Y\sqsubseteq G$
\item
$X\not\sqsubseteq Z\sqsubseteq^* G$
\end{enumerate}
then there is some $V\sqsubseteq^* Y$ containing $X$ such that $X\not\sqsubseteq V$ and $|V|\leq h(|Z|)$.
\end{lemma}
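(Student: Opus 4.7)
The plan is to transport a witnessing collection for $X \not\sqsubseteq Z$ into a bounded-size $V\sqsubseteq^* Y$, and then use it to exhibit $X\not\sqsubseteq V$. First I would fix a finite collection $\Sigma=\{E_1,\dots,E_k\}$ of finite-dimensional closed sets in $Z$ with $\flatsum{X}(\Sigma_X) > \flatsum{Z}(\Sigma)$. Using Observation \ref{observation: inclusion in flatsum}, I may assume the $E_i$ are pairwise $\subseteq$-incomparable, so $k\leq 2^{|Z|}$ and $\dm_Z(E_i)\leq |Z|$ for each $i$. For each $i$, set $D_i := \cl_G(E_i)\cap Y$, which is closed in $Y$ of dimension at most $|Z|$, and choose a finite basis $B_i\subseteq Y$ for $D_i$. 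Let $X_0 := X\cup\bigcup_i B_i$, so $|X_0|\leq |Z|(1+2^{|Z|})$. Since $Y$ is flat (by $Y\sqsubseteq G$ and the corollary to the definition of $\sqsubseteq$), Lemma \ref{lemma: bound on sqsubseteq^* superset} supplies some $V\sqsubseteq^* Y$ containing $X_0$ with $|V|\leq g(|X_0|)$, and setting $h(n) := g(n(1+2^n))$ secures the size bound.

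The crux is to verify that $\Sigma_V^* := \{\cl_G(E_i)\cap V : i\leq k\}$, a collection of closed sets in $V$, witnesses $X\not\sqsubseteq V$. First I would observe $(\Sigma_V^*)_X = \Sigma_X$: from $Z\sqsubseteq^* G$ with $E_i$ closed in $Z$ one gets $\cl_G(E_i)\cap Z = E_i$, and $X\subseteq V\cap Z$ then gives $\cl_G(E_i)\cap V\cap X = E_i\cap X$. For the $V$-dimensions of intersections I plan to apply Lemma \ref{lemma: in sqsubseteq^* the intersection and closure commute} twice: once to $Z\sqsubseteq^* G$ yielding $\bigcap_{i\in s}\cl_G(E_i) = \cl_G(E_s)$, and once to $V\sqsubseteq^* Y$ — where the inclusion $B_i\subseteq V$ is used to check $\cl_Y(\cl_G(E_i)\cap V) = D_i$, so the lemma gives $\cl_Y(\bigcap_{i\in s}(\cl_G(E_i)\cap V)) = \cl_G(E_s)\cap Y$. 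Since $V$ is a subpregeometry of $Y$ and closure preserves dimension, this yields $\dm_V(\bigcap_{i\in s}(\cl_G(E_i)\cap V)) = \dm_Y(\cl_G(E_s)\cap Y)$, so $\flatsum{V}(\Sigma_V^*) = \flatsum{Y}((\Sigma_G)_Y)$ where $\Sigma_G := \{\cl_G(E_i)\}$.

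To finish, I would chain the inequalities: $Y\sqsubseteq G$ gives $\flatsum{Y}((\Sigma_G)_Y) \leq \flatsum{G}(\Sigma_G)$, while Corollary \ref{corollary: sqsubseteq^* plays nicely with sums} applied with $Z\sqsubseteq^* G$ gives $\flatsum{G}(\Sigma_G) = \flatsum{Z}(\Sigma)$. Altogether $\flatsum{X}((\Sigma_V^*)_X) = \flatsum{X}(\Sigma_X) > \flatsum{Z}(\Sigma) \geq \flatsum{V}(\Sigma_V^*)$, so $\Sigma_V^*$ witnesses $X\not\sqsubseteq V$. The main obstacle will be the dimension bookkeeping of the second paragraph: ensuring that $V$ preserves the dimensions of all the intersections $\bigcap_{i\in s}(\cl_G(E_i)\cap V)$ encountered in $\flatsum{V}$. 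This is what forces the inclusion of the singleton bases $B_i$ in $V$, but fortunately nothing more — no bases for higher intersections need to be added, because Lemma \ref{lemma: in sqsubseteq^* the intersection and closure commute} handles those automatically via $V\sqsubseteq^* Y$.
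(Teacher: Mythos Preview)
Your proposal is correct and follows essentially the same route as the paper: pick a witnessing $\Sigma$ in $Z$, push it up to $G$ via $Z\sqsubseteq^* G$ (Corollary~\ref{corollary: sqsubseteq^* plays nicely with sums}), pull it down to $Y$ via $Y\sqsubseteq G$, and then trap a small $V\sqsubseteq^* Y$ containing bases for the $\cl_G(E_i)\cap Y$ using Lemma~\ref{lemma: bound on sqsubseteq^* superset}. The paper's proof is terser at the end --- it simply asserts that $\Sigma_V$ witnesses $X\not\sqsubseteq V$ --- whereas you spell out the dimension bookkeeping for the higher intersections $\bigcap_{i\in s}(\cl_G(E_i)\cap V)$ via Lemma~\ref{lemma: in sqsubseteq^* the intersection and closure commute} applied to $V\sqsubseteq^* Y$; this is exactly the right justification and makes explicit what the paper leaves implicit.
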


\begin{proof}
Fix $g$ as in the statement of Lemma \ref{lemma: bound on sqsubseteq^* superset}. For each natural number $n$, set $h(n) = \max\setarg{g(k)}{k\leq 2^n n}$.

Let $\Sigma_Z$ witness that $X\not\sqsubseteq Z$ and denote $\Sigma = \setarg{\cl_G(E)}{E\in \Sigma_Z}$. Since $Z\sqsubseteq^* G$, we have $\flatsum{Z}(\Sigma_Z) = \flatsum{G}(\Sigma)$. By $Y\sqsubseteq G$, we know $\flatsum{Y}(\Sigma_Y) \leq \flatsum{G}(\Sigma)$. By choice of $\Sigma$, $\flatsum{Z}(\Sigma_Z)  < \flatsum{X}(\Sigma_X)$. Conclude $\flatsum{Y}(\Sigma_Y) < \flatsum{X}(\Sigma_X)$, so $\Sigma_Y$ witnesses that $X\not\sqsubseteq Y$.

There are at most $2^{|Z|}$ elements in $\Sigma$ and each is of dimension at most $\dm(Z)$, so there is some $V'\subseteq Y$ of size at most $2^{|Z|}\dm(Z)$ such that for every $E\in \Sigma$, $\dm(V'\cap E) = \dm(Y\cap E)$.
%We may assume also that $V'\subseteq\cl_G(\bigcup\Sigma) \subseteq \cl_G(Z)$, so $\dm(V') \leq \dm(Z)$.
Now, take a minimal $V\sqsubseteq^* Y$ containing $V'$. Then $|V| \leq g(|V'|)\leq h(|Z|)$ and $\Sigma_V$ witnesses $X\not\sqsubseteq V$.
\end{proof}

\begin{remark}
In Lemma \ref{lemma: bound on sqsubseteq^* superset} and Lemma \ref{lemma: not sqsubseteq is witnessed inside any sqsubseteq superstructure}, there is no harm in assuming the functions $g$ and $h$ are non-decreasing.
\end{remark}

We now have the components required for the proof.

\begin{prop}
\label{proposition: T is satisfied by pg{G}}
There exists $\tau:\Nats\to \Nats$ such that $\pg{G}\models T_{\tau}$. In particular, $T_{\tau}$ is satisfiable.
\end{prop}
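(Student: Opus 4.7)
The plan is to take $\tau(m):=\max(m,h(m))$, where $h$ is the function from Lemma \ref{lemma: not sqsubseteq is witnessed inside any sqsubseteq superstructure} (non-decreasing by the remark following it), and verify $\pg{G}\models T_\tau$. Axioms (T1) and (T2) are built into the construction: $\pg{G}$ is flat either as a direct limit of flat pregeometries along $\sqsubseteq$ or, via Proposition \ref{proposition: G is isomorphic to G_M}, as $G_\mathcal{M}$ with $\mathcal{M}$ the generic hypergraph (flat by Proposition \ref{prop: flatness}); infinite-dimensionality follows by iterating $(*)$ to adjoin independent elements.

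For (T3), fix $F\sqsubseteq^*\pg{G}$, $F\sqsubseteq H\in\mathfrak{C}$, $n\in\Nats$; set $m:=|H|+n$; and assume $F\sqsubseteq^{\tau(m)}\pg{G}$. Fix a good hypergraph representation $\mathcal{M}$ of $\pg{G}$, let $F_0$ be the vertex set underlying $F$, and put $A:=G_{\mathcal{M}[\sscl_\mathcal{M}(F_0)]}$. By Proposition \ref{prop: self-sufficiency implies sqsubseteq}, $F\subseteq A\sqsubseteq\pg{G}$ with $A$ finite. Form the geometric amalgam $L:=H\amalg_F A$ (after renaming so $H\cap A=F$); by Lemma \ref{lemma: amalg properties}, $A\sqsubseteq L\in\mathfrak{C}$, so the genericity property $(*)$ of Theorem \ref{theorem: Fraisse's theorem - restricted} yields an embedding $\phi:L\to\pg{G}$ fixing $A$ with $\phi[L]\sqsubseteq\pg{G}$. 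Set $f:=\phi\restrictedto H$; then $f$ embeds $H$ into $\pg{G}$ and fixes $F\subseteq A$.

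It remains to verify $f[H]\sqsubseteq^n\pg{G}$. Suppose toward contradiction that there is $Y\supseteq f[H]$ with $Y\sqsubseteq^*\pg{G}$, $|Y\setminus f[H]|\leq n$, and $f[H]\not\sqsubseteq Y$. Lemma \ref{lemma: not sqsubseteq is witnessed inside any sqsubseteq superstructure} applied in ambient $\pg{G}$ with $\sqsubseteq$-superset $\phi[L]$ and $Z=Y$ gives $V\sqsubseteq^*\phi[L]$ containing $f[H]$, with $f[H]\not\sqsubseteq V$ and $|V|\leq h(|Y|)\leq h(m)\leq\tau(m)$. Then $V\sqsubseteq^*\pg{G}$ by transitivity, so $F\sqsubseteq V$ by the hypothesis $F\sqsubseteq^{\tau(m)}\pg{G}$. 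Through $\phi$, the amalgam structure transfers as $\phi[L]=\phi[H]\amalg_F A$; Corollary \ref{corollary: strong in amalgam means strong in G_2} (1) yields $V\cap A\sqsubseteq^* A$, and Observation \ref{observation: intermediate amalgam} (applied with $G_1=\phi[H]$ and $V\cap A\sqsubseteq^* A$) produces $\phi[H]\amalg_F(V\cap A)\sqsubseteq^*\phi[L]$, defined on $\phi[H]\cup(V\cap A)=V$; since $V$ is itself $\sqsubseteq^*$-embedded in $\phi[L]$ with the same underlying set, $V=\phi[H]\amalg_F(V\cap A)$ as pregeometries. From $V\cap A\sqsubseteq^* V$ (using $V\cap A\sqsubseteq^*\pg{G}$ with $V\cap A\subseteq V\subseteq\pg{G}$) and $F\sqsubseteq V$, Lemma \ref{lemma: sqsubseteq and sqsubseteq^* interaction} gives $F\sqsubseteq V\cap A$; finally, Lemma \ref{lemma: amalg properties} (v) applied to $V=\phi[H]\amalg_F(V\cap A)$ yields $\phi[H]\sqsubseteq V$, contradicting $f[H]=\phi[H]\not\sqsubseteq V$.

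The main obstacle is the identification of the witness $V$ itself as a geometric amalgam $\phi[H]\amalg_F(V\cap A)$: once in place, the $\tau$-closeness of $F$ (transferred via the chain $F\sqsubseteq V\Rightarrow F\sqsubseteq V\cap A$) transports through the amalgam into the $\sqsubseteq$-strength of $\phi[H]$ in $V$ needed to defeat the Lemma-produced witness.
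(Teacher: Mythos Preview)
Your proof is correct and follows essentially the same approach as the paper: embed the amalgam $H\amalg_F A$ strongly into $\pg{G}$ via genericity, then use Lemma~\ref{lemma: not sqsubseteq is witnessed inside any sqsubseteq superstructure} to pull a putative failure of $f[H]\sqsubseteq^n\pg{G}$ back into the amalgam, identify the witness as $\phi[H]\amalg_F(V\cap A)$, and derive a contradiction via Lemma~\ref{lemma: amalg properties}(v). The only cosmetic difference is that the paper argues contrapositively (showing $F\not\sqsubseteq V_0$ directly witnesses $F\not\sqsubseteq^{h(|H|+n)}\pg{G}$), whereas you first invoke the hypothesis $F\sqsubseteq^{\tau(m)}\pg{G}$ on $V$ and then pass to $V\cap A$ via Lemma~\ref{lemma: sqsubseteq and sqsubseteq^* interaction}; your route is slightly longer but equally valid, and your explicit verification that $V=\phi[H]\amalg_F(V\cap A)$ as subpregeometries on the same underlying set is a detail the paper leaves implicit.
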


\begin{proof}
We only need to address T3.

Fix some $n\geq |H|$. Let $F\sqsubseteq^* \pg{G}$, $F\sqsubseteq H\in\mathfrak{C}$ and let $E\sqsubseteq \pg{G}$ contain $F$. By genericity of $\pg{G}$, we may assume $H\amalg_F E$ is strongly embedded into $\pg{G}$ over $E$.

Assume now that $H\not\sqsubseteq^n \pg{G}$. Let $Z\sqsubseteq^* \pg{G}$, $|Z\setminus H|\leq n$ contain $H$ such that $H\not\sqsubseteq Z$. By Lemma \ref{lemma: not sqsubseteq is witnessed inside any sqsubseteq superstructure}, there is some $V\sqsubseteq^* H\amalg_F E$ containing $H$ of size at most $h(|Z|)\leq h(|H|+n)$ such that $H\not\sqsubseteq V$. By (1) of Corollary \ref{corollary: strong in amalgam means strong in G_2}, we have $V_0:= V\cap E \sqsubseteq^* E$. Then by Observation \ref{observation: intermediate amalgam}, $V= H\amalg_F V_0$. If $F\sqsubseteq V_0$, the last item of Lemma \ref{lemma: amalg properties} implies $H\sqsubseteq V$, which is not the case. So $F\not\sqsubseteq V_0$, where $V_0\sqsubseteq^* \pg{G}$ with $|V_0|\leq h(|H|+n)$.

Conclude that for choosing $\tau$ greater or equal to the $h$ of Lemma \ref{lemma: not sqsubseteq is witnessed inside any sqsubseteq superstructure}, ${\pg{G}\models T_{\tau}}$.
\end{proof}

\begin{notation}
From now on, fix $T:=T_{\tau}$ for some $\tau$ such that $T_{\tau}$ is satisfiable (not necessarily the $\tau$ of the above lemma).
\end{notation}
We will show that $\pg{G}$ is a saturated model for $T$. In the case of Hrushovski's construction, it is easy to show that $\mathcal{M}$ is saturated, since $\mathcal{M}$ is isomorphic to each of its elementary extensions. This is not the case for $\pg{G}$. We instead use the weaker property stated in Proposition \ref{proposition: model of T can be extended to generic}. The proposition is proved by constructing an increasing chain, with each step constructed using the next lemma.

%\begin{notation*}
%As the proofs become more model theoretic, from now on, where appropriate, we abuse notation and implicitly consider ordered tuples as if they were sets and vice versa.
%\end{notation*}

\begin{lemma}
\label{lemma: realize extension strongly}
Let $L\models T$, $F\sqsubseteq L$, $F\sqsubseteq H\in \mathfrak{C}$. Then there exists some elementary extension $L\preceq L'$ and $f:H\to L'$ fixing $F$ such that $f[H]\sqsubseteq L'$.
\end{lemma}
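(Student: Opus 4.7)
The plan is to use a compactness argument powered by axiom (T3). Expand the language $\mathcal{L}$ to $\mathcal{L}^+ := \mathcal{L}(L) \cup \setarg{c_h}{h\in H\setminus F}$, where elements of $F$ are already named by their own constants in $\mathcal{L}(L)$. For each $h\in H$, let $c_h^\star$ denote $h$ itself if $h\in F$ and the new constant $c_h$ otherwise. I will form an $\mathcal{L}^+$-theory $\Gamma$ whose models are, essentially, elementary extensions of $L$ containing a strongly embedded copy of $H$ over $F$.

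Concretely, $\Gamma$ consists of: (i) the elementary diagram of $L$; (ii) for each quantifier-free $\mathcal{L}$-formula $\psi(\bar x)$ and each tuple $\bar h$ from $H$, the sentence $\psi(\bar c_{\bar h}^\star)$ if $H\models\psi(\bar h)$ and $\neg\psi(\bar c_{\bar h}^\star)$ otherwise --- encoding that the map $h\mapsto c_h^\star$ is an $\mathcal{L}$-isomorphism of $H$ onto $\setarg{c_h^\star}{h\in H}$; and (iii) for each $n\in\Nats$, the sentence $\varphi_n(\bar c_H^\star)$ asserting $\setarg{c_h^\star}{h\in H}\sqsubseteq^n \text{(ambient)}$. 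Since $H$ is finite, each such $\varphi_n$ is a well-formed first-order formula in $|H|$ variables by the observation following the definition of $\sqsubseteq^n$.

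The key step is finite satisfiability of $\Gamma$. A finite $\Gamma_0\subseteq \Gamma$ uses only the axioms of type (iii) for $n\leq N$, some $N$. Since $F\sqsubseteq L$, the equivalence in the observation after Definition of $\sqsubseteq^n$ gives $F\sqsubseteq^{\tau(|H|+N)} L$, so axiom (T3) of $T$ provides an embedding $g:H\to L$ fixing $F$ with $g[H]\sqsubseteq^N L$. Interpret each new constant by $c_h^{L} := g(h)$. Then (i) holds because $L$ models its own elementary diagram, (ii) holds because $g$ is a pregeometry isomorphism onto its image, and (iii) for $n\leq N$ follows from the trivial monotonicity $\sqsubseteq^N\Rightarrow\sqsubseteq^n$. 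By compactness, $\Gamma$ has a model $L'$; axiom scheme (i) makes $L\preceq L'$, and defining $f:H\to L'$ by $f(h)=h$ on $F$ and $f(h)=c_h^{L'}$ otherwise, the axioms of (ii) make $f$ a pregeometry embedding fixing $F$, while those of (iii) give $f[H]\sqsubseteq^n L'$ for every $n$, hence $f[H]\sqsubseteq L'$ by the same equivalence.

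The substantive obstacles are all handled by pieces already in place: the first-order expressibility of $\sqsubseteq^n$ (so (iii) makes sense), the equivalence between $\sqsubseteq$ and the conjunction of all $\sqsubseteq^n$ (so the limit of (iii)-axioms delivers what we want), and the availability of (T3) from the hypothesis $F\sqsubseteq L$. The only genuine verification is that $\tau(|H|+N)$ in (T3) can be fed by $F\sqsubseteq L$, which is immediate. So the entire argument reduces to the compactness packaging described above.
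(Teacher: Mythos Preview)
Your proposal is correct and follows essentially the same approach as the paper: a compactness argument in which axiom (T3), fed by $F\sqsubseteq L$ (hence $F\sqsubseteq^{\tau(|H|+N)} L$ for every $N$), supplies finite approximations $g[H]\sqsubseteq^N L$, and the equivalence between $\sqsubseteq$ and $\bigwedge_n\sqsubseteq^n$ finishes the job. The paper merely phrases this as ``the partial type $\{H(\bar a\bar y)\}\cup\text{``}\bar a\bar y\sqsubseteq G\text{''}$ is finitely satisfiable, so realize it in an elementary extension,'' while you spell out the constants-and-elementary-diagram packaging explicitly; these are the same argument.
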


\begin{proof}
%In this proof, when describing first order formulas, we use $G$ to symbolize the ambient structure.

Observe that although the language $\mathcal{L}$ is infinite, the atomic type of a finite pregeometry is given by a single finite formula. Denote by $F(\bar{x})$ the atomic type of $F$. Denote by $H(\bar{x}\bar{y})$ the atomic type of $H$, where the induced structure on $\bar{x}$ is that of $F$, and the elements of $\bar{y}$ realize over $\bar{x}$ the atomic type of $H$ over $F$. Denote by $``\bar{x}\sqsubseteq^n G"$ a first order formula in variables $\bar{x}$ stating that $\bar{x}$ is $\sqsubseteq^n$-embedded in the ambient structure. Denote by $``\bar{x}\sqsubseteq G"$ the partial type $\setarg{``\bar{x}\sqsubseteq^n G"}{n\in \omega}$.

Observe that by the axiom scheme T3, for any fixed natural $n$
\[
T\cup \set{F(\bar{x})}\cup ``\bar{x}\sqsubseteq G" \models \exists \bar{y} (H(\bar{x}\bar{y})\wedge ``\bar{x}\bar{y}\sqsubseteq^{n} G").
\]
Whenever $i>j$, $\exists \bar{y} (H(\bar{x}\bar{y})\wedge ``\bar{x}\bar{y}\sqsubseteq^{i} G")\implies \exists \bar{y} (H(\bar{x}\bar{y})\wedge ``\bar{x}\bar{y}\sqsubseteq^{j} G")$. Consequently, if $G\models T\cup \set{F(\bar{a})}\cup ``\bar{a}\sqsubseteq G"$, then the type $\set{H(\bar{a}\bar{y})}\cup ``\bar{a}\bar{y}\sqsubseteq G"$ over $\bar{a}$ is finitely satisfiable.
Since $L\models T$ and $F\sqsubseteq L$, we may realize $H(F\bar{y})\wedge ``F\bar{y}\sqsubseteq G"$ in some elementary extension $L\preceq L'$. This finishes the proof. 
\end{proof}

\begin{prop}
\label{proposition: model of T can be extended to generic}
Whenever $L\models T$ is countable, there exists an elementary extension $L\preceq G$ that is generic for $\mathfrak{C}$. In particular $G\cong \pg{G}$.
\end{prop}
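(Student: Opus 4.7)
The plan is to build $G$ as the union of a countable elementary chain $L = L_0 \preceq L_1 \preceq L_2 \preceq \cdots$, obtained by iteratively invoking Lemma~\ref{lemma: realize extension strongly} to realize every extension instance demanded by property $(*)$ of Theorem~\ref{theorem: Fraisse's theorem - restricted}. The uniqueness clause of that theorem will then give $G \cong \pg{G}$.

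The key preliminary observation, which makes the construction work across an elementary chain, is that for each fixed $n$ the predicate ``$\bar{x}\sqsubseteq^n G$'' is first-order in $\bar{x}$ (as noted in the observation preceding Definition~\ref{definition: theory of generic pregeometry}), and $X \sqsubseteq G \iff \forall n\ X \sqsubseteq^n G$. Consequently, if $L_i \preceq L_j$ then for any finite tuple $\bar{a}$ from $L_i$ we have $\bar{a}\sqsubseteq L_i \iff \bar{a}\sqsubseteq L_j$, and likewise $\bar{a}\sqsubseteq L_i \iff \bar{a}\sqsubseteq \bigcup_k L_k$. The theory $T$ itself is first-order, hence preserved through the chain, so every $L_n$ satisfies $T$ and Lemma~\ref{lemma: realize extension strongly} remains applicable at each stage.

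For the construction itself, enumerate (using a standard bookkeeping function) all triples $(n, F, H)$ where $F \subfin L_n$ with $F \sqsubseteq L_n$ and $F \sqsubseteq H \in \mathfrak{C}$; each $L_n$ is countable and $\mathfrak{C}$ has countably many isomorphism types over a fixed base, so there are only countably many such triples, and the enumeration can be arranged so that every eventual task is processed. At stage $n$, handle the next enqueued task $(m, F, H)$ with $m \leq n$: since $F \sqsubseteq L_m$ lifts to $F \sqsubseteq L_n$ and $L_n \models T$, Lemma~\ref{lemma: realize extension strongly} yields an elementary extension $L_n \preceq L_{n+1}$ together with an embedding $f : H \to L_{n+1}$ fixing $F$ with $f[H] \sqsubseteq L_{n+1}$. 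Set $G := \bigcup_n L_n$; then $L \preceq G$ by the elementary chain theorem.

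Finally, verify that $G$ is generic for $\mathfrak{C}$. $G$ is flat (T1 is a $\Pi$-scheme preserved in chains), so every finite $H\sqsubseteq G$ lies in $\mathfrak{C}$; the converse inclusion will follow once $(*)$ is established, applied with $F = \emptyset$. For $(*)$ itself, given a finite $F \sqsubseteq G$ and $F \sqsubseteq H \in \mathfrak{C}$, pick $n$ with $F \subfin L_n$; by the first-order preservation of $\sqsubseteq^k$ downward from $G$ to $L_n \preceq G$ we get $F \sqsubseteq L_n$, so $(n,F,H)$ is a legitimate task and is processed at some stage $k \geq n$, producing $f : H \to L_{k+1}$ fixing $F$ with $f[H]\sqsubseteq L_{k+1}$; upward preservation then yields $f[H] \sqsubseteq G$, as required. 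The main (minor) obstacle is exactly this elementary-extension transfer of $\sqsubseteq$: once the equivalence $\sqsubseteq = \bigcap_n \sqsubseteq^n$ of first-order predicates is noted, everything else is a routine Fra\"iss\'e-style chain argument, and Theorem~\ref{theorem: Fraisse's theorem - restricted} delivers $G \cong \pg{G}$.
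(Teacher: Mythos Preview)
Your proposal is correct and follows essentially the same approach as the paper's proof: build an elementary chain above $L$ by iterating Lemma~\ref{lemma: realize extension strongly} with a bookkeeping enumeration of extension tasks, then verify genericity using that $\sqsubseteq$ transfers along elementary extensions. Your explicit use of the equivalence $\sqsubseteq = \bigcap_n \sqsubseteq^n$ with each $\sqsubseteq^n$ first-order is exactly what the paper abbreviates as ``being $\sqsubseteq$-embedded in a model is a first order property (type) preserved by elementary extension.''
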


\begin{proof}
We construct an elementary chain, similarly to a Fra\"iss\'e construction, but starting from a model and realizing types instead of amalgamating.

Let $M_0 = L$. Assume $M_i$ countable, $F_i\sqsubseteq M_i$ and $F\sqsubseteq H_i\in \mathfrak{C}$ are given. Use Lemma \ref{lemma: realize extension strongly} to get a countable elementary extension $M_i\preceq M_{i+1}$ into which $H$ can be strongly embedded over $F$. Choose an enumeration so that for every $i<\omega$, every $A\sqsubseteq M_i$ and $A\sqsubseteq B\in\mathfrak{C}$, the pair $(A,B)$ is chosen as $(F_i, H_i)$ infinitely often.

Let $G = \bigcup_{i<\omega} M_i$. Then $L\preceq G$. Observe that being $\sqsubseteq$-embedded in a model is a first order property (type) preserved by elementary extension, so by construction $G$ is clearly generic for $\mathfrak{C}$.
\end{proof}

Before proceeding with the proof of saturation, we note an immediate corollary of Proposition \ref{proposition: model of T can be extended to generic}.

\begin{cor}
$T\equiv Th(\pg{G})$. In particular, $T=T_{\tau}$ is a complete theory independent of the specific choice of $\tau$. \qed
\end{cor}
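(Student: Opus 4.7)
The plan is to deduce the corollary directly from Propositions \ref{proposition: T is satisfied by pg{G}} and \ref{proposition: model of T can be extended to generic}, with essentially no new work. The first gives one half of the equivalence: since $\pg{G}\models T_{\tau_0}$ for the specific $\tau_0$ built in the proof of Proposition \ref{proposition: T is satisfied by pg{G}}, we have at least one satisfiable $T_{\tau}$ in hand. The real content is completeness, which I would establish by a standard two-model argument.

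Concretely, I would take any $L\models T$, apply downward L\"owenheim--Skolem to obtain a countable $L_{0}\preceq L$ (which still satisfies $T$), and then invoke Proposition \ref{proposition: model of T can be extended to generic} to produce an elementary extension $L_{0}\preceq G$ with $G\cong \pg{G}$. Chaining the three elementary equivalences yields $L\equiv L_{0}\equiv G\equiv \pg{G}$, so every model of $T$ is elementarily equivalent to $\pg{G}$. Hence $T$ is complete, and in particular $\pg{G}\models T$ (since $\pg{G}$ is elementarily equivalent to some model of $T$). This gives $T\equiv Th(\pg{G})$.

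For the independence of $\tau$, observe that neither Lemma \ref{lemma: realize extension strongly} nor Proposition \ref{proposition: model of T can be extended to generic} uses any specific property of the $\tau$ in the definition of $T=T_{\tau}$ beyond the fact that (T3) is available. So the completeness argument above applies verbatim to any $\tau$ for which $T_{\tau}$ is satisfiable, and shows $T_{\tau}\equiv Th(\pg{G})$. Since $Th(\pg{G})$ is intrinsic to $\pg{G}$, any two admissible choices of $\tau$ yield logically equivalent theories.

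There is no serious obstacle: all the heavy lifting was done in Propositions \ref{proposition: T is satisfied by pg{G}} and \ref{proposition: model of T can be extended to generic}. The only point requiring a moment's care is confirming that $\pg{G}\models T_{\tau}$ for every satisfiable $T_{\tau}$, not merely for the $\tau_0$ produced in Proposition \ref{proposition: T is satisfied by pg{G}}; but this is automatic once completeness is established, because any model of $T_{\tau}$ is elementarily equivalent to $\pg{G}$ by the argument above.
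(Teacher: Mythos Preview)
Your argument is correct and is exactly the (implicit) reasoning the paper has in mind when it writes \qed: any model of $T$ has a countable elementary substructure, which by Proposition \ref{proposition: model of T can be extended to generic} elementarily embeds into a copy of $\pg{G}$, so all models of $T$ are elementarily equivalent to $\pg{G}$. The remarks on independence of $\tau$ are likewise precisely what the paper intends, since $T$ was fixed as an arbitrary satisfiable $T_\tau$ just before Lemma \ref{lemma: realize extension strongly}.
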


\begin{theorem}
$\pg{G}$ is saturated.
\end{theorem}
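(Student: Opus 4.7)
Since $\pg{G}$ is countable, saturation amounts to realizing every complete type $p(\bar{x})$ over every finite parameter set $A \subseteq \pg{G}$. The plan is to reduce to the case that $A$ is already $\sqsubseteq$-embedded in $\pg{G}$, realize $p$ in a suitable countable model of $T$, upgrade that model to a copy of $\pg{G}$, and then transport the realization back by a genericity-driven automorphism.

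First, I would reduce to the strongly embedded case. Given any finite $A \subseteq \pg{G}$, use Proposition \ref{proposition: G is isomorphic to G_M} to identify $\pg{G}$ with $G_{\mathcal{M}}$ and let $A^+ := \sscl_{\mathcal{M}}(A)$, which is finite because $\emptyset \strong \mathcal{M}$. By Proposition \ref{prop: self-sufficiency implies sqsubseteq} applied to $A^+ \strong \mathcal{M}$, we get $A^+ \sqsubseteq \pg{G}$. Extending $p$ arbitrarily to a complete type over $A^+$, it suffices to realize complete types over finite $\sqsubseteq$-embedded parameter sets.

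Next, realize $p$ in any elementary extension $\pg{G} \preceq N$, obtaining a tuple $\bar{b} \in N$, and take a countable $M$ with $A^+ \cup \bar{b} \subseteq M \preceq N$. Then $M \models T$ and $\bar{b}$ still realizes $p$ in $M$. The key point is that, by the observation following the definition of $\sqsubseteq^n$, each ``$A^+ \sqsubseteq^n -$'' is a single first-order formula, so $A^+ \sqsubseteq \pg{G}$ together with $\pg{G} \preceq N$ and $M \preceq N$ yields $A^+ \sqsubseteq M$. By Proposition \ref{proposition: model of T can be extended to generic}, $M$ admits an elementary extension $\pg{G}'$ with $\pg{G}' \cong \pg{G}$; the same first-order argument gives $A^+ \sqsubseteq \pg{G}'$, while elementarity keeps $\bar{b}$ a realization of $p$.

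Finally, fix an isomorphism $\psi : \pg{G}' \to \pg{G}$. Since $\psi$ is an isomorphism and $A^+ \sqsubseteq \pg{G}'$, the image $\psi(A^+) \sqsubseteq \pg{G}$ is a finite $\sqsubseteq$-embedded substructure isomorphic (via $\psi^{-1}$) to $A^+ \sqsubseteq \pg{G}$. By Remark \ref{remark: isomorphism extends to automorphism in generic}, this isomorphism between finite $\sqsubseteq$-embedded substructures extends to an automorphism $\chi$ of $\pg{G}$, so $\chi \circ \psi : \pg{G}' \to \pg{G}$ is an isomorphism fixing $A^+$ pointwise. Then $(\chi \circ \psi)(\bar{b}) \in \pg{G}$ realizes $p$, completing the argument. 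The main conceptual obstacle --- preserving strong embeddedness when passing between models of $T$ --- is exactly what the quantitative approximants $\sqsubseteq^n$ (and thereby the axiom scheme (T3)) were introduced to overcome; once that is in hand, the usual Fra\"iss\'e-style back-and-forth provided by Remark \ref{remark: isomorphism extends to automorphism in generic} does the rest.
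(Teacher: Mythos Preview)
Your proof is correct and follows essentially the same route as the paper: reduce to a strongly embedded parameter set, realize the type in an elementary extension, invoke Proposition~\ref{proposition: model of T can be extended to generic} to obtain a generic copy, and use Remark~\ref{remark: isomorphism extends to automorphism in generic} to transport the realization back over the parameters. The only cosmetic difference is that the paper first embeds a finite $B\sqsubseteq L$ containing the realization into $\pg{G}$ via genericity and then applies the automorphism inside the larger generic $G\succeq L$, whereas you work directly with an isomorphism $\pg{G}'\to\pg{G}$ corrected by an automorphism of $\pg{G}$; the underlying mechanism is the same.
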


\begin{proof}
Let $p(x)$ be a complete type over a finite set $F\subseteq \pg{G}$. By increasing $F$, we may assume $F\sqsubseteq \pg{G}$. Let $L$ be an elementary extension of $\pg{G}$ in which $p(x)$ is realized, say by $a\in L$. Let $B\sqsubseteq L$ contain $F\cup\set{a}$. Since $F\sqsubseteq L$, in particular $F\sqsubseteq B$. By genericity of $\pg{G}$, we may strongly embed $B$ into $\pg{G}$ over $F$, call the image of this embedding $H\sqsubseteq \pg{G}$.

By Proposition \ref{proposition: model of T can be extended to generic}, let $G$ be an elementary extension of $L$ such that $G\cong \pg{G}$. Then $H,B\sqsubseteq G$ and $H$, $B$ are isomorphic over $F$. As in Remark \ref{remark: isomorphism extends to automorphism in generic}, there exists an automorphism $f$ of $G$ extending the isomorphism between $B$ and $H$. In particular, $\tp^{\pg{G}}(f(a)/F) = \tp^{G}(f(a)/F) = \tp^{G}(a/F) =\tp^{L}(a/F) = p(x)$. So the arbitrary type $p(x)$ is realized in $\pg{G}$, hence $\pg{G}$ is saturated.
\end{proof}

Now that we have saturation of $\pg{G}$, we can show that $T$ is $\omega$-stable and has quantifier elimination up to a set of formulas, reminiscent of the case of $Th(\mathcal{M})$. We lead with $\omega$-stability. The next lemma shows that the type of a strongly embedded set in a model of $T$ is determined by its atomic diagram.

\begin{lemma}
\label{lemma: atomic type determines full type for sqsubseteq}
Let $H,F\sqsubseteq \pg{G}$ (possibly infinite) be such that $f:H\to F$ is an isomorphism of pregeometries. Then, seen as $\omega$-tuples, $\tp^{\pg{G}}(H) = \tp^{\pg{G}}(F)$.
\end{lemma}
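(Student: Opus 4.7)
The plan is to reduce the infinite statement to the finite case, which is already handled by the back-and-forth argument for generics in Remark \ref{remark: isomorphism extends to automorphism in generic}. Since the type of an $\omega$-tuple is determined by the types of its finite sub-tuples, it suffices to show that for any finite sub-tuple $\bar{h} = (h_1,\dots,h_n)$ of $H$ and its image $\bar{b} = (f(h_1),\dots,f(h_n))$ in $F$, one has $\tp^{\pg{G}}(\bar{h}) = \tp^{\pg{G}}(\bar{b})$.

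First I would produce a finite $H_0$ with $\bar{h} \subseteq H_0 \subseteq H$ such that $H_0 \sqsubseteq \pg{G}$. Because $H\sqsubseteq \pg{G}$ forces $H$ to be flat, Theorem \ref{theorem: flat pregeometry comes from graph} furnishes a good representation $\hg{A}$ of $H$. Setting $H_0 := \sscl_{\hg{A}}(\bar{h})$ gives a finite set (the non-negativity of $\dm_{\hg{A}}$ ensures this), and Proposition \ref{prop: self-sufficiency implies sqsubseteq} yields $G_{\hg{A}[H_0]} \sqsubseteq H$, whence $H_0 \sqsubseteq \pg{G}$ by transitivity. Next, observe that $\sqsubseteq$ is defined purely in terms of dimensions of closures within each pregeometry (Definition \ref{def: geometric self-sufficiency}), so any isomorphism of pregeometries automatically preserves it; therefore $f(H_0) \sqsubseteq F$, and again by transitivity $f(H_0) \sqsubseteq \pg{G}$. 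The restriction $f\restrictedto H_0$ is then an isomorphism between two finite $\sqsubseteq$-embedded substructures of the generic $\pg{G}$, and Remark \ref{remark: isomorphism extends to automorphism in generic} extends it to an automorphism of $\pg{G}$, immediately giving $\tp^{\pg{G}}(\bar{h}) = \tp^{\pg{G}}(f(\bar{h}))$.

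The argument is essentially a bookkeeping reduction and I do not anticipate a serious obstacle. The only points worth flagging are that saturation of $\pg{G}$ plays no role here (genericity alone suffices) and that the only substantive ingredient beyond the definitions is the existence of finite self-sufficient closures in a good representation of $H$, which is precisely what the hypergraph technology of Section \ref{section: flatness} was built to supply.
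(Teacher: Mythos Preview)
Your proposal is correct and follows essentially the same route as the paper: reduce to finite sub-tuples, enlarge to a finite $\sqsubseteq$-embedded set inside $H$, transport via $f$ (which preserves $\sqsubseteq$), and invoke Remark~\ref{remark: isomorphism extends to automorphism in generic}. The only difference is cosmetic: the paper simply asserts the existence of a finite $Y\sqsubseteq H$ containing the given tuple, whereas you spell out its construction as $\sscl_{\hg{A}}(\bar{h})$ in a good representation of $H$.
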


\begin{proof}
Let $X\subfin H$ be arbitrary. It is enough to show that $\tp^{\pg{G}}(X) = \tp^{\pg{G}}(f[X])$.  %By Corollary \ref{corollary: sqsubseteq equivalence}, let
%$\hg{B}\strong \hg{A}$ be representations of $H\sqsubseteq \pg{G}$. Let $Y=\sscl_{\hg{A}}(X)$, then in fact $Y\strong \hg{B}$. In particular, considering$Y$ as an induced subpregeometry of $\pg{G}$, we have $Y\sqsubseteq H$.
Let $Y\sqsubseteq H\sqsubseteq \pg{G}$ be finite containing $X$. Then $f[Y]\sqsubseteq f[H] = F$, and since $F\sqsubseteq \pg{G}$, also $f[Y]\sqsubseteq \pg{G}$. Then the restriction of $f$ to elements of $Y$ is an isomorphism between $\sqsubseteq$-embedded finite substructures of $\pg{G}$. As in Remark \ref{remark: isomorphism extends to automorphism in generic}, there is an automorphism of $\pg{G}$ taking $Y$ to $f[Y]$, hence they have the same type. Since $X\subseteq Y$, also $\tp^{\pg{G}}(X) = \tp^{\pg{G}}(f[X])$.
\end{proof}

\begin{theorem}
$T$ is $\omega$-stable.
\end{theorem}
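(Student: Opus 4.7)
The plan is to bound $|S_1(A)| \leq \aleph_0$ for countable $A \subseteq \pg{G}$ by associating to each element $a \in \pg{G}$ a finite hypergraph-theoretic datum over a finite subset of $A$, and then invoking Lemma \ref{lemma: atomic type determines full type for sqsubseteq} to conclude that elements with isomorphic data have the same type over $A$. This imitates the standard argument for Hrushovski's non-collapsed construction, with finite self-sufficient hypergraph closures in a well-chosen representation of $\pg{G}$ playing the role of the ``canonical'' type-determining extension.

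First I would reduce to the case $A \sqsubseteq \pg{G}$. For any finite $X \subseteq \pg{G}$, fixing a good representation $\hg{A}$ of $\pg{G}$ gives a finite $\sscl_{\hg{A}}(X) \strong \hg{A}$, which by Corollary \ref{corollary: sqsubseteq is equivalent to self-sufficient} gives a finite $\sqsubseteq$-superset of $X$; an $\omega$-iteration then enlarges $A$ to a countable $A' \sqsubseteq \pg{G}$. Since restriction surjects $S_1(A') \twoheadrightarrow S_1(A)$, it is enough to bound $|S_1(A')|$. Assuming now $A \sqsubseteq \pg{G}$, use Corollary \ref{corollary: replace self-sufficient subgraph} to fix a good representation $\hg{A}$ of $\pg{G}$ for which $\hg{B} := \hg{A}[A] \strong \hg{A}$. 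For each $a \in \pg{G}$, attach the data $\hg{C}_a := \sscl_{\hg{A}}(\{a\})$ together with $A_0(a) := \hg{C}_a \cap A$. Since $\hg{B}, \hg{C}_a \strong \hg{A}$, the intersection $A_0(a)$ is strongly embedded in $\hg{A}$, and Fact \ref{fact: self-sufficiency properties}(6) identifies $\hg{A}[A \cup \hg{C}_a]$ as the free amalgam of $\hg{B}$ with $\hg{C}_a$ over $A_0(a)$ and shows $A \cup \hg{C}_a \strong \hg{A}$; hence $A \cup \hg{C}_a \sqsubseteq \pg{G}$ by Corollary \ref{corollary: sqsubseteq is equivalent to self-sufficient}.

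The key claim is that $\tp^{\pg{G}}(a/A)$ depends only on the isomorphism type of the pointed triple $(A_0(a), \hg{C}_a, a)$ over $A_0(a)$. Given matching data for $a, a'$ via an isomorphism $\psi: \hg{C}_a \to \hg{C}_{a'}$ fixing $A_0$ pointwise and sending $a \mapsto a'$, extend by the identity on $A$ to $\bar\psi: A \cup \hg{C}_a \to A \cup \hg{C}_{a'}$. The free-amalgamation identity $R[A \cup \hg{C}_a] = R[\hg{B}] \cup R[\hg{C}_a]$ from Fact \ref{fact: self-sufficiency properties}(6) (and its counterpart on the $a'$-side) makes $\bar\psi$ a hypergraph isomorphism, which together with Fact \ref{fact: self-sufficiency properties}(2) promotes it to a pregeometry isomorphism between the two $\sqsubseteq$-embedded sets $A \cup \hg{C}_a$ and $A \cup \hg{C}_{a'}$. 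Lemma \ref{lemma: atomic type determines full type for sqsubseteq}, applied with $\omega$-enumerations that agree on $A$ and send $a \mapsto a'$, then yields $\tp^{\pg{G}}(a/A) = \tp^{\pg{G}}(a'/A)$. Counting finishes: finite subsets $A_0 \subseteq A$ are countable, and for each $A_0$ there are countably many isomorphism types of pointed finite hypergraph extensions of $\hg{A}[A_0]$, so $|S_1(A)| \leq \aleph_0$. The most delicate step is the verification that $\bar\psi$ is a genuine pregeometry isomorphism; this is the point where the free-amalgamation structure over $A_0(a)$ inside $\hg{A}$ is indispensable, but it proceeds cleanly from Fact \ref{fact: self-sufficiency properties}(6) as indicated.
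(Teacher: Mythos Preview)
Your overall strategy---assign a finite hypergraph datum and invoke Lemma~\ref{lemma: atomic type determines full type for sqsubseteq}---is the paper's, but there are two genuine gaps in the execution.

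The first is fatal: the invariant $\hg{C}_a=\sscl_{\hg{A}}(\{a\})$ is far too small to determine $\tp(a/A)$. For any $a\notin\cl_{\pg{G}}(\emptyset)$ one has $\dm(\{a\})=1=\delta_{\hg{A}}(\{a\})$, hence $\{a\}\strong\hg{A}$ and $\hg{C}_a=\{a\}$. So for every such $a\notin A$ your datum is the constant triple $(\emptyset,\{a\},a)$, which cannot distinguish $a\in\cl(A)$ from $a\notin\cl(A)$. Concretely, take an edge $\{b_1,b_2,b_3,a\}$ in the representation with each $b_i\in A$: then $a\in\cl(A)$, yet $\hg{C}_a=\{a\}$, $A_0(a)=\emptyset$, and $R[A\cup\{a\}]$ strictly contains $R[A]\cup R[\{a\}]$. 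This is exactly where your appeal to Fact~\ref{fact: self-sufficiency properties}(6) fails---its hypothesis $\delta_{\hg{A}}(\hg{C}_a/\hg{C}_a\cap A)\leq 0$ is not met (the value is $1$), and the ``free amalgam'' identification is simply false. The paper repairs this by first picking, when $a\in\cl(A)$, a finite independent $X\subseteq A$ with $a\in\cl(X)$ and setting $Y=\sscl(X\cup\{a\})$; then $\delta(Y/Y\cap A)\leq 0$ does hold, the free-join conclusion is legitimate, and the pregeometry on $Y$ together with the marked point $a$ is the correct invariant. The case $a\notin\cl(A)$ contributes a single type and is handled separately.

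The second gap is that you only range over $a\in\pg{G}$, hence only count the types over $A$ that happen to be realised in the countable model $\pg{G}$; not every $p\in S_1(A)$ is of this form. The paper deals with this by taking $A$ to be all of $\pg{G}$ (saturation reduces $\omega$-stability to bounding $|S_1(\pg{G})|$) and, for each $p\in S_1^{\pg{G}}(M)$, realising $p$ in a generic elementary extension $G\succ\pg{G}$ obtained via Proposition~\ref{proposition: model of T can be extended to generic}; Corollary~\ref{corollary: replace self-sufficient subgraph} then extends the representation $\mathcal{M}\strong\mathcal{N}$ so that the self-sufficient-closure bookkeeping can be carried out in $\mathcal{N}$.
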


\begin{proof}
Identify the underlying set of $\pg{G}$ and the underlying set of $\mathcal{M}$, so that $\mathcal{M}$ is a representation of $\pg{G}$, and call that set $M$. Since $\pg{G}$ is saturated, it will be enough to show that $S^{\pg{G}}_1(M)$ is countable. We do this by injectively mapping $S^{\pg{G}}_1(M)$ to pairs $(H,a)$ where $H$ is an isomorphism type of an element of $\mathfrak{C}$ and $a\in H$.

Let $p(x)\in S^{\pg{G}}_1(M)$. By Proposition \ref{proposition: model of T can be extended to generic} let $G\succeq \pg{G}$ be generic for $\mathfrak{C}$ such that $G\models p(a)$ for some $a\in G$. Observe that since $\pg{G}$ is elementarily embedded in $G$, any finite set strongly embedded in $\pg{G}$ is also strongly embedded in $G$, i.e., $\pg{G}\sqsubseteq G$. By Corollary \ref{corollary: replace self-sufficient subgraph}, let $\mathcal{N}=(N,S)$ represent $G$ such that $\mathcal{M}\strong \mathcal{N}$.

Assume first that $a\in \cl_{G}(M)$. Then there is some finite independent $X\subseteq M$ such that $a\in \cl_{G}(X)$. Let $Y=\sscl_{\mathcal{N}}(X\cup\set{a})$ and let $H$ be the pregeometry induced on $Y$ by $G$. Then because $X\subseteq Y\cap M$, it must be that $\delta_{\mathcal{N}}(Y/M) \leq 0$, and since $M\strong \mathcal{N}$, we have $\delta_{\mathcal{N}}(Y/M) = 0$, $M\cup Y\strong \mathcal{N}$, and $\mathcal{N}[M\cup Y] = \mathcal{M}\amalg \mathcal{N}[Y]$. So $G_{\mathcal{N}[M\cup Y]} = \pg{G}\amalg_{\pg{G}\cap H} H$. Define $\Theta(p) = (H,a)$.

Observe that by Lemma \ref{lemma: atomic type determines full type for sqsubseteq}, the type $p$ is recoverable from $\Theta(p)$ --- it is the type of the image of $a$ over the image of $M$ in a $\sqsubseteq$-embedded copy of $\pg{G}\amalg_{\pg{G}\cap H} H$ in $\pg{G}$, which by Lemma \ref{lemma: atomic type determines full type for sqsubseteq} is unique. So $\Theta$ is injective on types of $S^{\pg{G}}_1(M)$ whose realizations depend on $M$. But, if $a\notin \cl_{G}(M)$, then $M\cup \set{a}\strong \mathcal{N}$ and $G_{\mathcal{N}[M\cup \set{a}]} = \pg{G}\amalg_{\emptyset} a$. Thus, again by Lemma \ref{lemma: atomic type determines full type for sqsubseteq}, there is a unique type in $S^{\pg{G}}_1(M)$ whose realizations are independent of $M$. All in all, $S^{\pg{G}}_1(M)$ is countable.
\end{proof}

In order to address quantifier elimination for $T$, we only need to be able to speak of finite $\sqsubseteq^*$-extensions.

For each finite pregeometry $H$, letting $\bar{h}$ be an enumeration of the elements of $H$ as an ordered tuple, let $\Phi_H(\bar{x})$ be the full\footnote{While technically the atomic type is not finite, it is isolated by its restriction to the finite sublanguage $\mathcal{L}^{\leq |X|} = \setarg{I_n}{n\leq |X|}$.} atomic type of $\bar{h}$. Let $\Phi^*_H(\bar{x})$ be the formula stating additionally that $\bar{x}\sqsubseteq^* G$ in the ambient structure $G$, namely: ``$\Phi_{H}$ holds, and whenever $X_1, X_2\subseteq \bar{x}$, denoting $n:= \dm(\cl_H(X_1)\cap \cl_H(X_2))+1$, every $y_1,\dots, y_{n}\in \cl_G(X_1)\cap \cl_G(X_2)$ are dependent''. Since $X_i$ is a subtuple of $\bar{x}$, the set $\cl_G(X_i)$ is definable by a quantifier free formula, and since $\bar{x}$ is isomorphic to $H$, the set $\cl_H(X_i)$ is known. Thus, the formula $\Phi^*_H$ is a conjunction of universal formulas, i.e., universal.

\begin{definition}
Define $\LEX$ to be the language $\mathcal{L}$ enriched by a predicate symbol for each formula of the form $\exists \bar{y}\Phi^*_H(\bar{x}\bar{y})$, where $H\in\mathfrak{C}$.
\end{definition}

We interpret a pregeometry as an $\LEX$ structure in the obvious way, implicitly assuming that every $\LEX$-theory forces the ``correct'' interpretation.

\begin{prop}
$T$ has quantifier elimination in the language $\LEX$.
\end{prop}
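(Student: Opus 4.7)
The plan is to verify the standard quantifier-elimination criterion: since $T$ is complete and $\pg{G}$ is saturated, it suffices to show that any two finite tuples $\bar a, \bar b \in \pg{G}$ with the same quantifier-free $\LEX$-type have the same complete $\mathcal{L}$-type. By Lemma \ref{lemma: atomic type determines full type for sqsubseteq}, it is enough to produce finite pregeometries $A, B$ with $\bar a \subseteq A \sqsubseteq \pg{G}$, $\bar b \subseteq B \sqsubseteq \pg{G}$, and an isomorphism $A \to B$ sending $\bar a \mapsto \bar b$, since then the full types of $A$ and $B$ agree as $\omega$-tuples and, in particular, on the distinguished subtuples.

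I would choose $A := \sscl_{\mathcal{N}}(\bar a)$ in some hypergraph representation $\mathcal{N}$ of $\pg{G}$ (available by Theorem \ref{theorem: flat pregeometry comes from graph}); this $A$ is finite with $A \sqsubseteq \pg{G}$. The $\LEX$-predicate $\exists \bar y\, \Phi^*_A(\bar a \bar y)$ (viewing $A$ as an extension of $\bar a$) holds in $\pg{G}$ since $A \sqsubseteq^* \pg{G}$; by qf $\LEX$-type equality, it holds of $\bar b$ too, yielding some $B_0 \supseteq \bar b$ with $B_0 \cong A$ via $\bar a \mapsto \bar b$ and $B_0 \sqsubseteq^* \pg{G}$.

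The core step is upgrading $B_0 \sqsubseteq^* \pg{G}$ to an isomorphic $B$ with $B \sqsubseteq \pg{G}$. Using $\omega$-saturation of $\pg{G}$, this reduces to finitely satisfying, over $\bar b$, the partial type consisting of $\Phi^*_A(\bar b \bar y)$ together with the qf $\LEX$-formulas ``$\bar b \bar y \sqsubseteq^n \pg{G}$'' for each $n$. For fixed $n$, axiom T3 supplies a realization as follows: choose a finite $\sqsubseteq$-extension $\bar b \subseteq \bar b^* \sqsubseteq \pg{G}$ (again using a representation of $\pg{G}$); form an amalgam of $\bar b^*$ with $A$ along a common $\sqsubseteq$-embedded base using Lemma \ref{lemma: amalg properties} and Corollary \ref{cor: amalgam geometric summary} to obtain $\bar b^* \sqsubseteq H \in \mathfrak{C}$ extending $\bar b$ by a copy of $A$; then, since $\bar b^* \sqsubseteq \pg{G}$ gives $\bar b^* \sqsubseteq^{\tau(|H|+n)} \pg{G}$, axiom T3 embeds $H$ over $\bar b^*$ as a $\sqsubseteq^n$-embedded substructure of $\pg{G}$. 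The embedded copy of $A$ sitting over $\bar b$ inside this $H$ provides the desired $\bar y$.

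The main obstacle lies in the amalgamation step. Since neither $\bar a$ nor $\bar b$ need be $\sqsubseteq$-embedded in $\pg{G}$, one cannot simply amalgamate $A$ and $\bar b^*$ over $\bar a \sim \bar b$ and invoke Lemma \ref{lemma: amalg properties}(v); instead one must select a common $\sqsubseteq$-embedded base $F$ that occurs compatibly inside both $A$ and $\bar b^*$, respecting the identification $\bar a \mapsto \bar b$. The existence and matching of $F$ on both sides is precisely what the qf $\LEX$-type of $\bar a$ encodes, since the predicates $P_H$ catalogue the $\sqsubseteq^*$-embeddable extensions of $\bar a$ in $\pg{G}$, and genericity refines these to $\sqsubseteq$-embedded ones. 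Orchestrating this choice of base carefully — and verifying that the resulting amalgam satisfies the hypotheses of T3 — is the technical heart of the proof; once done, the $\sqsubseteq^n$-embeddings assemble, via $\omega$-saturation, into the desired $B \sqsubseteq \pg{G}$, completing the argument.
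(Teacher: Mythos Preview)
Your overall strategy is correct: reduce to showing that tuples with the same quantifier-free $\LEX$-type can be extended to isomorphic finite $\sqsubseteq$-embedded pregeometries, then invoke the automorphism-lifting property of the generic. The gap is exactly where you locate it --- the amalgamation step --- and you do not close it. To form $\bar b^*\amalg_K A$ you need a base $K$ with $K\sqsubseteq A$ (or $K\sqsubseteq \bar b^*$), and there is no evident candidate: $\bar a\sim\bar b$ need not be $\sqsubseteq$-embedded in either, and the qf $\LEX$-type only records which $\sqsubseteq^*$-extensions of $\bar a$ exist, not a matching $\sqsubseteq$-embedded base sitting inside both $A$ and $\bar b^*$. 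Even granting such an $H$, after applying T3 you obtain $f[H]\sqsubseteq^n\pg{G}$, but the copy of $A$ inside $f[H]$ is not thereby $\sqsubseteq^n$-embedded in $\pg{G}$, so the finite-satisfiability argument does not go through as written.

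The paper bypasses all of this with a one-line use of the prerank $\prerank$ and Corollary~\ref{corollary: sqsubseteq equivalence}. Rather than taking $A$ to be an arbitrary self-sufficient closure, choose $A\sqsubseteq^*\pg{G}$ containing $\bar a$ with $\prerank(A)$ \emph{minimal}; by Corollary~\ref{corollary: sqsubseteq equivalence} this forces $A\sqsubseteq\pg{G}$. The $\LEX$-predicate $\exists\bar y\,\Phi^*_A(\bar a,\bar y)$ transfers to $\bar b$, giving a witness $B\sqsubseteq^*\pg{G}$ with $B\cong A$. Since $\prerank$ depends only on the isomorphism type, $\prerank(B)=\prerank(A)$; and since the qf $\LEX$-type records \emph{all} isomorphism types of $\sqsubseteq^*$-extensions of $\bar b$, this value is already minimal among such extensions of $\bar b$. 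Hence $B\sqsubseteq\pg{G}$ directly --- no amalgamation, no T3, no saturation needed for the upgrade. The idea you are missing is precisely that $\strong_r$ characterizes $\sqsubseteq$, which is the payoff of the machinery in Section~2.6.
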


\begin{proof}
Since $\pg{G}$ is saturated, also its definable expansion to the language $\LEX$ is saturated. Therefore, it is enough to show that the quantifier free $\LEX$-type of a finite tuple $\bar{a}\in \pg{G}$ implies the full type of $\bar{a}$.

Let $\bar{a},\bar{b}\subseteq\pg{G}$ be finite such that they have the same quantifier free $\LEX$-type. Let $A\sqsubseteq^*\pg{G}$ be an extension of $\bar{a}$ such that $\prerank(A)$ is minimal. In particular, $A\strong_r \pg{G}$, so by Corollary \ref{corollary: sqsubseteq equivalence}, $A\sqsubseteq \pg{G}$. Since $\pg{G}\models \exists \bar{y}\Phi^*_A(\bar{a},\bar{y})$, also $\pg{G}\models \exists \bar{y}\Phi^*_A(\bar{b},\bar{y})$. Let $B\sqsubseteq^*\pg{G}$ witness this. Clearly, $\prerank(B)$ has to also be minimal among $\sqsubseteq^*$-embedded extension of $\bar{b}$, so also $B\sqsubseteq \pg{G}$. As $A$ and $B$ are finite, isomorphic, and strongly embedded in $\pg{G}$, by genericity there is an automorphism of $\pg{G}$ extending the isomorphism between $A$ and $B$. In particular, $\tp^{\pg{G}}(\bar{a}) = \tp^{\pg{G}}(\bar{b})$.
\end{proof}

So in the language $\mathcal{L}$, the theory $T$ has quantifier elimination up to boolean combinations of $\LEX$ quantifier free formulas, which in particular are $\exists\forall$ $\mathcal{L}$-formulas. The $\mathcal{L}$-theory $T$ is not model complete in general, but it is with respect to $\sqsubseteq$-embeddings.

\begin{lemma}
\label{lemma: one-to-one correspondence between atomic types and strongly embedded S(T) types}
For each $H\in\mathfrak{C}$ there exists a unique type $p_H\in S(T)$ such that $G\models p_H(\bar{a})$ if and only if $\bar{a}\sqsubseteq G$ and $\bar{a}\cong H$.
\end{lemma}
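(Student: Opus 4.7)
The plan is to define $p_H$ as the type in $\pg{G}$ of a $\sqsubseteq$-embedded copy of $H$, and then verify the characterization using saturation of $\pg{G}$ together with the fact that strong embeddedness, while not a single formula, is captured by the partial type $\bigwedge_n ``\bar{x}\sqsubseteq^n G$''.

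\textbf{Existence.} By genericity of $\pg{G}$ applied to the strong embedding $\emptyset\sqsubseteq H$, there is some tuple $\bar{a}\in\pg{G}$ with $\bar{a}\cong H$ and $\bar{a}\sqsubseteq \pg{G}$. Define $p_H:=\tp^{\pg{G}}(\bar{a})$. By Lemma \ref{lemma: atomic type determines full type for sqsubseteq}, any two finite $\sqsubseteq$-embedded copies of $H$ in $\pg{G}$ share the same type, so $p_H$ does not depend on the choice of $\bar{a}$. Note that $p_H$ automatically contains the (finite) atomic type $\Phi_H(\bar{x})$ and, for every $n\in\omega$, the formula ``$\bar{x}\sqsubseteq^n G$'', since these hold of $\bar{a}$ in $\pg{G}$.

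\textbf{Characterization.} Let $G\models T$ and let $\bar{a}\in G$. For the forward direction, if $G\models p_H(\bar{a})$, then $G$ satisfies $\Phi_H(\bar{a})$ so $\bar{a}\cong H$, and $G$ satisfies ``$\bar{a}\sqsubseteq^n G$'' for every $n$, so $\bar{a}\sqsubseteq G$ by the observation following the definition of $\sqsubseteq^n$. For the reverse direction, suppose $\bar{a}\cong H$ and $\bar{a}\sqsubseteq G$. Apply Proposition \ref{proposition: model of T can be extended to generic} to a countable elementary substructure of $G$ containing $\bar{a}$ to obtain an elementary extension $G\preceq G'$ with $G'\cong \pg{G}$. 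Each formula ``$\bar{a}\sqsubseteq^n G$'' lifts to $G'$ by elementarity, hence $\bar{a}\sqsubseteq G'$. Now $\bar{a}$ is a $\sqsubseteq$-embedded copy of $H$ inside a copy of $\pg{G}$, so by Lemma \ref{lemma: atomic type determines full type for sqsubseteq} and the well-definedness above, $\tp^{G'}(\bar{a})=p_H$. Elementarity of $G\preceq G'$ then gives $G\models p_H(\bar{a})$.

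\textbf{Uniqueness of $p_H$.} If $q\in S(T)$ also satisfies the stated characterization, then applying it to the model $\pg{G}$ forces $q$ to be realized precisely by $\sqsubseteq$-embedded copies of $H$ in $\pg{G}$, which by Lemma \ref{lemma: atomic type determines full type for sqsubseteq} all realize $p_H$; hence $q=p_H$.

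The main subtlety is that $\sqsubseteq$-embeddedness is not expressible by a single formula, only by a partial type; the trick that carries us through is that each $\sqsubseteq^n$ is first-order, so elementarity between $G$ and $G'\cong\pg{G}$ transfers the whole partial type, allowing us to reduce the verification in an arbitrary model of $T$ to the case of $\pg{G}$ where Lemma \ref{lemma: atomic type determines full type for sqsubseteq} applies directly.
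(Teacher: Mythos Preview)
Your argument is essentially correct, but there is one slip to fix. You write ``obtain an elementary extension $G\preceq G'$ with $G'\cong \pg{G}$''; Proposition~\ref{proposition: model of T can be extended to generic} does not give this. Applying it to a countable $L\preceq G$ containing $\bar a$ yields $L\preceq G'\cong\pg{G}$, not $G\preceq G'$. Fortunately you do not need $G\preceq G'$: since $\bar a\in L$, elementarity on each side gives $\tp^G(\bar a)=\tp^L(\bar a)=\tp^{G'}(\bar a)$, and likewise $\bar a\sqsubseteq G$ transfers to $\bar a\sqsubseteq L$ and then to $\bar a\sqsubseteq G'$ via the $\sqsubseteq^n$ formulas. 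With this correction the proof goes through.

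Your route differs from the paper's. The paper exploits the quantifier elimination in $\LEX$ just proved: given $\bar a\sqsubseteq G$ with $\bar a\cong H$, for each $A\in\mathfrak C$ one has $G\models\exists\bar y\,\Phi^*_A(\bar a,\bar y)$ if and only if $H\sqsubseteq A$ (one direction from axiom T3, the other from Lemma~\ref{lemma: sqsubseteq and sqsubseteq^* interaction}); so the quantifier-free $\LEX$-type of $\bar a$ is determined purely by $H$, and by QE this pins down the full type. Your argument instead avoids QE and reduces to the generic model via Proposition~\ref{proposition: model of T can be extended to generic}, where Lemma~\ref{lemma: atomic type determines full type for sqsubseteq} applies directly. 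The paper's approach buys an explicit description of $p_H$ in terms of the $\LEX$-predicates; yours is more self-contained and would work even before QE is established.
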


\begin{proof}
The existence of such a type is clear. Take the type of some strongly embedded copy of $H$ in $\pg{G}$. To see that this type is unique, use saturation of $\pg{G}$ and apply Lemma \ref{lemma: atomic type determines full type for sqsubseteq}.

For the explicit definition of the type, recall T3 of the definition of $T$ and observe that for each $A\in \mathfrak{C}$, we have $G\models \exists\bar{y}\Phi^*_A(\bar{a}, \bar{y})$ precisely when $\bar{a}\sqsubseteq A$. By quantifier elimination, this gives the full type of $\bar{a}$.
\end{proof}

\begin{theorem}
\label{theorem: model completeness of T with respect to strong embeddings}
If $G_1\sqsubseteq G_2$ with $G_1,G_2\models T$, then $G_1\preceq G_2$.
\end{theorem}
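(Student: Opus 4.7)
The plan is to leverage the quantifier elimination for $T$ in the language $\LEX$ proved in the previous proposition. Since both $G_1,G_2\models T$, it suffices to check that $G_1$ is an $\LEX$-substructure of $G_2$: for every atomic $\LEX$-formula $\varphi(\bar{x})$ and $\bar{a}\in G_1$, $G_1\models \varphi(\bar{a})$ iff $G_2\models \varphi(\bar{a})$. The basic predicates $I_n$ are preserved because $G_1\sqsubseteq G_2$ implies $G_1\sqsubseteq^* G_2$, and the defining property of $\sqsubseteq^*$ forces dimensions of tuples in $G_1$ to be computed identically in $G_2$. The content lies in the newly added predicates $\exists \bar{y}\,\Phi^*_H(\bar{x}\bar{y})$ for $H\in\mathfrak{C}$.

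The forward direction is immediate: if $\bar{b}\in G_1$ witnesses $\Phi^*_H(\bar{a},\bar{b})$, then $\bar{a}\bar{b}\sqsubseteq^* G_1$, and transitivity of $\sqsubseteq^*$ applied to $G_1\sqsubseteq^* G_2$ gives $\bar{a}\bar{b}\sqsubseteq^* G_2$, so the same $\bar{b}$ serves as a witness in $G_2$.

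The backward direction is the substantive step, and I would use axiom T3 as follows. Suppose $\bar{b}\in G_2$ witnesses $\Phi^*_H(\bar{a},\bar{b})$. Since $G_1$ is flat (T1), by Theorem \ref{theorem: flat pregeometry comes from graph} it has a good representation; taking the self-sufficient closure of $\bar{a}$ and applying Proposition \ref{prop: self-sufficiency implies sqsubseteq} yields a finite $F\sqsubseteq G_1$ containing $\bar{a}$. By transitivity of $\sqsubseteq$, $F\sqsubseteq G_2$. Next, choose a finite $B\sqsubseteq G_2$ containing $F\cup\bar{b}$ (again by a good representation argument in $G_2$), and use Lemma \ref{lemma: sqsubseteq and sqsubseteq^* interaction} to deduce $F\sqsubseteq B$. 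Because $F\sqsubseteq G_1$ gives $F\sqsubseteq^{\tau(|B|)}G_1$ trivially, applying T3 to the extension $F\sqsubseteq B\in\mathfrak{C}$ with $n=0$ produces an embedding $f:B\to G_1$ fixing $F$ with $f[B]\sqsubseteq^* G_1$. Set $\bar{b}':=f(\bar{b})$. Using the observation that $\sqsubseteq^*$ descends to intermediate subsets, $\bar{a}\bar{b}\sqsubseteq^* G_2$ together with $\bar{a}\bar{b}\subseteq B\subseteq G_2$ gives $\bar{a}\bar{b}\sqsubseteq^* B$; transferring through the isomorphism $f$ yields $\bar{a}\bar{b}'\sqsubseteq^* f[B]$, and a final application of transitivity of $\sqsubseteq^*$ gives $\bar{a}\bar{b}'\sqsubseteq^* G_1$. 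Since $f$ is an isomorphism onto its image, $\bar{a}\bar{b}'\cong \bar{a}\bar{b}\cong H$, so $G_1\models\Phi^*_H(\bar{a},\bar{b}')$, as required.

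The main obstacle is the backward direction: realizing an existential from $G_2$ inside $G_1$ over the tuple $\bar{a}$. The axiom T3 is purpose-built for this, but deploying it cleanly requires the double envelope $\bar{a}\subseteq F\sqsubseteq B$ --- $F$ to give T3's hypothesis of strong embedding in $G_1$, and $B$ to carry both $F$ and the witness $\bar{b}$ --- together with the verification that $\bar{a}\bar{b}\sqsubseteq^* B$, which is what ensures the image under the T3-provided embedding inherits the atomic $\LEX$-structure demanded by $\Phi^*_H$.
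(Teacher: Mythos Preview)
Your argument is correct. The approach differs from the paper's: the paper invokes Lemma~\ref{lemma: one-to-one correspondence between atomic types and strongly embedded S(T) types}, which says that for each $H\in\mathfrak{C}$ there is a unique type $p_H\in S(T)$ realized exactly by the $\sqsubseteq$-embedded copies of $H$, so that once $F\sqsubseteq G_1\sqsubseteq G_2$ one gets $\tp^{G_1}(F)=p_F=\tp^{G_2}(F)$ in one line. That lemma's uniqueness is proved via saturation of $\pg{G}$ and the automorphism-extension property, so the paper's route implicitly passes through the saturated model. Your route instead stays entirely inside $G_1$ and $G_2$: you verify directly that $G_1$ is an $\LEX$-substructure of $G_2$, handling the added predicates $\exists\bar{y}\,\Phi^*_H$ by transitivity of $\sqsubseteq^*$ in one direction and by a hands-on application of T3 (with the double envelope $\bar{a}\subseteq F\sqsubseteq B$) in the other. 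Your argument is more self-contained and avoids the detour through saturation; the paper's is shorter because the heavy lifting has been packaged into Lemma~\ref{lemma: one-to-one correspondence between atomic types and strongly embedded S(T) types}. One small remark: your justification for the $I_n$ predicates via $\sqsubseteq^*$ is unnecessary---$G_1\subseteq G_2$ as pregeometries already means $G_1$ is an $\mathcal{L}$-substructure by definition.
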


\begin{proof}
Let $F\subseteq G_1$. Since we need to show $\tp^{G_1}(F) = \tp^{G_2}(F)$, there is no harm in increasing $F$, so assume $F\sqsubseteq G_1$. By transitivity, also $F\sqsubseteq G_2$. By Lemma \ref{lemma: one-to-one correspondence between atomic types and strongly embedded S(T) types} above, $\tp^{G_1}(F) = p_F = \tp^{G_2}(F)$.
\end{proof}

\subsection{Geometric arity}

Evans and Ferreira \cite{DavidMarcoOne, DavidMarcoTwo} showed that when bounding the arity of the hypergraphs in the amalgamation class $\mathcal{C}$, different associated pregeometries arise. To be precise, whenever $k> n$ are non-negative, the pregeometries $G_{\mathcal{M}_k}$ and $G_{\mathcal{M}_n}$ are not isomorphic, even up to localization in a finite set. The argument hinges on the existence of a self-sufficient edge of maximal arity. We will show that this is the only difference, in the sense that $G_{\mathcal{M}_k}$ is a (geometrically) homogeneous elementary extension of $G_{\mathcal{M}_n}$ realizing the non-isolated type of such an edge.

We define ``arity'' of a flat pregeometry as a purely geometric notion, and show in Proposition \ref{proposition: arity equivalence} that the definition indeed coincides with a definition by the arity of hypergraph representations.
\begin{definition}
\label{definition: arity}
Let $n\in \Nats$ be non-negative. Say that a flat pregeometry $G$ is of \emph{arity at most $n$} and write $\arity(G)\leq n$ if whenever $H\sqsubseteq G$ is finite, then $\flatsum{H}(\Sigma) = \dm(H)$, where $\Sigma$ is the collection of closed sets in $H$ of dimension less than $n$. Define $\arity(G)$, the \emph{arity} of $G$, to be the least $n\in\Nats\cup\set{\omega}$ such that $\arity(G)\leq n$.
\end{definition}

The meaning of $G$ being of arity at most $n$ is that the pregeometry is completely determined by independence of $n$-tuples, or in other words, by its reduct to the language ${\mathcal{L}_k:= \setarg{I_k}{k\leq n}\subseteq \mathcal{L}}$. This reflects in the automorphism group of the pregeometry.

\begin{prop}
If $G$ is flat with $\arity(G)\leq n$, then a bijection $f:G\to G$ is an automorphism of $G$ if and only if $\dm(X) = \dm(f[X])$ whenever $X\in [G]^{\leq n}$
\end{prop}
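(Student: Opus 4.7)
The forward direction is immediate: any $\mathcal{L}$-automorphism preserves each relation $I_k$ and hence preserves the dimension of every $k$-tuple. For the converse, I assume $f$ preserves the dimension of every tuple of size at most $n$ and aim to show $\dm(X) = \dm(f[X])$ for every finite $X \subseteq G$.

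My first move is to upgrade the hypothesis: for $A \subseteq G$ with $\dm(A) = k < n$, pick an independent basis $A_0 \subseteq A$ of size $k$. Then $y \in \cl(A)$ iff the $(k+1)$-tuple $A_0 y$ is dependent, a condition on at most $n$ elements and hence preserved by $f$. This immediately gives $f[\cl(A)] = \cl(f[A])$ and $\dm(f[A]) = \dm(A)$ for every set $A$ of dimension less than $n$.

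Next I will establish $\dm(H) = \dm(f[H])$ for every finite $H \sqsubseteq G$; this is the heart of the argument. The cases $n \leq 1$ are degenerate: the arity condition then forces $\dm(H) = 0$ for every such $H$, so $G = \cl(\emptyset)$ and every bijection is an automorphism, so assume $n \geq 2$. Let $\Sigma$ be the closed subsets of $H$ of dimension less than $n$. By the arity hypothesis, $\dm(H) = \flatsum{H}(\Sigma)$, which by Corollary \ref{corollary: sqsubseteq^* plays nicely with sums} equals $\flatsum{G}(\Sigma_G)$. Using the first step and Lemma \ref{lemma: in sqsubseteq^* the intersection and closure commute}, every intersection $\bigcap_{i \in s} \cl_G(f[E_i])$ can be rewritten as $\cl_G(f[\bigcap_{i \in s} E_i])$, whose $G$-dimension is $\dm(\bigcap_{i \in s} E_i)$. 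Therefore the collection $\Sigma' := \{\cl_G(f[E]) : E \in \Sigma\}$ satisfies $\flatsum{G}(\Sigma') = \flatsum{G}(\Sigma_G) = \dm(H)$. Since every $h \in H$ lies in $\cl_H(\{h\}) \in \Sigma$ (using $n \geq 2$), $\bigcup \Sigma' \supseteq f[H]$, so flatness of $G$ gives $\dm(f[H]) \leq \flatsum{G}(\Sigma') = \dm(H)$. The main obstacle lives here: $f[H]$ need not be strongly embedded in $G$ nor even flat, so the arity identity is not available for $f[H]$, and it is flatness of the ambient $G$ that supplies the required inequality.

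To finish for arbitrary finite $X \subseteq G$, I fix a good hypergraph representation $\hg{A}$ of $G$ (Theorem \ref{theorem: flat pregeometry comes from graph}) and set $X' := \sscl_{\hg{A}}(X)$. Then $X'$ is finite, $X' \sqsubseteq G$ by Corollary \ref{corollary: sqsubseteq is equivalent to self-sufficient}, and $\dm(X') = \dm(X)$ by the standard property of the self-sufficient closure. The previous step yields $\dm(f[X]) \leq \dm(f[X']) \leq \dm(X') = \dm(X)$. Running the identical argument with $f^{-1}$ in place of $f$ and with the self-sufficient closure of $f[X]$ in a good representation of $G$ gives the reverse inequality, completing the proof that $\dm(X) = \dm(f[X])$ and thus that $f$ is an automorphism.
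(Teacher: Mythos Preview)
Your proof is correct and follows essentially the same route as the paper's. Both arguments first observe that $f$ preserves the closed sets of dimension less than $n$ together with their dimensions, and then use the arity hypothesis to recover the dimension of an arbitrary finite set from this data via an alternating sum $\flatsum{G}(\Sigma)$. The paper compresses your steps~3--4 into the single sentence ``the dimension of any finite set is given by $\flatsum{G}(\Sigma)$ for such a $\Sigma$''; your version makes explicit what that sentence hides, namely the passage through a self-sufficient closure $H\sqsubseteq G$, the inequality $\dm(f[H])\le\flatsum{G}(\Sigma')$ coming from flatness of $G$, and the appeal to $f^{-1}$ for the reverse inequality.
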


\begin{proof}
Assume the right hand side. So $f$ preserves the dimension and closedness of every closed set in $G$ of dimension less than $n$. Then for every $\Sigma$, finite collection of closed subsets of dimension less than $n$, we have $\flatsum{G}(\Sigma) = \flatsum{G}(f(\Sigma))$, where $f(\Sigma) = \setarg{f[E]}{E\in \Sigma}$. Since $\arity(G)\leq n$, the dimension of any finite set is given by $\flatsum{G}(\Sigma)$ for such a $\Sigma$, hence $f$ preserves dimension of all tuples. The left to right implication holds by definition.
\end{proof}

\begin{observation}
Let $G$ be a flat pregeometry with $\arity(G)\leq n$. If $H\sqsubseteq G$, then $\arity(H)\leq n$. However, $H\sqsubseteq^* G$ does not imply any finite bound on $\arity(H)$.
\end{observation}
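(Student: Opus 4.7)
For the first assertion, the plan is to invoke transitivity of $\sqsubseteq$ and nothing more. Let $F \sqsubseteq H$ be finite. Since $H \sqsubseteq G$, transitivity gives $F \sqsubseteq G$. Applying the hypothesis $\arity(G) \leq n$ to $F$, we obtain $\flatsum_F(\Sigma) = \dm(F)$ where $\Sigma$ is the collection of closed sets in $F$ of dimension less than $n$. This is exactly the condition required for $\arity(H) \leq n$. So the first claim is essentially a one-line consequence of transitivity plus the definition.

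For the second assertion, I would produce, for each $k$, a pair $H_k \sqsubseteq^* G_k$ with $\arity(G_k)$ bounded independently of $k$ (say $\arity(G_k) \leq 3$) while $\arity(H_k) \geq k$. The obvious candidate is to take $H_k = L_k$, the flat pregeometry on $k$ elements consisting of a single $k$-edge (so any proper subset is independent), which has arity exactly $k$. The ambient $G_k$ should then be a hypergraph representation of $L_k$ obtained by ``resolving'' the $k$-edge into a tree of smaller (size $\leq 3$) hyperedges connected by auxiliary vertices $b_1, \dots, b_{k-2}$, so that $\arity(G_k) \leq 3$ and the induced pregeometry on the $k$ original vertices remains $L_k$ as computed by $\delta$.

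The main obstacle will be verifying $L_k \sqsubseteq^* G_k$: one must check that for every pair of closed sets $X_1, X_2$ in $L_k$ (i.e., every pair of subsets of size $\leq k-2$ together with $L_k$ itself), the auxiliary points $b_i$ do not enter $\cl_{G_k}(X_1) \cap \cl_{G_k}(X_2)$ in a way that would force this intersection to have dimension strictly greater than $\dm_{L_k}(X_1 \cap X_2)$. A naive linear zig-zag construction fails this check because a single auxiliary $b_1$ ends up in $\cl_{G_k}$ of two disjoint pairs. The fix is to use a more symmetric, tree-shaped configuration of auxiliaries and to increase their multiplicity, so that the closure of any subset of the original vertices of size $< k-1$ in $G_k$ remains just that subset, and only the closure of the full $k$-tuple contains any $b_i$. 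Once this design is in place, the intersection conditions for $\sqsubseteq^*$ reduce to trivial checks.

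Assuming such a family $(H_k, G_k)$ is constructed, the conclusion is immediate: no finite function of $\arity(G)$ can bound $\arity(H)$ uniformly when only $H \sqsubseteq^* G$ is required, since $\arity(G_k)$ stays bounded while $\arity(H_k) = k \to \infty$. The hardest part of the argument is therefore the explicit hypergraph design satisfying all the closure-separation conditions needed for $\sqsubseteq^*$; once that design exists, the rest is just evaluating $\dm$ via $\delta$ on a few small subgraphs and matching it against the closed-set structure of $L_k$.
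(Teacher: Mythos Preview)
Your argument for the first assertion is correct and is exactly the intended one: transitivity of $\sqsubseteq$ reduces the claim to the definition. The paper states this observation without proof, so there is nothing further to compare here.

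For the second assertion your strategy is also the right one, and in fact the paper carries it out later: Lemma~\ref{lemma: edge resolution} constructs, for every $m>n\geq 3$, an $n$-resolution $G_F$ of the size-$m$ circuit $F$, which by definition satisfies $F\sqsubseteq^* G_F$ with $\arity(G_F)\leq n$ while $\arity(F)=m$. So the family $(H_k,G_k)$ you are looking for already exists in the paper, and the observation is really a forward reference to that lemma. The one point where your sketch diverges is the shape of the construction: the paper does \emph{not} use a tree of auxiliaries but a long cycle of $n$-edges threaded through the circuit many times, plus one extra edge tying three distinguished cycle points together. The verification that proper subsets of $F$ stay closed in $G_F$ (your ``closure-separation'' condition, which is exactly what forces $\sqsubseteq^*$) is the content of Claim~1(i) in that proof, and it genuinely requires the cycle to be long relative to $m$. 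Your instinct that a short or naive linear configuration fails is correct; the paper's fix is ``make the cycle long'' rather than ``make it tree-shaped'', and the tree idea as you describe it is not obviously salvageable without a similar blow-up in size. Pointing to Lemma~\ref{lemma: edge resolution} is the cleanest way to discharge the second assertion.
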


\begin{definition}
We say that a pregeometry $H$ is a \emph{circuit} if $H$ is not independent, but every proper subset of $H$ is independent.
\end{definition}

\begin{prop}
\label{proposition: arity equivalence}
For $G$ a flat pregeometry. The following are equivalent:
\begin{enumerate}
\item
$\arity(G)\leq n$
\item
If $H\sqsubseteq G$ is such that every $X\in [H]^{\leq n}$ is independent, then $H$ is independent
\item
If $H\sqsubseteq G$ is a circuit, then $|H|\leq n$
\item
Whenever $\hg{A}=(M,R)$ is a good representation of $G$, then $R\subseteq [M]^{\leq n}$
\item
There exists $\hg{A}=(M,R)$ a representation of $G$ with $R\subseteq [M]^{\leq n}$
\end{enumerate}
\end{prop}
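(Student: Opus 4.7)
My plan is to establish the equivalences via the cycle $(5)\Rightarrow(1)\Rightarrow(2)\Rightarrow(3)\Rightarrow(4)\Rightarrow(5)$, using the $\alpha$-function and the dictionary between hypergraph edges and $\sqsubseteq$-embedded circuits. I begin with the ``computational'' implications.

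For $(5)\Rightarrow(1)$, given $\hg{A}=(M,R)$ representing $G$ with $R\subseteq [M]^{\leq n}$, any finite closed $F\subseteq G$ with $\dm(F)\geq n$ is self-sufficient in $\hg{A}$ by Fact \ref{fact: self-sufficiency properties}, so $\hg{A}[F]$ represents the induced pregeometry on $F$. Every edge of $\hg{A}[F]$ has closure of dimension at most $n-1$, hence lies in the collection $\Sigma$ of proper closed subsets of $F$, and Lemma \ref{lemma: flatsum covering edges} applied with $X=Y=F$ yields $\flatsum{F}(\Sigma)=\dm(F)$; Corollary \ref{corollary: flatsum of all closed subsets} then forces $\alpha_G(F)=0$. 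Propagating through Lemma \ref{lemma: sqsubseteq means alpha only increases} and summing via Lemma \ref{lemma: true meaning of alternating sum} yields the equality in (1) for every finite $H\sqsubseteq G$. The implication $(4)\Rightarrow(5)$ is immediate from Theorem \ref{theorem: flat pregeometry comes from graph}.

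For $(1)\Rightarrow(2)$, if every $X\in [H]^{\leq n}$ is independent, each subset of $H$ of size at most $n-1$ is closed with dimension equal to its size, so the collection in (1) becomes $[H]^{<n}$; Lemma \ref{lemma: true meaning of alternating sum} translates $\flatsum{H}(\Sigma)=\dm(H)$ into the vanishing of $\alpha_H(X)$ for every closed $X\subflat H$ of dimension at least $n$, and then the recursive formula for $\alpha$ collapses to $\alpha_H(H)=|H|-\dm(H)=0$, so $H$ is independent. The implication $(2)\Rightarrow(3)$ is direct: a circuit $C\sqsubseteq G$ of size greater than $n$ has every proper subset---and in particular every $n$-subset---independent yet is itself dependent, contradicting (2) applied to $H=C$.

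The main obstacle is $(3)\Rightarrow(4)$. Contrapositively, suppose $\hg{A}^*$ is a good representation of $G$ with edge $r$ of size $k+1>n$, and set $F=\cl_G(r)$, so $F\sqsubseteq G$, $\dm(F)=k\geq n$, and $\alpha_G(F)\geq 1$. The goal is to produce a $(k+1)$-circuit $C\subseteq F$ with $C\sqsubseteq G$, contradicting (3). Given such $C$, the induced pregeometry on $C$ is a circuit with unique good representation $(C,\{C\})$, and Observation \ref{observation: edge sets in enumerative construction} allows swapping the $\alpha_G(F)$ edges of size $k+1$ on $F$ in $\hg{A}^*$ for a collection containing $C$, whereupon Corollary \ref{corollary: sqsubseteq is equivalent to self-sufficient} gives $C\sqsubseteq G$. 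The crux is thus to exhibit the $(k+1)$-circuit inside $F$: fixing a basis $B\subseteq F$ of size $k$, one seeks $a\in F\setminus\bigcup_{b\in B}\cl_G(B\setminus\{b\})$, so that $B\cup\{a\}$ has every $k$-subset a basis of $F$. The quantitative hypothesis $\alpha_G(F)\geq 1$---informally, $F$ carries extra points beyond those forced by its proper closed substructure---must be played against the flatness inequality for the $k$ hyperplanes $\cl_G(B\setminus\{b\})$, via Lemma \ref{lemma: true meaning of alternating sum}, to guarantee such a ``generic'' point of $F$ exists.
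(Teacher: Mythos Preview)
Your main obstacle, $(3)\Rightarrow(4)$, is not an obstacle at all: you have overlooked that in a \emph{good} representation the edge $r$ is already the circuit you are looking for. By definition of good, $\dm_{\hg{A}}(r)=|r|-1$; since $r\in R[r]$ we have $\delta_{\hg{A}}(r)\leq |r|-1$, and hence $\delta_{\hg{A}}(r)=\dm_{\hg{A}}(r)$, i.e.\ $r\strong\hg{A}$ and $R[r]=\{r\}$. Every proper subset $r'\subsetneq r$ therefore carries no edges, so $\delta_{\hg{A}}(r')=|r'|$ and (using $r\strong\hg{A}$) $\dm_{\hg{A}}(r')=|r'|$. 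Thus $r$ itself is a circuit with $r\sqsubseteq G$, and $(3)$ gives $|r|\leq n$ immediately. Your programme of locating a ``generic'' point of $F=\cl_G(r)$ away from all the hyperplanes $\cl_G(B\setminus\{b\})$ is unnecessary, and in any case you did not complete it: the flatness inequality for those $k$ hyperplanes does not by itself preclude their union covering $F$, so the step where $\alpha_G(F)\geq 1$ is ``played against'' flatness is a genuine gap.

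A smaller issue: your $(5)\Rightarrow(1)$ tries to go through $\alpha_G(F)=0$ for finite closed $F$ of large dimension, and then invokes Lemma~\ref{lemma: sqsubseteq means alpha only increases} to pass to $\alpha_H$. That lemma is stated for $H\sqsubseteq G$ with both finite, and $\cl_G(X)$ may well be infinite, so the ``propagation'' needs an intermediate finite $H'$ with $H\sqsubseteq H'\sqsubseteq G$ and $\cl_{H'}(X)$ finite. The paper avoids this by splitting into $(5)\Rightarrow(4)$ and $(4)\Rightarrow(1)$: for the latter one takes a good representation with $H\strong\hg{A}$ and applies Lemma~\ref{lemma: flatsum covering edges} directly to $\Sigma_G$, never touching $\alpha$; for the former one compares an arbitrary representation to a good one via the additional clause of Proposition~\ref{prop: flatness}. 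Your $(1)\Rightarrow(2)$ is correct but more laboured than needed: once $\Sigma=[H]^{<n}$ with each member independent, $\flatsum{H}(\Sigma)$ is literal inclusion--exclusion on cardinalities, so $\dm(H)=|\bigcup\Sigma|=|H|$ in one line.
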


\begin{proof}
$(1) \Rightarrow (2)$: First assume $H$ is finite. Then, letting $\Sigma$ be the collection of all closed sets in $H$ of dimension less then $n$, in fact $\Sigma = [H]^{<n}$. Thus, the alternating sum $\flatsum{H}(\Sigma)$ is a true inclusion-exclusion, which by $\arity(G)\leq n$ results in $\dm(H)=\flatsum{H}(\Sigma)=|\bigcup \Sigma| = |H|$. Since an infinite set is independent if and only if each of its finite subsets is independent, $(2)$ also holds for $H$ of an arbitrary size.

$(2) \Rightarrow (3)$: If $|H|>n$ and each of its proper subsets is independent, then in particular each $X\in[H]^{\leq n}$ is independent. By $(2)$, $H$ cannot be dependent, so $H$ is not a circuit.

$(3) \Rightarrow (4)$: Let $\hg{A}=(M,R)$ be a good representation of $G$. If $e\in R$, then $e$ is a circuit, and $\delta_{\hg{A}}(e) = |e|-1 = \dm_{\hg{A}}(e)$, so $e\strong \hg{A}$ and $e\sqsubseteq G$. Thus, $|e|\leq n$.

$(4) \Rightarrow (1)$: Let $H\sqsubseteq G$. By Corollary \ref{corollary: sqsubseteq equivalence}, let $\hg{A}=(M,R)$ be a good representation of $G$, such that $X\strong \hg{A}$, where $X$ is the underlying set of $H$. Let $\Sigma$ be as in Definition \ref{definition: arity}. Note $\flatsum{H}(\Sigma) = \flatsum{G}(\Sigma_G)$, because $H\sqsubseteq^* G$. Now apply Lemma \ref{lemma: flatsum covering edges} to $\Sigma_G$, to get $\flatsum{\hg{A}}(\Sigma_G) = \dm_{\hg{A}}(X) = \dm(H)$.

$(4) \Rightarrow (5)$: Immediate.

$(5) \Rightarrow (4)$: Let $\hg{A}=(M,R)$ be a representation of $G$ with $R\subseteq [M]^{\leq n}$ and let $\hg{A}'=(M,R')$ be some good representation of $G$. Let $e\in R'$ be arbitrary and let $X = \sscl_{\hg{A}}(e)$ and $\Sigma = \setarg{\cl_G(X_0)}{X_0\in [X]^{<n}}$. Since $X\strong \hg{A}$, we have $\delta_{\hg{A}}(\cl_G(X_0)) = \dm_G(X_0) = \delta_{\hg{A}}(\cl_G(X_0)\cap X)$ for every $X_0\subseteq X$. Then by (6) of Fact \ref{fact: self-sufficiency properties}, $\bigcup\Sigma\strong \hg{A}$ and $R[\bigcup\Sigma] = \bigcup_{E\in\Sigma} R[E]$. Therefore, by the additional part of Proposition \ref{prop: flatness}, $\flatsum{G}(\Sigma) = \dm_G(\bigcup \Sigma)$. Now with the equality known, applying the additional part of Proposition \ref{prop: flatness} to $\hg{A}'$ yields that $R'[\bigcup\Sigma] = \bigcup_{E\in\Sigma} R'[E]\subseteq [M]^{\leq n}$. In particular, $e\in E$ for some $E$ with $\dm_G(E)< n$. Since $\hg{A}'$ is a good representation, this means $|e| \leq n$.
%
%Recalling the characterization of $\alpha$ of Proposition \ref{prop: alpha counts edges} and applying Lemma \ref{lemma: true meaning of alternating sum} to $\Sigma$ of the above definition.
\end{proof}

Towards defining the $n$-ary generic pregeometry, we observe that taking an amalgam does not increase arity.

\begin{cor}
\label{corollary: amalgam for n-ary pregeometries}
For flat pregeometries $H, G_1, G_2$ such that $G_1\amalg_H G_2$ is well-defined, $\arity(G_1\amalg_H G_2) \leq \max\set{\arity(G_1), \arity(G_2)}$.
\end{cor}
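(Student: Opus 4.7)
The plan is to translate the arity bound into the existence of hypergraph representations with edges of bounded size, take an amalgam of such representations, and recognize that the amalgam has the same arity bound. Let $n = \max\set{\arity(G_1),\arity(G_2)}$, which we assume finite (else the inequality is vacuous).

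First, I would use Proposition \ref{proposition: arity equivalence} to translate ``$\arity(G_i)\leq n$'' into ``every good representation of $G_i$ has edges of size $\leq n$'' (equivalence $(1)\Leftrightarrow(4)$). Noting that $H\sqsubseteq G_1$ forces $\arity(H)\leq n$ by the observation following Definition \ref{definition: arity}, apply Corollary \ref{corollary: sqsubseteq equivalence} to pick good representations $\hg{B}_H\strong \hg{B}_1$ of $H\sqsubseteq G_1$; by $(4)$ of Proposition \ref{proposition: arity equivalence}, both of these good representations have all edges of size $\leq n$. Independently, pick a good representation $\hg{B}_2$ of $G_2$, again with edges of size $\leq n$ by the same proposition. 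Let $M_0$ be the underlying set of $H$, so $M_0\strong \hg{B}_1$ and $\hg{B}_1[M_0]$, $\hg{B}_2[M_0]$ both represent the pregeometry $H$.

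Next, invoke Corollary \ref{corollary: geometric amalgam is well-defined}, whose proof shows that the associated pregeometry of $\hg{B}_1\amalg \hg{B}_2$ depends only on $G_1$, $G_2$ (and the self-sufficiency $M_0\strong \hg{B}_1$), not on the specific hypergraph representations; hence $G:=G_1\amalg_H G_2$ is represented by $\hg{B}_1\amalg \hg{B}_2$. But by Definition \ref{definition: hypergraph amalg}, the edge set of $\hg{B}_1\amalg \hg{B}_2$ is contained in $R_1\cup R_2$, so every edge has size at most $n$. Applying the implication $(5)\Rightarrow (1)$ of Proposition \ref{proposition: arity equivalence} to this representation of $G$ yields $\arity(G)\leq n$.

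The only mild subtlety is that both sides of the amalgam must be chosen simultaneously with bounded-arity representations \emph{and} with $\hg{B}_1$ arranged so that $M_0\strong \hg{B}_1$; this is handled by the fact that goodness and bounded arity come together for free (via the equivalence $(1)\Leftrightarrow (4)$ of Proposition \ref{proposition: arity equivalence}), and that self-sufficiency of $M_0$ in some good representation of $G_1$ is exactly what Corollary \ref{corollary: sqsubseteq equivalence} delivers from $H\sqsubseteq G_1$. No additional computation is required.
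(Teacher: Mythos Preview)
Your proof is correct and follows the same route as the paper's one-line argument: choose good representations of $G_1$ and $G_2$ (with $M_0\strong \hg{B}_1$), observe via item~(4) of Proposition~\ref{proposition: arity equivalence} that their edges have size at most $n$, note that the edge set of $\hg{B}_1\amalg\hg{B}_2$ is contained in $R_1\cup R_2$, and conclude via item~(5). Your write-up is simply a careful unpacking of the paper's terse ``Use (4) of Proposition~\ref{proposition: arity equivalence} in constructing the hypergraph $\hg{B}_1\amalg\hg{B}_2$''; the only extra care you take---invoking Corollary~\ref{corollary: sqsubseteq equivalence} to ensure the good representation of $G_1$ can be chosen with $M_0$ self-sufficient---is already implicit in Definition~\ref{definition: geometric amalgam} being well-posed.
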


\begin{proof}
Use $(4)$ of Proposition \ref{proposition: arity equivalence} in constructing the hypergraph $\hg{B}_1\amalg \hg{B}_2$ of which $G_1\amalg_H G_2$ is the associated pregeometry.
\end{proof}

\begin{definition}
Define $\mathfrak{C}_n$ to be the class of finite flat pregeometries of arity at most $n$. Equivalently, by (5) of Proposition \ref{proposition: arity equivalence}, $\mathfrak{C}_n = \setarg{G_{\hg{A}}}{\hg{A}\in \mathcal{C}_n}$.
\end{definition}

Corollary \ref{corollary: amalgam for n-ary pregeometries} gives that $\mathfrak{C}_n$ is an amalgamation class. Theorem \ref{theorem: Fraisse's theorem - restricted} thus guarantees a unique countable generic structure for $\mathfrak{C}_n$, which we denote $\pg{G}_n$ and call the \emph{generic flat $n$-ary pregeometry}.

\begin{prop}
$G_{\mathcal{M}_n}\cong \pg{G}_n$
\end{prop}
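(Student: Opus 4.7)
The plan is to mirror the proof of Proposition \ref{proposition: G is isomorphic to G_M} and verify at every step that the bounded-arity constraints are respected. Concretely, I would apply Theorem \ref{theorem: Fraisse's theorem - restricted} to show that $G_{\mathcal{M}_n}$ is the unique generic structure for $(\mathfrak{C}_n,\sqsubseteq)$, by checking (i) that $\setcol{H\sqsubseteq G_{\mathcal{M}_n}}{|H|<\infty}=\mathfrak{C}_n$ and (ii) the extension property $(*)$.

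First I would observe that $\arity(G_{\mathcal{M}_n})\leq n$: the hypergraph $\mathcal{M}_n\in\mathcal{C}_n$ is itself a representation of $G_{\mathcal{M}_n}$ with all edges of size at most $n$, so condition (5) of Proposition \ref{proposition: arity equivalence} applies, giving (1). For (i), if $H\sqsubseteq G_{\mathcal{M}_n}$ is finite, then $H$ is flat (corollary after Observation \ref{observation: sqsubseteq implies sqsubseteq^*}) and inherits arity $\leq n$ from $G_{\mathcal{M}_n}$ by the observation following Proposition \ref{proposition: arity equivalence}, so $H\in\mathfrak{C}_n$. Conversely, for $H\in\mathfrak{C}_n$, use (4) of Proposition \ref{proposition: arity equivalence} to obtain a good representation $\hg{A}$ with $R\subseteq [M]^{\leq n}$; then $\hg{A}\in\mathcal{C}_n$ (since flatness gives $\emptyset\strong\hg{A}$), so by genericity of $\mathcal{M}_n$ we may assume $\hg{A}\strong\mathcal{M}_n$, and Corollary \ref{corollary: sqsubseteq is equivalent to self-sufficient} gives $H=G_{\hg{A}}\sqsubseteq G_{\mathcal{M}_n}$.

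For (ii), I would follow the proof of Proposition \ref{proposition: G is isomorphic to G_M} essentially verbatim. Given $F\sqsubseteq G_{\mathcal{M}_n}$ finite and $F\sqsubseteq H\in\mathfrak{C}_n$, set $B=\sscl_{\mathcal{M}_n}(X_F)$ and $K=G_{\mathcal{M}_n[B]}$; then $K\sqsubseteq G_{\mathcal{M}_n}$ by Proposition \ref{prop: self-sufficiency implies sqsubseteq}, and $F\sqsubseteq K$ by Lemma \ref{lemma: sqsubseteq and sqsubseteq^* interaction}. After renaming so that $H\cap K=F$, form the amalgam $L=K\amalg_F H$, which satisfies $K\sqsubseteq L$ by Lemma \ref{lemma: amalg properties}. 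The key extra check — the only new ingredient beyond the unbounded case — is supplied by Corollary \ref{corollary: amalgam for n-ary pregeometries}: $\arity(L)\leq \max\{\arity(K),\arity(H)\}\leq n$. Then Corollary \ref{corollary: replace self-sufficient subgraph} provides a good representation $\hg{D}$ of $L$ with $\mathcal{M}_n[B]\strong\hg{D}$, and Proposition \ref{proposition: arity equivalence}(4) forces $\hg{D}\in\mathcal{C}_n$. Genericity of $\mathcal{M}_n$ lets us strongly embed $\hg{D}$ into $\mathcal{M}_n$ over $\mathcal{M}_n[B]$, yielding $L\sqsubseteq G_{\mathcal{M}_n}$ and hence a strong embedding of $H$ over $F$.

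There is essentially no real obstacle — the only subtlety is ensuring that amalgamation preserves arity at most $n$, which is precisely what Corollary \ref{corollary: amalgam for n-ary pregeometries} delivers. With (i) and (ii) established, Theorem \ref{theorem: Fraisse's theorem - restricted} gives $G_{\mathcal{M}_n}\cong\pg{G}_n$.
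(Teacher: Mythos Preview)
Your proposal is correct and follows essentially the same approach as the paper: the paper's proof checks $\setcol{H\sqsubseteq G_{\mathcal{M}_n}}{|H|<\infty}=\mathfrak{C}_n$ via Proposition \ref{proposition: arity equivalence}(5) and then defers to the proof of Proposition \ref{proposition: G is isomorphic to G_M} verbatim, ``keeping Corollary \ref{corollary: amalgam for n-ary pregeometries} in mind'' --- exactly the extra check you isolate. One small nit: Corollary \ref{corollary: replace self-sufficient subgraph} as stated does not literally promise a \emph{good} representation, so to conclude $\hg{D}\in\mathcal{C}_n$ you should instead observe that in its proof $\hg{D}=\hg{A}'\amalg\mathcal{M}_n[B]$ with $\hg{A}'$ chosen good (hence $n$-ary by (4)) and $\mathcal{M}_n[B]$ already $n$-ary; the paper's proof of Proposition \ref{proposition: G is isomorphic to G_M} contains the same harmless imprecision.
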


\begin{proof}
By $(5)$ of Proposition \ref{proposition: arity equivalence}, $\arity(G_{\mathcal{M}_n}) \leq n$, so whenever $H\sqsubseteq G_{\mathcal{M}_n}$ is finite, $\arity(H)\leq n$, i.e., $H\in\mathfrak{C}_n$. Now, letting $H\in \mathfrak{C}_n$ be arbitrary, there is some $\hg{A}\in \mathcal{C}_n$ with $H=G_{\hg{A}}$, so by genericity of $\mathcal{M}_n$ we may assume $\hg{A}\strong \mathcal{M}_n$, hence $H=G_{\hg{A}}\sqsubseteq G_{\mathcal{M}_n}$. From both inclusions conclude that ${\setcol{H\sqsubseteq G_{\mathcal{M}_n}}{|H|<\infty} = \mathfrak{C}_n}$.

The rest is exactly the same as in the proof of $\pg{G}\cong G_{\mathcal{M}}$ (Proposition \ref{proposition: G is isomorphic to G_M}), keeping Corollary \ref{corollary: amalgam for n-ary pregeometries} in mind.
\end{proof}

In order to show $\pg{G}_n\models T$, we will go through some technical lemmas showing that for every $k$ and $n\geq 3$, every flat pregeometry $G$ of arbitrary arity is $\sqsubseteq^k$-embedded in some pregeometry $H$ of arity at most $n$.

\begin{definition}
Let $F$ be a flat pregeometry. %with $\arity(F)>n$.
We say that $H\in\mathfrak{C}_n$ is an \emph{$n$-resolution} of $F$ if $F\sqsubseteq^* H$ and there is no $F\subseteq F'\sqsubseteq H$ distinct from $H$. %Say that $H$ is an \emph{absolute} $n$-resolution of $F$ if it has a unique good representation.
\end{definition}

\begin{remark}
If $\arity(F)\leq n$, then $F$ is an $n$-resolution of itself.
\end{remark}

In order to find resolutions of arbitrary pregeometries, we will use resolutions of circuits. The next lemma provides an explicit construction of such resolutions.

\begin{lemma}
\label{lemma: edge resolution}
For $m>n\geq 3$, let $F$ be the unique circuit of size $m$. Then there exists $G_F$, an $n$-resolution of $F$. Moreover, for every natural $p$, $G_F$ can be chosen such that $F\sqsubseteq^p G_F$.
\end{lemma}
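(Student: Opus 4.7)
The plan is to construct $G_F$ explicitly as the pregeometry associated to a hypergraph $\hg{A}$ with vertex set $F \cup X$ and all edges of size at most $n$, so that $\arity(G_F) \leq n$ by Proposition \ref{proposition: arity equivalence}, and with $|V(\hg{A})| - |E(\hg{A})| = m - 1$ so that $G_F$ restricts to the $m$-circuit on $F$. The natural skeleton is a cyclic chain of $n$-edges traversing $F$, with consecutive edges sharing a single auxiliary vertex and the loop closing back through an element of $F$; for the ``moreover'' clause the chain is lengthened (by looping through $F$ multiple times if needed) so that $|X| > p$.

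Once the construction is in place, I would first verify by direct $\dm$-computation that $G_F$ restricted to $F$ is indeed the $m$-circuit. The technical heart is then $F \sqsubseteq^* G_F$: by Lemma \ref{lemma: in sqsubseteq^* the intersection and closure commute} it is enough to show that $\cl_{G_F}(X_1) \cap \cl_{G_F}(X_2) = \cl_{G_F}(X_1 \cap X_2)$ for every pair of closed $X_1, X_2 \subseteq F$. I would do this by tracing, for each auxiliary vertex $x \in X$, the collection of closed $F$-subsets $Z$ with $x \in \cl_{G_F}(Z)$ and showing it has a unique $\subseteq$-minimum (the ``$F$-support'' of $x$), itself closed in $F$; the cyclic chain has to be designed so that the propagation of closures reaches each $x$ in exactly one way compatible with the poset of closed subsets of $F$.

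The resolution property and the $\sqsubseteq^p$ claim would follow from the $\prerank$ characterization of $\sqsubseteq$ in Corollary \ref{corollary: sqsubseteq equivalence}. Because $\arity(G_F) \leq n$, Proposition \ref{prop: alpha counts edges} applied to a good representation of $G_F$ (whose edges all have size $\leq n$) gives $\alpha_{G_F}(Y) = 0$ whenever $\dm(Y) \geq n$, so $\prerank(G_F)$ contains no $\omega^{m-1}$-term; whereas $\prerank(F) = (m-1)\omega^\omega + \omega^{m-1}$. Hence $\prerank(G_F) < \prerank(F)$ in the reverse-lex order on $\OrdMod$, so $F \not\sqsubseteq G_F$. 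For any $F \subseteq Y \sqsubseteq^* G_F$ omitting at least one chain-auxiliary vertex, the chain is broken in $Y$ and $\cl_Y(F)$ is a closed set of dimension $m-1$ with $\alpha_Y(\cl_Y(F)) \geq 1$, contributing $\omega^{m-1}$ to $\prerank(Y)$; the same applies to every intermediate $F \subseteq Y' \sqsubseteq^* Y$, so $F \strong_r Y$ and hence $F \sqsubseteq Y$. Thus the only $Y \supseteq F$ in $G_F$ that is $\sqsubseteq G_F$ is $Y = G_F$ itself, and $F \sqsubseteq^p G_F$ holds as soon as $|X| > p$.

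The main obstacle will be the $\sqsubseteq^*$ verification: the cyclic chain must be engineered so that each auxiliary vertex has a unique closed $F$-support, which is subtle when $m \geq n + 2$ because many subsets of $F$ are then closed and a naive chain allows the same auxiliary to be reached from two incomparable closed subsets (as already happens when one tries the shortest chain for $m = 5, n = 3$); the cyclic, possibly multiply-wound, layout is introduced precisely to collapse all such ``rival'' supports into a single closed $F$-subset.
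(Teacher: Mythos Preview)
Your plan has the right overall shape---build $G_F$ as the pregeometry of a hypergraph consisting of a long chain of $n$-edges winding through $F$ via auxiliary vertices---and this is what the paper does. But the paper's construction differs in a way that removes precisely the obstacle you flag as the main one. Rather than a path of edges, the paper arranges the auxiliaries $a_1,\dots,a_k$ in a \emph{cycle} (so $a_i$ and $a_{i+1}$ share an edge, indices modulo $k$) and adds one extra edge $\{b_1,b_2,b_3,f_1,\dots,f_{n-3}\}$ hitting three equally spaced auxiliaries $b_1=a_1$, $b_2=a_{k/3}$, $b_3=a_{2k/3}$. The payoff is Claim~1(i) of the paper's proof: whenever $M\nsubseteq X$ and $X\nsubseteq M$, one has $\delta_{\hg{A}}(X)>|X\cap M|$. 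This says that every proper closed $F_0\subsetneq F$ is already closed in $G_F$---no auxiliary lies in $\cl_{G_F}(F_0)$---so $F\sqsubseteq^* G_F$ is immediate and your $F$-support machinery is not needed at all.

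Your path construction cannot have this property: the endpoint edge $e_1$ contains a single auxiliary $a_1$ together with $n-1$ elements of $F$, so $a_1\in\cl_{G_F}(e_1\cap F)$ for the proper closed set $e_1\cap F\subsetneq F$. You are then genuinely stuck verifying $\sqsubseteq^*$ via supports, and the assertion that multiple windings ``collapse rival supports into a single closed $F$-subset'' is neither proved nor obvious: an auxiliary $a_j$ can lie in $\cl_{G_F}(F_0)$ via the arc $e_1,\dots,e_j$ and also via the arc $e_{j+1},\dots,e_k$, and the two resulting $F$-supports are typically incomparable.

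Your $\prerank$-based route to the resolution property and to $F\sqsubseteq^p G_F$ is a genuinely different idea from the paper's, and an attractive one, but the key step needs more than you give it. You need $\alpha_{Y}(Y)\geq 1$ for every proper $F\subseteq Y\sqsubseteq^* G_F$; equivalently, every good representation of such a $Y$ must contain an edge of size $m$. ``The chain is broken'' does not yield this directly: you must rule out that some proper fragment of your chain already resolves the $m$-circuit to arity below $m$, and that in turn requires knowing which $Y$ satisfy $Y\sqsubseteq^* G_F$---which is exactly what you have not established. The paper sidesteps this by instead bounding, uniformly in $m$, the cardinality of every closed set in $G_F$ of dimension $<m-1$ (Claim~2), so that for $k$ large enough no family of $2^{|F|+p}$ such sets can cover $G_F$; a putative witness $\Sigma$ to $F\not\sqsubseteq Z$ then forces, via the equality case of Proposition~\ref{prop: flatness}, that $\bigcup\Sigma_{G_F}=N$, a contradiction.
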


\begin{proof}
Denote the underlying set of $F$ by $M$. Enumerate $M=\set{f_1,\dots, f_m}$ and for every natural non-negative number of the form $l=r\cdot m + s$, identify $f_l:= f_s$. Fix some $k> 3(m+1)$ divisible by $3$ such that $f_k\neq f_1$. Let $a_1,\dots, a_k$ be new elements. Denote $b_1 = a_1$, $b_2 = a_{\frac{k}{3}}$, $b_3 = a_{\frac{2k}{3}}$. Define
\begin{align*}
N = &M \cup \setcol{a_i}{1\leq i \leq k}
\\
R = &\setcol{\set{a_i, f_i,\dots, f_{i+(n-3)}, a_{i+1}}}{1\leq i \leq k}
\\
&\cup \set{\set{a_k,f_k,\dots, f_{k+(n-3)},a_1}, \set{b_1, b_2, b_3, f_1,\dots, f_{n-3}}}
\end{align*}
and $\hg{A} = (M, R)$. We show that $G_F:= G_{\hg{A}}\in\mathfrak{C}_n$ is as desired.

\medskip
\noindent\textbf{Claim 1.} For every $X\subseteq N$,
\begin{enumerate}[(i)]
\item
If $M\nsubseteq X$ and $X\nsubseteq M$, then $\delta_{\hg{A}}(X) > |X\cap M|$.
\item
If $M\subset X \subset N$, then $\delta_{\hg{A}}(X) \geq m$.
\item
If $|X|>n$, then $\delta_{\hg{A}}(X) \geq n$.
\end{enumerate}
\begin{proof}[Proof of claim.]
Note that (i), (ii) are true when $|X\setminus M| = 1$, because $R[X]$ is empty. Let $X$ be minimal contradicting either (i) or (ii), in particular $|X\setminus M| > 1$. Then for each $a_i\in X$, we may assume $a_i$ appears in at least two edges in $R[X]$, or else $X\setminus\set{a_i}$ also contradicts either (i) or (ii). Thus, by construction:
\begin{enumerate}
\item
If $1< i<\frac{k}{3}$ and $a_i\in X$, then $a_j\in X$ for each $1\leq j\leq \frac{k}{3}$.
\item
If $\frac{k}{3}<i <\frac{2k}{3}$ and $a_i\in X$, then $a_j\in X$ for each $\frac{k}{3}\leq j \leq \frac{2k}{3}$.
\item
If $\frac{2k}{3}<i \leq k$ and $a_i\in X$, then $a_1\in X$ and $a_j\in X$ for each $\frac{2k}{3}\leq j \leq k$.
\end{enumerate}

Assume for a moment $X$ fails (i). As $\frac{k}{3} > m$, the conclusion of $(1)$ would imply $M\subseteq X$, and similarly for the conclusions of $(2)$ and $(3)$. Then $X\setminus M\subseteq\set{b_1,b_2,b_3}$. However, if $b_1\in X$, then either $a_2$ or $a_k$ is an element of $x$, which cannot be, hence $b_1\notin X$ and similarly $b_2, b_3\notin X$. So $X\subseteq M$, in contradiction. This proves (i).

%If $a_{\frac{k}{3}}\in X$, then we must have $a_{\frac{k}{3} -1}$ or $a_{\frac{k}{3} + 1}\in X$, which cannot be, hence $X\setminus M \subseteq \set{a_1, a_{k-1}, a_k}$. Now, if $\set{a_1,a_{k-1}}\cap X\neq \emptyset$, then $\set{a_2,a_{k-2}}\cap X\neq \emptyset$, so again it must be that $a_1,a_{k-1}\notin X$. We are left with $X\setminus M = \set{a_k}$, which we know is not possible. This proves (i).

Now assume $X$ fails (ii). If $\set{b_1, b_2, b_3, f_1,\dots, f_{n-3}}\notin R[X]$, then mapping each $a_i\in X\setminus M$ to $\set{a_i, f_i, \dots, f_{i+(k-2)}, a_{i+1}}$ is a surjection onto a superset of $R[X]$, hence $|X\setminus M| \geq |R[X]|$ and $\delta_{\hg{A}}(X)\geq \delta_{\hg{A}}(M) = m$. So it must be that $b_1, b_2, b_3\in X$. Denote $Y_1 = \set{a_1,\dots, a_{\frac{k}{3}}}$, $Y_2 = \set{a_{\frac{k}{3}},\dots,  a_{\frac{2k}{3}}}$, $Y_3 = \set{a_{\frac{2k}{3}},\dots, a_k, a_1}$. Since each $a_i$ appears in at least two edges in $X$, by (1)-(3) above, at least two of $Y_1, Y_2, Y_3$ are subsets of $X$. Since $X\neq N$, at most two of $Y_1, Y_2, Y_3$ are subsets of $X$, hence $X= Y_i\cup Y_j$ for distinct $i,j\in\set{1,2,3}$. But $\delta_{\hg{A}}(Y_i\cup Y_j) = m$, in contradiction to our choice of $X$. This finishes (ii).

Assume (iii) does not hold for some $X$. If $M\subseteq X$, then $\delta_{\hg{A}}(X) \geq \dm_{\hg{A}}(M)$, and by part (ii) we observe $\dm_{\hg{A}}(M) = \delta_{\hg{A}}(N) =m-1 \geq n$. If $X\subseteq M$, then clearly $\delta_{\hg{A}}(X) = |X|$. So by part (i) it must be that $|X\cap M|< \delta_{\hg{A}}(X) \leq n-1$. By assumption, it must be that $R[X]$ is not empty, so $|X\cap M|= n-2$. If $a_i\in X$ is such that $i\neq 1, \frac{k}{3},\frac{2k}{3}$, then $a_i$ appears in at most one edge in $R[X]$, i.e., $\delta_{\hg{A}}(X\setminus \set{a_i}) \leq \delta_{\hg{A}}(X)$. Then for any $X'\subseteq X$ such that $X'\cap (M\cup \set{b_1,b_2,b_3}) = X\cap (M\cup \set{b_1,b_2,b_3})$, we have $\delta_{\hg{A}}(X') \leq \delta_{\hg{A}}(X)<n$. In particular, we my assume $|X|=n+1$. Since there are no two edges $e_1,e_2\in R[X]$ such that $|e_1\cap e_2| \geq n-1$, clearly $|R[X]|\leq 1$ and $\delta_{\hg{A}}(X)\geq n$, in contradiction.
\end{proof}

Part (i) of Claim 1 immediately yields that for every $M_0\subseteq M$, either $M_0$ is closed in $\hg{A}$, in which case $\dm_{\hg{A}}(M_0) = |M_0|$, or $M\subseteq\cl_{\hg{A}}(M_0)$. Consequently, $F\sqsubseteq^* G_{F}$.

Part (iii) of Claim 1 shows that the non-trivial $(n-1)$-dimensional closed sets in $\hg{A}$ are precisely the edges. Thus, $\hg{A}$ is the unique good representation of $G_{\hg{A}}$. By Corollary \ref{corollary: sqsubseteq equivalence}, this means there exists $F\subseteq F'\sqsubseteq G_{\hg{A}}$ distinct from $G_{\hg{A}}$ if and only if there exists some $M\subseteq M'\strong \hg{A}$ distinct from $N$. But $\delta(\hg{A}) = m-1$, so (ii) guarantees no such $M'$ exists. Then $G_F$ is an $n$-resolution of $F$.

We address the additional part of the statement.

\medskip
\noindent\textbf{Claim 2.} If $X\subseteq N$ with $\dm_{\hg{A}}(X) < m-1$, then $|X|\leq (m+2)m$.
\begin{proof}[Proof of claim]
When writing $a_i$ for some $i>k$, we mean $a_{i-k}$. Without loss of generality, by increasing $X$ assume $\delta_{\hg{A}}(X) = \dm_{\hg{A}}(X)$.

Since $|M|=m$, we may assume $X\nsubseteq M$. Let $C=\set{a_i,\dots, a_{i+l}}$ be maximal such that for each $j< l$, the elements $a_{i+j}$ and $a_{i+j+1}$ appear together in an edge in $R[X]$. If $l\geq m$, then each element of $M$ appears in some edge in $R[X]$, so $M\subseteq X$ in contradiction to $\dm(X)< m-1$. Therefore $|C|<m$. 

Assume $\set{b_1, b_2, b_3, f_1,\dots, f_{n-3}}\notin R[X]$. Then ${\delta_{\hg{A}}(C/(X\setminus C)) = 1}$. Repeating this process for $X\setminus C$ in the role of $X$, stripping away sets of the form of $C$ until we are only left with elements of $M$, we have removed at most $\delta_{\hg{A}}(X)<m-1$ many sets of size at most $m$ and are left with at most $m$ elements, so $|X|\leq (m-2)m+m$.

Now, if $\set{b_1, b_2, b_3, f_1,\dots, f_{n-3}}\in R[X]$, for $i\in\set{1,2,3}$ let $C_i$ containing $b_i$ be of the form of $C$, and denote $D = C_1\cup C_2\cup C_3$. Then $\delta_{\hg{A}}(D/(X\setminus D)) = 2$, and as we've seen $|X\setminus D|\leq (m-2)m+m$. Then $|X|\leq 3m + (m-2)m + m = (m+2)m$.
\end{proof}

Fix some natural $p$. We show that if $k$ is large enough, then $F\sqsubseteq^p G_F$. Choose $k$ to be large enough so that, by Claim 2, $G_F$ is not a union of $2^{(|F|+p)}$ closed sets of dimension less than $m-1$.

Assume for a contradiction that $F\not\sqsubseteq Z\sqsubseteq^* G_F$, $|Z\setminus F|\leq p$ and let $\Sigma$ be a collection of closed sets in $Z$ witnessing this. Note that $|\Sigma| \leq 2^{|Z|}$ and every $E\in \Sigma$ has $\dm_{\hg{A}}(E)<m-1$, or otherwise $E=Z$. Recall that $M$ is the underlying set of $F$ and that every proper subset of $F$ is independent. Then
\[
|M\cap \bigcup\Sigma|=\flatsum{F}(\Sigma_F) >\flatsum{Z}(\Sigma) \geq \dm_{\hg{A}}(\bigcup \Sigma) \geq \dm_{\hg{A}}(M\cap \bigcup \Sigma).
\]
Since $M\cap \bigcup \Sigma$ is not independent, it must be that $M\subseteq\bigcup \Sigma$ and $\flatsum{Z}(\Sigma) = m-1$. By $Z\sqsubseteq^* G_F$, we have that $\flatsum{G_F}(\Sigma_{G_F}) = \flatsum{Z}(\Sigma) = m-1 = \dm(\bigcup \Sigma_{G_F})$. By the additional part of Proposition \ref{prop: flatness}, we get that $\bigcup \Sigma_{G_F} \strong \hg{A}$. Since $G_F$ is an $n$-resolution of $F$, this means $\bigcup \Sigma_{G_F} = N$. By choice of $k$, this cannot be. Conclude $F\sqsubseteq^p G_F$.
\end{proof}

\begin{digression*}
Resolutions motivate Morley rank in $\pg{G}$ and its similarity to $\prerank$. We can see inductively that an $n$-sized circuit has Morley rank $\geq\omega^{n-4}$ over any $n-1$ of its elements.

For $n>3$, an $n$-sized circuit can resolve in infinitely many mutually exclusive ways into a finite configuration of $n-1$-sized circuits. For a configuration given by $k$ $n$-sized circuits, each one can resolve independently of the others, which by the induction hypothesis gives the configuration Morley rank at least $k\cdot \omega^{n-5}$. Since $k$ is unbounded, the Morley rank of the $n$-sized circuit limits to at least $\omega^{n-4}$. As a base for the induction, it is enough to note that a $4$-sized circuit has infinitely many distinct $3$-resolutions, i.e., Morley rank at least $1 = \omega^0$.

This means that the Morley rank of $T$ is at least $\omega^\omega$, since the type of a point $b$ independent from (a basis of) $\pg{G}$ is, for example, the limit of increasingly large circuits involving $b$ and elements (of a basis of) $\pg{G}$.

Similarly, one can show inductively that the Morley rank of any one specific $n-1$-resolution of an $n$-sized circuit over $n-1$ of its elements is bounded from above by $\omega^{n-3}$.

Thus, the Morley rank of $T$ is precisely $\omega^\omega$, the rank of a point $b$ independent from a basis of $\pg{G}$ --- the unique generic type over $\pg{G}$. This also gives us the novel result that the type of an element $b$ does not fork over a set $B$ if and only if $b$ is independent from $B$, i.e, $\pg{G}$ is isomorphic to its own forking geometry!
\end{digression*}

In an $n$-resolution of $F$ with some $F_0\sqsubseteq F$, if we want to preserve strong embeddedness of $F_0$, we cannot require $H\in\mathfrak{C}_n$, because $F_0$ may have strongly embedded circuits of size greater than $n$. This issue arises when wishing to replace $F$ in the amalgam $F\amalg_{F_0} H$ with $F'$, an $n$-resolution of $F$, for the sake of lower arity --- if $F_0$ is no longer strongly embedded in $F'$, the amalgam $F'\amalg_{F_0} H$ does not necessarily exist. There will be no harm in leaving $F_0$ ``unresolved'' in $F'$, because what seems like a circuit in $F$, may in fact be a part of a low-arity configuration in $H$.

To proceed, we generalize the notion of arity and resolution to make sense \emph{over} some strongly embedded subpregeometry.

\begin{definition}
Say that a flat pregeometry $G$ is of arity at most $n$ \emph{over $F\sqsubseteq G$} if there exist good representations $(M_0, R_0)\strong (M, R)$ of $F\sqsubseteq G$ with $R\setminus R_0 \subseteq [M]^{\leq n}$. Write this statement as $\arity(G/F)\leq n$ and let $\arity(G/F)$ be the least $n$ such that $\arity(G/F)\leq n$.
%whenever $H\sqsubseteq G$ is finite, then $\flatsum{H}(\Sigma) = \dm(H)$, where
%\[
%\Sigma=\setarg{E\subflat H}{\dm(E)< n}\cup \setarg{E\subflat H}{\dm(E) =\dm(E\cap F)< \dm(H\cap F)}.
%\]
%
\end{definition}

\begin{observation}
\label{observation: arity of one-sided amalgam with resolution over base}
In the vein of Corollary \ref{corollary: amalgam for n-ary pregeometries}, for flat pregeometries $H, G_1, G_2$ such that $G_1\amalg_H G_2$ is well-defined, $\arity(G_1\amalg_H G_2) \leq \max\set{\arity(G_1/H), \arity(G_2)}$.
\end{observation}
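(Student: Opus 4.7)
The plan is to mirror the proof of Corollary \ref{corollary: amalgam for n-ary pregeometries}: exhibit a hypergraph representation of $G_1\amalg_H G_2$ all of whose edges have size at most $n := \max\set{\arity(G_1/H), \arity(G_2)}$, then invoke Proposition \ref{proposition: arity equivalence}(5). We may assume $n<\omega$. The crucial observation is that in the hypergraph amalgam $\hg{B}_1\amalg \hg{B}_2$ of Definition \ref{definition: hypergraph amalg}, the edges of $\hg{B}_1$ lying in the overlap $M_0 := M_1\cap M_2$ are discarded and replaced by the edges of $\hg{B}_2$, so only the edges of $\hg{B}_1$ \emph{outside} $\hg{B}_0$ need to be small --- precisely what $\arity(G_1/H)\leq n$ supplies.

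Using $\arity(G_1/H)\leq n$, I would fix good representations $\hg{B}_0 = (M_0, R_0)\strong \hg{B}_1 = (M_1, R_1)$ of $H\sqsubseteq G_1$ with $R_1\setminus R_0 \subseteq [M_1]^{\leq n}$. Since $H\sqsubseteq G_2$ and $\hg{B}_0$ is a good representation of $H$, Corollary \ref{corollary: replace self-sufficient subgraph} yields a good representation $\hg{B}_2 = (M_2, R_2)$ of $G_2$ with $\hg{B}_0\strong \hg{B}_2$; goodness survives the $\hg{A}'\amalg \hg{B}_0$ construction inside that corollary's proof because every surviving edge keeps its dimension. Renaming elements of $M_2\setminus M_0$ arranges $M_1\cap M_2 = M_0$. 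Since $\arity(G_2)\leq n$, Proposition \ref{proposition: arity equivalence}(4) forces $R_2\subseteq [M_2]^{\leq n}$.

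Now form $\hg{D} := \hg{B}_1 \amalg \hg{B}_2$. Its edge set is $R_2 \cup (R_1\setminus R_1[M_0]) = R_2 \cup (R_1\setminus R_0) \subseteq [M_1\cup M_2]^{\leq n}$. Since $\hg{B}_1[M_0] = \hg{B}_0 = \hg{B}_2[M_0]$ both represent $H$, Corollary \ref{corollary: geometric amalgam is well-defined} identifies $G_{\hg{D}}$ with $G_1\amalg_H G_2$, and Proposition \ref{proposition: arity equivalence}(5) delivers $\arity(G_1\amalg_H G_2)\leq n$. I do not anticipate a genuine obstacle here; all the heavy lifting has already been absorbed into the hypergraph characterization of arity (Proposition \ref{proposition: arity equivalence}) and into the robustness of the geometric amalgam under change of representation, so the remaining work is simply bookkeeping.
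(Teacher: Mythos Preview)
Your approach is exactly the paper's: exhibit a hypergraph representation of the amalgam whose edge set lies in $[M_1\cup M_2]^{\leq n}$ and invoke Proposition~\ref{proposition: arity equivalence}(5). The bookkeeping with $R_1\setminus R_1[M_0] = R_1\setminus R_0$ is correct.

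There is one small slip. You write ``Since $H\sqsubseteq G_2$\ldots'' and then call Corollary~\ref{corollary: replace self-sufficient subgraph}, but the hypothesis ``$G_1\amalg_H G_2$ well-defined'' only gives $H\subseteq G_2$ (see Definition~\ref{definition: geometric amalgam}); $H\sqsubseteq G_2$ is not assumed, and in the paper's intended application (the proof that $\pg{G}_n\models T_\tau$) it genuinely need not hold. Fortunately this detour is unnecessary: take $\hg{B}_2$ to be \emph{any} good representation of $G_2$ (Theorem~\ref{theorem: flat pregeometry comes from graph}), with no relation to $\hg{B}_0$ required. Corollary~\ref{corollary: geometric amalgam is well-defined} only needs that $G_{\hg{B}_1}$ and $G_{\hg{B}_2}$ restrict to the same pregeometry $H$ on $M_0$, which holds automatically, and Proposition~\ref{proposition: arity equivalence}(4) then gives $R_2\subseteq[M_2]^{\leq n}$. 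With that one change the argument is complete.
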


\begin{definition}
For $F_0\sqsubseteq F$, say that $H\in\mathfrak{C}$ is an $n$-resolution of $F$ \emph{over} $F_0$ if $\arity(H/F_0) \leq n$, $F\sqsubseteq^* H$, there is no $F\subseteq F'\sqsubseteq H$ distinct from $H$, and $F_0\sqsubseteq H$.
%additionally there exist good representations $(M_0, R_0)\strong (M, R)$ of $F_0, F$, respectively, such that $R\setminus R_0 \subseteq [M]^{\leq n}$.
\end{definition}

\begin{observation}
For a flat pregeometry $F$, $\arity(F) = \arity(F/\emptyset)$ and $H$ is an $n$-resolution of $F$ if and only if $H$ is an $n$-resolution of $F$ over $\emptyset \sqsubseteq F$.
\end{observation}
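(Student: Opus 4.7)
Both equalities in the observation should follow by simply unfolding the definitions, once one notes that the empty pregeometry admits only the trivial (empty-edge) good representation.

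For the first claim, I would argue as follows. The empty pregeometry $\emptyset$ has the unique good hypergraph representation $(\emptyset,\emptyset)$, and this trivially strong-embeds into any hypergraph $\hg{A}=(M,R)$. Hence, a witness to $\arity(F/\emptyset)\leq n$ is a pair $(\emptyset,\emptyset)\strong (M,R)$ with $(M,R)$ a good representation of $F$ and $R\setminus \emptyset = R \subseteq [M]^{\leq n}$. That is to say, $\arity(F/\emptyset)\leq n$ holds if and only if $F$ admits a good representation all of whose edges have size at most $n$. By the equivalence of items $(4)$ and $(5)$ in Proposition~\ref{proposition: arity equivalence} (which says that having \emph{some} representation of arity $\leq n$ is equivalent to having \emph{every} good representation of arity $\leq n$), this is precisely $\arity(F)\leq n$. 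Taking minima, $\arity(F)=\arity(F/\emptyset)$.

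For the second claim, unfold both definitions side-by-side. Being an $n$-resolution of $F$ requires $H\in\mathfrak{C}_n$, $F\sqsubseteq^* H$, and no proper $F\subseteq F'\sqsubseteq H$ distinct from $H$. Being an $n$-resolution of $F$ over $\emptyset$ requires $\arity(H/\emptyset)\leq n$, $F\sqsubseteq^* H$, no proper intermediate, and $\emptyset \sqsubseteq H$. The final condition $\emptyset \sqsubseteq H$ is vacuous (an empty collection $\Sigma$ trivially satisfies the definition of $\sqsubseteq$), and by part (1) the condition $\arity(H/\emptyset)\leq n$ coincides with $\arity(H)\leq n$, which — together with finiteness and flatness, already built into $\mathfrak{C}$ — is exactly membership in $\mathfrak{C}_n$. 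All other conditions match verbatim, so the two notions agree.

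The argument is essentially bookkeeping; the only place one might worry is that $\arity(F/\emptyset)$ is defined via the existence of good representations of the pair $\emptyset\sqsubseteq F$ with an edge-size bound, while $\arity(F)\leq n$ corresponds to item $(5)$ and not $(4)$ of Proposition~\ref{proposition: arity equivalence}. Invoking the equivalence of these two items removes the discrepancy, so no real obstacle remains.
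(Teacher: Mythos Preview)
Your argument is correct and matches the paper's intent; the statement is given without proof there, precisely because it is just definition-unfolding of the kind you carry out. One small imprecision: your parenthetical justification for $\emptyset\sqsubseteq H$ (``an empty collection $\Sigma$ trivially satisfies the definition'') is not the right reason, since the definition quantifies over all $\Sigma$; the correct point is that $\Sigma_\emptyset$ collapses to $\{\emptyset\}$ with $\flatsum{\emptyset}(\Sigma_\emptyset)=0$, and $\flatsum{H}(\Sigma)\geq 0$ follows from flatness of $H\in\mathfrak{C}$ --- indeed the paper notes elsewhere that $\emptyset\sqsubseteq H$ is equivalent to $H$ being flat.
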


We now show that an $n$-resolution over a strongly embedded subpregeometry always exists. In particular, the observation immediately above implies that every flat pregeometry has an $n$-resolution.

\begin{prop}
\label{proposition: n-resolution}
For every $F\in \mathfrak{C}$, $F_0\sqsubseteq F$ and $n\in\Nats$, there exists $H\in \mathfrak{C}$, an $n$-resolution of $F$ over $F_0$. Moreover, for every natural $p$, $H$ can be chosen so that $F\sqsubseteq^p H$.
\end{prop}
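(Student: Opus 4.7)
The plan is to construct $H$ as an iterated amalgam that replaces, one at a time, the ``overlarge'' edges in a good hypergraph representation of $F$ with the $n$-resolutions of circuits furnished by Lemma \ref{lemma: edge resolution}. Fix good representations $\hg{A}_0 \strong \hg{A}$ of $F_0 \sqsubseteq F$ (Corollary \ref{corollary: sqsubseteq equivalence}); enumerate as $e_1, \dots, e_k$ the edges of $\hg{A}$ in $R \setminus R_0$ of size exceeding $n$. Each such $e_i$ satisfies $e_i \strong \hg{A}$, so the induced circuit $C_i := G_{\hg{A}[e_i]}$ lies in $F$ with $C_i \sqsubseteq F$ by Proposition \ref{prop: self-sufficiency implies sqsubseteq}. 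For each $i$ I apply Lemma \ref{lemma: edge resolution} to obtain an $n$-resolution $G_{C_i} \in \mathfrak{C}_n$ of $C_i$, taking the new vertices $N_{e_i} \setminus e_i$ disjoint from $F$ and from one another, and choosing the parameter of that lemma large enough that $C_i \sqsubseteq^p G_{C_i}$ (for the moreover clause). Define $H_0 := F$ and inductively $H_{i+1} := H_i \amalg_{C_{i+1}} G_{C_{i+1}}$, setting $H := H_k$. Each amalgam is legal because iterated Lemma \ref{lemma: amalg properties}(iv) (using $C_{j+1} \sqsubseteq^* G_{C_{j+1}}$) gives $F \sqsubseteq^* H_j$, and then Lemma \ref{lemma: sqsubseteq and sqsubseteq^* interaction} promotes $C_{i+1} \sqsubseteq F \sqsubseteq^* H_i$ to $C_{i+1} \sqsubseteq H_i$.

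The properties $F \sqsubseteq^* H$ (iteration of Lemma \ref{lemma: amalg properties}(iv)) and $F_0 \sqsubseteq H$ (Lemma \ref{lemma: sqsubseteq and sqsubseteq^* interaction} applied to $F_0 \sqsubseteq F \sqsubseteq^* H$) will then be immediate. For the arity bound, I will exhibit a good representation of $H$ obtained by deleting each $e_i$ from $\hg{A}$ and adjoining a good representation of the corresponding $G_{C_i}$; by Proposition \ref{proposition: arity equivalence}(4) applied to each $G_{C_i} \in \mathfrak{C}_n$, together with the surviving edges of $R \setminus R_0$ of size $\leq n$, this representation has only $\leq n$-sized edges outside $R_0$, establishing $\arity(H/F_0) \leq n$.

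For the minimality condition, suppose $F \subseteq F' \sqsubseteq H$. Iterating Corollary \ref{corollary: strong in amalgam means strong in G_2} backwards through the stages $H_k, H_{k-1}, \dots, H_0$ (its hypothesis is the known $C_{j+1} \sqsubseteq^* G_{C_{j+1}}$) yields $F' \cap G_{C_i} \sqsubseteq G_{C_i}$ for each $i$. Since $C_i \subseteq F \subseteq F'$, the $n$-resolution property of $G_{C_i}$ forces $F' \cap G_{C_i} = G_{C_i}$, and hence $F' \supseteq F \cup \bigcup_i G_{C_i} = H$, giving $F' = H$.

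For the moreover clause, let $Z \sqsubseteq^* H$ contain $F$ with $|Z \setminus F| \leq p$. The same iteration of Corollary \ref{corollary: strong in amalgam means strong in G_2} gives $Z_i := Z \cap G_{C_i} \sqsubseteq^* G_{C_i}$; by the disjointness of the new vertex sets, $|Z_i \setminus C_i| \leq |Z \setminus F| \leq p$, so $C_i \sqsubseteq^p G_{C_i}$ delivers $C_i \sqsubseteq Z_i$. Identifying $Z$ with the iterated amalgam $F \amalg_{C_1} Z_1 \amalg \cdots \amalg_{C_k} Z_k$ (via Observation \ref{observation: intermediate amalgam}, noting that the $Z_i$'s meet pairwise only in subsets of $F$), iterated Lemma \ref{lemma: amalg properties}(v) will produce $F \sqsubseteq Z$. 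The principal obstacle is orchestrating the separate local resolutions so that they do not interact, which is handled by keeping the extra vertex sets of the $G_{C_i}$ pairwise disjoint and disjoint from $F$; with that disjointness in place, each $G_{C_i}$ attaches to the running structure only along $C_i \subseteq F$, and every statement above decomposes cleanly across the amalgam.
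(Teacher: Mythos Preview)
Your overall strategy is the paper's, and the minimality and moreover arguments are essentially correct. However, you twice misapply Lemma~\ref{lemma: sqsubseteq and sqsubseteq^* interaction}, and both times the gap is real.

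That lemma says: from $A\sqsubseteq G$ and $A\subseteq B\sqsubseteq^* G$ one may \emph{descend} to $A\sqsubseteq B$. It does \emph{not} let you ascend from $A\sqsubseteq B$ and $B\sqsubseteq^* G$ to $A\sqsubseteq G$. Indeed the latter is false: take $A=B$ with $B\sqsubseteq^* G$ but $B\not\sqsubseteq G$. You invoke exactly this false direction to get $C_{i+1}\sqsubseteq H_i$ from $C_{i+1}\sqsubseteq F\sqsubseteq^* H_i$, and again to get $F_0\sqsubseteq H$ from $F_0\sqsubseteq F\sqsubseteq^* H$. Neither step is justified as written; in particular the legality of each amalgam $H_i\amalg_{C_{i+1}}G_{C_{i+1}}$ is left unproven.

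The repair is the one the paper uses and which you already have in hand for the arity clause: track the explicit hypergraph. At stage $i$ the running representation $\hg{D}_i$ still contains $e_{i+1}$ as an edge (removing edges inside $e_j$ at step $j$ deletes only $e_j$ itself, since distinct edges of a good representation with $\emptyset\strong\hg{A}$ cannot be nested), and $\dm_{H_i}(e_{i+1})=\dm_F(e_{i+1})=|e_{i+1}|-1=\delta_{\hg{D}_i}(e_{i+1})$ by Lemma~\ref{lemma: amalg properties}(ii); hence $e_{i+1}\strong\hg{D}_i$ and $C_{i+1}\sqsubseteq H_i$. Likewise, since no edge of $R_0$ is ever removed and none of the added edges lie in $M_0$, one has $\hg{D}_{k}[M_0]=\hg{A}_0$, so $\hg{A}_0\strong\hg{D}_{k}$ by Fact~\ref{fact: self-sufficiency properties}(2), whence $F_0\sqsubseteq H$. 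Route both claims through the hypergraph and the proof goes through.
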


\begin{proof}
Let $\hg{A}=(M,R)$, $\hg{A}_0 = (M_0, R_0)$ be a good representations of $F, F_0$ respectively, such that $\hg{A}_0\strong \hg{A}$. For each $e\in R\setminus R_0$, seen as a subpregeometry of $F$, using \ref{lemma: edge resolution}, let $G_e$ be an $n$-resolution of $e$. If $|e|\leq n$, take $G_e = e$. Enumerate $R\setminus R_0 =\set{e_1,\dots, e_k}$, define $H_1 = F$ and inductively define $H_{i+1} = H_i\amalg_{e_i} G_{e_i}$. Denote $H=H_{k+1}$.

Recalling the explicit construction of an amalgam of pregeometries (definitions \ref{definition: hypergraph amalg}, \ref{definition: geometric amalgam}), at every stage the pregeometry $H_{i+1}$ is represented by a hypergraph $\hg{D}_{i+1}= (N, S_{i+1})$ which is $\hg{D}_i\amalg \hg{B}_i$, where $\hg{B}_i$ is a (good) representation of $G_{e_i}$. Then for every $j\geq i$ we have that still $e_j\in S_i[M]$, and in particular $\delta_{\hg{D}_i}(e_j) = |e_j|-1 = \dm_{F}(e_j)=\dm_{\hg{D}_i}(e_j)$. Therefore, $e_j\strong H_i$, implying $e_j\sqsubseteq H_i$. This means that $H_{i+1}$ is well defined at every step. 

Unraveling the construction, we see that the order in which we enumerate the edges makes no difference to the resulting hypergraph $\hg{D}_{k+1}$ and associated pregeometry $H$. Thus, given any $e\in R\setminus R_0$, we may re-enumerate so that $e=e_k$ and get $H = H_k\amalg_e G_e$.
%By Lemma \ref{lemma: amalg properties}, because $e\sqsubseteq H_k$, also $G_e\sqsubseteq H$.

Now we check that $H$ is an $n$-resolution of $F$ over $F_0$. Clearly, $H\in \mathfrak{C}$.
\begin{itemize}
\item
By Lemma \ref{lemma: amalg properties}, since $e_i\sqsubseteq^* G_{e_i}$, we have $H_i\sqsubseteq^* H_{i+1}$, so inductively $F\sqsubseteq^* H$.
\item
By construction, inductively, for each $i$ the restriction of $H_i$ to $M_0$ is $F_0$. Since $\hg{D}_{k+1}[M_0] = \hg{A}_0$, we have $G_{\hg{D}_{k+1}[M_0]} = G_{\hg{A}_0} = F_0$. By (2) of Fact \ref{fact: self-sufficiency properties}, this implies $\hg{A}_0\strong \hg{D}_{k+1}$ and consequently $F_0\sqsubseteq H$.
\item
For each edge in $\hg{D}_{k+1}$, its dimension remains unchanged from the stage when it was introduced into the construction. By choosing only good representations of the pregeometries $G_{e}$ during construction, the resulting $\hg{D}_{k+1}$ is a good representation of $H$. In the previous item we saw $\hg{A}_0\strong \hg{D}_{k+1}$, so $\arity(H/F_0) \leq n$.
\item
Lastly, let $F\subseteq F'\sqsubseteq H$, then by (1) of Corollary \ref{corollary: strong in amalgam means strong in G_2}, $F'\cap G_e\sqsubseteq G_e$ for every edge $e$. But $G_e$ is an $n$-resolution of $e$ so it must be that $G_e\subseteq F'$. Then $F' = H$. 
\end{itemize}
We prove the additional part. Fix $p$ and assume we had chosen all the $G_e$ such that $e\sqsubseteq^p G_e$. Let $Z\sqsubseteq^* H$ contain $F$ such that $|Z\setminus F|\leq p$. For each $e$, by (2) of Corollary \ref{corollary: strong in amalgam means strong in G_2}, since $e\sqsubseteq^* G_e$, we have $Z\cap G_e\sqsubseteq^* G_e$. By $e\sqsubseteq^p G_e$, in fact $e\sqsubseteq Z\cap G_e$. For each $e$ denote $Z_e = Z\cap G_e$. Applying Observation \ref{observation: intermediate amalgam} iteratively, we see that the restriction of $H$ to $Z$ is the amalgam of the pregeometries $F\amalg_e Z_e$ over $F$. By the last item of Lemma \ref{lemma: amalg properties}, for each $e$ it holds that $F\sqsubseteq F\amalg_e Z_e$, so inductively we get $F\sqsubseteq Z$.
\end{proof}

We can finally determine the theory of $\pg{G}_n$. The proof below is similar to that of $\pg{G}\models T$ (Proposition \ref{proposition: T is satisfied by pg{G}}), but does require more consideration.

\begin{prop}
There exists some $\tau$ such that $\pg{G}_n\models T_{\tau}$ (Recall Definition \ref{definition: theory of generic pregeometry}). In particular $\pg{G}_n\models T$.
\end{prop}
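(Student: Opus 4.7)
The plan is to adapt the proof of Proposition \ref{proposition: T is satisfied by pg{G}} with the additional ingredient of passing through an $n$-resolution $H'$ of $H$ over $F$ supplied by Proposition \ref{proposition: n-resolution}, so as to remain inside $\mathfrak{C}_n$ and invoke the genericity of $\pg{G}_n$. Axioms T1 and T2 are immediate.

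The preliminary obstacle is that the $n$-resolution argument requires $\arity(F)\leq n$; this is forced automatically once $\tau$ is chosen large enough, because $F\sqsubseteq^{\tau(|H|+m)}\pg{G}_n$ combined with Lemma \ref{lemma: bound on sqsubseteq^* superset} and the existence of a finite $\sqsubseteq$-extension of $F$ inside $\pg{G}_n$ (whose size enters the choice of $\tau$) implies $F\sqsubseteq E^*$ for some $E^*\sqsubseteq\pg{G}_n$ lying in $\mathfrak{C}_n$, and hence $\arity(F)\leq \arity(E^*)\leq n$ by the arity observation. Assuming $\arity(F)\leq n$, choose $H'$ an $n$-resolution of $H$ over $F$ with $H\sqsubseteq^{h(|H|+m)} H'$ (Proposition \ref{proposition: n-resolution}), where $h$ is from Lemma \ref{lemma: not sqsubseteq is witnessed inside any sqsubseteq superstructure}; the arity bounds force $H'\in\mathfrak{C}_n$. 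Pick a finite $E_0\sqsubseteq\pg{G}_n$ containing $F$ (for example the self-sufficient closure of $F$ in a hypergraph representation of $\pg{G}_n$); by Corollary \ref{corollary: amalgam for n-ary pregeometries} the amalgam $H'\amalg_F E_0\in\mathfrak{C}_n$ and contains $E_0$ strongly, so the genericity of $\pg{G}_n$ over $E_0$ lets us assume $H'\amalg_F E_0\sqsubseteq\pg{G}_n$, realising an embedding of $H$ into $\pg{G}_n$ fixing $F$.

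For the image to satisfy $H\sqsubseteq^m\pg{G}_n$, assume the contrary, with $Z\sqsubseteq^*\pg{G}_n$ containing $H$, $|Z\setminus H|\leq m$, witnessing $H\not\sqsubseteq Z$. Lemma \ref{lemma: not sqsubseteq is witnessed inside any sqsubseteq superstructure} applied in $H'\amalg_F E_0$ gives $V\sqsubseteq^* H'\amalg_F E_0$ containing $H$, of size at most $h(|H|+m)$, with $H\not\sqsubseteq V$. Corollary \ref{corollary: strong in amalgam means strong in G_2} and Observation \ref{observation: intermediate amalgam} yield $V = V_1\amalg_F V_0$ with $V_1 := V\cap H'\sqsubseteq^* H'$ and $V_0 := V\cap E_0\sqsubseteq^*\pg{G}_n$. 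Since $|V_1|\leq h(|H|+m)$ and $H\sqsubseteq^{h(|H|+m)}H'$, we have $H\sqsubseteq V_1$; if additionally $F\sqsubseteq V_0$, Lemma \ref{lemma: amalg properties}(v) forces $H\sqsubseteq V$, a contradiction. Hence $F\not\sqsubseteq V_0$ with $|V_0|\leq h(|H|+m)$, so taking $\tau$ to dominate both $h$ and the arity-forcing bound produces the desired contradiction with $F\sqsubseteq^{\tau(|H|+m)}\pg{G}_n$, establishing $\pg{G}_n\models T_\tau$. The \emph{in particular} part then follows from the Corollary after Proposition \ref{proposition: model of T can be extended to generic}, which shows that $T$ is one complete theory independent of the choice of satisfiable $\tau$. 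The genuinely new obstacle beyond the unbounded-arity proof is the arity-forcing preliminary, ensuring that the $n$-resolution lands in $\mathfrak{C}_n$ so that the genericity of $\pg{G}_n$ can be invoked at all.
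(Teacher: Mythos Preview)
Your overall strategy matches the paper's: resolve $H$ over $F$, amalgamate with a finite $E\sqsubseteq\pg{G}_n$, embed by genericity, then transfer a failure of $H\sqsubseteq^m$ down to a failure of $F\sqsubseteq^{\tau(|H|+m)}$ via Lemma~\ref{lemma: not sqsubseteq is witnessed inside any sqsubseteq superstructure}. Two points deserve comment.

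\textbf{The arity-forcing preliminary is unnecessary.} The paper does not argue that $\arity(F)\leq n$; instead it uses Observation~\ref{observation: arity of one-sided amalgam with resolution over base}, which gives $\arity(G_0\amalg_F E)\leq\max\{\arity(G_0/F),\arity(E)\}$. Since $G_0$ is an $n$-resolution \emph{over} $F$, by definition $\arity(G_0/F)\leq n$, and $\arity(E)\leq n$ because $E\sqsubseteq\pg{G}_n$. So the amalgam lands in $\mathfrak{C}_n$ regardless of $\arity(F)$. Your route works (since $|F|\leq|H|$, the bound $g(|F|)$ can be absorbed into $\tau$), but it is a detour around the very reason relative arity was introduced.

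\textbf{The endgame differs, and your version has a small gap.} You decompose $V$ as $V_1\amalg_F V_0$ and invoke Lemma~\ref{lemma: amalg properties}(v). But Observation~\ref{observation: intermediate amalgam} only lets you vary the \emph{second} coordinate of an amalgam; it gives $H'\amalg_F V_0\sqsubseteq^* H'\amalg_F E_0$, not that the induced structure on $V_1\cup V_0$ equals $V_1\amalg_F V_0$. This is fixable in the case $F\sqsubseteq V_0$ (the only case you need), since then the amalgam is symmetric and you can apply the observation on the other side --- but that step is not written. The paper sidesteps this by applying Lemma~\ref{lemma: not sqsubseteq is witnessed inside any sqsubseteq superstructure} a \emph{second} time inside $G_0\amalg_F V_0$ (where $G_0\sqsubseteq G_0\amalg_F V_0$ once $F\sqsubseteq V_0$), obtaining $V'\sqsubseteq^* G_0$ with $H\not\sqsubseteq V'$ and $|V'|\leq h(|V|)$; this is why the paper takes $p>h(h(|H|+k))$ rather than your single $h(|H|+m)$. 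Your endgame is more direct once patched; the paper's is more self-contained with the lemmas as stated.
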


\begin{proof}
Both T1 and T2 are clear. We show T3 holds. Suppose $F\sqsubseteq^* \pg{G}_n$, $F\sqsubseteq H\in \mathfrak{C}$ and fix some natural $k$. We want to find a sufficient condition on $\tau(k)$ so that if $F\sqsubseteq^{\tau(|H|+k)} \pg{G}_n$, then there is an embedding $f:H\to \pg{G}_n$ such that $f[H]\sqsubseteq^k \pg{G}_n$.

By Proposition \ref{proposition: n-resolution}, let $G_0$ be an $n$-resolution of $H$ over $F$ such that $H\sqsubseteq^p G_0$ for some $p>h(h(|H|+k))$, where $h$ is a function as in \ref{lemma: not sqsubseteq is witnessed inside any sqsubseteq superstructure}. Choose some finite $E\sqsubseteq \pg{G}_n$ containing $F$, and let $G = G_0\amalg_{F} E$. Observation \ref{observation: arity of one-sided amalgam with resolution over base} gives $\arity(G)\leq n$, so by genericity we may assume $G$ is strongly embedded into $\pg{G}_n$ over $E$.

Assume that $H\not\sqsubseteq^k \pg{G}_n$ and let $Z\sqsubseteq^* \pg{G}_n$ contain $H$ such that $|Z\setminus H|\leq k$ and $H\not\sqsubseteq Z$. Then by Lemma \ref{lemma: not sqsubseteq is witnessed inside any sqsubseteq superstructure} there is some $V\sqsubseteq^* G$ containing $H$ such that $|V|\leq h(|H|+k)$ and $H\not\sqsubseteq V$. By (1) of Corollary \ref{corollary: strong in amalgam means strong in G_2}, $V_0:= V\cap E\sqsubseteq^* E$. Observation \ref{observation: intermediate amalgam} now implies $G':= G_0\amalg_F V_0\sqsubseteq^* G$. Note that $p>h(|V|)$.

Assume for a moment $F\sqsubseteq V_0$. Then $G_0\sqsubseteq G'$ and, considering ${H\not\sqsubseteq V\sqsubseteq^* G'}$, an application of Lemma \ref{lemma: not sqsubseteq is witnessed inside any sqsubseteq superstructure} yields some set $V'\sqsubseteq^* G_0$ containing $H$ such that $H\not\sqsubseteq V'$. But $|V'|\leq h(|V|) < p$, in contradiction to $H\sqsubseteq^p G_0$. So it must be that $F\not\sqsubseteq V_0$, hence $F\not\sqsubseteq^{h(|H|+k)}\pg{G}_n$. As before, setting $\tau$ greater or equal to the $h$ function of Lemma \ref{lemma: not sqsubseteq is witnessed inside any sqsubseteq superstructure} gives us what we want.
\end{proof}

While tempting to call $T$ the theory of generic flat pregeometries, there are still more generic flat pregeometries -- in the sense that they are generic structures for amalgamation classes -- not sharing this theory. Like in the case of the strongly minimal Hrushovski construction, one can enforce finite multiplicities on certain configurations -- see \cite[Lemmas 17,18]{Hns} for good representations of such. Also, there are models of $T$ besides those that we've seen --- an elementary extension of $\pg{G}_3$ realizing a unique strongly-embedded ``4-ary'' circuit, say. The study of these variants is left for a different paper.

To conclude our investigation, we show that the generic flat pregeometries we have studied form an elementary chain. Fix until the end of this section copies of $\mathcal{M}$ and $\pg{G}$ such that $\pg{G} = G_{\mathcal{M}}$ and $\mathcal{M} = (M,R)$. Also fix $I\subseteq M$, a basis for $\pg{G}$.

\begin{observation}
Let $\hg{A}_1=(M, R_1)$, $\hg{A}_2= (M, R_2)$ be good representations of the same pregeometry and let $\hg{B}_i = (M, R_i\cap [M]^{\leq n})$. Then $G_{\hg{B}_1} = G_{\hg{B}_2}$.

Being good representations, clearly $G_{\hg{B}_1}$ and $G_{\hg{B}_2}$ have the same closed sets of dimension less than $n$. By definition of $\arity(\hg{B}_i) \leq n$, this uniquely determines the entire pregeometry associated to $\hg{B}_i$.
\end{observation}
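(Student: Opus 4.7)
The plan is to prove this in three moves: show each $\hg{B}_i$ is itself a good representation, show the resulting pregeometry has arity at most $n$, and verify that the two resulting pregeometries agree on dependent sets of size at most $n$ (which, by arity $\leq n$, forces full equality).

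First, since $\hg{B}_i$ is obtained from $\hg{A}_i$ by deleting edges while keeping the vertex set $M$, the associated dimension function can only grow: $\dm_{\hg{B}_i}\geq \dm_{\hg{A}_i}=\dm_G$. For any $e\in R_i\cap[M]^{\leq n}$ we thus get $|e|-1=\dm_{\hg{A}_i}(e)\leq \dm_{\hg{B}_i}(e)\leq |e|-1$, so $\hg{B}_i$ is a good representation of its associated pregeometry $G_{\hg{B}_i}$. The arity bound $\arity(G_{\hg{B}_i})\leq n$ is then immediate from (5) of Proposition \ref{proposition: arity equivalence}.

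Next, I show that every closed set $F$ of $G$ with $\dm_G(F)=k<n$ remains closed in $G_{\hg{B}_i}$ with the same dimension. The key observation is that in the good representation $\hg{A}_i$, any edge contained in $F$ has size $\leq k+1\leq n$, hence already lives in $R_i\cap[M]^{\leq n}$. Thus $\hg{A}_i[F]=\hg{B}_i[F]$ as hypergraphs. Since $F\strong \hg{A}_i$, this gives $\dm_{\hg{A}_i}(F)=\delta(\hg{A}_i[F])=\delta(\hg{B}_i[F])\geq \dm_{\hg{B}_i}(F)\geq \dm_{\hg{A}_i}(F)$, forcing equality throughout; and $\cl_{\hg{B}_i}(F)\subseteq \cl_{\hg{A}_i}(F)=F$ because deleting edges can only shrink closures.

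To conclude, I check that $G_{\hg{B}_1}$ and $G_{\hg{B}_2}$ have the same dependent sets of size at most $n$. If $|X|\leq n$ and $X$ is dependent in $\hg{B}_i$, then $\dm_{\hg{A}_i}(X)\leq \dm_{\hg{B}_i}(X)<|X|$, so $X$ is dependent in $G$. Conversely, if $X$ is dependent in $G$ it contains a circuit $C\subseteq X$ of size $\leq|X|\leq n$, and $F:=\cl_G(C)$ is closed in $G$ of dimension $|C|-1<n$. By the previous paragraph, $F$ is also closed in $G_{\hg{B}_i}$ of the same dimension $|C|-1<|C|$, so $C$ is dependent in $\hg{B}_i$, making $X$ dependent there as well. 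The main (and only real) obstacle was this last step: one might worry that removing edges could create new independences even on small sets, but the arity-$\leq n$ rigidity of $\hg{B}_i$ combined with the fact that all ``witnessing'' edges on low-dimensional closed sets were preserved prevents this. Since arity $\leq n$ pregeometries are determined by their dependent sets of size at most $n$ (by (2) and (3) of Proposition \ref{proposition: arity equivalence}), this yields $G_{\hg{B}_1}=G_{\hg{B}_2}$.
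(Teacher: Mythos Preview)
Your proof is correct and takes the same route as the paper's two-sentence justification: establish that $G_{\hg{B}_1}$ and $G_{\hg{B}_2}$ agree on small sets (equivalently, have the same closed sets of dimension $<n$), then invoke $\arity(G_{\hg{B}_i})\leq n$ to force equality. One small note: your appeal to items (2) and (3) of Proposition~\ref{proposition: arity equivalence} for the last implication is not quite the right citation --- those items characterize arity within a single fixed pregeometry rather than asserting that two arity-$\leq n$ pregeometries with matching $\leq n$-dependent sets must coincide; the paper instead points to Definition~\ref{definition: arity}, which directly expresses $\dm(H)$ for every finite $H\sqsubseteq G_{\hg{B}_i}$ as a $\flatsum{}$ over closed sets of dimension $<n$, data you have shown to be identical for $i=1,2$.
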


\begin{definition}
Let $G$ be a flat pregeometry, let $\hg{A}=(M,R)$ be a good representation of $G$ and let $\hg{A}^{\leq n} = (M,R\cap [M]^{\leq n})$. For every $X\subseteq M$ define $\cl_G^n(X) = \cl_{\hg{A}^{\leq n}}(X)$, the \emph{$n$-ary closure of $X$ in $G$}.
\end{definition}

The structure $\mathcal{M}_n$ can be identified in $\mathcal{M}$ as the $n$-ary closure of an infinite independent set.

\begin{lemma}
Let $N = \cl_{\pg{G}}^n(I)$, for some $n\geq 3$. Then $N\strong \mathcal{M}$ and $\mathcal{M}[N] \cong \mathcal{M}_n$.
\end{lemma}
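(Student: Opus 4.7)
The plan is to establish three facts in sequence, from which the conclusion follows by uniqueness of the Fra\"iss\'e generic: (a) every hyperedge of $\mathcal{M}$ lying entirely in $N$ has size at most $n$; (b) $N\strong\mathcal{M}$; (c) $\mathcal{M}[N]$ satisfies the extension property characterizing the generic for $\mathcal{C}_n$. Combined with the fact that $\mathcal{M}[N]$ is countable with edges of size $\leq n$, Theorem \ref{theorem: Fraisse's theorem - restricted} then gives $\mathcal{M}[N]\cong \mathcal{M}_n$.

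For (a), suppose $r\in R$ with $r\subseteq N$ and $|r|>n$. Choose finite $I_0\subseteq I$ with $r\subseteq \cl_{\mathcal{M}^{\leq n}}(I_0)$ and set $Y=\sscl_{\mathcal{M}^{\leq n}}(r\cup I_0)$. Then $Y$ is finite with $\delta_{\mathcal{M}^{\leq n}}(Y)=\dm_{\mathcal{M}^{\leq n}}(r\cup I_0)=|I_0|$. Since $|r|>n$, the edge $r$ contributes to $R[Y]$ but not to $R^{\leq n}[Y]$, so $\delta_{\mathcal{M}}(Y)\leq|I_0|-1$. But $I_0\subseteq Y$ is independent in $\mathcal{M}$, forcing $|I_0|\leq\dm_{\mathcal{M}}(Y)\leq\delta_{\mathcal{M}}(Y)$, a contradiction.

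For (b), suppose $\delta_{\mathcal{M}}(X/N)<0$ for some finite $X\subseteq M$. We may assume $X\cap N=\emptyset$ and pick $|X|+1$ edges $r_1,\dots,r_{|X|+1}\in R[X\cup N]\setminus R[N]$ (which is possible whether this set is finite or infinite). Fix finite $I_0\subseteq I$ with $X\subseteq\cl_{\mathcal{M}}(I_0)$ and set $N_0=\sscl_{\mathcal{M}^{\leq n}}\bigl(I_0\cup\bigcup_i(r_i\cap N)\bigr)\subseteq N$. By (a), $R[N_0]=R^{\leq n}[N_0]$, so $\delta_{\mathcal{M}}(N_0)=\delta_{\mathcal{M}^{\leq n}}(N_0)=|I_0|$. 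Since each $r_i$ sits in $X\cup N_0$ but not in $N_0$, the set $Y=X\cup N_0$ has $\delta_{\mathcal{M}}(Y)\leq(|X|+|N_0|)-(|R[N_0]|+(|X|+1))=|I_0|-1$. Yet $I_0\subseteq Y$ gives $\dm_{\mathcal{M}}(Y)\geq|I_0|$, contradicting $\dm_{\mathcal{M}}(Y)\leq \delta_{\mathcal{M}}(Y)$.

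For (c), let $\hg{B}\strong\mathcal{M}[N]$ be finite and $\hg{B}\strong\hg{D}\in\mathcal{C}_n$; by (b) and transitivity $\hg{B}\strong\mathcal{M}$. Pick a basis $B_1\subseteq\hg{D}$ of $\hg{D}$ over $\hg{B}$ and choose $B_1'\subseteq I$ of size $|B_1|$ disjoint from a finite basis of $\hg{B}$ in $I$. Then $B_1'$ is independent over $\hg{B}$ in $\mathcal{M}$; any edge of $\mathcal{M}$ contained in $\hg{B}\cup B_1'$ and meeting $B_1'$ would, as a circuit, force a dependence contradicting this independence. Hence $R[\hg{B}\cup B_1']=R[\hg{B}]$, so $\hg{B}\cup B_1'\strong\mathcal{M}$. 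By genericity of $\mathcal{M}$, strongly embed the renamed $\hg{D}$ (with $B_1\mapsto B_1'$) over $\hg{B}\cup B_1'$. Each new vertex is dependent on $\hg{B}\cup B_1'$ via the $\leq n$-ary edges of $\hg{D}$, hence lies in $\cl_{\mathcal{M}^{\leq n}}(\hg{B}\cup B_1')\subseteq\cl_{\mathcal{M}^{\leq n}}(N)=N$; the resulting copy is then strongly embedded in $\mathcal{M}[N]$ by transitivity. The main technical obstacle is (b): choosing $N_0$ large enough to absorb the witnessing edges while retaining $\delta_{\mathcal{M}}(N_0)=|I_0|$, which crucially uses (a); verifying $\hg{B}\cup B_1'\strong\mathcal{M}$ in (c) is easier but relies on $B_1'$ being drawn from the global basis $I$ rather than an arbitrary independent extension.
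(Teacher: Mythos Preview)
Your proof follows essentially the same strategy as the paper's: show $R[N]\subseteq[M]^{\leq n}$, show $N\strong\mathcal{M}$, and verify the extension property by routing a basis of $\hg{D}$ over $\hg{B}$ through $I$ and invoking genericity of $\mathcal{M}$. The paper collapses your (a) and (b) into a single step --- for any finite $X\subseteq N$ it directly produces $Y\subseteq N$ with $\delta_{\mathcal{M}}(Y/Y\cap I)\leq 0$, whence $Y\strong\mathcal{M}$ and $R[Y]=R[Y]\cap[M]^{\leq n}$ simultaneously --- but the content is the same.

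There is one genuine slip in your part (b). You fix $I_0\subseteq I$ with $X\subseteq\cl_{\mathcal{M}}(I_0)$ and then assert $\delta_{\mathcal{M}^{\leq n}}(N_0)=|I_0|$ for $N_0=\sscl_{\mathcal{M}^{\leq n}}\bigl(I_0\cup\bigcup_i(r_i\cap N)\bigr)$. That equality requires $\bigcup_i(r_i\cap N)\subseteq\cl_{\mathcal{M}^{\leq n}}(I_0)$, which your choice of $I_0$ does not guarantee (and the condition $X\subseteq\cl_{\mathcal{M}}(I_0)$ you did impose is never used). The fix is immediate: since each $r_i\cap N\subseteq N=\cl_{\mathcal{M}^{\leq n}}(I)$, simply enlarge $I_0$ so that $\bigcup_i(r_i\cap N)\subseteq\cl_{\mathcal{M}^{\leq n}}(I_0)$; the rest of your computation then goes through unchanged.

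Two cosmetic points in (c): the phrase ``a finite basis of $\hg{B}$ in $I$'' should be ``a finite $I_0\subseteq I$ with $\hg{B}\subseteq\cl_{\mathcal{M}}(I_0)$'', and the circuit argument for $R[\hg{B}\cup B_1']=R[\hg{B}]$ is unnecessary --- edges of $\mathcal{M}$ need not be circuits, but the conclusion follows directly from $\dm_{\mathcal{M}}(\hg{B}\cup B_1')=\delta_{\mathcal{M}}(\hg{B})+|B_1'|\leq\delta_{\mathcal{M}}(\hg{B}\cup B_1')$, forcing equality and hence no new edges.
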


\begin{proof}
For every finite $X\subfin N$, there is $Y\subfin N$ containing $X$ such that $\delta_{\mathcal{M}}(Y/Y\cap I)\leq 0$. To be precise, $|Y\setminus I| \leq |R[Y]\cap [M]^{\leq n}|$. As $Y\cap I\strong \mathcal{M}$, we have $Y\strong \mathcal{M}$ and $R[Y] = R[Y]\cap [M]^{\leq n}$. By $Y\strong \mathcal{M}$, we get that $\dm_{\mathcal{M}[N]}(X) = \dm_{\mathcal{M}[Y]}(X) = \dm_{\mathcal{M}}(X)$. Therefore, since $X$ was arbitrary, $N\strong \mathcal{M}$ and $R[N] = R[N]\cap [M]^{\leq n}$.

Now, we only need to show extension -- $(*)$ of Theorem \ref{theorem: Fraisse's theorem - restricted} -- to get that $\mathcal{M}[N]$ is generic for $\mathcal{C}_n$. Let $\hg{A}\strong \mathcal{M}[N]\strong \mathcal{M}$ and let $\hg{A}\strong \hg{B}\in \mathcal{C}_n$. Increasing $\hg{A}$ by mapping points in $\hg{B}$ independent from $\hg{A}$ in $\hg{B}$ to elements of $I$ that are independent from $\hg{A}$ in $\mathcal{M}$, we may assume $\delta(\hg{A}) = \delta(\hg{B})$. By genericity of $\mathcal{M}$, we may assume $\hg{B}$ is strongly embedded into $\mathcal{M}$ over $\hg{A}$. But then the embedding is clearly into $\mathcal{M}[N]$
\end{proof}

\begin{theorem}
$\pg{G}_3\prec \pg{G}_4\prec \pg{G}_5\prec \dots \prec \pg{G}$
\end{theorem}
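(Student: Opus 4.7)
The plan is to exhibit explicit $\sqsubseteq$-embeddings $\pg{G}_3 \sqsubseteq \pg{G}_4 \sqsubseteq \cdots \sqsubseteq \pg{G}$ and then invoke the model completeness of $T$ with respect to $\sqsubseteq$ (Theorem \ref{theorem: model completeness of T with respect to strong embeddings}), which will automatically upgrade these to elementary embeddings since every $\pg{G}_n$ and $\pg{G}$ is a model of $T$.

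The key geometric observation is that the $n$-ary closures $N_n := \cl^n_{\pg{G}}(I)$ form an ascending chain $N_3 \subseteq N_4 \subseteq N_5 \subseteq \cdots \subseteq M$, and by the lemma immediately preceding the theorem, each $N_n \strong \mathcal{M}$ with $\mathcal{M}[N_n] \cong \mathcal{M}_n$, so the induced pregeometry $G_{\mathcal{M}[N_n]}$ is isomorphic to $\pg{G}_n$.

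From the containments $N_n \subseteq N_{n+1} \subseteq M$ and the fact that $N_n \strong \mathcal{M}$, for any finite $X \subseteq N_{n+1}$ we have $\delta_{\mathcal{M}[N_{n+1}]}(X/N_n) = \delta_{\mathcal{M}}(X/N_n) \geq 0$, so $N_n \strong \mathcal{M}[N_{n+1}]$, and likewise $N_n \strong \mathcal{M}$. Proposition \ref{prop: self-sufficiency implies sqsubseteq} then yields $G_{\mathcal{M}[N_n]} \sqsubseteq G_{\mathcal{M}[N_{n+1}]}$ and $G_{\mathcal{M}[N_n]} \sqsubseteq \pg{G}$. Transporting through the identifications $G_{\mathcal{M}[N_n]} \cong \pg{G}_n$, this gives the desired chain of $\sqsubseteq$-embeddings $\pg{G}_3 \sqsubseteq \pg{G}_4 \sqsubseteq \cdots \sqsubseteq \pg{G}$.

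Since $\pg{G}_n \models T$ for every $n \geq 3$ and $\pg{G} \models T$, Theorem \ref{theorem: model completeness of T with respect to strong embeddings} promotes each $\sqsubseteq$-embedding to an elementary embedding, concluding the proof. The only nontrivial step is the hypergraph-theoretic bookkeeping that identifies $\pg{G}_n$ with the restriction $G_{\mathcal{M}[N_n]}$ (already done in the preceding lemma); after that, the proof is essentially a direct application of the structural results collected in the previous subsections.
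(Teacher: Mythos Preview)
Your proof is correct and follows essentially the same route as the paper's: identify $\pg{G}_n$ with the pregeometry on $N_n=\cl^n_{\pg{G}}(I)$ via the preceding lemma, use $N_n\strong \mathcal{M}$ (hence $N_n\strong \mathcal{M}[N_{n+1}]$) to obtain the chain $\pg{G}_3\sqsubseteq\pg{G}_4\sqsubseteq\cdots\sqsubseteq\pg{G}$, and then apply Theorem \ref{theorem: model completeness of T with respect to strong embeddings} together with $\pg{G}_n\models T$. The only difference is cosmetic: you spell out explicitly why $N_n\strong\mathcal{M}[N_{n+1}]$, while the paper passes through this step without comment.
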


\begin{proof}
By the above lemma, identifying $\mathcal{M}_n$ with $\cl^n_{\pg{G}}(I)$, we get
\[
\mathcal{M}_3 \strong \mathcal{M}_4 \strong \mathcal{M}_5 \strong \dots \strong \mathcal{M}
\]
which immediately gives
\[
\pg{G}_3\sqsubseteq \pg{G}_4\sqsubseteq \pg{G}_5\sqsubseteq \dots \sqsubseteq \pg{G}.
\]
Since $\pg{G}_n\models T$ for every $n\geq 3$, by Theorem \ref{theorem: model completeness of T with respect to strong embeddings} we are done.
\end{proof}

\section*{Acknowledgments}
The author would like to thank Uri Andrews and Assaf Hasson for discussing the material and commenting on earlier versions of this paper.

\bibliographystyle{alpha}
\bibliography{../../myrefs}

\def\cprime{$'$}
\begin{thebibliography}{Mas72}

\bibitem[BS96]{BaldwinShi}
John~T. Baldwin and Niandong Shi.
\newblock Stable generic structures.
\newblock {\em Ann. Pure Appl. Logic}, 79(1):1--35, 1996.

\bibitem[EF11]{DavidMarcoOne}
David~M. Evans and Marco~S. Ferreira.
\newblock The geometry of {H}rushovski constructions, {I}: {T}he uncollapsed
  case.
\newblock {\em Ann. Pure Appl. Logic}, 162(6):474--488, 2011.

\bibitem[EF12]{DavidMarcoTwo}
David~M. Evans and Marco~S. Ferreira.
\newblock The geometry of {H}rushovski constructions, {II}. {T}he strongly
  minimal case.
\newblock {\em J. Symbolic Logic}, 77(1):337--349, 2012.

\bibitem[Eva11]{EvansMatroid}
David~M. Evans.
\newblock Matroid theory and {H}rushovski's {P}redimension {C}onstruction,
  2011.
\newblock arXiv:1105.3822.

\bibitem[Hru93]{Hns}
Ehud Hrushovski.
\newblock A new strongly minimal set.
\newblock {\em Ann. Pure Appl. Logic}, 62(2):147--166, 1993.
\newblock Stability in model theory, III (Trento, 1991).

\bibitem[Hru96]{Hrushovski-MordellLang}
Ehud Hrushovski.
\newblock The {M}ordell-{L}ang conjecture for function fields.
\newblock {\em J. Amer. Math. Soc.}, 9(3):667--690, 1996.

\bibitem[HZ96]{HrushovskiZilber-Zariskigeometries}
Ehud Hrushovski and Boris Zilber.
\newblock Zariski geometries.
\newblock {\em J. Amer. Math. Soc.}, 9(1):1--56, 1996.

\bibitem[KR16]{StronglyMinimalReductsOfValuedFields}
Piotr Kowalski and Serge Randriambololona.
\newblock Strongly minimal reducts of valued fields.
\newblock {\em J. Symb. Log.}, 81(2):510--523, 2016.

\bibitem[Mas72]{Onaclassofmatroidsarisingfrompathsingraphs}
J.~H. Mason.
\newblock On a class of matroids arising from paths in graphs.
\newblock {\em Proc. London Math. Soc. (3)}, 25:55--74, 1972.

\bibitem[Rab93]{Rabinovich}
E.~D. Rabinovich.
\newblock {\em Definability of a field in sufficiently rich incidence systems},
  volume~14 of {\em QMW Maths Notes}.
\newblock Queen Mary and Westfield College School of Mathematical Sciences,
  London, 1993.
\newblock With an introduction by Wilfrid Hodges.

\bibitem[Zil84]{ZilberConjecture}
B.~I. Zil{\cprime}ber.
\newblock Strongly minimal countably categorical theories. {II}.
\newblock {\em Sibirsk. Mat. Zh.}, 25(3):71--88, 1984.

\bibitem[Zil14]{Zilber-ACurveAndItsAbstractJacobian}
Boris Zilber.
\newblock A curve and its abstract {J}acobian.
\newblock {\em Int. Math. Res. Not. IMRN}, (5):1425--1439, 2014.

\end{thebibliography}
\end{document}